\documentclass[pdflatex,sn-mathphys-num]{sn-jnl}

\usepackage{graphicx}%
\usepackage{multirow}%
\usepackage{amsmath,amssymb,amsfonts}%
\usepackage{esint}
\usepackage{amsthm}%
\usepackage{mathrsfs}%
\usepackage[title]{appendix}%
\usepackage{xcolor}%
\usepackage{textcomp}%
\usepackage{manyfoot}%
\usepackage{booktabs}%
\usepackage{algorithm}%
\usepackage{algorithmicx}%
\usepackage{algpseudocode}%
\usepackage{listings}%
\usepackage[style=numeric]{biblatex}
\usepackage{enumitem}
\newtheorem{theorem}{Theorem}[section]
\newtheorem{definition}{Definition}[section]
\newtheorem{lemma}[theorem]{Lemma}
\newtheorem{corollary}[theorem]{Corollary}
\newtheorem{proposition}[theorem]{Proposition}
\newtheorem{remark}[theorem]{Remark}

\begin{document}

\title[Ancient mean curvature flow asymptotic to Simons cone]{Ancient mean curvature flow asymptotic to Simons cone}
\author*[1]{\fnm{Junyoung} \sur{Park}}\email{jp2453@math.rutgers.edu}

\affil*[1]{\orgdiv{Department of Mathematics}, \orgname{Rutgers University}, \orgaddress{\street{110 Frelinghuysen
Road}, \city{Piscataway}, \postcode{08854-8019}, \state{NJ}, \country{USA}}}


\abstract{In this paper, we prove that a smooth, properly embedded ancient mean curvature flow that is asymptotic to $O(n)\times O(n)$ symmetric Simons cone for $n \geq 5$, and lies on one side of the cone has to have a unique asymptotics in the parabolic region. When additionally assuming mean convexity, we upgrade unique asymptotics to full uniqueness, and show that such flow has to be a stationary flow given by one of the leaves of the Hardt-Simon foliation.}

\keywords{Mean curvature flow, ancient flow, Simons cone}



\maketitle
\tableofcontents
\section{Introduction}
A smooth, one parameter family of hypersurfaces $\mathcal{M} = (M_t \subset \mathbf{R}^{n+1})_{t \in I}$ is a solution to mean curvature flow if its normal velocity is equal to the mean curvature vector, i.e
\begin{equation}
    (\partial_tx_t)^{\perp} = \Vec{H}_t = -H_t\nu_t,
\end{equation}
where $x_t$ is the position, $\Vec{H}_t = -H_t\nu_t$ is the mean curvature vector at $x_t$. Mean curvature flow is a nonlinear heat type equation acting on submanifolds, and one can hope that the flow will even out the curvature, and in the end `converge' to a surface with simple geometry. Unfortunately, due to the nonlinearity, the flow becomes singular in finite time in many typical cases such as evolution of closed hypersurfaces. There has been extensive effort to understand the behavior near singularities, in hopes of possibly extending the flow past singular times in a canonical way.\\

A general method of investigating the singular behavior of the flow is via blowup analysis. By suitably rescaling the flow near the singularity, one can obtain a model mean curvature flow which approximates the original flow near the singularity. It turns out that these model solutions are defined for all negative times, in other words, they are ancient flows. Therefore if one has a full classification of ancient flows, then one can determine the blowup limits which leads to an understanding of the local geometry of the original flow near singularity. This connection with singularity analysis is an important reason why ancient flows have received much attention in the theory.\\

It seems that classifying \textit{all} ancient flows is very difficult, if not impossible. Therefore, one typically further assumes a natural geometric assumption, and obtain classification of ancient flows with the extra condition. Typical assumptions include various types of convexity, symmetry assumptions, and non-collapsing assumption \cite{daskalopoulos2010classification}, \cite{bourni2019convex}  \cite{huisken2015convex}, \cite{LangfordLynch2020}, \cite{haslhofer2015uniqueness}, \cite{haslhofer2016ancient}, \cite{angenent2020uniqueness}, \cite{BrendleChoi2021}, \cite{choi2022translators}, \cite{choi2024classification}, \cite{choi2022bubblesheet}, \cite{bourni2021collapsing}, \cite{Clutterbuck_2007}  .\\

Another possible geometric assumption is prescribing the asymptotic behavior at infinity. If $\mathcal{M} = (M_t)_{t \in (-\infty, 0)}$ is an ancient flow, then for any sequence of $\lambda_i \to 0$, we can consider the sequence of rescaled flows
\begin{equation}
    M^i_t = \lambda_iM_{\lambda_i^{-2}t}, \ t \in (-\infty, 0).
\end{equation}
It turns out that in many cases, one can take a (subsequential) limit of above sequences (possibly in a weak sense), and obtain a limit ancient flow $\mathcal{M}^{\infty}$ which is called the blowdown (or tangent flow at infinity) of $\mathcal{M}$. By Huisken's monotonicity formula \cite{huisken1990asymptotic}, it is well known that $\mathcal{M}^{\infty}$ has to be self shrinking, i.e
\begin{equation}
    M^{\infty}_t = \sqrt{-t}M^{\infty}_{-1.}
\end{equation}
Then one can prescribe $M^{\infty}_{-1}$, and try to classify all ancient flows with this prescribed asymptotic behavior. \\

The most extensively studied case is when $M^{\infty}_{-1} = \mathbf{R}^k \times \mathbf{S}^{n-k}(\sqrt{2(n-k)})$, in other words when the flow is asymptotic to multiplicity one shrinking cylinders at infinity. One reason why this has been widely studied is due to its connection with Ilmanen's mean convex neighborhood conjecture (see \cite{haslhofer2026mcf} for details). There has been a long line of work in this setting, and we now have a full classification of all the asymptotically cylindrical flows \cite{hershkovits2021translators}, \cite{choi2022ancientlowentropy}, \cite{ancientasymptotical}, \cite{DuZhu2025}, \cite{bamler2025pde}, \cite{bamler2025classification}. Some other addressed cases are when $M^{\infty}_{-1}$ is an asymptotically conical self shrinker \cite{Chodosh2024-qv}, or is a line with possibly higher multiplicity \cite{choi2026classification}. In the Lagrangian mean curvature flow setting, there has been work dealing with the case when the blowdown is a union of transverse planes of multiplicity one \cite{LotaySchulzeSzekelyhidi2024}, \cite{lambert2021ancient}. \\

A natural question is what other possible class of $M^{\infty}_{-1}$ can one prescribe. Motivated by the asymptotically conical case, one can consider the situation where $M^{\infty}_{-1}$ is a regular hypercone. Since $M^{\infty}_{-1}$ has to be a self shrinker, this means $M^{\infty}_{-1}$ is a regular, minimal cone. In particular, we can consider the simplest case when $M^{\infty}_{-1}$ is an $O(n) \times O(n)$ symmetric Simons cone. Thus one can ask what are the ancient flows that are asymptotic to a multiplicity one $O(n) \times O(n)$ symmetric Simons cone. \\

This question seems interesting due to two reasons. First, it is a natural parabolic analogue to the well known classification problem of minimal hypersurfaces asymptotic to quadratic cones \cite{simon1986minimal}, \cite{Mazet2014}, \cite{edelen2023regularity}. Another reason is because of the fact that Simons cone has a singularity at the origin, which is in stark contrast to all previous works except the Lagrangian case, where the prescribed model is smooth everywhere. This difference introduces new fundamental difficulties. For example in the asymptotically conical case, the flow can be expressed as a \textit{global} normal graph defined over the shrinker, whereas in our case, the singularity at the origin does not allow such nice representation of the flow. In fact this issue is always present whenever the model shrinker is not smooth everywhere. Since isolated singularities are one of the simplest forms of singularities, investigation of the question might lead to new ideas that may be useful when dealing with other model shrinkers with more complicated singular behavior.  \\

In this paper, we consider smooth, properly embedded, ancient mean curvature flow $(M_t)_{t \in (-\infty, 0)}$ which satisfies (some of) the following assumptions; 
\begin{enumerate}[label=(A\arabic*)]
    \item \label{strong convergence to simonscone assumption1} (Simons cone as unique tangent flow at infinity) For any sequence $\lambda_i \to 0$, define sequence of mean curvature flows
    \begin{equation}
        M^i_t = \lambda_iM_{\lambda_i^{-2}t}.
    \end{equation}
    Then 
\begin{equation}
   (M^i_t)_{t \in (-\infty, 0)} \to (\sqrt{-t}\Sigma \equiv \Sigma)_{t \in (-\infty, 0)} \text{ as }i \to \infty,
\end{equation}
as integral Brakke flows (see Ilmanen \cite{ilmanen1994elliptic}), where $\Sigma$ is a $O(n) \times O(n)$ symmetric Simons cone with $n \geq5$ (with unit multiplicity).
    \item \label{additional geoemtric assumption}
    $(M_t)_{t \in (-\infty, 0)}$ lies on one side of $\Sigma$.
    \item \label{mean convexity assumption4}
    In addition to assumptions \ref{strong convergence to simonscone assumption1}, and \ref{additional geoemtric assumption}, $H_t = \textup{div}(\nu_{t}) \geq 0$, where $\nu_{t}$ is the unit normal vector field that is pointing away from the domain which contains the origin. 
\end{enumerate}
 We note that under assumption \ref{strong convergence to simonscone assumption1}, \ref{additional geoemtric assumption}, $M_t$ is connected (corollary \ref{connectedness}), and $0 \notin M_t$ for all $t < 0$.
This implies that $\mathbf{R}^{2n} \setminus M_t$ has two components, and we can make a `continuous' choice of component $0 \in \Omega_{t} \subset \mathbf{R}^{2n} \setminus M_t$. Then $\nu_{t}$ in assumption \ref{mean convexity assumption4} is chosen so that it is pointing outwards of $\Omega_{t}$. \\

Our first result is that under assumptions \ref{strong convergence to simonscone assumption1} and \ref{additional geoemtric assumption}, the rescaled mean curvature flow $\Tilde{\mathcal{M}} = (\Tilde{M}_{\tau})_{\tau \in (-\infty, 0)}$ 
given by the formula
\begin{equation}
    \Tilde{M}_{\tau} = e^{\frac{\tau}{2}}M_{-e^{-\tau}}
\end{equation}
has a unique asymptotics. 
\begin{theorem}\label{maintheorem : uniqueasmptotics} Let $(M_t)_{t \in (-\infty, 0)}$ be smooth, properly embedded ancient mean curvature flow which satisfies assumptions \ref{strong convergence to simonscone assumption1}, \ref{additional geoemtric assumption}, and let $(\Tilde{M}_{\tau})_{\tau \in (-\infty, 0)}$ be the rescaled mean curvature flow. Then there exists $c(\Tilde{\mathcal{M}}) > 0$, $p(n) > 0$, $r_0(\Tilde{\mathcal{M}}) > 0$ so that for given $k \in \mathbf{N}$, $0 < r < R < \infty$, $\Tilde{M}_{\tau} \setminus B(0, r_0e^{\frac{\tau}{2}})$ is a normal graph of $u_{\tau}$ defined over $\Sigma  \setminus B(0, 2r_0e^{\frac{\tau}{2}}) \subset \mathcal{D}_{\tau} \subset \Sigma$ for all sufficiently small $\tau$. Moreover, $u_{\tau}$ satisfies
\begin{equation}
    \|u_{\tau}(x) - ce^{\frac{1 - \alpha}{2}\tau}|x|^{\alpha} \|_{C^{k}(\Sigma \cap (B(0, R) \setminus B(0, r)))} \leq O(e^{\frac{(1 - \alpha)(1 + p(n))}{2}\tau}) \text{ as }\tau \to -\infty.
\end{equation}
Here, 
\begin{equation}
     \alpha = \alpha(n) = \frac{-(2n-3) + \sqrt{4n^2 - 20n + 17}}{2} \in (-2, -1).
\end{equation}
\end{theorem}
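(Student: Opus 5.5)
The plan is a standard spectral analysis of the rescaled flow near the cone, with one extra ingredient: a barrier/localization argument to deal with the singular tip. Write $L = \Delta_\Sigma - \frac{1}{2}x\cdot\nabla + \frac12 + |A_\Sigma|^2$ for the Jacobi operator of the shrinker equation linearized at $\Sigma$. On $\Sigma$ we have $|A_\Sigma|^2 = (2n-2)/|x|^2$. Separating variables in $r=|x|$ (the link is $S^{n-1}\times S^{n-1}$), the homogeneous Jacobi fields of the cone are $r^{\gamma}$ with $\gamma^2+(2n-3)\gamma+(2n-2)=0$. The relevant root is $\gamma=\alpha$ from the statement.

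\textbf{Step 1: the eigenvalue and the exponent.} The eigenfunctions of $L$ with this leading behaviour at the origin are weighted Kummer/Laguerre functions $\phi_k(r)$ with $\phi_k\sim r^{\alpha}$ as $r\to0$. A scaling computation shows that $r^\alpha$ itself solves $L\phi=\frac{1-\alpha}{2}\phi$ up to the Laplacian term, which vanishes for the Jacobi field. So the top eigenvalue among $O(n)\times O(n)$-invariant modes with the one-sided growth allowed is $\lambda_1=\frac{1-\alpha}{2}>0$, with eigenfunction exactly $r^\alpha$. The next eigenvalue is $\lambda_1-1$ (times powers of $r^2$), or the other branch; this gap yields some $p(n)>0$.

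\textbf{Step 2: graphical representation away from the tip.} By (A1) and Brakke/White regularity, for $\tau\to-\infty$ the surface $\tilde M_\tau$ is a normal graph $u_\tau$ over $\Sigma\setminus B(0,\rho)$ on compact sets, with $u_\tau\to0$ smoothly. By (A2) we may take $u_\tau>0$. Using the Hardt--Simon foliation leaves, rescaled by $\lambda e^{\tau/2}$, as barriers, together with the maximum principle and one-sidedness, we trap $\tilde M_\tau$ between leaves $\lambda_\pm e^{\frac{1-\alpha}{2}\tau}$-scaled. Since a Hardt--Simon leaf is asymptotic to $\Sigma$ like $c\,r^{\alpha}$, this gives a two-sided pointwise bound $u_\tau\approx e^{\frac{1-\alpha}2\tau}r^\alpha$ outside $B(0,r_0e^{\tau/2})$. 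It also produces the graph radius $r_0e^{\tau/2}$ and, via interior estimates, $C^k$ control.

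\textbf{Step 3: ODE/spectral analysis.} Decompose a cutoff of $u_\tau$ in the weighted $L^2$ space of $L$, restricted to invariant functions with the one-sided growth. Derive Merle--Zaag type ODEs for the unstable projection $U_+$, the $\phi_1$-coefficient $a(\tau)$, and the rest $U_-$:
\[
a'=\lambda_1 a+\text{err},\qquad |U_-|'\le (\lambda_1-1)|U_-|+\text{err}.
\]
The errors come from the quadratic terms and the cutoff near $r\sim e^{\tau/2}$. These are controlled by Step 2: the cutoff region carries weighted mass of order $e^{\frac{1-\alpha}{2}\tau}\cdot e^{\epsilon\tau}$, because $r^{2\alpha}r^{2n-1}$ is integrable at $0$ for $n\ge5$. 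The barrier bounds exclude the case where $a$ is not dominant. Integrating then gives $a(\tau)=ce^{\lambda_1\tau}+O(e^{\lambda_1(1+p)\tau})$ with $c>0$, since $u>0$, and the remainder is smaller by the factor $e^{\lambda_1 p\tau}$. Finally, the $L^2$ bounds are upgraded to $C^k$ on $B(0,R)\setminus B(0,r)$ by parabolic interior estimates.

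The main obstacle is Step 2, the tip. Without a global graph we need barriers to show that the error from the region near $r\sim e^{\tau/2}$ is small relative to $e^{\lambda_1\tau}$. This relies on building the Hardt--Simon comparison carefully in rescaled variables.
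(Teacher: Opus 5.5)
Your Step~2 is circular, and everything downstream depends on it: the error bounds in Step~3, the claim that $a$ dominates, and the graph radius $r_0e^{\tau/2}$.

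The rescaled leaves $e^{\tau/2}\Sigma^+_\lambda$ are not stationary for the rescaled flow. Their graph functions are $\approx c_0\lambda^{1-\alpha}e^{\frac{1-\alpha}{2}\tau}|x|^{\alpha}$, which collapse onto $\Sigma$ as $\tau\to-\infty$. The flow is ancient, so there is no initial time. To place $\tilde M_\tau$ between two such leaves for all $\tau\le\bar\tau$ by the maximum principle, you must already know the ordering as $\tau\to-\infty$. That means knowing $u_\tau$ is comparable to $e^{\frac{1-\alpha}{2}\tau}$ on a fixed annulus. Assumption (A1) only gives $u_\tau\to0$ with no rate. A flow whose dominant mode had eigenvalue $\lambda<\lambda_1$ would decay like $e^{\lambda\tau}\gg e^{\lambda_1\tau}$ and cross any upper leaf, and ruling this out is exactly what the theorem asserts. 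In the paper, the trapping between leaves (Corollary~\ref{upper and lower obound of unrescaled flow in parabolic region}) and the bound $r(\tau;\eta)\lesssim e^{\tau/2}$ (Corollary~\ref{graphicality of unrescaled flow}) are \emph{consequences} of the asymptotics, not inputs.

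What you are missing is a rate-free, relative control of the tip. The paper gets it as follows:
\begin{itemize}
\item It uses barriers that are static in rescaled time, the inner self-shrinkers of Proposition~\ref{inner family of self shrinkers}. Comparison then only needs $\sup_{s\le\tau}|u_s|\le\hat u_\theta$ on a fixed sphere $|x|=\bar r$.
\item It combines this with the one-sided pseudolocality of Proposition~\ref{pseudolocality theorem}, which rests on the Brakke strong maximum principle and $\lambda(\Sigma)<2$. This yields $r(\tau;\eta)\le C_0\rho_0(\tau)^{1/(1-\alpha)}$ and $|u_\tau|\lesssim\rho_0|x|^{\alpha}$ down to that radius, where $\rho_0$ is a $L^2_w$ norm on a fixed annulus.
\item Quadratic terms near the tip are handled by the inner--outer (inverse Poincar\'e) estimate of Proposition~\ref{inner outer estimate}. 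This comes from $H(\tilde M_\tau)\le H(\Sigma)$ and a calibration by the shrinker foliation.
\item The cutoff sits at radius $\rho_0^{\frac{1}{1-\alpha}-\delta}$, so its error is $O(\rho_0^{1+p})$. This, not a spectral gap, is where $p(n)$ comes from.
\end{itemize}

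Second, the flow is not assumed $O(n)\times O(n)$-symmetric, so you cannot restrict to invariant modes. The nonnegative spectrum is $\lambda_1=\frac{1-\alpha}{2}>\lambda_2=\frac12>\lambda_3=\frac{-1-\alpha}{2}>\lambda_4=0$. It includes the translation modes $\nu_\Sigma\cdot e_j$ at $\lambda_2$ and $n^2$ rotation modes at $\lambda_4$, so $\lambda_1-1$ is not the next eigenvalue.

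Moreover, an inequality like $|U_-|'\le(\lambda_1-1)|U_-|+\text{err}$ with $\lambda_1-1>0$ does not make $U_-$ small relative to $a$ as $\tau\to-\infty$. Components with eigenvalue in $[0,\lambda_1)$ decay more slowly backward in time and would dominate if present. Linear analysis alone cannot exclude them.

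The paper excludes them using one-sidedness. At each threshold ($\lambda_4$, then $\lambda_3$, then $\lambda_2$), the Merle--Zaag lemma gives a dichotomy. If the mode at that threshold dominated, the normalized projection of $\tilde u_{\tau_i}\ge0$ would be an eigenfunction with small $L^2_w$ negative part. That is impossible, because these eigenfunctions all change sign and the eigenspaces are finite dimensional (Lemmas~\ref{dominance of positive mode} and \ref{dominance of first eigenmode}). Only after this does one obtain $|a'-\lambda_1a|\le Ca^{1+p}$ and the stated expansion.
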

\begin{remark}
    The function $ce^{\frac{1 - \alpha}{2}\tau}|x|^{\alpha}$ is precisely the asymptotic formula of the normal graph function of the type I rescaling of the stationary solution given by one of the leaves in the Hardt-Simon foliation. Thus, theorem \ref{maintheorem : uniqueasmptotics} asserts that any smooth, one-sided flow asymptotic to the Simons cone has to agree with the stationary solution at the asymptotics level in the parabolic region. We expect the one-sidedness assumption \ref{additional geoemtric assumption} to be optimal, and hope to construct a counterexample to theorem \ref{maintheorem : uniqueasmptotics} without one-sidedness in a forthcoming work.
\end{remark}
When we additionally assume mean convexity \ref{mean convexity assumption4}, then the unique asymptotics result in theorem \ref{maintheorem : uniqueasmptotics} is upgraded to a full uniqueness result. 
\begin{theorem}\label{maintheorem : uniqueness in mean convex case}
    Let $(M_t)_{t \in (-\infty, 0)}$ be a smooth, properly embedded, ancient mean curvature flow which satisfies assumptions \ref{strong convergence to simonscone assumption1}, \ref{additional geoemtric assumption}, \ref{mean convexity assumption4}. Then $(M_t)_{t \in (-\infty, 0)}$ is a stationary flow given by one of the leaves of the Hardt-Simon foliation associated to the Simons cone.  
\end{theorem}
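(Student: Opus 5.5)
The plan is to combine the unique asymptotics of Theorem \ref{maintheorem : uniqueasmptotics} with a comparison argument against the Hardt--Simon foliation, using mean convexity to make the flow monotone in time. Let $\Sigma_s$, $s>0$, denote the leaves of the Hardt--Simon foliation on the side of $\Sigma$ where $\mathcal{M}$ lies. Up to a sign convention for the side, these are $\Sigma_s = s\Sigma_1$, and $\Sigma_1$ is a normal graph over $\Sigma$ outside a compact set, with graph function $|x|^{\alpha} + O(|x|^{\alpha - \epsilon})$. After the type I rescaling at time $\tau$, the leaf $\Sigma_s$ has graph function asymptotic to $s^{1-\alpha} e^{\frac{1-\alpha}{2}\tau}|x|^{\alpha}$. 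The constant $c$ in Theorem \ref{maintheorem : uniqueasmptotics} therefore singles out a unique candidate leaf $\Sigma_{s_0}$ with $s_0^{1-\alpha}=c$. The goal is to prove $M_t = \Sigma_{s_0}$ for all $t$.

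First I use mean convexity (A3). Since $H\ge 0$ with $\nu$ pointing out of $\Omega_t$, the flow moves inward: the domains $\Omega_t$ are nonincreasing in $t$. By the strong maximum principle, either $H\equiv 0$ or $H>0$ everywhere. If $H\equiv 0$, the flow is static, each $M_t$ is a minimal hypersurface on one side of $\Sigma$ with tangent cone at infinity $\Sigma$ of multiplicity one, and the Hardt--Simon classification (\cite{simon1986minimal}, together with the one-sided hypothesis) shows it is a leaf $\Sigma_s$. So it suffices to rule out $H>0$.

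Suppose $H>0$. I then compare $M_t$ with the leaves, in unrescaled variables. Fix $t_1<t_2$. Strict monotonicity gives $\overline{\Omega_{t_2}}\subset\Omega_{t_1}$. I would like to find leaves separating the two surfaces, and this is where Theorem \ref{maintheorem : uniqueasmptotics} enters. Translated back, it says that at time $t$ the surface $M_t$, restricted to the annulus $\{\, r\sqrt{-t}\le |x|\le R\sqrt{-t} \,\}$, is a graph over $\Sigma$ with function $c|x|^{\alpha}\big(1+O((-t)^{-\frac{(1-\alpha)p}{2}})\big)$. This matches $\Sigma_{s_0}$ to leading order on large parabolic annuli as $t\to-\infty$.

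I expect the main obstacle to be converting this parabolic-region information into global control. Three regions must be handled. The first is the outer region $|x|\gg\sqrt{-t}$, where graphical control should propagate from the cone via pseudolocality and barrier estimates. The second is the inner region $|x|\le r_0\sqrt{-t}$ near the origin, where the graph representation fails. The third is the fixed compact scale, where the leaf $\Sigma_{s_0}$ actually lives.

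The step I would try first is a foliation-sweeping argument. Set
\[
s_-(t)=\sup\{s : \Sigma_s\subset \Omega_t\}
\]
and let $s_+(t)$ be the corresponding infimum from outside. Mean convexity makes $s_\pm$ monotone in $t$. The one-sidedness and the fact that the leaves foliate the whole side make $s_\pm$ finite and positive. Using the asymptotics at $t\to-\infty$ together with the maximum principle, I would show $s_-(t)=s_+(t)=s_0$ for very negative $t$: if $s_-<s_0$, then interior touching at a finite point contradicts the strong maximum principle (minimal leaves are stationary, while $M_t$ moves strictly inward), and touching at infinity is excluded by the strict leading-order comparison of the coefficients $s^{1-\alpha}$ versus $c$. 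Once $\Sigma_{s_0}$ is trapped both inside and outside $\Omega_t$, the surface $M_t$ must coincide with $\Sigma_{s_0}$, and hence $H\equiv0$, which contradicts $H>0$.
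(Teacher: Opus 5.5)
Your reduction to $H>0$ and the inner half of the sweep are sound. If $\Sigma_{s_-}\subset\overline{\Omega_t}$ touches $M_t$ at a finite point $x_0$, then comparing nested domains with the same outward normal gives $H_{M_t}(x_0)\le H_{\Sigma_{s_-}}(x_0)=0$, which contradicts $H>0$.

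\textbf{The outer half fails.} The outer half is not symmetric to the inner one. Suppose $s_+(t)>s_0$ and $\Sigma_{s_+}$ touches $M_t$ at $x_0$. Then $\Omega_t$ lies on the cone side of $\Sigma_{s_+}$, so comparison only gives $H_{M_t}(x_0)\ge 0$. The strong maximum principle does not help either: the earlier surfaces $M_{t'}$, $t'<t$, lie farther from the cone and may cross $\Sigma_{s_+}$. Inward motion is fully compatible with such a contact. So nothing in your argument places $M_t$ on the cone side of $\Sigma_{s_0}$.

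By nesting it would suffice to show $\limsup_{t'\to-\infty}s_+(t')\le s_0$. That, however, requires control of $M_{t'}$ on a fixed compact set around the origin as $t'\to-\infty$, and Theorem \ref{maintheorem : uniqueasmptotics} says nothing there. It only sees $|x|\sim\sqrt{-t}$, and the graphical radius estimate only gives graphicality outside a fixed ball. The missing idea is the paper's backward limit:
\begin{itemize}
\item time translates $M_{t_i+t}$ subconverge as Brakke flows;
\item mean convexity gives $\int_a^b\int\phi\,|H|\,d\mathcal{H}^{2n-1}dt\to 0$, so the limit is stationary;
\item uniform graphicality outside a fixed ball, the monotone limit $v_t\nearrow v_{-\infty}$, and Simon--Solomon identify the limit as a single leaf $\Sigma^+_c$ with unit multiplicity.
\end{itemize}
Only after this is $M_t$ trapped between $\Sigma$ and $\Sigma^+_c$.

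\textbf{A second gap at spatial infinity.} To exclude ``touching at infinity'', and even to get $s_-(t)>0$, you need the coefficient of $|x|^\alpha$ in $v_t(x)$ as $|x|\to\infty$ at fixed $t$. The constant $c$ of Theorem \ref{maintheorem : uniqueasmptotics} describes a different regime, $|x|\sim\sqrt{-t}$ with $t\to-\infty$, and one-sidedness does not connect the two. Mean convexity does give the upper bound, via $v_t(x)\le v_{t_0}(x)$ with $t_0=-|x|^2$. The lower bound needs an extra argument, for example integrating a bound $|\partial_t v|\lesssim |x|^{\alpha-2}$ over $[t_0,t]$ with $t_0=-|x|^2/R^2$. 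You would also have to check that this spatial coefficient agrees with that of the backward leaf $\Sigma^+_c$, which is not automatic.

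The paper avoids these matching issues entirely. It writes $M_t$ as a global graph $h_t$ over $\Sigma^+_c$ with $h_t\ge 0$, $\partial_t h_t\ge 0$ and weighted decay. It then shows $g=\partial_t h\equiv 0$ via a blow-up Schauder estimate and Stolarski's Liouville theorems on $\Sigma^+_1$ and on $\Sigma$.
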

\begin{remark}
    In view of theorem \ref{maintheorem : uniqueasmptotics}, we believe that theorem \ref{maintheorem : uniqueness in mean convex case} should continue to hold without mean convexity. 
\end{remark}
\begin{remark}
    By slightly modifying the proofs in section 4,5 in \cite{stolarski2023existence}, and in this paper with the changed numbers, it seems that theorem \ref{maintheorem : uniqueasmptotics}, and theorem \ref{maintheorem : uniqueness in mean convex case} both continue to hold for more general quadratic cones with $O(n)\times O(m)$ symmetry where $n + m \geq 10$.
\end{remark}
We briefly discuss the key ideas in the proof. The proof of theorem \ref{maintheorem : uniqueasmptotics} follows the same line of argument as in previous works dealing with cylindrical case. Assumption \ref{strong convergence to simonscone assumption1} implies that for sufficiently negative times, we can express large parts of the flow as a normal graph of a function with small norm defined over evolving domains in the Simons cone that eventually covers all of $\Sigma \setminus \{0\}$. As mentioned before, we cannot hope that such functions are defined globally in $\Sigma \setminus \{0\}$, hence we truncate the function by a cutoff function. By using the rescaled mean curvature flow equation, we can compute the evolution equation of the truncated graph function $\Tilde{u}_{\tau} = u_{\tau}\xi_{\tau}$ which is of the form
\begin{equation}
    \partial_{\tau}\Tilde{u}_{\tau} = L\Tilde{u}_{\tau} + Q\Tilde{u}_{\tau} + E_{\tau},
\end{equation}
where $L$ is the linearized rescaled mean curvature operator at Simons cone, $Q$ is the at least quadratic error term arising from the nonlinearity of the equation, and $E_{\tau}$ is the error term coming from the cutoff function. The goal is to show that the error terms $Q\Tilde{u}_{\tau} + E_{\tau}$ can be controlled by higher powers of $\|\Tilde{u}_{\tau}\|$. To control $Q\Tilde{u}_{\tau}$, we prove an inner outer estimate (proposition \ref{inner outer estimate}), which is an inverse Poincare type inequality for the graph function $u_{\tau}$. To control $E_{\tau}$, we prove a graphical radius estimate (proposition \ref{graphical radius estimate proposition}), which estimates the size of the domain on which $u_{\tau}$ is defined in terms of the $L^2$ integral of $u_{\tau}$ over a \textit{fixed} annulus region on the cone. Both these estimates are based on a family of self shrinkers we construct in proposition \ref{inner family of self shrinkers}. By choosing the correct cutoff function $\xi_{\tau}$, we can indeed control $Q\Tilde{u}_{\tau} + E_{\tau}$ essentially by $\|\Tilde{u}_{\tau}\|^{1 + p(n)}$ for some $p(n) > 0$. Once the error terms are under control, we can use Merle-Zaag ODE lemma (lemma A.1 in \cite{merle1998optimal}) together with one-sidedness \ref{additional geoemtric assumption} to isolate the dynamics onto the first eigenmode of $L$, which leads to the asymptotics given in theorem \ref{maintheorem : uniqueasmptotics}. \\

To prove theorem \ref{maintheorem : uniqueness in mean convex case}, we use theorem \ref{maintheorem : uniqueasmptotics} and the mean convexity assumption \ref{mean convexity assumption4} to first show that the unrescaled flow $\mathcal{M} = (M_t)_{t \in (-\infty, 0)}$ smoothly converges to one of the leaves of the Hardt-Simon foliation as $t \to -\infty$. This allows us to write $\mathcal{M}$ as a \textit{global} normal graph over the leaf for all sufficiently negative times. By using the Liouvile type theorems that was proved by Stolarski \cite{stolarski2023existence} (proposition \ref{liouvile for simons cone}, \ref{liouvlie for hardtsimonleaf}), we show that the time derivative of the normal graph function vanishes, which implies that $\mathcal{M}$ is a stationary flow. This means that the time slices of $\mathcal{M}$ is equal to its backward limit which is one of the leaves of the Hardt-Simon foliation, thus proving theorem \ref{maintheorem : uniqueness in mean convex case}.\\

The paper is organized as follows. In section \ref{preliminaries}, we collect several preliminaries, and set up notations that will be used frequently throughout the paper. In section \ref{graphical radius estimatesection}, we prove the graphical radius estimate. In section \ref{innerouterestimatesection}, we prove the inner outer estimate. In section \ref{uniqueasymptoticssection}, we prove theorem \ref{maintheorem : uniqueasmptotics}. In section \ref{uniquenesssection}, we prove theorem \ref{maintheorem : uniqueness in mean convex case}. Finally, in sections \ref{innershrinkersection} and \ref{trumpetsection}, we construct two families of $O(n)\times O(n)$ symmetric self shrinkers with boundary that play a crucial role in the proof of the main theorems.

\section{Preliminaries}\label{preliminaries}
In this section, we collect several concepts and results that will be frequently used.
\subsection{Integral Brakke flow}
We first recall several basic concepts related to integral $n$-Brakke flows. We refer the reader to the notes \cite{schulze2021introduction} for details. A one parameter family of Radon measures $(\mu_t)_{t \in I}$ is an integral $n$-Brakke flow in $\mathbf{R}^{n+1}$ if 
    \begin{itemize}
        \item For almost every $t \in I$, $\mu_t$ is a pushforward of an integral $n$-varifold $V(t)$ with locally bounded first variation, and no generalized boundary. 
        \item For any bounded interval $[t_1, t_2]$, compact $K \subset \mathbf{R}^{n+1}$, 
        \begin{equation}
            \int_{t_1}^{t_2}\int_{K}(1 + |H|^2)d\mu_tdt < \infty.
        \end{equation}
        \item For any $[t_1, t_2] \subset I$, $f \in C^{\infty}_c(\mathbf{R}^{n+1}\times [t_1, t_2] ; \mathbf{R}_+)$, 
        \begin{equation}
            \int f_{t_2}d\mu_{t_2} - \int f_{t_1}d\mu_{t_1} \leq \int_{t_1}^{t_2}\int -|H|^2f_t + \langle H, \nabla f_t\rangle + \partial_tf_td\mu_tdt.
        \end{equation}
    \end{itemize}
Colding - Minicozzi \cite{coldingminicozzi} introduced the notion of entropy which is given by
\begin{equation}
        \lambda(\mu) = \sup_{x_0 \in \mathbf{R}^{n+1}, r > 0}F_{x_0, r}(\mu),
    \end{equation}
    with
    \begin{equation}
        F_{x_0, r}(\mu) = \int_{\mathbf{R}^{n+1}}\frac{1}{(4\pi r^2)^{n/2}}\exp{(-\frac{|x - x_0|^2}{4r^2})}d\mu.
    \end{equation}
    We say that an integral Brakke flow $(\mu_t)_{t \in I}$ has finite entropy if 
    \begin{equation}
\sup_{t \in I}\lambda(\mu_t) < \infty.
    \end{equation}
We can also define the Gaussian density of integral Brakke flow $\mathcal{M} = (\mu_t)_{t \in I}$ at point $X_0$ at scale $r > 0$ by
\begin{equation}
        \Theta(X_0, \mathcal{M}, r) = \int \frac{1}{(4\pi r^2)^{n/2}}\exp{(-\frac{|x - x_0|^2}{4r^2})}d\mu_{t_0 - r^2},
\end{equation}
whenever $t_0 - r^2 \in I$. 
Huisken's monotonicity formula \cite{huisken1990asymptotic} implies that
\begin{equation}
    r  \to \Theta(X_0, \mathcal{M}, r) 
\end{equation}
is non-decreasing. In particular, whenever the flow is defined for all negative time, and has bounded entropy, then
\begin{equation}
    \Theta(X_0, \mathcal{M}) = \lim_{r \to 0}\Theta(X_0, \mathcal{M}, r)  \leq \lim_{r \to \infty}\Theta(X_0, \mathcal{M}, r)  = \sup_{t \in (-\infty, 0)}\lambda(\mu_t).
\end{equation}
By using Gaussian density, We now define the support of $\mathcal{M} = (\mu_t)_{t \in I}$, and the notion of unit regular Brakke flows. Let $\mathcal{M} = (\mu_t)_{t \in I}$ be an integral $n$-Brakke flow in $\mathbf{R}^{n+1}$ with bounded entropy. The support of $\mathcal{M}$ is defined by
    \begin{equation}
        \operatorname{spt}(\mathcal{M}) = \bigcup_{t \in I}\operatorname{spt}(\mathcal{M})_t \times \{t\} = \{ X \ | \ \Theta(X, \mathcal{M}) \geq 1\}.
    \end{equation}
    $\mathcal{M} = (\mu_t)_{t \in I}$ is unit regular if $X \in \operatorname{spt}(\mathcal{M})$ is regular, i.e one can find a spacetime neighborhood of $X$ where $\mathcal{M}$ is smooth, whenever $\Theta(X, \mathcal{M}) = 1$. In other words, the set of all regular points can be defined as
    \begin{equation}
        (\mathcal{M}^{\infty})^{\textup{reg}} = \{ X \in \operatorname{spt}(\mathcal{M}^{\infty}) \ | \ \Theta(X, \mathcal{M}^{\infty}) = 1\}.
    \end{equation}
    Any integral Brakke flow $\mathcal{M}$ arising as a Brakke flow limit of sequence of smooth flows is unit regular.\\ 

We end this subsection by recalling a strong maximum principle for integral Brakke flows that was proved in \cite{ancientasymptotical}. 
\begin{theorem}[Theorem 3.4 in \cite{ancientasymptotical}]\label{strongmaximumprinciplebrakkeflows} Let $\mathcal{M}_1$ be a smooth mean curvature flow defined in a parabolic ball $P(X_0,r)$, where $X_0 \in \operatorname{spt}\mathcal{M}_1$. Assume that $r>0$ is sufficiently small so that $\operatorname{spt}\mathcal{M}_1$ separates $P(X_0,r)$ into two open connected components, denoted by $U$ and $U'$. Let $\mathcal{M}_2$ be an integral Brakke flow defined in $P(X_0,r)$, with $X_0 \in \operatorname{spt}\mathcal{M}_2.$ Suppose that the Gaussian density of $\mathcal{M}_2$ at $X_0$ satisfies 
\begin{equation}
    \Theta(X_0, \mathcal{M}_2)<2
\end{equation}
and that \begin{equation} \operatorname{spt}\mathcal{M}_2 \subseteq U \cup \operatorname{spt}\mathcal{M}_1.  \end{equation} Then $X_0$ is a regular point of $\mathcal{M}_2$. Moreover, there exists $\varepsilon>0$ such that \begin{equation} \operatorname{spt}\mathcal{M}_2 \cap P(X_0,\varepsilon) = \operatorname{spt}\mathcal{M}_1 \cap P(X_0,\varepsilon).  \end{equation} 
Here, $P(X_0, \epsilon) = B(x_0, \epsilon_0) \times (t_0 - \epsilon_0^2, t_0]$ for $X_0 = (x_0, t_0)$.
\end{theorem}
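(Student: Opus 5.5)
The plan is a blow-up argument. First show that every tangent flow of $\mathcal{M}_2$ at $X_0$ is a multiplicity one static hyperplane. Then use local regularity to conclude that $X_0$ is a regular point. Finally, apply the classical strong maximum principle for smooth graphs to get that the two supports coincide near $X_0$.

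\emph{Step 1 (tangent flows).} Take $\lambda_i \to \infty$ and parabolically rescale both flows around $X_0$. Since $\mathcal{M}_1$ is smooth at $X_0$, its rescalings converge smoothly to the static tangent hyperplane $P$, and the region $U$ converges to an open half-space $H^+$ bounded by $P$. Because $\Theta(X_0,\mathcal{M}_2)<2$, the rescalings of $\mathcal{M}_2$ have locally uniformly bounded area ratios. Brakke compactness together with Huisken's monotonicity then gives a subsequential limit $\mathcal{M}_2^\infty$. This limit is a self-similar integral Brakke flow with Gaussian density $\Theta(X_0,\mathcal{M}_2)\in[1,2)$ at the origin, and its support lies in $\overline{H^+}$.

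Its time $-1$ slice is an integral varifold $V$. This varifold is stationary for the Gaussian (shrinker) metric $e^{-|x|^2/(2n)}\delta_{ij}$, in which $P$ is totally geodesic. The support of $V$ lies in the closed half-space $\overline{H^+}$. If the support of $V$ touched $P$, the Solomon--White/Ilmanen strong maximum principle for stationary varifolds would apply, with barrier the minimal hypersurface $P$. It would give that $\operatorname{spt} V$ contains a neighborhood of the contact point in $P$, and by connectedness all of $P$. Then $V = V' + P$ where $V'$ is again stationary, and the density bound $<2$ forces $V'=0$.

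If instead $\operatorname{spt}V$ does not touch $P$, it lies strictly inside $H^+$. Apply the shrinker identity $\mathcal{L}\, x_{n+1} = \tfrac12 x_{n+1}$ with $x_{n+1}\ge 0$ on $\operatorname{spt}V$. Integrating against the Gaussian weight gives $\int x_{n+1}e^{-|x|^2/4}\,d\|V\| = 0$, because $\mathcal{L}$ is self-adjoint with respect to this weight. This would force $V$ to be supported in $P$, a contradiction. So in every case $\mathcal{M}_2^\infty$ is the static multiplicity one plane $P$.

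\emph{Step 2 (regularity).} Every tangent flow at $X_0$ has Gaussian density $1$, so $\Theta(X_0,\mathcal{M}_2)=1$. White's (equivalently Brakke's) local regularity theorem then makes $\mathcal{M}_2$ smooth in a small parabolic neighborhood of $X_0$, so $X_0$ is a regular point. Unit regularity is not needed here, since density $1$ with a planar tangent flow suffices for White's $\varepsilon$-regularity.

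\emph{Step 3 (coincidence).} Near $X_0$, write both flows as graphs $u_1,u_2$ over $P$ on a backward parabolic cylinder. The inclusion $\operatorname{spt}\mathcal{M}_2\subseteq U\cup\operatorname{spt}\mathcal{M}_1$ gives $w=u_2-u_1\ge0$, and $w(X_0)=0$. The difference $w$ satisfies a linear uniformly parabolic equation with smooth coefficients. The parabolic strong maximum principle therefore gives $w\equiv0$ on $P(X_0,\varepsilon)$, which is the claimed equality of supports.

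The main obstacle is Step 1. There one must carefully justify the varifold strong maximum principle in the Gaussian-weighted setting, including the case where the support touches $P$ only at singular points. One must also check that the half-space containment survives the limit with support taken in the Brakke sense.
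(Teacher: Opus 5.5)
The paper gives no proof of this statement: it is imported from \cite{ancientasymptotical} and only applied to limits of smooth flows. Your route is the standard one and it works, with two corrections. The route is: the tangent flow at $X_0$ is a multiplicity-one plane, so $X_0$ is regular by local regularity, and then the parabolic strong maximum principle applied to $u_2-u_1\ge 0$ finishes.

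\emph{First, the case distinction in Step 1 is unnecessary, and with it the obstacle you single out.} Take the first variation of Gaussian area of $V$ along the constant field $e_{n+1}$, cut off at infinity using the Euclidean area growth of $V$. This gives $\int x_{n+1}e^{-|x|^2/4}\,d\|V\|=0$ directly. You need no self-adjointness of an operator on a possibly singular varifold. (In any case $\mathcal{L}x_{n+1}=-\tfrac12 x_{n+1}$ for $\mathcal{L}=\Delta-\tfrac12\langle x,\nabla\,\cdot\,\rangle$, not $+\tfrac12 x_{n+1}$.) Since $x_{n+1}\ge 0$ on $\operatorname{spt}V$, this forces $\operatorname{spt}V\subset P$, whether or not $V$ touches $P$. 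The approximate tangent planes of $V$ are then $P$ almost everywhere, so $x^\perp=0$ and $V$ is stationary. The constancy theorem gives $V=k[P]$, and $k=\Theta(X_0,\mathcal{M}_2)\in[1,2)$ forces $k=1$. You can therefore drop the Solomon--White argument and the splitting $V=V'+P$, which would itself require showing $\|V\|\ge\mathcal{H}^n\llcorner P$.

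\emph{Second, your claim in Step 2 that unit regularity is not needed is too strong.} The point $X_0$ sits at the final time of $P(X_0,r)$, and a general integral Brakke flow may vanish suddenly there. For example, the static plane for $t<t_0$ with $\mu_{t_0}=0$ satisfies Brakke's inequality and has $\Theta(X_0)=1$. For such flows, local regularity only yields smoothness on $B(x_0,\varepsilon)\times(t_0-\varepsilon^2,t_0)$. The accompanying curvature bounds let the graphs extend smoothly to $t_0$. That is enough for Step 3 and for the equality of supports. However, ``$X_0$ is a regular point'' must then be read in the support sense, unless $\mathcal{M}_2$ is unit regular. In the paper's application this is harmless, since $\mathcal{M}^\infty$ is a limit of smooth flows and hence unit regular.
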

\subsection{Simons cone}\label{simons cone related result prelim}
In this subsection, we collect several quantitative information related to Simons cone, especially when $n \geq 5$. Note that we will always denote the Simons cone by $\Sigma$.\\

We first consider the entropy of the Simons cone. In appendix A in \cite{bernstein2025lower}, the authors give an explicit formula of the entropy of general $O(m) \times O(l)$ symmetric quadratic cone, which is given by
\begin{equation}
    \lambda(C_{m,l}) = \frac{\sigma_{m-1}\sigma_{l-1}}{\sigma_{m+l-2}}(\frac{m-1}{m+l-2})^{\frac{m-1}{2}}(\frac{l-1}{m+l-2})^{\frac{l-1}{2}},
\end{equation}
with
\begin{equation}
    \sigma_{n} = (n+1)\frac{\pi^{\frac{n+1}{2}}}{\Gamma(\frac{n+3}{2})}.
\end{equation}
In particular, when we take $m = l \geq 4$, then one can directly check that
\begin{equation}
    \lambda(\Sigma) < 2.
\end{equation}
We now recall the definition, and several quantitative properties of the Hardt-Simon foliation associated to the Simons cone when $n \geq 4$ \cite{hardt1985area}. One can find details of the following results in section 2 in \cite{guo2018analysis}. \\

There exists a unique function $\psi_1 : [0, \infty) \to \mathbf{R}_+$ so that it satisfies the initial value problem
    \begin{equation}\label{minimal surface equation}
        \begin{cases}
            \frac{\psi_1''}{1 + (\psi_1')^2} + \frac{n-1}{y}\psi_1' - \frac{n-1}{\psi_1} = 0 \\ \psi_1(0) = 1, \psi_1'(0) = 0.
        \end{cases}
    \end{equation}
    This profile function generates two smooth minimal hypersurfaces in $\mathbf{R}^{2n}$ via the map
    \begin{equation}\label{Hardt-Simon foliation for Simons coneeq}
        X : [0, \infty) \times \mathbf{S}^{n-1}\times \mathbf{S}^{n-1} \ni (y, w_1, w_2) \to (yw_1, \psi_1(y)w_2) \in \Sigma^+_1 \subset  \mathbf{R}^{2n},
    \end{equation}
    and
    \begin{equation}
        Y : [0, \infty) \times \mathbf{S}^{n-1}\times \mathbf{S}^{n-1} \ni (y, w_1, w_2) \to (\psi_1(y)w_1, yw_2) \in \Sigma^-_1 \subset \mathbf{R}^{2n}
    \end{equation}
    The profile function $\psi_1$ satisfies the asymptotic formula
    \begin{equation}\label{asymptotic behavior of hardsimonleafformula}
        \psi_1(y) = y + c_1(n)y^{\alpha} + o(y^{\alpha}) \text{ as }y \to \infty
    \end{equation}
   for some $c_1(n) > 0$, and 
    \begin{equation}\label{the formula for alpha}
        \alpha = \alpha(n) = \frac{-(2n-3) + \sqrt{4n^2 - 20n + 17}}{2} \in [-2, -1).
    \end{equation}
    For each $c > 0$, we define
    \begin{equation}
        \psi_c(x) = c\psi_1(\frac{x}{c}),
    \end{equation}
    which is the profile function of the rescaled minimal hypersurface 
    \begin{equation}
        \Sigma^+_c = c\Sigma^+_1.
    \end{equation}
    One can alternatively parametrize $\Sigma^+_c$ as a normal graph over the Simons cone. There exists $\hat{\psi}_c : [\frac{c}{\sqrt{2}}, \infty) \to \mathbf{R}_+$ so that
    \begin{equation}\label{graphical normal parametrization of hardtsimonleaf}
        \Sigma^+_c = \{ (\frac{r - \hat{\psi}_c(r)}{\sqrt{2}}, \frac{r + \hat{\psi_c}(r)}{\sqrt{2}}) \ | \ (r, w_1, w_2) \in [\frac{c}{\sqrt{2}}, \infty) \times \mathbf{S}^{n-1} \times \mathbf{S}^{n-1} \}.
    \end{equation}
    $\hat{\psi}_c$ satisfies the asymptotic formula
    \begin{equation}
        \hat{\psi}_c(r) = c_0(n)c^{1 - \alpha}r^{\alpha} + o(r^{\alpha}) \text{ as }r \to \infty
    \end{equation}
    for $c_0(n) > 0$.\\

We now consider the linearization of \eqref{minimal surface equation} at $\psi_1$. For each $u \in C^2([0, \infty))$, the linearized minimal surface equation at $\psi_1$ is given by
\begin{equation}\label{linearized minimal surface equation at leaf}
        \mathcal{L}u = \frac{u''}{1 + (\psi_1')^2} + \frac{n-1}{y}u' + \frac{(n-1)u}{\psi_1^2} - \frac{2\psi_1'\psi_1''u'}{(1 + (\psi_1')^2)^2}.
    \end{equation}
We now state an inverse operator to $\mathcal{L}$ which was constructed in  \cite{stolarski2023existence}. In section 4.3 in \cite{stolarski2023existence}, Stolarski considered the following linear operator
\begin{equation}
    \hat{\mathcal{L}}u = u'' + \frac{(n-1)(1 + (\psi_1')^2)}{y}u' + [(\frac{\psi_1''}{1 + (\psi_1')^2})^2 + \frac{(n-1)(\psi_1')^2}{y^2} + \frac{n-1}{\psi_1^2}]u.
\end{equation}
The discrepancy between the above operator and \eqref{linearized minimal surface equation at leaf} is due to the different choice of gauge. In fact one can check that
\begin{equation}
    \hat{\mathcal{L}}(\frac{u}{\sqrt{1 + (\psi_1')^2}}) = \sqrt{1 + (\psi_1')^2}\mathcal{L}u.
\end{equation}
Therefore, for any $y_* > 0$, $f \in C^0([0, y_*])$ so that
\begin{equation}
        \sup_{y \in [0, y_*]}|\frac{f(y)}{(1 + y)^{\alpha}}| < \infty,
    \end{equation}
one can use the inverse operator (definition 4.11 in \cite{stolarski2023existence}) to explicitly solve the initial value problem
\begin{equation}\label{inhomogenous initial valueproblem for lineaarized minimal surf}
    \begin{cases}
        \mathcal{L}u = f \\ u(0) = u'(0) = 0.
    \end{cases}
\end{equation}
The unique solution is given by the formula
    \begin{equation}\label{Solvability of linearized operator definition eq}
        u(y) = (\psi_1 - y\psi_1')\int_0^{y}\frac{1}{\phi^2(s)\mathcal{J}(s)}\int_{0}^{s}f(v)\mathcal{J}(v)(\psi_1 - v\psi_1')dvds
    \end{equation}
    where
    \begin{equation}
        \phi(y) = \frac{\psi_1 - y\psi_1'}{\sqrt{1 + (\psi_1')^2}}, \ \mathcal{J}(y) = \frac{y^{n-1}\psi_1^{n-1}}{\sqrt{1 + (\psi_1')^2}}.
    \end{equation}\\
    
We now consider the linearized rescaled mean curvature operator at the Simons cone. The details of the spectral decomposition of the operator that we will state can be found in section 2 in \cite{huang2025mean}
\begin{definition}\label{linearized rescaled mean curvature flow equation over the cone}
    Let $\Sigma$ be the $O(n) \times O(n)$ symmetric Simons cone with $n \geq 4.$ For each $u \in C^{\infty}(\mathcal{D})$ with $\mathcal{D} \subset \Sigma \setminus \{0\}$, The linearized rescaled mean curvature operator is given by
    \begin{equation}\label{linearzed RMCF at cone}
        Lu = \Delta_{\Sigma}u - \frac{1}{2}\langle x, \nabla^{\Sigma}u \rangle + (\frac{1}{2} + \frac{2n-2}{|x|^2})u,
    \end{equation}
    where $x \in \mathcal{D} \subset \Sigma$, $\Delta_{\Sigma}$, $\nabla^{\Sigma}$ denote the Laplacian / covariant derivative with respect to the induced metric on $\Sigma$.
\end{definition}
When analyzing \eqref{linearzed RMCF at cone}, one typically considers the Gaussian weighted $L^2$ space since the operator is symmetric with respect to the weighted inner product.
\begin{definition}\label{gaussian weighted sobolev space}
    The Gaussian weighted $L^2$ space is defined by 
    \begin{equation}
        L^2_{w} = \overline{C_c^{\infty}(\Sigma \setminus \{0\})}^{\|\cdot \|_{L^2_{w}}},
    \end{equation}
    where the norm is given by the inner product
    \begin{equation}
        \langle f, g \rangle_{L^2_{w}} = \int_{\Sigma}f(x)g(x)e^{-\frac{|x|^2}{4}}d\mathcal{H}^{2n-1}(x).
    \end{equation}
    The Gaussian weighted Sobolev space $H^1_{w}$ is defined analogously with the inner product
    \begin{equation}
        \langle f, g \rangle_{H^1_{w}} = \int_{\Sigma}(fg + \nabla^{\Sigma}f \cdot \nabla^{\Sigma}g)
        e^{-\frac{|x|^2}{4}}d\mathcal{H}^{2n-1}(x).
    \end{equation}
\end{definition}
By using the fact $\Sigma$ is strictly area minimizing when $n \geq 4$, the operator \eqref{linearzed RMCF at cone} is coercive, i.e it satisfies
\begin{equation}\label{spectral decomposition}
     -\langle Lu, u \rangle_{L^2_w} \geq \epsilon_0\|\nabla u\|_{L^2_w}^2 - \frac{1}{\epsilon_0}\|u\|_{L^2_w}^2.
 \end{equation}
 for some $\epsilon_0(n) > 0$. As a result, we can find $\lambda_1 > \lambda_2 > ... \to -\infty$, and a countable orthonormal basis of $L^2_w$ consisting of smooth functions in $\Sigma \setminus \{0\}$ denoted by $\{\varphi_{i,j}\}_{i \in \mathbf{N}, \ 1 \leq j \leq J(i)< \infty}$ so that
    \begin{equation}
        L\varphi_{i,j} = \lambda_i\varphi_{i,j},
    \end{equation}
    with $L$ given in \eqref{linearzed RMCF at cone}. $J(i)$ is the multiplicity of $\lambda_i$. \\

For the sake of our purpose, we explicitly state the first four eigenvalues and corresponding eigenfunctions in the case when $n \geq 5$. The first four eigenvalues are 
    \begin{equation}\label{first four eigenvalues eq}
        \lambda_1 = \frac{1 - \alpha}{2}, \ \lambda_2 = \frac{1}{2}, \lambda_3 = \frac{-1 - \alpha}{2}, \lambda_4 = 0,
    \end{equation}
    and $\lambda_i \leq \lambda_5(n) < 0$ for $i \geq 5$. As for the eigenfunctions, for each $x \in \Sigma \setminus \{0\}$, let $\nu_{\Sigma}(x)$ be a smooth unit normal vector field. Set $(e_j)_{j = 1,2,..,2n}$ to be the standard $j^{\textup{th}}$ unit vector in $\mathbf{R}^{2n}$. For each $i = 1,2,3,4$, the corresponding eigenfunctions are as follows. 
  \begin{itemize}\label{eigenfunctions list}
  \item $i = 1$ : $J(1) = 1$, $\varphi_1(x) =\varphi_{1,1}(x) = c_1(n)|x|^{\alpha}$.
  \item $i = 2$ : $J(i) = 2n$, $\varphi_{2,j}(x) = c_2(n)\nu(x) \cdot e_j$.
  \item $i = 3$ : $J(3) = 1$, $\varphi_{3}(x) = \varphi_{3,1}(x) = c_3(n)|x|^{\alpha}(1 - \frac{1}{4(n +  \alpha - \frac{1}{2})}|x|^2)$.
  \item $i = 4$ : $J(4) = n^2$. By corresponding $1 \leq j \leq n^2$ with a pair $(l,m)$ with $1 \leq l,m \leq n$, $\varphi_{4,j}(x) = c_4(n)\frac{(x\cdot e_{l})(x\cdot e_{n +m})}{r}$.
  \end{itemize}
  Here, $c_i(n) > 0, 1 \leq i \leq 4$ are normalizing constants, and $\alpha(n)$ is given by \eqref{the formula for alpha}.\\

  We end this subsection by stating two Liouvile type theorems for ancient solutions to linear parabolic equations related to Simons cone $\Sigma$, and the smooth minimal surface $\Sigma^+_1$ introduced in \eqref{Hardt-Simon foliation for Simons coneeq}. These theorems are immediate consequences of theorem 5.2, and corollary 5.5 in \cite{stolarski2023existence}. 
\begin{proposition}\label{liouvile for simons cone}
    Let $n \geq 4$, and let $\Sigma$ be the $O(n)\times O(n)$ symmetric Simons cone. Let $u \in C^{\infty}(\Sigma \setminus \{0\} \times (-\infty, 0))$ so that
    \begin{equation}
        \partial_tu_t = \Delta_{\Sigma}u_t + |A_{\Sigma}|^2u_t,
    \end{equation}
    and
    \begin{equation}\label{spatial decay for liouvile theorem simons cone}
        u_t \geq 0, \ \ \sup_{t < 0}\sup_{x \in \Sigma \setminus \{0\}}|u_t(x)||x|^{|\alpha| + \delta} < \infty
    \end{equation}
    for some $\delta \in (0, 3)$. Then $u \equiv 0$. Here, $\alpha(n) \in [-2, -1)$ is given in \eqref{the formula for alpha}.
\end{proposition}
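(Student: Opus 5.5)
The plan is to reduce the statement to a Liouville theorem for a nonnegative, radial, ancient solution of the ordinary heat equation in a (non-integer) dimension $d$. Positivity is what allows this reduction: it suffices to show that the average of $u_t$ over the link vanishes.

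\emph{Step 1: average over the link.} Write $x = r\omega$ with $\omega$ in the link $\Gamma = \Sigma\cap \mathbf{S}^{2n-1} = \mathbf{S}^{n-1}(1/\sqrt2)\times \mathbf{S}^{n-1}(1/\sqrt2)$. On $\Sigma$ we have $|A_\Sigma|^2 = \frac{2n-2}{r^2}$ and $\Delta_\Sigma = \partial_r^2 + \frac{2n-2}{r}\partial_r + r^{-2}\Delta_\Gamma$. Set
\[
v(r,t) = \fint_\Gamma u_t(r\omega)\,d\omega .
\]
Integrating the equation over $\Gamma$ kills $\Delta_\Gamma$, so $v$ satisfies
\[
v_t = v_{rr} + \frac{2n-2}{r}v_r + \frac{2n-2}{r^2}v ,
\]
together with $v\ge 0$ and $v\le C r^{-|\alpha|-\delta}$ for all $r>0$ and $t<0$. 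If we show $v\equiv 0$, then $u\ge0$ forces $u\equiv 0$.

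\emph{Step 2: conjugate away the potential.} The indicial roots of $\partial_r^2 + \frac{2n-2}{r}\partial_r + \frac{2n-2}{r^2}$ are $\alpha$ and $\beta = \frac{-(2n-3)-\sqrt{4n^2-20n+17}}{2}$. Put $v = r^{\alpha} w$. A direct computation, using $\alpha^2+(2n-3)\alpha+(2n-2)=0$, gives
\[
w_t = w_{rr} + \frac{d-1}{r}w_r, \qquad d = 2n-1+2\alpha .
\]
This is the radial heat equation in dimension $d$, with $d-2 = \sqrt{4n^2-20n+17}>4$ for $n\ge5$ (and $d>2$ for $n=4$ as well). Moreover $w\ge0$ and $w\le C r^{-\delta}$, and the exponent satisfies $\delta<3<d-2$ for $n\ge5$. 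For $n=4$ one must check that $\delta<d-2$ in whatever range is used; this is where the restriction on $\delta$ enters.

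\emph{Step 3: removable singularity at $r=0$ (main obstacle).} We need $w$ to be a weak solution of the heat equation on all of $\mathbf{R}^d\times(-\infty,0)$, viewing $w$ as a radial function. Since the equation for $v$ is scale invariant, interior parabolic estimates on dyadic annuli $\{r/2<|x|<2r\}$ over time intervals of length $r^2$ give $|w_r|\le C r^{-\delta-1}$. Now test against $\eta\in C_c^\infty(\mathbf{R}^d\times(-\infty,0))$ and excise the ball $B_\varepsilon$. The boundary flux terms are bounded by
\[
\varepsilon^{d-1}\bigl(|w_r|\,|\eta| + w\,|\nabla\eta|\bigr) \lesssim \varepsilon^{d-2-\delta}\to0 ,
\]
and $w\in L^1_{loc}$ because $\delta<d$. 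Hence $w$ is a nonnegative distributional solution, and it is smooth by hypoellipticity. The point of this step is that the hypothesis $\delta<d-2$ rules out the singular solution $r^{\beta}$, i.e.\ $r^{-(d-2)}$ in the $w$ variable.

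\emph{Step 4: Liouville.} For any $s<t<0$, represent
\[
w(x,t) = \int_{\mathbf{R}^d}\Phi(x-y,t-s)\,w(y,s)\,dy ,
\]
where $\Phi$ is the heat kernel. This representation is valid because $w$ is nonnegative and bounded by $C|y|^{-\delta}$, so it lies in the Widder uniqueness class. Using the bound $w(y,s)\le C|y|^{-\delta}$ and scaling, the right-hand side is at most $C'(t-s)^{-\delta/2}$ for fixed $x$ (or $C'|x|^{-\delta}$, whichever is smaller). Letting $s\to-\infty$ gives $w(x,t)=0$. Thus $v\equiv0$, and therefore $u\equiv0$.
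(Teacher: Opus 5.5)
Your proof is correct for $n\ge5$. That is the only case the paper uses: it applies the proposition with $\delta=2$. For $n=4$ the statement itself is false in part of its range, as explained at the end.

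\textbf{Comparison with the paper.} Your Step~1 is the paper's argument: average over $\mathbf{S}^{n-1}\times\mathbf{S}^{n-1}$, then use $u\ge0$ to pass from $\hat u\equiv0$ to $u\equiv0$. After that the paper simply cites Corollary~5.5 of Stolarski for the radial problem. You prove that radial Liouville theorem yourself:
\begin{itemize}
\item conjugating by $r^{\alpha}$ gives the radial heat equation in dimension $d=2n-1+2\alpha$;
\item the condition $\delta<d-2=\sqrt{4n^2-20n+17}$ makes the origin removable, and this is exactly what excludes the second indicial root $r^{\beta}$;
\item $\delta>0$ together with the heat-kernel bound $(t-s)^{-\delta/2}$ then forces $w\equiv0$.
\end{itemize}
The paper's route is shorter but leaves the hypotheses of the cited result implicit. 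Yours is self-contained and shows that the sharp range is $0<\delta<\sqrt{4n^2-20n+17}$.

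\textbf{A technical point.} $d$ is generally not an integer, so ``$\mathbf{R}^d$'', hypoellipticity and the heat kernel must be read for the Bessel operator $\partial_r^2+\frac{d-1}{r}\partial_r$ on $(0,\infty)$ with measure $r^{d-1}dr$. The cleanest fix is to merge Steps~3 and~4 and skip hypoellipticity entirely. Show directly that $\tau\mapsto\int_0^\infty w(\rho,\tau)\,q_{t-\tau}(r,\rho)\,\rho^{d-1}\,d\rho$ is constant on $(-\infty,t)$. Here $q$ is the Bessel heat kernel, which is smooth and even in $\rho$. The only input needed is your flux bound $O(\varepsilon^{d-2-\delta})$ at $\rho=\varepsilon$. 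Letting $\tau\to t$ and $\tau\to-\infty$ then gives $w\equiv0$, and Widder's theorem is not needed.

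\textbf{The case $n=4$.} Your hedge there cannot be resolved in favor of the statement. For $n=4$ one has $\alpha=-2$, $\beta=-3$ and $d-2=1$.
\begin{itemize}
\item For $\delta=1$: $u=|x|^{-3}$ is a positive stationary solution with $|u|\,|x|^{|\alpha|+1}\equiv1$, so the proposition fails.
\item For $\delta\in(1,3)$: take $u=|x|^{-2}w$ with $w(x,t)=\int_{-\infty}^t\Phi(x,t-s)(-s)^{-(\delta-1)/2}\,ds$, the caloric potential of a point source at the origin in $\mathbf{R}^3$. This $w$ is positive, smooth away from $x=0$, and parabolically homogeneous of degree $-\delta$ with bounded profile. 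Hence $u\ge0$, $u\not\equiv0$ and $|u|\,|x|^{2+\delta}$ is bounded, another counterexample.
\end{itemize}
So the correct hypothesis is $0<\delta<\sqrt{4n^2-20n+17}$. This interval contains $(0,3)$ exactly when $n\ge5$.
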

\begin{proof}[Proof of proposition \ref{liouvile for simons cone}]
    Parameterizing $\Sigma \setminus \{0\}$ via map
    \begin{equation}
        (0, \infty) \times \mathbf{S}^{n-1} \times \mathbf{S}^{n-1} \ni (y, w_1, w_2) \to (yw_1, yw_2) \in \mathbf{R}^{2n},
    \end{equation}
    we can define the `averaged' function $\hat{u}_t$ as
    \begin{equation}
        \hat{u}_t(y) = \fint_{\mathbf{S}^{n-1} \times \mathbf{S}^{n-1}}u_t(y,w_1,w_2)d\sigma(w_1)d\sigma(w_2),
    \end{equation}
    where $d\sigma$ is the standard spherical measure. Noting that rotations along $\mathbf{S}^{n-1} \times \mathbf{S}^{n-1}$ are isometries from $\Sigma$ to itself, we see that
    \begin{equation}
        \partial_t\hat{u}_t = \Delta_{\Sigma}\hat{u}_t + |A_{\Sigma}|^2\hat{u}_t.
    \end{equation}
    Moreover the assumption \eqref{spatial decay for liouvile theorem simons cone} translates to 
    \begin{equation}
        \sup_{t < 0}\sup_{y > 0}|\hat{u}_t(y)||y|^{|\alpha| + \delta} < \infty
    \end{equation}
    for some $\delta \in (0, 3)$. Then by corollary 5.5 in \cite{stolarski2023existence}, $\hat{u} \equiv 0$. Since $u \geq 0$, this implies that $u \equiv 0$, thus proving proposition \ref{liouvile for simons cone}.
\end{proof}
\begin{proposition}\label{liouvlie for hardtsimonleaf}
    Let $\Sigma^+_1$ be a leaf of the Hardt-Simon foliation defined in \eqref{Hardt-Simon foliation for Simons coneeq}. Let $u \in C^{\infty}(\Sigma^+_1 \times (-\infty, 0))$ so that 
    \begin{equation}
        \partial_tu_t = \Delta_{\Sigma^+_1}u_t + |A_{\Sigma^+_1}|^2u_t,
    \end{equation}
    and
    \begin{equation}\label{decay at infinity forhardsimonleafliouviel}
        u_t \geq 0, \ \ \sup_{t < 0}\sup_{x \in \Sigma^+_1}|u_t(x)|(1 + |x|)^{a} < \infty
    \end{equation}
    for some $a > |\alpha|$. Then $u \equiv 0$. Here, $\alpha(n) $ is given by \eqref{the formula for alpha}.
\end{proposition}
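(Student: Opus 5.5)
The plan mirrors the proof of proposition \ref{liouvile for simons cone}: reduce to the $O(n)\times O(n)$ invariant (radial) case by averaging, then invoke the Liouville theorem for the linearized operator on the Hardt--Simon leaf from \cite{stolarski2023existence} (theorem 5.2 there).

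First, parametrize $\Sigma^+_1$ by the map $X(y,w_1,w_2) = (yw_1, \psi_1(y)w_2)$ from \eqref{Hardt-Simon foliation for Simons coneeq}. Then define
\begin{equation}
\hat{u}_t(y) = \fint_{\mathbf{S}^{n-1}\times\mathbf{S}^{n-1}} u_t(X(y,w_1,w_2))\,d\sigma(w_1)d\sigma(w_2).
\end{equation}
The rotations in $O(n)\times O(n)$ act on $\Sigma^+_1$ by isometries and preserve $|A_{\Sigma^+_1}|^2$, which depends only on $y$. Hence the Jacobi operator $\Delta_{\Sigma^+_1} + |A_{\Sigma^+_1}|^2$ commutes with this averaging, and $\hat{u}_t$ is again a solution of $\partial_t\hat{u}_t = \Delta_{\Sigma^+_1}\hat{u}_t + |A_{\Sigma^+_1}|^2\hat{u}_t$. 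Because $\hat{u}_t$ depends only on $y$, this equation becomes a one-dimensional parabolic equation in $y\in[0,\infty)$. Its spatial operator is the Jacobi operator of $\Sigma^+_1$ restricted to radial functions, which is, up to the choice of gauge, the operator $\mathcal{L}$ of \eqref{linearized minimal surface equation at leaf} (equivalently $\hat{\mathcal{L}}$ in Stolarski's normalization). Smoothness of $u$ on all of $\Sigma^+_1$, including the sphere $\{y=0\}$, gives $\hat{u}_t'(0)=0$ and smoothness of $\hat{u}_t$ up to $y=0$.

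Next, transfer the hypotheses. Clearly $\hat{u}_t \geq 0$. On $\Sigma^+_1$ we have $|X(y,w_1,w_2)|^2 = y^2+\psi_1(y)^2$, which is comparable to $(1+y)^2$. So \eqref{decay at infinity forhardsimonleafliouviel} yields
\begin{equation}
\sup_{t<0}\sup_{y\geq 0}|\hat{u}_t(y)|(1+y)^{a} < \infty, \qquad a>|\alpha|.
\end{equation}
This is exactly the decay regime of theorem 5.2 in \cite{stolarski2023existence}. The decay $(1+y)^{-a}$ is faster than $y^{\alpha}$, the growth rate of the Jacobi field $\psi_1 - y\psi_1'$ coming from dilations (compare the formula \eqref{Solvability of linearized operator definition eq}), which is what rules out nontrivial ancient solutions. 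If a gauge conversion is needed, replace $\hat{u}$ by $\hat{u}/\sqrt{1+(\psi_1')^2}$. The factor $\sqrt{1+(\psi_1')^2}$ is bounded above and below, since $\psi_1'\to 1$, so this changes neither the sign nor the decay. Applying that result gives $\hat{u}\equiv 0$. Then $u\geq0$ with vanishing spherical averages forces $u\equiv 0$, since $u$ is continuous.

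The main obstacle I expect is matching hypotheses precisely with theorem 5.2 in \cite{stolarski2023existence}. One issue is checking that the averaged equation on the leaf corresponds exactly to Stolarski's operator, including the gauge factor and the behavior at $y=0$. The other is that his decay threshold is stated in terms of $a>|\alpha|$ (possibly with a restriction like $a<|\alpha|+3$). If an upper restriction appears, I would handle it by noting that larger decay only strengthens the assumption: replace $a$ by $\min(a, |\alpha|+\delta)$ with small $\delta>0$, which keeps the bound \eqref{decay at infinity forhardsimonleafliouviel} valid.
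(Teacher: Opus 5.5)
Your proposal is correct and follows essentially the same route as the paper: average $u$ over $\mathbf{S}^{n-1}\times\mathbf{S}^{n-1}$ using the isometric $O(n)\times O(n)$ action, transfer nonnegativity and the $(1+y)^{-a}$ decay to $\hat{u}_t$, apply theorem 5.2 of \cite{stolarski2023existence} to get $\hat{u}\equiv 0$, and conclude $u\equiv 0$ from $u\geq 0$. Your extra remarks are harmless additions that the paper leaves implicit: the bounded gauge factor $\sqrt{1+(\psi_1')^2}$, regularity at $y=0$, and lowering $a$ if Stolarski's statement imposes an upper bound on the exponent.
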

\begin{proof}[Proof of proposition \ref{liouvlie for hardtsimonleaf}]
    Parameterizing $\Sigma^+_1$ via map
    \begin{equation}
        [0, \infty) \times \mathbf{S}^{n-1}\times \mathbf{S}^{n-1} \ni (y, w_1, w_2) \to (yw_1, \psi_1(y)w_2) \in \mathbf{R}^{2n},
    \end{equation}
    where $\psi_1$ is the solution to the initial value problem \eqref{minimal surface equation}, we can define the `averaged' function $\hat{u}_t$ as
    \begin{equation}
        \hat{u}_t(y) = \fint_{\mathbf{S}^{n-1} \times \mathbf{S}^{n-1}}u_t(y,w_1, w_2)d\sigma(w_1)d\sigma(w_2),
    \end{equation}
    where $d\sigma$ is the standard spherical measure. Noting that rotations along $\mathbf{S}^{n-1} \times \mathbf{S}^{n-1}$ are isometries from $\Sigma^+_1$ to itself, we see that 
    \begin{equation}
        \partial_t\hat{u}_t = \Delta_{\Sigma^+_1}\hat{u}_t + |A_{\Sigma^+_1}|^2\hat{u}_t.
    \end{equation}
    Moreover the assumption \eqref{decay at infinity forhardsimonleafliouviel} translates to 
    \begin{equation}
        \sup_{t < 0}\sup_{ y \geq 0}|\hat{u}_t(y)|(1 + y)^{a} < \infty
    \end{equation}
    for some $a > |\alpha|$. Then by theorem 5.2 in \cite{stolarski2023existence}, $\hat{u}_t \equiv 0$. Since $u \geq 0$, this implies that $u \equiv 0$, thus proving proposition \ref{liouvlie for hardtsimonleaf}.
    \end{proof}
\subsection{Domains and norms}
In this subsection, we define notations for specific domains and weighted norms that we will frequently use throughout the paper. \\

We first define a special form of subdomains of the Simons cone which will be frequently used. 
\begin{definition}\label{definiton of conical domain}
    Let $\Sigma$ be the $O(n)\times O(n)$ symmetric Simons cone. For each $0 < r < L \leq \infty$, we define
    \begin{equation}
        \Sigma_{r, L} = \Sigma 
        \cap (B(0, L) \setminus B(0, r)).
    \end{equation}
    Here, $B(0, r)$ is an open ball in $\mathbf{R}^{2n}$ of radius $r > 0$ centered at the origin. 
\end{definition}

We now define the weighted $C^k$ norm for functions over $\Sigma$. 
\begin{definition}\label{Weighted Ck norms}
    For any $0 < r < R \leq \infty$, $u \in C^k(\Sigma_{r, R})$, the weighted $C^k$ seminorm is defined by
    \begin{equation}\label{definition of weighted Ck seminorm}
        [u]^*_{C^k(\Sigma_{r, R})} = \sup_{x \in \Sigma_{r, R}}|x|^{k - 1}|\nabla^ku(x)|,
    \end{equation}
    where $\nabla$ denote the covariant derivative with respect to the induced metric on $\Sigma$. 
    The weighted $C^k$ norm is defined by 
    \begin{equation}\label{definition of weighted ck norm}
        \|u\|^*_{C^k(\Sigma_{r, R})} = \sum_{i = 0}^k[u]^*_{C^i(\Sigma_{r, R})}.
    \end{equation}
    $\Sigma_{r,R}$ is given by definition \ref{definiton of conical domain}.
\end{definition}
One can directly check that for the rescaling
\begin{equation}
    u_{\lambda}(x) = \lambda^{-1} u(\lambda x),
\end{equation}
the seminorms given by \eqref{definition of weighted Ck seminorm} are invariant i.e
\begin{equation}
    [u_{\lambda}]^*_{C^k(\Sigma_{r,R})} = [u]^*_{C^k(\Sigma_{\lambda r, \lambda R})}.
\end{equation}
The weighted seminorms also satisfy the following form of interpolation inequalities.
\begin{proposition}\label{interpolation inequalities of weighted Ck norms}
    Following the notations in definition \ref{Weighted Ck norms}, for any $0 < j < k$, $0 < \delta_1 < \delta_2 < L_2 < L_1 < \infty$, there exists a constant $c= c(j,k,n, \delta_1, \delta_2, L_1, L_2) > 0$ so that 
    \begin{equation}
        [u]^*_{C^j(\Sigma_{\delta_2r, L_2r})} \leq c(([u]^*_{C^k(\Sigma_{\delta_1r, L_1r})})^{\frac{j}{k}}(\|u\|^*_{C^0(\Sigma_{\delta_1r, L_1r})} )^{1 - \frac{j}{k}} + \|u\|^*_{C^0(\Sigma_{\delta_1r, L_1r})} )
    \end{equation}
    for any $r > 0$. In case $L_1 = L_2 = \infty$, we can find some different constant $c = c(j,k,n,\delta_1, \delta_2)>0$ so that above inequality holds with $L_1 = L_2 = \infty$. 
\end{proposition}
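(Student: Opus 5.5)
Since the weighted seminorms are invariant under the rescaling $u_\lambda(x)=\lambda^{-1}u(\lambda x)$, I will first reduce to $r=1$. Indeed, $[u_r]^*_{C^i(\Sigma_{a,b})} = [u]^*_{C^i(\Sigma_{ar,br})}$ for every $i$, hence also $\|u_r\|^*_{C^0(\Sigma_{a,b})}=\|u\|^*_{C^0(\Sigma_{ar,br})}$. So it suffices to prove
\begin{equation*}
[u]^*_{C^j(\Sigma_{\delta_2, L_2})} \leq c\big(([u]^*_{C^k(\Sigma_{\delta_1, L_1})})^{\frac{j}{k}}(\|u\|^*_{C^0(\Sigma_{\delta_1, L_1})})^{1-\frac{j}{k}} + \|u\|^*_{C^0(\Sigma_{\delta_1, L_1})}\big)
\end{equation*}
with $c$ independent of $u$. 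This gives the claim for every $r>0$ with the same constant.

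For bounded $L_1,L_2$, the region $\overline{\Sigma_{\delta_1,L_1}}$ is a compact smooth Riemannian manifold with boundary; it is a piece of the smooth cone away from its vertex. On it the weights $|x|^{i-1}$ are bounded above and below by constants depending only on $\delta_1$ and $L_1$, so the weighted seminorms are comparable to the ordinary $C^i$ seminorms. The claim then reduces to the classical Gagliardo--Nirenberg/Landau--Kolmogorov interpolation on a compact domain, where the interior region $\Sigma_{\delta_2,L_2}$ is strictly contained in the larger one:
\begin{equation*}
\|\nabla^j u\|_{C^0(\Sigma_{\delta_2,L_2})}\le c\big(\|\nabla^k u\|_{C^0(\Sigma_{\delta_1,L_1})}^{j/k}\|u\|_{C^0(\Sigma_{\delta_1,L_1})}^{1-j/k}+\|u\|_{C^0(\Sigma_{\delta_1,L_1})}\big).
\end{equation*}
I will prove this by covering $\overline{\Sigma_{\delta_2,L_2}}$ with finitely many coordinate charts of fixed size $\rho$, each contained in $\Sigma_{\delta_1,L_1}$; this is possible because the two regions are at positive distance. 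In each chart I apply the Euclidean interpolation inequality on a ball. Its lower-order term is absorbed into the $\|u\|_{C^0}$ term. The covariant derivatives differ from coordinate derivatives by lower-order terms involving Christoffel symbols, which are bounded on the compact region, and these are handled by induction on $j$, using the interpolation already proved for lower orders. The Euclidean inequality itself is standard: on a segment of length $h\le\rho$, use Taylor expansion to get $|\partial^j u|\lesssim h^{-j}\|u\|+h^{k-j}\|\partial^k u\|$, then optimize $h=\min(\rho,(\|u\|/\|\partial^ku\|)^{1/k})$. The case $h=\rho$ produces the additive $\|u\|_{C^0}$ term.

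For $L_1=L_2=\infty$, I will use scale invariance again to avoid any uniformity problem near infinity. I cover $\Sigma_{\delta_2,\infty}$ by the dyadic annuli $\Sigma_{2^m\delta_2,2^{m+1}\delta_2}$, $m\ge0$. Each such annulus is the rescaling by $\lambda=2^m$ of the fixed annulus $A_0=\Sigma_{\delta_2,2\delta_2}$, and it sits inside the rescaling of the fixed enlargement $A_0'=\Sigma_{\delta',4\delta_2}$ with $\delta'=\max(\delta_1,\delta_2/2)$. Since $\delta'\ge\delta_1$, we have $2^mA_0'\subset\Sigma_{\delta_1,\infty}$ for all $m\ge0$. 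Applying the bounded case to $u_{2^m}$ on $A_0\subset A_0'$ gives one constant $c(j,k,n,\delta_1,\delta_2)$ for every $m$. The right-hand side on each $2^mA_0'$ is bounded by the corresponding quantity on $\Sigma_{\delta_1,\infty}$, because $t\mapsto t^{j/k}$ is monotone. Taking the supremum over $m$ finishes the proof.

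The main obstacle is only bookkeeping: making sure the chart radius, and hence the constant, stays uniform. This is exactly what the reduction to a fixed compact region through scale invariance achieves.
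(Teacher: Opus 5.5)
Your proposal is correct and is essentially the paper's argument: scale invariance of the weighted seminorms reduces everything to standard unweighted interpolation on a fixed compact piece of $\Sigma$ away from the vertex. The only difference is bookkeeping: the paper localizes pointwise via $u^x(y)=|x|^{-1}u(|x|y)$ on one model domain $\Omega$, which treats $L_1<\infty$ and $L_1=L_2=\infty$ uniformly, whereas you work directly on the compact annulus when $L_1<\infty$ and use the dyadic annuli $\Sigma_{2^m\delta_2,2^{m+1}\delta_2}$ when $L_1=L_2=\infty$.

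One small correction concerns your sketch of the standard inequality. Rewriting $\nabla^k u$ in coordinates brings in all $\partial^i u$ with $i<k$, not just $i<j$, and on a larger domain, so induction on $j$ does not close this step. Instead, use the same-ball estimate $\|\partial^i u\|_{C^0(B)}\le \epsilon\|\partial^k u\|_{C^0(B)}+C_\epsilon\|u\|_{C^0(B)}$ on each coordinate ball and absorb. Alternatively, Taylor-expand along geodesics, where $\frac{d^k}{dt^k}u(\gamma(t))=\nabla^k u(\dot\gamma,\dots,\dot\gamma)$ holds exactly.
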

\begin{proof}[Proof of proposition \ref{interpolation inequalities of weighted Ck norms}]
   By considering 
    \begin{equation}
        u^r(x) = \frac{1}{r}u(rx), \ x\in \Sigma_{\delta_1, L_1},
    \end{equation}
    the scaling invariance of the seminorms implies that it is enough to prove the case when $r = 1$. We now choose any $x \in \Sigma_{\delta_2, L_2}$, and define
    \begin{equation}
        u^x(y) = \frac{1}{|x|}u(|x|y), \ y \in \Omega,
    \end{equation}
    where
    \begin{equation}
        \Omega = 
        \begin{cases}
            \Sigma_{\frac{\delta_1 }{ \delta_2}, \frac{L_1}{L_2}} \text{ if }L_2 < L_1 < \infty \\ 
             \Sigma_{\frac{\delta_1 }{ \delta_2}, 2} \text{ if }L_1 = L_2 = \infty.
        \end{cases}
    \end{equation}
    Then by standard interpolation inequalities of the unweighted norms, we can find $c > 0$ with the prescribed dependence so that we have
    \begin{align*}
        |x|^{-1 + j}|\nabla^ju|(x) = |\nabla^ju^x|(\frac{x}{|x|}) &\leq c([u^x]_{C^k(\Omega)}^{\frac{j}{k}}\|u^x\|_{C^0(\Omega)}^{1 - \frac{j}{k}} + \|u^x\|_{C^0(\Omega)}) \\ & \leq c(([u]^*_{C^k(\Sigma_{\delta_1, L_1})})^{\frac{j}{k}}(\|u\|^*_{C^0(\Sigma_{\delta_1, L_1}))} )^{1 - \frac{j}{k}} + \|u\|^*_{C^0(\Sigma_{\delta_1, L_1}))} ),
    \end{align*}
    where the last inequality due to the definition of weighted norm, and the choice of domain $\Omega$. Taking supremum over all $x \in \Sigma_{\delta_2, L_2}$ gives us the desired interpolation inequality.
\end{proof}

\section{Graphical radius estimate}\label{graphical radius estimatesection}
In this section, we consider smooth, properly embedded ancient mean curvature flow $(M_t)_{t \in (-\infty, 0)}$ which satisfies assumptions \ref{strong convergence to simonscone assumption1}, \ref{additional geoemtric assumption}. The main goal of this section is to establish an optimal graphical radius estimate (proposition \ref{graphical radius estimate proposition}). This will be achieved by a pseudolocality theorem (proposition \ref{pseudolocality theorem}), together with comparison principle against inner self shrinkers we construct in proposition \ref{inner family of self shrinkers}.\\

We first establish a local smooth convergence of the rescaled mean curvature flow to the Simons cone away from the origin, and an explicit entropy bound. 
\begin{lemma}\label{local smooth convergence away from origin lemma}
    Let $(M_t)_{t < 0}$ be a smooth, properly embedded ancient mean curvature flow which satisfies assumption \ref{strong convergence to simonscone assumption1}. Define the rescaled mean curvature flow $\Tilde{\mathcal{M}} = (\Tilde{M}_{\tau})_{\tau \in (-\infty, 0)}$ to be
    \begin{equation}
        \Tilde{M}_{\tau} = e^{\frac{\tau}{2}}M_{-e^{-\tau}}.
    \end{equation}
    Then
    \begin{equation}\label{local smooth convergence away from origin}
        \Tilde{M}_{\tau} \to \Sigma \text{ locally smoothly in }\mathbf{R}^{2n}\setminus \{0\} \text{ as }\tau \to -\infty.
    \end{equation}
    Moreover, \begin{equation}\label{entropy bound proposition eq}
        \lambda(M_t) = \lambda(\Tilde{M}_{\tau}) \leq \lambda(\Sigma) < 2
    \end{equation}
    for any $t = -e^{-\tau} \in (-\infty, 0)$.
\end{lemma}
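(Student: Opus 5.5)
Both claims will be read off from assumption \ref{strong convergence to simonscone assumption1}, combined with Brakke's local regularity theorem (or White's version) and Huisken's monotonicity formula.

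\textbf{Step 1: Brakke flow convergence.} Fix any sequence $\tau_i \to -\infty$ and set $\lambda_i = e^{\tau_i/2} \to 0$. The rescaled slice is
\[
\Tilde{M}_{\tau_i + s} = e^{\frac{\tau_i+s}{2}}M_{-e^{-\tau_i - s}}.
\]
This is, up to the time change $t = -e^{-s}$, the parabolic rescaling $M^i_t = \lambda_i M_{\lambda_i^{-2}t}$ evaluated at $t = -e^{-s}$ and then multiplied by $\sqrt{-t}^{-1}$. So the convergence $M^i \to \Sigma$ as Brakke flows in \ref{strong convergence to simonscone assumption1} gives $\Tilde{M}_{\tau_i + s} \to \Sigma$ as Brakke flows, for $s$ in compact sets.

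\textbf{Step 2: upgrade to smooth convergence away from $0$.} The limit $(\Sigma)_t$ is a static flow with unit multiplicity. It is smooth away from $\{0\}\times(-\infty,0)$, so every spacetime point $X=(x,t)$ with $x\neq 0$ has Gaussian density $1$. By upper semicontinuity of density, together with Brakke's / White's local regularity theorem, the approximating smooth flows $M^i$ have uniformly bounded curvature, and all derivatives, in a parabolic neighborhood of such points. Hence they converge smoothly there. Covering a compact subset of $(\mathbf{R}^{2n}\setminus\{0\})\times[-2,-1/2]$ by finitely many such neighborhoods gives smooth convergence of the time $-1$ slices, that is, $\Tilde{M}_{\tau_i}\to\Sigma$ locally smoothly in $\mathbf{R}^{2n}\setminus\{0\}$. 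Since the sequence $\tau_i$ was arbitrary and the limit is always $\Sigma$, the whole family converges as $\tau\to-\infty$, which proves \eqref{local smooth convergence away from origin}.

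\textbf{Step 3: entropy bound.} Entropy is invariant under dilations, so $\lambda(\Tilde{M}_\tau)=\lambda(M_t)$. For fixed $(x_0,r)$, write
\[
F_{x_0,r}(M_t) = \Theta(X_0,\mathcal{M},r) \quad\text{with } X_0=(x_0,t+r^2).
\]
Huisken's monotonicity says $\rho \mapsto \Theta(X_0,\mathcal{M},\rho)$ is nondecreasing, and as $\rho\to\infty$ this quantity equals the Gaussian density of the blown-down flows $M^i$ at a point converging to $(0,0)$. The blow-down is $\Sigma$, so this limit is $F_{0,1}(\Sigma) = \lambda(\Sigma)$, using that $\Sigma$ is a cone and a shrinker.

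There is one subtlety here. The base point $X_0$ may have $t+r^2>0$, in which case $\Theta(X_0,\mathcal{M},\rho)$ is not directly defined for the ancient flow on $(-\infty,0)$. To avoid this, I would apply monotonicity only over backward times $t_0-\rho^2\le t$, which are always in the domain. The limit $\lim_{\rho\to\infty}\Theta(X_0,\mathcal{M},\rho)$ is computed on slices $M_{t_0-\rho^2}$ with $\rho\to\infty$, and after rescaling by $\rho^{-1}$ these are $M^i$ at time $-1$ with base point $\rho^{-1}x_0\to 0$.

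\textbf{Main obstacle.} Passing Gaussian integrals to the limit requires the Brakke convergence of measures together with a uniform entropy (area ratio) bound to control the Gaussian tails. That bound is standard for flows with a Brakke limit, or can be taken from the finite entropy implicit in \ref{strong convergence to simonscone assumption1}. Granting it, we get $F_{x_0,r}(M_t)\le\lambda(\Sigma)$, and taking the supremum over $(x_0,r)$ gives $\lambda(M_t)\le\lambda(\Sigma)$. The bound $\lambda(\Sigma)<2$ is the explicit computation recalled in subsection \ref{simons cone related result prelim}.
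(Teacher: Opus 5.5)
Your Steps 1--2 match the paper's argument for the smooth convergence: White's local regularity theorem applied to $M^i_{-1}=\lambda_i M_{-\lambda_i^{-2}}$ for an arbitrary sequence $\lambda_i\to 0$.

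The gap is in Step 3, exactly at the point you grant.

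\begin{itemize}
\item Assumption (A1) gives only weak convergence of Radon measures $\mu^i_{-1}\to\mathcal{H}^{2n-1}|_{\Sigma}$. This controls $\int\phi\,d\mu^i_{-1}$ for compactly supported $\phi$, but gives no uniform control of the Gaussian tails $\int_{\{|x|>R\}}e^{-|x|^2/4}\,d\mu^i_{-1}$.
\item There is no finite-entropy hypothesis hidden in (A1). A uniform entropy bound is not a general consequence of having a Brakke limit, since mass drifting off to infinity is invisible to local convergence. Moreover, a bound of this kind is essentially the conclusion you are proving, so invoking it is circular.
\item For a flow assumed only smooth and properly embedded, the unlocalized monotonicity formula in your Step 3 is not justified without a priori area growth; the Gaussian integrals need not even be finite.
\item A smaller slip: $\rho^{-1}M_{t_0-\rho^2}$ is the rescaled flow at time $-1+t_0\rho^{-2}$, not $-1$. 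So Brakke convergence at a fixed time does not apply verbatim.
\end{itemize}

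The missing idea, which is what the paper does, is to localize Huisken's monotonicity with the Ecker--Brakke cutoff at the blow-down scale.

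\begin{itemize}
\item Take base point $(x_0,t_0+r^2)$ and $R^2=L\lambda_i^{-2}$. The integral over $M_t$ of $\Phi\,\bigl(1-\frac{|x-x_0|^2+2(2n-1)(t-t_0-r^2)}{R^2}\bigr)_+^3$ is nonincreasing on $[-\lambda_i^{-2},t_0]$ for any smooth properly embedded flow, because the cutoff has compact support.
\item After rescaling by $\lambda_i$, the value at $t=-\lambda_i^{-2}$ is an integral over $M^i_{-1}$ at exactly time $-1$. The kernel scale $1+\lambda_i^2(t_0+r^2)$ drifts to $1$ instead of the time drifting.
\item The integrand is continuous, supported in a fixed ball, and converges uniformly. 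Weak convergence alone therefore gives the limit $\int_\Sigma(4\pi)^{-\frac{2n-1}{2}}e^{-\frac{|x|^2}{4}}\bigl(1+\frac{2(2n-1)}{L}-\frac{|x|^2}{L}\bigr)_+^3\,d\mathcal{H}^{2n-1}$.
\item On the $M_{t_0}$ side the cutoff tends to $1$ pointwise as $\lambda_i\to0$, so Fatou's lemma recovers the full $F_{x_0,r}(M_{t_0})$.
\item Letting $L\to\infty$ yields $F_{x_0,r}(M_{t_0})\le F_{0,1}(\Sigma)\le\lambda(\Sigma)$.
\end{itemize}

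The tails are thus integrated only on the fixed-time side, where the cutoff radius tends to infinity, so no a priori entropy bound is needed.
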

\begin{proof}[Proof of lemma \ref{local smooth convergence away from origin lemma}]
Local smooth convergence away from the origin is immediate from White's local regularity theorem. Indeed, for any $\tau_i \to -\infty$, if we set $\lambda_i = e^{\frac{\tau_i}{2}} \to 0$, then
\begin{equation}
    \Tilde{M}_{\tau_i} = \lambda_iM_{-\lambda^{-2}_i}= M^i_{-1}.
\end{equation}
Since the flow
\begin{equation}
    \sqrt{-t}\Sigma \equiv \Sigma
\end{equation}
is smooth in $\mathbf{R}^{2n}\setminus \{0\} \times (-\infty, 0)$, 
assumption \ref{strong convergence to simonscone assumption1} together with White's local regularity theory \cite{White2005-nl} implies that
\begin{equation}
   \Tilde{M}_{\tau_i} = M^i_{-1} \to \Sigma \text{ locally smoothly in }\mathbf{R}^{2n}\setminus \{0\}.
\end{equation}
Since $\tau_i \to -\infty$ is arbitrary, we have \eqref{local smooth convergence away from origin}.\\

We now prove \eqref{entropy bound proposition eq}. Take any $x_0 \in \mathbf{R}^{2n}$, $t_0 < 0$, $r > 0$, and $\lambda_i \to 0$. Choose any large $L > 0$. By (localized) Huisken's monotonicity formula \cite{huisken1990asymptotic}, we have for any $\lambda_i > 0$ small so that $-\lambda_i^{-2} < t_0$, 
\begin{align*}
   &\int_{M_{t_0}}\frac{1}{(4\pi r^2)^{(2n-1)/2}}\exp(-\frac{|x - x_0|^2}{4r^2})(1 - \frac{|x-x_0|^2}{L\lambda_i^{-2}})_+^3d\mathcal{H}^{2n-1}(x) \\&\leq\int_{M_{t_0}}\frac{1}{(4\pi r^2)^{(2n-1)/2}}\exp(-\frac{|x - x_0|^2}{4r^2})(1 - \frac{|x-x_0|^2- 2(2n-1)r^2}{L\lambda_i^{-2}})_+^3d\mathcal{H}^{2n-1}(x)  \\ &\leq \int_{M_{-\lambda_i^{-2}}}\frac{1}{(4\pi(t_0 + r^2 + \lambda_i^{-2}))^{(2n-1)/2}}\exp{(-\frac{|x - x_0|^2}{4(t_0 + r^2 + \lambda_i^{-2})})}(1 - \frac{|x-x_0|^2 - 2(2n-1)(\lambda_i^{-2} + r^2 + t_0)}{L\lambda_i^{-2}})_+^3d\mathcal{H}^{2n-1}(x)\\ & = \int_{M^i_{-1}}\frac{1}{(4\pi(1 + r_i))^{(2n-1)/2}}\exp(-\frac{|x - x_i|^2}{4(1 + r_i)})(1 + \frac{2(2n-1)}{L} + \frac{2(2n-1)r_i}{L} - \frac{|x - x_i|^2}{L})_+^3d\mathcal{H}^{2n-1}(x) = I_i,
\end{align*}
with $r_i = \lambda_i^2(t_0 + r^2)$, $x_i = \lambda_ix_0$, and
\begin{equation}
    M^i_t = \lambda_iM_{\lambda^{-2}_it}.
\end{equation}
Since the integrand in previous integral are compactly supported on some fixed large ball for large enough $i$, and converges uniformly to
\begin{equation}
    \frac{1}{(4\pi)^{(2n-1)/2}}\exp(-\frac{|x|^2}{4})(1 + \frac{2(2n-1)}{L} - \frac{|x|^2}{L})_+^3,
\end{equation}
by assumption \ref{strong convergence to simonscone assumption1}, we have
\begin{equation}
    \lim_{i\to \infty}I_i = \int_{\Sigma}\frac{1}{(4\pi)^{(2n-1)/2}}\exp(-\frac{|x|^2}{4})(1 + \frac{2(2n-1)}{L} - \frac{|x|^2}{L})_+^3d\mathcal{H}^{2n-1}(x).
\end{equation}
At the same time, by Fatou's lemma, 
\begin{equation}
    F_{x_0, r}(M_{t_0}) \leq \liminf_{i\to \infty}\int_{M_{t_0}}\frac{1}{(4\pi r^2)^{(2n-1)/2}}\exp(-\frac{|x - x_0|^2}{4r^2})(1 - \frac{|x-x_0|^2}{L\lambda_i^{-2}})_+^3d\mathcal{H}^{2n-1}(x).
\end{equation}
Therefore, we obtain for each $r > 0$, $x_0 \in \mathbf{R}^{2n}$, $t_0 < 0$, $L > 0$, 
\begin{equation}
    F_{x_0, r}(M_{t_0}) \leq \int_{\Sigma}\frac{1}{(4\pi)^{(2n-1)/2}}\exp(-\frac{|x|^2}{4})(1 + \frac{2(2n-1)}{L} - \frac{|x|^2}{L})_+^3d\mathcal{H}^{2n-1}(x).
\end{equation}
Thus, letting $L \to \infty$, and taking supremum in $(x_0, r)$ implies that
\begin{equation}
    \sup_{t < 0}\lambda(M_t) \leq \lambda(\Sigma) < 2.
\end{equation}
Since entropy is invariant under rescaling, we obtain \eqref{entropy bound proposition eq}.
\end{proof}
Following the arguments in lemma 7.18 in \cite{Chodosh2024-qv}, we now establish rough graphicality of the ancient flow over the Simons cone away from the origin.
\begin{proposition}\label{Rough graphicality property}
    Let $(M_t)_{t \in (-\infty, 0)}$ be a smooth, properly embedded rescaled mean curvature flow that satisfies assumption \ref{strong convergence to simonscone assumption1}, and let $(\Tilde{M}_{\tau})_{\tau \in (-\infty, 0)}$ be the rescaled mean curvature flow.
    There exists $\overline{\eta}(n) \in (0, \frac{1}{100})$ so that for every $\eta \in (0, \overline{\eta}(n))$, $r > 0$, there exists $ \overline{\tau} < 0$ so that for all $\tau \leq \overline{\tau}$, $\Tilde{M}_{\tau} \setminus B(0, r)$ can be written as a normal graph of a smooth function $u_{\tau} \in C^{\infty}(\mathcal{D}_{\tau})$. Moreover, $\Sigma \setminus B(0, 2r) 
    \subset \mathcal{D}_{\tau}$, and $\|u_{\tau}\|^*_{C^3(\Sigma \setminus B(0, 2r))} \leq \eta$. Here, the weighted $C^3$ norm $\|u\|^*_{C^3}$ is given by definition \ref{Weighted Ck norms}.
\end{proposition}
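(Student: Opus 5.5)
The plan is to upgrade the local smooth convergence of Lemma \ref{local smooth convergence away from origin lemma} on compact annuli to graphicality over all of $\Sigma \setminus B(0,2r)$, using a uniform pseudolocality/regularity argument at large radii. This follows the strategy of lemma 7.18 in \cite{Chodosh2024-qv}. The key observation is that the weighted norm $\|\cdot\|^*_{C^3}$ is scale invariant. So it suffices to show that, after rescaling any point $x$ with $|x|\geq 2r$ to unit size, the surface is $\eta$-close in $C^3$ to the rescaled cone near $x/|x|$.

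\emph{Step 1 (compact region).} Fix a large $R$ to be chosen later. By Lemma \ref{local smooth convergence away from origin lemma}, $\Tilde{M}_{\tau} \to \Sigma$ smoothly on $\overline{B(0,R)}\setminus B(0,r)$. Hence for $\tau$ sufficiently negative, $\Tilde{M}_\tau \cap (B(0,R)\setminus B(0,r))$ is a normal graph over a domain containing $\Sigma_{2r,R}$ with arbitrarily small $C^3$ norm. Since $|x|\geq 2r$ there, the weights are bounded, so the weighted norm is small too.

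\emph{Step 2 (far region, main obstacle).} Here we must control $\Tilde{M}_\tau$ on $\{|x|\ge R\}$ uniformly in $\tau$. In unrescaled variables this is the region $|x|\geq R\sqrt{-t}$ of $M_t$. I would argue by contradiction. Suppose there are $\tau_i\to-\infty$ and points $x_i\in \Tilde{M}_{\tau_i}$ with $|x_i|\to\infty$, or $|x_i|\ge R_i$ with $R_i\to\infty$, at which the scale-invariant closeness fails. Rescale the unrescaled flow parabolically by $\rho_i = |x_i|e^{-\tau_i/2}$, i.e. consider $\rho_i^{-1}M_{\rho_i^2 s}$. This sends the bad point to a unit-size point at time $s_i=-e^{-\tau_i}/\rho_i^2 = -|x_i|^{-2}\to 0^-$. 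Since $\rho_i\to\infty$, assumption \ref{strong convergence to simonscone assumption1} applied with $\lambda_i=\rho_i^{-1}$ shows that these rescaled flows converge as Brakke flows to the static cone $\Sigma$ on $(-\infty,0)$. The difficulty is that the bad time $s_i\to 0$ is the endpoint, so we need convergence up to $s=0$ near the unit-size point $\hat x_i = x_i/|x_i|$, which lies near $\Sigma\cap \partial B(0,1)$, away from the origin. For this I would use pseudolocality (or White's local regularity theorem with the Gaussian density bound): at time $s=-1$ the rescaled flow is $C^3$-close to $\Sigma$ in $B(\hat x_i,\delta)$, which is smooth with bounded curvature there. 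The entropy bound $\lambda<2$ from Lemma \ref{local smooth convergence away from origin lemma} gives density control, and $\Sigma$ is static and smooth near $\hat x_i$. Together these show the flow remains graphical with small curvature in $B(\hat x_i,\delta/2)\times[-1,0)$. Interior estimates then give $C^3$ closeness at $s_i$, contradicting the choice of $x_i$. This yields an $R$ such that the far region is graphical with small weighted $C^3$ norm for all $\tau\le\overline\tau$.

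\emph{Step 3 (patching).} Combine the local graphs from Steps 1 and 2. Normal graphs over $\Sigma$ with small scale-invariant $C^1$ norm are unique in tubular neighborhoods of relative size $\ll 1$, so they glue to a single function $u_\tau$ on a domain $\mathcal D_\tau\supset\Sigma\setminus B(0,2r)$. Properness, together with the one-to-one correspondence from Brakke convergence at unit multiplicity, rules out extra sheets of $\Tilde{M}_\tau\setminus B(0,r)$ not captured by the graph. Since $\lambda<2$, multiplicity-one closeness excludes a second sheet near any point of $\Sigma$. One must also exclude far-away components of $\Tilde{M}_\tau$ that are not close to $\Sigma$ at all. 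This follows from the contradiction argument in Step 2 applied to arbitrary points of $\Tilde{M}_\tau$ with $|x|\ge r$: the limit of the rescaled flows has support in $\Sigma$, so such points must lie near $\Sigma$. Choosing $\overline\eta(n)$ small enough that these uniqueness statements hold completes the proof.
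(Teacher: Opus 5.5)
Your proposal is correct and is essentially the paper's argument. The paper's flows $S^{\tau_0}_t=\sqrt{-t}\,\tilde M_{\tau_0-\ln(-t)}=e^{\tau_0/2}M_{e^{-\tau_0}t}$ are exactly your rescalings $\rho^{-1}M_{\rho^2 s}$ with $\rho=e^{-\tau_0/2}$. Like you, it combines smooth convergence on $[-1,-\epsilon]$ with pseudolocality to reach times near $0$, pastes the resulting annuli over $|x|\ge 1$, and handles the compact part directly via assumption (A1); the only differences are that you argue by contradiction and spell out the gluing and extra-sheet issues. One small fix: launch pseudolocality (or White's estimate) at $s=-\epsilon$, where smooth closeness is already known, rather than at $s=-1$, since it only propagates control for a time comparable to the square of the graphical radius.
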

One corollary of proposition \ref{Rough graphicality property} is that $(\Tilde{M}_{\tau})_{\tau \in (-\infty, 0)}$ is connected. 
\begin{corollary}\label{connectedness}
    Any smooth, properly embedded mean curvature flow $(M_t)_{t \in (-\infty, 0)}$ which satisfies assumption \ref{strong convergence to simonscone assumption1} is connected.
\end{corollary}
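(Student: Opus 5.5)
The plan is to reduce to sufficiently negative times via Proposition \ref{Rough graphicality property}. At those times, every point of $\Tilde{M}_{\tau}$ far from the origin lies on a single connected graphical sheet. Any other component is then trapped in a small ball, and we rule it out with the avoidance principle against shrinking spheres. Since $\Tilde{M}_{\tau} = e^{\frac{\tau}{2}}M_{-e^{-\tau}}$ is a dilation, $M_t$ is connected exactly when the corresponding $\Tilde{M}_{\tau}$ is connected.

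\textbf{Step 1 (the big sheet).} Fix a small $r>0$, to be chosen in Step 2, and some $\eta<\overline{\eta}(n)$. By Proposition \ref{Rough graphicality property}, for $\tau\le\overline{\tau}$ the set $\Tilde{M}_{\tau}\setminus B(0,r)$ is the normal graph of $u_\tau$ over $\mathcal{D}_\tau\supset\Sigma\setminus B(0,2r)$, and $\|u_\tau\|^*_{C^3}\le\eta$ there. The domain $\Sigma\setminus B(0,2r)$ is diffeomorphic to $[2r,\infty)\times\mathbf{S}^{n-1}\times\mathbf{S}^{n-1}$, which is connected. Hence the graph $G_\tau$ of $u_\tau$ over it is connected, and it lies in a single component $K_0$ of $\Tilde{M}_{\tau}$.

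Now take a point of $\Tilde{M}_{\tau}\setminus B(0,r)$ lying over $\mathcal{D}_\tau\setminus(\Sigma\setminus B(0,2r))$. Its base point has $|x|<2r$, and the weighted bound gives $|u_\tau|\le\eta|x|$, so the point itself has norm at most $2r(1+\eta)<3r$. Consequently every point of $\Tilde{M}_{\tau}$ with $|x|\ge 3r$ lies on $G_\tau$. Any component $K\neq K_0$ therefore satisfies $K\subset B(0,3r)$. Being closed, since the flow is proper, and bounded, $K$ is a compact closed embedded hypersurface.

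\textbf{Step 2 (extinction contradiction).} Suppose such a $K$ exists at some $\tau_0\le\overline{\tau}$, and set $t_0=-e^{-\tau_0}$. In the unrescaled flow the corresponding component lies in $B(0,\rho)$ with $\rho^2=9r^2|t_0|$. The flow is smooth and properly embedded, so it is locally a smooth family of normal graphs. The component therefore persists as a compact component for all $t\in[t_0,0)$, disjoint from the other components.

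Compare with the shrinking sphere $\partial B\bigl(0,\sqrt{\rho^2-2(2n-1)(t-t_0)}\bigr)$, which is initially disjoint from $K$ and encloses it. By the avoidance principle, $K$ must vanish by time $t_0+\frac{\rho^2}{2(2n-1)} = t_0\bigl(1-\frac{9r^2}{2(2n-1)}\bigr)$. Choosing $r$ small so that $\frac{9r^2}{2(2n-1)}<1$ makes this time strictly negative. That contradicts smoothness of the flow on $(-\infty,0)$. Hence $\Tilde{M}_{\tau}$ is connected for all $\tau\le\overline{\tau}$.

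\textbf{Step 3 (all times).} The main obstacle is propagating connectedness to all $t<0$. A smooth, properly embedded flow on $(-\infty,0)$ can be realized, locally in time, as a smooth family of embeddings of a fixed manifold, by writing $M_{t'}$ as a normal graph over $M_t$ on compact sets. Properness controls the noncompact ends uniformly on compact time intervals. So the number of components is locally constant in $t$, hence constant, and it equals one by Step 2. If this isotopy argument is delicate at infinity, the fallback is to repeat Step 2 directly. Any extra component at a later time $t_1$, followed backward, gives a spacetime-connected family of components at times $t\le\overline{t}$. Connectedness at those times forces that family to coincide with the main sheet, and this contradicts disjointness of distinct components along the smooth flow.
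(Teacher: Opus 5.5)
Your proof is correct and follows essentially the same route as the paper: Proposition \ref{Rough graphicality property} confines every component other than the main graphical sheet to a small ball about the origin, and comparison with a shrinking sphere forces that component to become singular at a negative time, contradicting smoothness. The paper runs the sphere comparison in the rescaled flow with $\partial B(0,1)$, while you run it in the unrescaled flow; the two are equivalent. (Minor remarks: your Step 1 also uses $|u_\tau|\le\eta|x|$ over $\mathcal{D}_\tau\cap B(0,2r)$, which the proof of Proposition \ref{Rough graphicality property} provides but its literal statement does not, and your Step 3 justifies—somewhat heuristically at the noncompact ends—the reduction to very negative times that the paper simply asserts from smoothness.)
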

\begin{proof}[Proof of corollary \ref{connectedness}]
It is enough to prove connectedness of the rescaled mean curvature flow $(\Tilde{M}_{\tau})_{\tau \in (-\infty, 0)}$. 
Since the flow is smooth, it is enough to show connectedness for sufficiently negative time slices. Suppose this is not true. Let $\Tilde{M}^{main}_{\tau}$ be the component which is graphical over $\Sigma$ outside a compact set. Note that such component exists due to proposition \ref{Rough graphicality property}. If we let $N_{\tau}$ to be any other component, then proposition \ref{Rough graphicality property} implies that for sufficiently negative $\tau$, $N_{\tau} \subset B(0, 1)$. Note that the sphere $\partial B(0,1)$ contracts down to the origin in finite time under rescaled mean curvature flow. Therefore if we denote $\tau_0(n) > 0$ to be the time required for $\partial B(0,1)$ to contract to a point, then avoidance principle implies that $N_s$ cannot be smooth for all $s \in [\tau, \tau + \tau_0]$. By taking $\tau \leq -2\tau_0$ sufficiently negative, we see that the flow $(\Tilde{M}_{\tau})_{(-\infty, 0)}$ cannot be smooth which is a contradiction. This proves corollary \ref{connectedness}.
\end{proof}
\begin{proof}[Proof of proposition \ref{Rough graphicality property}]
    We follow the arguments in lemma 7.18 of \cite{Chodosh2024-qv}. For each $\tau_0 < 0$, define the smooth mean curvature flow 
    \begin{equation}
        S^{\tau_0}_t = \sqrt{-t}\Tilde{M}_{\tau_0 - \ln (-t)}, \ t < -e^{\tau_0}.
    \end{equation}
    Then due to lemma \ref{local smooth convergence away from origin lemma}, $S^{\tau_0}_t$ converges locally smoothly in $\mathbf{R}^{2n} \setminus \{0\} \times (-\infty, 0)$ to a self similar flow $\sqrt{-t}\Sigma \equiv \Sigma$ as $\tau_0 \to -\infty$. This implies that for any $\epsilon > 0$, $\xi \in (0,1)$, one can find $\overline{\tau}_0 < 0$ so that for any $\tau_0 \leq \overline{\tau}_0$, $S^{\tau_0}_{t} \cap (B(0, 3) \setminus B(0, \frac{1}{2}))$ is graphical over $\Sigma$ with $C^{10}$ norm bounded by $\xi$ for all $t \in [-1, -\epsilon]$. By using pseudolocality of mean curvature flow (theorem 1.5 in \cite{pseudolocality}), we see that by choosing $\epsilon = \epsilon(n, \eta) > 0$, $\xi = \xi(n, \eta) > 0$, we can ensure that for all $\tau_0 \leq \overline{\tau}_0$, $t \in [-1, -e^{\tau_0}]$,  
    \begin{equation}
        S^{\tau_0}_t \cap (B(0, 2) \setminus B(0, 1))
    \end{equation}
    is a normal graph over $\Sigma$ with $C^3$ norm bounded by $\frac{\eta}{100}$. By choosing $\overline{\eta}(n) \in (0, \frac{1}{100})$ small enough, and rescaling back to $\Tilde{M}_{\tau}$, we see that for any $\tau \in [\tau_0, 0)$, $\tau_0 \leq \overline{\tau}_0$, 
    \begin{equation}\label{graphicality of rescaled mean curvautre flow on annulus regions}
        \Tilde{M}_{\tau} \cap (B(0, 2e^{\frac{\tau - \tau_0}{2}}) \setminus B(0, e^{\frac{\tau - \tau_0}{2}}))
    \end{equation}
    is graphical over $\Sigma$, with the weighted $C^3$ norm of the graph function bounded by $\eta$. By fixing any $\tau \leq \overline{\tau}_0$, and varying $\tau_0 \in (-\infty. \tau]$, we can paste together the regions given in \eqref{graphicality of rescaled mean curvautre flow on annulus regions} and conclude that there exists $\overline{\tau}_0 < 0$ so that for all $\tau \leq \overline{\tau}_0$, $\Tilde{M}_{\tau} \setminus B_1$ is graphical over $\Sigma$ with the weighted $C^3$ norm bounded by $\eta > 0$. \\

    As for $\Tilde{M}_{\tau}\cap (B(0,1) \setminus B(0, r))$, we use assumption \ref{strong convergence to simonscone assumption1} directly. This implies that there exists $\overline{\tau}_1 < 0$ so that for all $\tau \leq \overline{\tau}_1$, $\Tilde{M}_{\tau}\cap (B(0,10) \setminus B(0, r))$ is graphical over $\Sigma$ with weighted $C^3$ norm bounded by $\eta > 0$. By combining with the previous conclusion, we see that for all $\tau \leq \overline{\tau} = \min\{\overline{\tau}_0, \overline{\tau}_1\} < 0$, $\Tilde{M}_{\tau} \setminus B(0, r)$ is graphical over $\Sigma$ with weighted $C^3$ norm bounded by $\eta$. Denoting the domain of the graph function by $\mathcal{D}_{\tau}$, we may further conclude that $\Sigma \setminus B(0, 2r) \subset \mathcal{D}_{\tau}$ 
    for all $\tau \leq \overline{\tau}$ by possibly shrinking $\overline{\eta}(n) \in (0, \frac{1}{100})$. This proves proposition \ref{Rough graphicality property}.
\end{proof}
Proposition \ref{Rough graphicality property} implies that the rescaled flow can be expressed using the graph function $u_{\tau}$ outside any small compact set containing the origin for all sufficiently negative times. However, in order to obtain unique asymptotics, we need a more precise estimate on the size of the compact set for $\textit{each}$ $\tau$. To do so, we need the following version of pseudolocality of mean curvature flow lying on one side of the Simons cone. 
\begin{proposition}\label{pseudolocality theorem}
    Let $n \geq 4$, and $\Sigma$ be the $O(n)\times O(n)$ symmetric Simons cone in $\mathbf{R}^{2n}$. For any given $\eta > 0, \ m \geq 2, \ l > 0, \ 0 < \delta <1 < 10 < L$, there exists $\epsilon = \epsilon(n,\eta, m, l, \delta, L) > 0$, and $K = K(n,\eta, m, l, \delta, L) > 10$ so that the following holds; Let $r > 0$, and suppose a smooth, properly embedded mean curvature flow $(M_t)_{t \in [-2lr^2, 2lr^2]}$ satisfies the entropy bound
    \begin{equation}\label{entropy bound proof of pseudolocality}
        \lambda(M_t) \leq \lambda(\Sigma) < 2
    \end{equation}
    for all $t  \in [-2lr^2, 2lr^2]$, $(M_t)_{t \in [-2lr^2, 2lr^2]}$ lies on one side of $\Sigma$, 
     and $M_t \cap (B(0,{Kr}) \setminus B(0, r))$ is graphical over $\mathcal{D}_t \subset \Sigma$ for all $t \in [-2lr^2, 0]$ with 
    \begin{equation}
       \Sigma_{2r, (K-1)r} \subset \mathcal{D}_t, \ \|u_t\|^{*}_{C^1( \Sigma_{2r, (K-1)r})} \leq \epsilon, \ \|u_t\|^{*}_{C^2( \Sigma_{2r, (K-1)r})} \leq \eta.
    \end{equation}
    Then for any $t \in [-lr^2, lr^2]$, $M_t\cap (B(0, Lr) \setminus B(0, \delta r))$ is graphical over $\mathcal{D}_t \subset \Sigma$ with 
    \begin{equation}
        \Sigma_{2\delta r, (L-1)r} \subset \mathcal{D}_t , \ \|u_t\|^*_{C^m(\Sigma_{2\delta r, (L-1)r})} \leq \eta.
    \end{equation}
    The norms are given in definition \ref{Weighted Ck norms}.
\end{proposition}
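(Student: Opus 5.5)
We argue by contradiction and compactness, using the scaling invariance of all hypotheses and of the weighted norms to reduce to $r=1$. Suppose the statement fails for some fixed $(n,\eta,m,l,\delta,L)$. Then there are $\epsilon_i\to 0$, $K_i\to\infty$, and smooth properly embedded flows $(M^i_t)_{t\in[-2l,2l]}$ with the following properties: they satisfy the entropy bound \eqref{entropy bound proof of pseudolocality}, lie on one side of $\Sigma$, and are graphical over $\Sigma_{2,K_i-1}$ for $t\in[-2l,0]$ with $\|u^i_t\|^*_{C^1}\le\epsilon_i$ and $\|u^i_t\|^*_{C^2}\le\eta$. Nevertheless, for some $t_i\in[-l,l]$ the conclusion fails on $B(0,L)\setminus B(0,\delta)$. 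We can ignore which side of $\Sigma$ the flows lie on, since it may be fixed after passing to a subsequence. By Brakke compactness (the entropy bound gives uniform local area bounds), a subsequence converges to an integral Brakke flow $\mathcal{M}^\infty$ on $[-2l,2l]$. It is unit regular because it is a limit of smooth flows, it has entropy at most $\lambda(\Sigma)<2$, and its support lies on one side of $\Sigma$, say in the closure $\overline{U}$ of one component $U$ of $\mathbf{R}^{2n}\setminus\Sigma$.

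Next we identify $\mathcal{M}^\infty$. For $t\in[-2l,0]$, the graphical hypothesis with $\epsilon_i\to0$ and $K_i\to\infty$ shows that $M^i_t\setminus B(0,1)$ converges to $\Sigma\setminus B(0,2)$ (in $C^{1}$, and in $C^{1,\beta}$ by the $C^2$ bound). We also need to rule out extra mass inside $B(0,2)$. As in corollary \ref{connectedness}, we use avoidance against a shrinking sphere, together with the Gaussian density bound, to show that near $t=-2l$ the flow has no mass inside $B(0,2)$ beyond the cone piece. In any case, the limit at time $t=-2l$ agrees with $\Sigma$ outside $B(0,2)$. By White's local regularity and the Brakke flow property, $\mathcal{M}^\infty$ agrees with the static flow $\Sigma$ away from $\overline{B(0,2)}\times[-2l,0]$.

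The key step is to show $\operatorname{spt}\mathcal{M}^\infty=\Sigma\times[-2l,2l]$ on the whole time interval, including the forward part $(0,2l]$ and near the origin. For this we apply the strong maximum principle, theorem \ref{strongmaximumprinciplebrakkeflows}, with $\mathcal{M}_1$ the static smooth flow $\Sigma$ away from the origin. Since $\operatorname{spt}\mathcal{M}^\infty\subset \overline{U}$ and $\Theta<2$, the set of points of $\Sigma\setminus\{0\}$ where the supports coincide locally is open. It is also closed and nonempty (it contains the region $|x|>2$). By connectedness of $\Sigma\setminus\{0\}$ and propagation in time, the supports coincide along $\Sigma\setminus\{0\}$ for all $t\in[-2l,2l]$.

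We must also exclude extra pieces of $\mathcal{M}^\infty$ in $U$ detached from $\Sigma$. Here we use the Hardt–Simon leaves $\Sigma^{\pm}_c\subset U$ as barriers. At time $-2l$ the flow lies outside a small neighborhood of $\Sigma$ only inside $B(0,2)$. Comparing with shrinking spheres and with the leaves, which foliate $U$ and are stationary, we would show that any such mass disappears, or could never have been there, by the avoidance principle for Brakke flows against smooth flows. Together with the entropy bound and multiplicity one, this gives $\mathcal{M}^\infty=\Sigma$ on $B(0,L)\times[-l,l]$ with multiplicity one.

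Finally, since $\mathcal{M}^\infty$ is the smooth static cone with multiplicity one on the compact set $\overline{\Sigma_{\delta,L}}\times[-l,l]$, which avoids the origin, White's local regularity theorem upgrades the convergence of $M^i_t$ to smooth convergence there. This contradicts the failure of the $C^m$ graphical bound at $t_i$, with the weighted norm comparable to the unweighted one since $|x|\in[\delta,L]$.

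The main obstacle is the previous step: ruling out extra components or a bubble near the origin in $\overline{U}$ on $[0,2l]$, where no graphical hypothesis is available. One-sidedness combined with the Hardt–Simon foliation as barriers is the tool I would try first.
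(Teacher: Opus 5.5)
There is a genuine gap at the step you flag as the main obstacle, and the tools you propose for it do not work as stated. Your overall skeleton matches the paper's: contradiction, Brakke compactness, Theorem \ref{strongmaximumprinciplebrakkeflows} with one-sidedness and $\Theta<2$, and White's regularity at the end.

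The first problem is forward time. Theorem \ref{strongmaximumprinciplebrakkeflows} only gives coincidence on a \emph{backward} neighborhood $P(X_0,\varepsilon)$ of a point $X_0$ already known to lie in $\operatorname{spt}\mathcal{M}^\infty$. So ``propagation in time'' in the forward direction is not available: a flow that agrees with $\Sigma$ near $x_0$ up to time $t_0$ can still be pushed off afterwards by what happens near the origin. For $t\in(0,2l]$ you have no graphical hypothesis. You therefore do not know that $\Sigma\setminus B(0,R)$ lies in the support, so your open--closed set may be empty. Nor is anything preventing the limit from leaving $\Sigma$ into $U$, in which case points of $\Sigma$ are not in the support and the maximum principle has nothing to act on.

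The paper supplies two ingredients here.
\begin{enumerate}
\item Pseudolocality (Ilmanen--Neves--Schulze) plus Ecker--Huisken interior estimates. Since $\Sigma$ is nearly flat at scale $\sqrt{l}$ far out, $M^j_t$ stays a smooth graph over $\Sigma$ with bounded weighted norms on $\{R\le|x|\le j/2\}$ for $|t|\le\frac74 l$. Hence $\Sigma\setminus B(0,R)$ is regular for all $|t|\le\frac32 l$.
\item Avoidance of $\mathcal{M}^\infty$ against the stationary Hardt--Simon leaves $\Sigma^+_c$, started at $t=0$ once $\mu^\infty_0=\mathcal{H}^{2n-1}|_{\Sigma}$ has been established. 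This gives $\operatorname{spt}(\mathcal{M}^\infty)_t\subset\Sigma$ for $t\in[0,\frac32 l]$, after which the infimum/strong-maximum-principle argument can be rerun on the forward interval.
\end{enumerate}
Starting the barrier comparison at $t=-2l$, as you propose, cannot work. The leaves foliate $U^+$, so any part of that slice off $\Sigma$ already lies on some leaf, and inside $B(0,2)$ the slice at $t=-2l$ is constrained only by one-sidedness and the entropy bound.

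The second problem is your treatment of extra mass, which is not a proof either. The shrinking-sphere argument of Corollary \ref{connectedness} needs a piece contained in a ball whose boundary sphere the rest of the flow avoids, whereas spheres such as $\partial B(0,2)$ meet $\Sigma$. No barrier is needed; the paper's argument is entropic:
\begin{itemize}
\item Since $\Sigma$ is a cone with vertex $0$, $F_{0,r}(\mathcal{H}^{2n-1}|_{\Sigma})=\lambda(\Sigma)$ for every $r>0$.
\item The bound $\lambda(\mu^\infty_t)\le\lambda(\Sigma)$ passes to the limit.
\item So once the maximum principle argument gives $\Sigma\setminus\{0\}\subset(\mathcal{M}^\infty)^{\textup{reg}}_t$, and hence $\mu^\infty_t\ge\mathcal{H}^{2n-1}|_{\Sigma}$, any further mass would force $F_{0,r}(\mu^\infty_t)>\lambda(\Sigma)$. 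This covers detached pieces, a bubble at the origin, and higher multiplicity.
\end{itemize}
This is how the paper obtains $\mu^\infty_t=\mathcal{H}^{2n-1}|_{\Sigma}$, first for $t\in[-\frac32 l,0]$ and then, using the two ingredients above, for $t\in[0,\frac32 l]$. That range is all that is needed for the final smooth-convergence contradiction on $[-l,l]$. Note also that you cannot claim anything at $t=-2l$ itself, where the strong maximum principle has no backward neighborhood to work with.
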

\begin{remark}
    A key feature of proposition \ref{pseudolocality theorem} that we will exploit is that we get improvement of graphicality towards the singular part of the cone, provided we are sufficiently close to the regular part of the cone. 
\end{remark}
\begin{proof}[Proof of proposition \ref{pseudolocality theorem}]
    Since proposition \ref{pseudolocality theorem} is scaling invariant, it is enough to prove it for $r = 1$. Suppose the theorem is false. Then we can find a sequence of smooth, properly embedded mean curvature flows $\mathcal{M}^j = (M^j_t)_{t \in [-2l, 2l]}$, and $t_j \in [-l, l] $ so that the following holds.
    \begin{itemize}
        \item $\sup_{|t| \leq 2l}\lambda(M^j_t) \leq \lambda(\Sigma) < 2$. 
        \item $M^j_t$ lies on one side of $\Sigma$ for all $|t| \leq 2l$.
        \item $M^j_t \cap (B(0, j) \setminus  B(0, 1))$ is graphical over $\Sigma_{2, (j-1)} \subset \mathcal{D}^j_t$ for all $t \in [-2l, 0]$. 
        \item $\|u^j_t\|_{C^1(\Sigma_{2, (j-1)})}^* \leq \frac{1}{j}, \ |u^j_t\|_{C^2(\Sigma_{2, (j-1)})}^* \leq \eta$ for all $t  \in [-2l, 0]$. 
        \item Either $M^j_{t_j}\cap (B(0, L) \setminus  B(0, \delta ))$ is not graphical over $\Sigma$, or $\Sigma_{2\delta , (L-1)} \not\subset \mathcal{D}^j_{t_j}$, or $\|u^j_{t_j}\|^*_{C^m(\Sigma_{2\delta , (L-1)})} \geq \eta$.
    \end{itemize}
    Set 
    \begin{equation}\label{two components of complement of cone}
    \mathbf{R}^{2n} \setminus \Sigma = U^+ \cup U^-.
\end{equation}
Since $\mathcal{M}^j$ are smooth, and lies on one side of $\Sigma$, after possibly reflecting across $\Sigma$, we may without loss of generality assume that
\begin{equation}\label{one sidedness of sequence of mean curvauture flow}
    \operatorname{spt}(\mathcal{M}^j) \subset U^+ \times [-2l, 2l].
\end{equation}
    Due to the uniform entropy bound, by possibly passing through a subsequence, there exists a unit-regular, integral $(2n-1)$-Brakke flow $\mathcal{M}^{\infty} = (\mu^{\infty}_t)_{t \in [-2l, 2l]}$ so that $\mathcal{M}^j\to \mathcal{M}^{\infty}$ as integral Brakke flows defined over $I = [-2l, 2l]$ (see theorem 5.8 in \cite{schulze2021introduction}). \\

    We show that for each $|t| \leq \frac{3}{2}l$, 
    \begin{equation}
        \mu^{\infty}_t \equiv \mathcal{H}^{2n-1}|_{\Sigma}.
    \end{equation}
The assumption 
\begin{equation}\label{good c1c2bound onsequence of flows}
    \|u^j_t\|_{C^1(\Sigma_{2, (j-1)})}^* \leq \frac{1}{j}, \ |u^j_t\|_{C^2(\Sigma_{2, (j-1)})}^* \leq \eta \text{ for all }
    t  \in [-2l, 0],
\end{equation}
together with pseudolocality of mean curvature flow (theorem 1.5 in \cite{pseudolocality}), and interior regularity theory \cite{Ecker1991InteriorEF} implies that there exists $R = R(l, \eta, n) > 0$ so that for $j$ large, $(M^j_t \cap (B(0, \frac{j}{2})\setminus B(0, R)))_{t \in [-\frac{7}{4}l, \frac{7}{4}l]}$ is a normal graph over $\Sigma$ with the normal graph function having uniformly bounded weighted $C^k$ norm for each $k$. This implies that outside $B(0, R)$, $t \in [-\frac{3}{2}l, \frac{3}{2}l]$, $\mathcal{M}^{\infty}$ is smooth mean curvature flow that is a normal graph over $\Sigma$. Also note that since we have
\begin{equation}
    \sup_{t \in [-2l, 0]}\|u^j_t\|_{C^1(\Sigma_{2, (j-1)})}^* \leq \frac{1}{j},
\end{equation}
we see that outside $B(0, R)$, $t \in [-\frac{3}{2}l, 0]$, $\mathcal{M}^{\infty}$ is identical to the multiplicity one Simons cone. This in particular implies that for each $|t| \leq \frac{3}{2}l$, 
\begin{equation}
    \operatorname{spt}(\mathcal{M}^{\infty})_t \setminus B(0, R) \subset (\mathcal{M}^{\infty})^{\textup{reg}}_t,
\end{equation}
and for $t \in [-\frac{3}{2}l, 0]$,
\begin{equation}
    \Sigma \setminus B(0, R) \subset (\mathcal{M}^{\infty})^{\textup{reg}}_t.
\end{equation}

We first claim that for each $t \in [-\frac{3}{2}l, 0]$, 
\begin{equation}
     \Sigma \setminus \{0\} \subset (\mathcal{M}^{\infty})^{\textup{reg}}_t.
\end{equation}
Previous discussion implies that there exists $R > 0$ so that 
\begin{equation}
    \Sigma \setminus B(0, R) \subset (\mathcal{M}^{\infty})^{\textup{reg}}_t
\end{equation}
for each $t \in [-\frac{3}{2}l, 0]$. We now select $R_0 \geq 0$ to be the infimum of all such $R > 0$. Then our claim follows if $R_0 = 0$. So for the sake of contradiction, assume that $R_0 > 0$. Take any $X_0 = (x_0, t_0) \in \Sigma \cap \partial B(0, R_0) \times [-\frac{3}{2}l, 0]$. By the choice of $R_0 > 0$, we see that 
\begin{equation}
    X_0 \in \operatorname{spt}(\mathcal{M}^{\infty}) \cap \operatorname{spt}((\mathcal{H}^{2n-1}|_{\Sigma})_{[-\frac{3}{2}l, \frac{3}{2}l]}).
\end{equation}
Since $R_0 > 0$, we can find $\epsilon_0 > 0$ so that $\Sigma \cap B(x_0, \epsilon_0)$ is smooth, and $\Sigma$ divides $B(x_0, \epsilon_0)$ into two components, namely $U^+ \cap B(x_0, \epsilon_0)$, and $U^- \cap B(x_0, \epsilon_0)$, where $U^+, U^-$ are given by \eqref{two components of complement of cone}. Due to \eqref{one sidedness of sequence of mean curvauture flow}, we see that for all $t \in [t_0 - \epsilon_0^2, t_0]$, 
\begin{equation}
    \operatorname{spt}(\mathcal{M}^{\infty})_t \cap B(x_0, \epsilon_0)\subset (U^+ \cup \Sigma) \cap B(x_0, \epsilon_0).
\end{equation}
By Huisken's monotonicity formula together with the entropy bound on $\mathcal{M}^j$, we see that
\begin{equation}
    \Theta(X_0, \mathcal{M}^{\infty}) \leq \lambda(\Sigma) < 2.
\end{equation}
We can therefore apply the strong maximum principle for integral Brakke flows (theorem \ref{strongmaximumprinciplebrakkeflows}) and conclude that after possibly shrinking $\epsilon_0 > 0$, we have
\begin{equation}
    X_0 \in (\mathcal{M}^{\infty})^{\textup{reg}}, \ \operatorname{spt}(\mathcal{M}^{\infty})_t \cap B(x_0, \epsilon_0) \equiv \Sigma \cap B(x_0, \epsilon_0)
\end{equation}
for all $t \in [t_0 - \epsilon_0^2, t_0]$. Since $\Sigma \cap \partial B(0, R_0) \times [-\frac{3}{2}l, 0]$ is compact, we conclude that there exists some $\epsilon_1 > 0$ so that
\begin{equation}
    \Sigma \setminus B(0, R_0 - \epsilon_1) \subset (\mathcal{M}^{\infty})^{\textup{reg}}_t
\end{equation}
for $t \in [-\frac{3}{2}l, 0]$. This contradicts the minimality of $R_0 > 0$. Therefore $R_0 = 0$, hence
\begin{equation}
    \Sigma \subset \operatorname{spt}(\mathcal{M}^{\infty})_t, \ \Sigma \setminus \{0\} \subset (\mathcal{M}^{\infty})^{\textup{reg}}_t
\end{equation}
for each $t \in [-\frac{3}{2}l, 0]$. \\

Once we have $\Sigma \subset \operatorname{spt}(\mathcal{M}^{\infty})_t, \ \Sigma \setminus \{0\} \subset (\mathcal{M}^{\infty})^{\textup{reg}}_t$, the entropy bound
\begin{equation}\label{entropy bound}
    \lambda(\mu^{\infty}_t) \leq \lambda(\Sigma)
\end{equation}
implies that
\begin{equation}
    \mu^{\infty}_t \equiv\mathcal{H}^{2n-1}|_{\Sigma}  
\end{equation}
for all $t \in [-\frac{3}{2}l, 0]$, or else any excess measures will strictly contribute to the entropy of $\mu^{\infty}_t$, and violate \eqref{entropy bound}. \\

To control $t \in [0, \frac{3}{2}l]$, we first note that
\begin{equation}
    \operatorname{spt}(\mathcal{M}^{\infty})_t \subset \Sigma
\end{equation}
for all $t \in [0, \frac{3}{2}l]$. This is because we can apply avoidance principle (appendix E in \cite{Chodosh2023-yl}) to $\mathcal{M}^{\infty}$ against any leaf of the Hardt-Simon foliation. Since we already know that outside $B(0, R)$, $t \in [-\frac{3}{2}l, \frac{3}{2}l]$, $\mathcal{M}^{\infty}$ is smooth, normal graph over $\Sigma$, we see that for each $t \in [-\frac{3}{2}l, \frac{3}{2}l]$, 
\begin{equation}
    \Sigma \setminus B(0, R) \subset (\mathcal{M}^{\infty})^{\textup{reg}}_t.
\end{equation}
By repeating the previous argument using the strong maximum principle, we can conclude that for each $t \in [-\frac{3}{2}l, \frac{3}{2}l]$, 
\begin{equation}
        \mu^{\infty}_t \equiv \mathcal{H}^{2n-1}|_{\Sigma}.
\end{equation}
White's local regularity theory \cite{White2005-nl} implies that $(M^j_t)_{t \in [-l, l]} \to (\Sigma)_{t \in [-l,l]}$ locally smoothly away from $\{0\} \times [-l,l]$, which contradicts the last assumption of $\mathcal{M}^j$. This completes the proof of proposition \ref{pseudolocality theorem}.
\end{proof}
By using proposition \ref{pseudolocality theorem}, and the self shrinkers we constructed in proposition \ref{inner family of self shrinkers}, we can estimate the graphical radius. \\

We first define the precise notion of graphical radius.
\begin{definition}\label{definition of graphical radius}
    Let $(\Tilde{M}_{\tau})_{(-\infty, 0)}$ be as in proposition \ref{Rough graphicality property}. For each $\eta \in (0, \overline{\eta}(n))$,  $\tau \leq \overline{\tau}$ for some $\overline{\tau} < 0$, we define $ r(\tau;\eta)$ to be the infimum among all $r > 0$ so that for each $s \leq \tau$, $\Tilde{M}_{s} \setminus B(0, r)$ is graphical over $\Sigma_{2r, \infty} \subset \mathcal{D}_{s} \subset \Sigma$ with the normal graph function satisfying $\|u_{s}\|^*_{C^3(\Sigma_{2r, \infty})} \leq \eta$.
    Here, $\overline{\eta}(n) \in (0, \frac{1}{100})$ is given by proposition \ref{Rough graphicality property}, and the norms are given by definition \ref{Weighted Ck norms}.
\end{definition}
\begin{remark}
    Note that by proposition \ref{Rough graphicality property}, we can find $\overline{\tau} < 0$ so that $r(\tau ; \eta) $ is well defined for all $\tau \leq \overline{\tau}$. In fact, we have 
    \begin{equation}
        \lim_{\tau \to -\infty}r(\tau ; \eta) = 0
    \end{equation}
    for every $\eta > 0$.
\end{remark}
The main result of this section is the following graphical radius estimate. This  allows us to estimate the graphical radius with respect to the $L^2$ closeness of the flow to the cone in a \textit{fixed} annulus region.
\begin{proposition}\label{graphical radius estimate proposition} There exists $\overline{r} = \overline{r}(n) > 0$, $\overline{\eta} = \overline{\eta}(n) \in (0, \frac{1}{100})$ so that the following holds; let $(M_t)_{t \in (-\infty, 0)}$ be a smooth, properly embedded, ancient mean curvature flow that satisfies assumptions \ref{strong convergence to simonscone assumption1}, \ref{additional geoemtric assumption}, and let $(\Tilde{M}_{\tau})_{\tau \in (-\infty, 0)}$ be the rescaled mean curvature flow. Define 
    \begin{equation}
        \rho_0(\tau) = \sup_{s \leq \tau}\| u_{s}\|_{L^2_{w}(\Sigma_{\frac{\overline{r}}{2}, 2\overline{r}})}, 
    \end{equation}
    where the norm is the Gaussian weighted $L^2$ norm given in definition \ref{gaussian weighted sobolev space}.
    Then for any $\eta \in (0, \overline{\eta})$, there exists $\overline{\tau} < 0$, and $C_0 = C_0(n,  \eta) > 0$ so that for all $\tau \leq \overline{\tau}$, 
    \begin{equation}\label{graphical radius estimate}
        r(\tau ; \eta) \leq C_0\rho_0(\tau)^{\frac{1}{1 - \alpha}}.
    \end{equation}
    Here, \begin{equation}
        \alpha = \alpha(n) = \frac{-(2n-3) + \sqrt{4n^2 - 20n + 17}}{2} \in (-2, -1).
    \end{equation}
\end{proposition}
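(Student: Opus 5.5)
We prove the graphical radius estimate by comparing $\Tilde{M}_{\tau}$ near the origin against the one-parameter family of inner self shrinkers from proposition \ref{inner family of self shrinkers}, and then bootstrapping graphicality inward with proposition \ref{pseudolocality theorem}.

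The properties of the inner family that we use are the following. Each shrinker $S_a$ is $O(n)\times O(n)$ symmetric, lies on the $U^+$ side of $\Sigma$, and has boundary on $\partial B(0,\overline{r})$. At scale $a$ it looks like the Hardt--Simon leaf $\Sigma^+_a$. Hence in $\Sigma_{\overline{r}/2,2\overline{r}}$ its graph function over $\Sigma$ is comparable to $a^{1-\alpha}|x|^{\alpha}$, by the asymptotics of $\hat\psi_c$. Finally, $S_a$ stays at definite distance from $\Sigma$ at scale $a$, i.e.\ it does not enter $B(0,c\,a)$. Stationary shrinkers are stationary solutions of the rescaled flow, so we may use them as barriers for $\Tilde{M}_{s}$.

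The first step converts the $L^2$ smallness into pointwise smallness. Fix $\tau \le \overline{\tau}$. By proposition \ref{Rough graphicality property}, for $s\le\tau$ the flow is graphical with small weighted $C^3$ norm on $\Sigma_{\overline{r}/4,4\overline{r}}$. Parabolic interior estimates for the graph equation, together with the one-sidedness $u_s\ge 0$ from \ref{additional geoemtric assumption}, then give
\[
\sup_{s\le\tau}\|u_s\|_{C^0(\Sigma_{\overline{r}/2,2\overline{r}})}\le C\rho_0(\tau).
\]
Heuristically this is a Harnack / $L^2\to L^\infty$ bound for nonnegative solutions of a linear-type parabolic equation on a fixed annulus, and it holds uniformly in time because the supremum over $s\le\tau$ is built into $\rho_0$.

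The second step is the barrier comparison. Choose $a = C\rho_0(\tau)^{1/(1-\alpha)}$ so that the shrinker $S_a$ lies strictly on the far side of $\Tilde{M}_s$ at the boundary sphere $\partial B(0,\overline{r})$ for all $s\le\tau$. We also need the inner region of $\Tilde{M}_s$ to be initially on the correct side of $S_a$. For this we use $s\to-\infty$: by lemma \ref{local smooth convergence away from origin lemma} and proposition \ref{Rough graphicality property}, the flow converges to $\Sigma$, so for very negative $s$ it lies between $\Sigma$ and $S_a$. The avoidance principle, valid since the boundaries are separated, then gives for all $s\le\tau$ that $\Tilde{M}_s\cap B(0,\overline{r})$ is trapped between $\Sigma$ and $S_a$.

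The third step upgrades trapping to graphicality. In the annuli $B(0,Kr)\setminus B(0,r)$ with $r\ge C' a$, the trapping region has width $\ll r$ relative to $r$. Combined with the entropy bound \eqref{entropy bound proposition eq}, this gives small weighted $C^1$ closeness, via a local regularity / Allard-type argument. Proposition \ref{pseudolocality theorem}, applied at scale $r$ on time windows of length $lr^2$ in the rescaled-flow time, then yields the weighted $C^3$ bound $\le \eta$ down to radius $2\delta r$. Iterating on dyadic scales from $\overline{r}$ inward, and transferring between the rescaled and unrescaled flow over short time intervals, stops at $r\sim C_0 a$. This gives $r(\tau;\eta)\le C_0\rho_0(\tau)^{1/(1-\alpha)}$.

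The main obstacle is the second step, specifically the initial ordering: ensuring that $\Tilde{M}_s$ is on the correct side of $S_a$ near the origin for all sufficiently negative $s$. The flow is not graphical there, and $S_a$ is only a local barrier with boundary. We would handle this by a continuity argument in $a$. Start with $a$ large, where $S_a$ clearly lies outside, then decrease $a$ to the target value, using the strong maximum principle (theorem \ref{strongmaximumprinciplebrakkeflows}) to rule out a first interior touching. Boundary touching is excluded by the first step.
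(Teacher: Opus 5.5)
Your overall route is the paper's. Interior estimates turn $\rho_0$ into a pointwise bound on $|x|=\overline{r}$. Avoidance traps $\Tilde{M}_s\cap B(0,\overline{r})$ between $\Sigma$ and the inner shrinker $\Gamma_\theta$ with $\theta\sim\rho_0^{1/(1-\alpha)}$. Proposition~\ref{pseudolocality theorem} then pushes graphicality inward. The gap is in Step 3, where you must verify the hypotheses of Proposition~\ref{pseudolocality theorem}.

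Trapping in a thin region plus entropy $<2$ does not give $C^1$ closeness at a fixed time by an Allard-type argument. Allard needs mass ratios close to $1$, and a thin slab with entropy below $2$ still allows small folds of size comparable to the slab width. Moreover, $C^1$ closeness alone does not supply the second hypothesis $\|u_t\|^*_{C^2}\le\eta$.

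The missing idea is to interpolate against a $C^2$ bound you already have. The paper argues by contradiction with the minimality of $r(\tau;\eta)$:
\begin{itemize}
\item By definition, $\|u_s\|^*_{C^3(\Sigma_{2r(\tau;\eta),\infty})}\le\eta$ for all $s\le\tau$.
\item If $r(\tau;\eta)\ge C_0\theta$, the barrier gives $|u_s(x)|/|x|\le 2c_0(\theta/r(\tau;\eta))^{1-\alpha}\le\hat\epsilon$ on $\Sigma_{2r(\tau;\eta),10Kr(\tau;\eta)}$.
\item Proposition~\ref{interpolation inequalities of weighted Ck norms} upgrades this to $\|u_s\|^*_{C^1}\le\epsilon$.
\item Both hypotheses of Proposition~\ref{pseudolocality theorem} now hold at scale $2r(\tau;\eta)$. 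This gives graphicality with $C^3$ norm at most $\eta$ down to $r(\tau;\eta)/25$, contradicting the infimum.
\end{itemize}
Your dyadic iteration becomes the same argument if each step interpolates against the $C^2$ bound from the previous step (from Proposition~\ref{Rough graphicality property} at the first step).

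Alternatively, a genuinely parabolic compactness argument would give smooth closeness directly on every annulus with $r\ge C'\theta$, making pseudolocality unnecessary. The limit Brakke flow is supported on $\Sigma$. It has multiplicity one because the entropy is below $2$ and $\Tilde{M}_s$ separates space, so it crosses every normal segment between $\Sigma$ and $\Gamma_\theta$. White's local regularity then applies. That argument would have to be carried out, though; it is not an Allard-type step.

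The step you flag as the main obstacle is not one, and the proposed fix would not work. Since $u_\theta$ is increasing with $u_\theta(0)=\theta$, $\Gamma_\theta$ misses $B(0,\theta)$. Hence $U^+\cap B(0,\theta)$ already lies in the region between $\Sigma$ and $\Gamma_\theta$.
\begin{itemize}
\item Because $r(s;\eta)\to0$, for $s$ very negative the non-graphical part of $\Tilde{M}_s$ lies in $U^+\cap B(0,\theta)$.
\item On the fixed annulus $\Sigma_{\theta/2,\overline{r}}$, $u_s\to0$ uniformly, while $\Gamma_\theta$ stays a positive distance from $\Sigma$.
\item So the ordering holds for all $s\le s_*$, and avoidance with the boundary ordering carries it to $s\le\tau$.
\end{itemize}
This is how the paper handles the analogous trumpet comparison in Lemma~\ref{weighted C2 estimate for the graph function giving us higher power nonlinearity}.

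The continuity argument in $a$ fails on two counts. At a fixed time, a first tangency of $\Tilde{M}_s$ with $S_a$ gives no contradiction, since $\Tilde{M}_s$ is not a shrinker. In spacetime, you would need a starting value of $a$ for which trapping is already known, which is the same ordering question. Also, the shrinkers are only controlled for $\theta\le\theta_0(n)$, so no large-$a$ starting configuration is available.
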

\begin{remark}
    Proposition \ref{graphical radius estimate proposition} holds when $n = 4$ as well. Moreover this estimate is optimal up to a constant $C_0$ in view of the flow generated by a leaf of Hardt-Simon foliation, e.g $(e^{\frac{\tau}{2}}\Sigma^+_1)_{\tau \in (-\infty, 0)}$. Direct computation shows that for each small $\eta > 0$, both $r(\tau ; \eta)$ and $\rho_0(\tau)^{\frac{1}{1 - \alpha}}$ are comparable to $e^{\tau/2}$ uniformly in $\tau$. 
\end{remark}
We now prove proposition \ref{graphical radius estimate proposition}. We first prove a slight variant of proposition \ref{graphical radius estimate proposition} with the $L^2$ norm replaced by a $L^{\infty}$ norm. 
\begin{lemma}\label{graphical radius estimate supversion}Assuming as in proposition \ref{graphical radius estimate proposition}, there exists $\overline{r}(n) = \overline{r}(n) > 0$, $\overline{\eta} = \overline{\eta}(n) \in (0, \frac{1}{100})$ so that for each $\eta \in (0, \overline{\eta})$, there exists constants $\overline{\tau}< 0$, and  $C_0 = C_0(\eta, n) > 0$ so that for all $\tau \leq \overline{\tau}$
    \begin{equation}
        r(\tau ; \eta) \leq C_0(\sup_{s \leq \tau, |x| = \overline{r}, x \in \Sigma} |u_{s}(x)|)^{\frac{1}{1 - \alpha}}.
    \end{equation}
    Here, $\alpha = \alpha(n) \in (-2, -1)$ is given in proposition \ref{graphical radius estimate proposition}.
\end{lemma}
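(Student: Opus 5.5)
We compare the rescaled flow with the family of inner self shrinkers of proposition \ref{inner family of self shrinkers} and then upgrade the resulting barrier control to graphicality using proposition \ref{pseudolocality theorem}. Since proposition \ref{inner family of self shrinkers} appears only later, we will assume it in the following form. The shrinkers $\Gamma_a$ are $O(n)\times O(n)$ symmetric and lie on the same side $U^+$ of $\Sigma$ as $\tilde M_\tau$. They have boundary on the sphere $\partial B(0,\overline r)$, and they are defined for every small parameter $a>0$. Near $\partial B(0,\overline r)$ each $\Gamma_a$ is a normal graph over $\Sigma$ of height comparable to $a$. In the inner region it behaves like a rescaled Hardt--Simon leaf $\Sigma^+_{c(a)}$ with $c(a)\sim a^{\frac{1}{1-\alpha}}$. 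This scaling follows from $\hat\psi_c(r)\approx c_0c^{1-\alpha}r^\alpha$: setting $r=\overline r$ gives height $a\sim c^{1-\alpha}$. The family should also foliate the region between $\Sigma$ and $\Gamma_{a_0}$ inside $B(0,\overline r)$.

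Set $m(\tau)=\sup_{s\le\tau,\,|x|=\overline r}|u_s(x)|$ and $a=Am(\tau)$ for a large constant $A$. The first step is a barrier argument. Suppose $\Gamma_a$ fails to separate $\tilde M_s\cap B(0,\overline r)$ from $\Sigma$, that is, $\tilde M_s$ passes beyond $\Gamma_a$ toward the cone. Being a shrinker, $\Gamma_a$ is a static solution of rescaled mean curvature flow, and its boundary lies strictly between $\Sigma$ and $\tilde M_s\cap\partial B(0,\overline r)$ for all $s\le\tau$, because the height there is $a\gg m(\tau)$. As $s\to-\infty$, lemma \ref{local smooth convergence away from origin lemma} and proposition \ref{Rough graphicality property} give graphicality of $\tilde M_s$ outside any small ball. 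One-sidedness, together with the connectedness from corollary \ref{connectedness}, then shows that for very negative $s$ the surface $\tilde M_s\cap B(0,\overline r)$ lies on the far side of $\Gamma_{a'}$ for any fixed $a'$, and the avoidance principle propagates this up to time $\tau$. The orientation conventions (which side is "outer") need care here, but the conclusion we want is a confinement statement. Namely, for all $s\le\tau$, the part of $\tilde M_s$ in $B(0,\overline r)$ lies between $\Sigma$ and $\Gamma_a$. In particular it avoids the region near the origin excluded by $\Gamma_a$, and it is squeezed within normal distance $\lesssim c(a)^{1-\alpha}|x|^{\alpha}$ of $\Sigma$ for $|x|\ge Kc(a)$.

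The second step converts this $C^0$ confinement into $C^3$ graphicality down to radius $\sim c(a)$. For $r\ge K_0c(a)$, the squeeze gives normal distance $\le C(c(a)/r)^{1-\alpha}\,r$ on the annulus $B(0,Kr)\setminus B(0,r)$. This is a small multiple of $r$, so the scale-invariant $C^0$ norm is small. We then apply proposition \ref{pseudolocality theorem} in unrescaled variables, which is where the entropy bound $<2$ from lemma \ref{local smooth convergence away from origin lemma} and one-sidedness are used. To get the required $C^1$ smallness $\epsilon$ and $C^2$ bound $\eta$ at scale $r$, we run a continuity argument in $r$, decreasing from the scale where proposition \ref{Rough graphicality property} already holds. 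At each stage, the graphicality known at larger radii combines with the $C^0$ squeeze through interpolation (proposition \ref{interpolation inequalities of weighted Ck norms}) to give small $C^1$. Proposition \ref{pseudolocality theorem} then extends graphicality inward by a fixed factor $\delta$. Because this step is a compactness argument, uniform in $r$, it iterates down to $r\approx K_0 c(a)$. This yields $r(\tau;\eta)\le C\,c(a)\le C_0 m(\tau)^{\frac1{1-\alpha}}$.

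The main obstacle is the barrier step. We must check that the initial ordering holds at $s=-\infty$, which depends on the fine structure of $\Gamma_a$ near the origin, and that the boundary comparison persists uniformly in $s$. We must also rule out $\tilde M_s$ touching $\Gamma_a$ at an interior point near the origin, where graphicality of $\tilde M_s$ is not yet known. The plan for this is a strong maximum principle applied to the first touching time, combined with continuity in the parameter $a$ through the foliation property, sliding $a$ down from $a_0$.
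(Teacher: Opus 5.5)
Your proposal is correct and follows essentially the paper's route: avoidance against the inner shrinkers with $\theta\sim f(\tau)^{\frac{1}{1-\alpha}}$ to get $|u_s|\le \hat{u}_{\theta}$, interpolation with the $C^2$ bound to get $C^1$-smallness on an annulus, and then proposition \ref{pseudolocality theorem} to push graphicality inward. The only structural difference is that the paper does this once, as a contradiction at the infimal radius $r(\tau;\eta)$ under the assumption $r(\tau;\eta)>C_0 f(\tau)^{\frac{1}{1-\alpha}}$, rather than iterating. Your orientation remarks in the barrier step are muddled: the correct picture is that $\tilde{M}_s$ is trapped between $\Sigma$ and $\Gamma_\theta$, a region containing all of $B(0,\theta)$ on that side of $\Sigma$. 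So the initial ordering for very negative $s$ follows directly from $r(s;\eta)\to 0$ and local smooth convergence away from the origin, and no sliding in the parameter is needed.
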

\begin{proof}[Proof of lemma \ref{graphical radius estimate supversion}]
We first choose the constants $\overline{r}$, and $\overline{\tau}$. Recall that for $0 < \theta \leq \theta_0(n)$, $\hat{u}_{\theta}$ is the normal graph function of the inner self shrinker constructed in proposition \ref{inner family of self shrinkers} (see corollary \ref{graphicality of self shrinkers over simons cone}). By proposition \ref{approximate formula for inner self shrinkers}, for any $\epsilon > 0$, there exists $\delta = \delta(n, \epsilon) > 0$ so that if
    \begin{equation}
        x \in \Sigma \setminus\{0\}, |x| \leq \delta, \frac{\theta}{|x|} \leq \delta,
    \end{equation}
    then 
    \begin{equation}\label{asymptotic formula}
        |\hat{u}_{\theta}(x) - c_0\theta^{1 - \alpha}|x|^{\alpha}| \leq \epsilon \theta^{1 - \alpha}|x|^{\alpha},
    \end{equation}
    where $c_0 = c_0(n) > 0$. Fix $\epsilon = \epsilon(n) = \frac{c_0}{2} > 0$, and define $\overline{r} = \overline{r}(n) > 0$ to be the corresponding $\delta = \delta(n) >0$. \\
    
Define $K = K(n, \eta)> 10$ to be the constant from proposition \ref{pseudolocality theorem} with $m = 3, l = 1, L = 15, \delta = \frac{1}{100}$.
    Choose $\overline{\tau} < 0$ so that for all $\tau \leq \overline{\tau}$, $r(\tau; \eta)$ is well defined, and is less than $\min\{\frac{1}{10},\frac{\overline{r}}{100K}\}$. Note that such choice of $\overline{\tau}$ is possible due to proposition \ref{Rough graphicality property}.\\

    We now prove the desired estimate. For simplicity, let 
    \begin{equation}
        f(\tau) = \sup_{s \leq \tau, |x| = \overline{r}, x \in \Sigma}|u_{s}(x)|.
    \end{equation}
    Suppose that lemma \ref{graphical radius estimate supversion} is false. Then, there exists $\tau \leq \overline{\tau}$ so that 
    \begin{equation}\label{contradiction assumption graphical radius}
        r(\tau ; \eta) > C_0f(\tau)^{\frac{1}{1 - \alpha}}
    \end{equation}
    for large $C_0 > 0$ to be determined. \\
    
    By avoidance principle against the self shrinker constructed in proposition \ref{inner family of self shrinkers}, we have
    \begin{equation}
        |u_{s}(x)| \leq |\hat{u}_{\theta}(x)| \text{ for all } x \in \Sigma_{2r(\tau;\eta) , \overline{r}}, \ s \leq \tau
    \end{equation}
  whenever $\theta > 0$ satisfies
    \begin{equation}
        f(\tau) \leq |\hat{u}_{\theta}(x)| \leq 2f(\tau)
    \end{equation}
    for any $|x| = \overline{r}$. 
    By choosing $\overline{\eta} = \overline{\eta}(n) \in (0, \frac{1}{100})$ so that $f(\tau) \leq c(n)\overline{\eta}$ is small enough, we can use the formula \eqref{asymptotic formula} to find the optimal $\theta$, which is given by
    \begin{equation}
        \theta  = c_1(n)f(\tau)^{\frac{1}{1 - \alpha}}
    \end{equation}
    for some $c_1(n) > 0$. 
   Note that for all $x \in \Sigma_{2r(\tau ; \eta), \overline{r}}$, we have
    \begin{equation}
        \frac{\theta}{|x|} \leq \frac{\theta}{r(\tau ; \eta)}\leq \frac{c_1(n)}{C_0} \leq \overline{r}(n),
    \end{equation}
    provided $C_0 \geq \underbar{C}_0(n)$. Then the formula \eqref{asymptotic formula} is applicable for $x \in \Sigma_{2r(\tau ; \eta), \overline{r}}$, $\epsilon = \frac{c_0}{2}$, hence we see that for all $x \in \Sigma_{2r(\tau ; \eta) , 10Kr(\tau ; \eta)} \subset \Sigma_{2r(\tau;\eta), \overline{r}}$, $s \leq \tau$ 
    \begin{equation}\label{improved C0}
        \frac{|u_{s}(x)|}{|x|} \leq 2c_0\theta^{1 - \alpha}|x|^{\alpha - 1} \leq 2c_0\theta^{1 - \alpha}r(\tau ; \eta)^{\alpha - 1} \leq2c_0(\frac{c_1}{C_0})^{1 - \alpha} \leq \hat{\epsilon}
    \end{equation}
    provided $C_0 \geq \underbar{C}_0(n, \hat{\epsilon})$. By the definition of $r(\tau ; \eta)$, we have the $C^2$ bound
    \begin{equation}
        \|u_{s}\|^*_{C^2(\Sigma_{2r(\tau ; \eta), 10Kr(\tau ; \eta)})} \leq \eta
    \end{equation}
    for all $s \leq \tau$.
    By the interpolation inequality (proposition \ref{interpolation inequalities of weighted Ck norms}) with above $C^2$ bound, we have
    \begin{equation}\label{improved C1 norm}
        \|u_{s}\|^*_{C^1(\Sigma_{3r(\tau ; \eta), 9Kr(\tau ; \eta)})} \leq \epsilon
    \end{equation}
    for all $s \leq \tau$, 
    provided $\hat{\epsilon} > 0$ is small enough depending on $n$, $\eta$, and $\epsilon > 0$. Select $\epsilon = \epsilon( n, \eta) > 0$ from proposition \ref{pseudolocality theorem} with $m = 3, \delta = \frac{1}{100}, l = 1, L = 15$.\\

    We will now use proposition \ref{pseudolocality theorem}. For each $s \leq \tau$, define the mean curvature flow
\begin{equation}
    M^s_{t} = \sqrt{1-t}\Tilde{M}_{s - \ln(1-t)}, \ t \in [-8r^2(\tau ; \eta), 8r^2(\tau ; \eta)].
\end{equation}
Recalling that $r(\tau ; \eta) \leq \frac{1}{10}$ by our choice of $\overline{\tau}$, previous discussion on $\Tilde{M}_{s}$ implies that for each $t \in [-8r^2(\tau ; \eta), 0]$, 
\begin{equation}
    M^s_t \setminus B(0,2r(\tau ; \eta)) \text{ can be written as a normal graph over }\mathcal{D}_t \subset \Sigma,
\end{equation}
with
\begin{equation}
    \Sigma_{4r(\tau ; \eta), 9Kr(\tau ; \eta)} \subset \mathcal{D}_t, \ \|\hat{u}^s_t\|^*_{C^1(\Sigma_{4r(\tau ; \eta), 9Kr(\tau ; \eta)})} \leq \epsilon, \ \|\hat{u}^s_t\|^*_{C^2(\Sigma_{4r(\tau ; \eta), 9Kr(\tau ; \eta)})} \leq \eta.
\end{equation}
The required entropy bound \eqref{entropy bound proof of pseudolocality} follows from \eqref{entropy bound proposition eq}, and one sidedness follows from assumption \ref{additional geoemtric assumption}. Thus we can apply proposition \ref{pseudolocality theorem} to $M^s_t$ with $r = 2r(\tau ; \eta)$, $m = 3, \delta = \frac{1}{100}, l = 1, L = 15$. Taking $t = 0$, and noting that $M^s_0 = \Tilde{M}_s$, we see that $\Tilde{M}_s \cap (B(0, 30r(\tau ; \eta) \setminus B(0, \frac{1}{50}r(\tau ; \eta))$ is a normal graph over $\Sigma_{\frac{r(\tau ; \eta)}{25}, 28r(\tau ; \eta)} \subset \mathcal{D}_0$, with $\|u_s\|^*_{C^3(\Sigma_{\frac{r(\tau ; \eta)}{25}, 28r(\tau ; \eta)})} \leq 
\eta$ for all $s \leq \tau$. Since we already have graphicality of $\Tilde{M}_{s}$ over $\Sigma$ outside $B(0, r(\tau ; \eta))$ with weighted $C^3$ norm bounded by $\eta$ due to the definition of $r(\tau ; \eta)$, the minimality of $r(\tau ; \eta)$ implies that
    \begin{equation}
        \frac{1}{25}r(\tau ; \eta) \geq r(\tau ; \eta)
    \end{equation}
    which is a contradiction since $r(\tau ; \eta) > 0$ by \eqref{contradiction assumption graphical radius}. Thus lemma \ref{graphical radius estimate supversion} is true for any $C_0 \geq \underbar{C}_0(n,  \eta)$, in particular for $C_0 = \underbar{C}_0(n,  \eta) > 0$. 
\end{proof}
Once we have lemma \ref{graphical radius estimate supversion}, by possibly shrinking $\overline{\eta}(n)$, proposition \ref{graphical radius estimate proposition} immediately follows. This is because we can apply standard interior Schauder estimate to replace the $L^{\infty}$ norm with $L^2$ norm, and use the fact that in $\Sigma_{\frac{\overline{r}}{2}, 2\overline{r}}$, the weighted $L^2$ norm is equivalent to the standard $L^2$ norm up to a constant only depending on $n$. \\

Note that in the proof of proposition \ref{graphical radius estimate proposition}, we used the avoidance principle to obtain a weighted $C^0$ estimate of $u_{\tau}$ near the origin given by \eqref{improved C0}. It turns out that this estimate is useful for later purposes, so we record it separately. The constants below are essentially those given in proposition \ref{graphical radius estimate proposition} with $\eta = \overline{\eta}(n) \in (0, \frac{1}{100})$. 
\begin{proposition}\label{inner region weighted C0 estimate}
    Let $ (\Tilde{M}_{\tau})_{\tau \in (-\infty, 0)}$ be smooth, properly embedded ancient rescaled mean curvature flow which satisfies assumption \ref{strong convergence to simonscone assumption1}, \ref{additional geoemtric assumption}. Then there exist $\overline{r}(n) > 0$, $c_0(n), \ C_0(n) > 0$, $\overline{\tau}< 0$ so that for all $\tau \leq \overline{\tau}$, 
    \begin{equation}
        x \in \Sigma, \ C_0(\sup_{s \leq \tau}\|u_s\|_{L^2_w(\Sigma_{\frac{\overline{r}}{2}, 2\overline{r}})})^{\frac{1}{1 - \alpha}} \leq |x| \leq \overline{r},
    \end{equation}
    the following weighted $C^0$ estimate
    \begin{equation}
        \frac{|u_{\tau}(x)|}{|x|} \leq c_0\sup_{s \leq \tau}\|u_s\|_{L^2_w(\Sigma_{\frac{\overline{r}}{2}, 2\overline{r}})}|x|^{\alpha - 1}
    \end{equation}
    holds. Here, $\alpha = \alpha(n) \in (-2, -1)$ is given by \eqref{the formula for alpha}.
\end{proposition}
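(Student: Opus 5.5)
This is essentially the weighted $C^0$ estimate \eqref{improved C0} from the proof of lemma \ref{graphical radius estimate supversion}, with the sup-norm replaced by the $L^2_w$ quantity. I would proceed in three steps.

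\textbf{Step 1: from $L^2_w$ to $L^\infty$ on the fixed sphere.} Set $\rho_0(\tau) = \sup_{s\le\tau}\|u_s\|_{L^2_w(\Sigma_{\frac{\overline r}{2},2\overline r})}$ and $f(\tau)=\sup_{s\le\tau,\,|x|=\overline r}|u_s(x)|$. On the fixed annulus $\Sigma_{\frac{\overline r}{2},2\overline r}$, proposition \ref{Rough graphicality property} makes $u_s$ small in $C^3$ for all $s\le\overline\tau$. Hence $u_s$ solves the graphical rescaled mean curvature flow equation there, which is a uniformly parabolic quasilinear equation with bounded coefficients. Interior parabolic estimates on the time interval $[s-1,s]$ then give $|u_s(x)|\le C(n)\sup_{s'\in[s-1,s]}\|u_{s'}\|_{L^2(\Sigma_{\frac{\overline r}{2},2\overline r})}$ for $|x|=\overline r$. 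Since the Gaussian weight is comparable to a constant on this annulus, this yields $f(\tau)\le C_1(n)\rho_0(\tau)$. This is the same remark the paper uses to pass from lemma \ref{graphical radius estimate supversion} to proposition \ref{graphical radius estimate proposition}.

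\textbf{Step 2: comparison with the inner shrinkers.} Fix $\eta=\overline\eta(n)$ and take $\overline r$ from the proof of lemma \ref{graphical radius estimate supversion}. Choose $\theta=c_1(n)f(\tau)^{\frac1{1-\alpha}}$, so that $f(\tau)\le\hat u_\theta\le 2f(\tau)$ on $\{|x|=\overline r\}$. By the avoidance principle against the shrinker of proposition \ref{inner family of self shrinkers}, exactly as in that proof, $|u_s(x)|\le|\hat u_\theta(x)|$ for $x\in\Sigma_{2r(\tau;\eta),\overline r}$ and $s\le\tau$. Whenever $\theta/|x|\le\overline r$, formula \eqref{asymptotic formula} with $\epsilon=c_0/2$ gives
\[
|\hat u_\theta(x)|\le 2c_0\theta^{1-\alpha}|x|^\alpha\le C f(\tau)|x|^\alpha\le C\rho_0(\tau)|x|^\alpha .
\]
Dividing by $|x|$ gives the claimed bound $\frac{|u_\tau(x)|}{|x|}\le c_0\rho_0(\tau)|x|^{\alpha-1}$.

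\textbf{Step 3: checking the range of $|x|$.} Two conditions must hold on the stated range of $x$.
\begin{itemize}
\item The comparison needs $|x|\ge 2r(\tau;\eta)$. Proposition \ref{graphical radius estimate proposition} gives $2r(\tau;\eta)\le 2C_0\rho_0^{\frac1{1-\alpha}}$.
\item The asymptotic formula needs $\theta\le\overline r|x|$. Since $\theta\le c_1C_1^{\frac1{1-\alpha}}\rho_0^{\frac1{1-\alpha}}$, this holds once $|x|\ge C\rho_0^{\frac1{1-\alpha}}$.
\end{itemize}
Enlarging $C_0(n)$ to cover both conditions handles this. Also $\overline\tau$ must be chosen so that $\rho_0(\tau)$ is small enough for the $\theta$ selection to be valid, i.e. $\theta\le\theta_0(n)$. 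This follows from lemma \ref{local smooth convergence away from origin lemma}, since $\rho_0(\tau)\to0$ as $\tau\to-\infty$.

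The main obstacle is the avoidance step in Step 2, which is deferred to the shrinker construction. One needs the boundary ordering at $|x|=\overline r$ for all $s\le\tau$, and the inner boundary at the graphical radius must not touch. I would handle this as in the proof of lemma \ref{graphical radius estimate supversion}. One-sidedness \ref{additional geoemtric assumption} puts the flow on the shrinker's side near the origin. Since $\tilde M_s\to\Sigma$ as $s\to-\infty$, the ordering holds initially. The avoidance principle for rescaled flows with boundary then propagates it forward in $s$.
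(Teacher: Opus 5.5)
Your proposal is correct and is essentially the paper's own argument: the paper obtains this proposition by recording the avoidance-principle bound \eqref{improved C0} from the proof of lemma \ref{graphical radius estimate supversion}. It then replaces the supremum at $|x|=\overline{r}$ by the $L^2_w$ norm via interior estimates, and uses proposition \ref{graphical radius estimate proposition} with $\eta=\overline{\eta}(n)$ to guarantee $|x|\geq 2r(\tau;\eta)$ and $\theta/|x|\leq\overline{r}$ on the stated range, exactly as in your Steps 1--3. One small point: your justification of the initial ordering in the avoidance step is slightly loose, since the convergence $\tilde{M}_s\to\Sigma$ holds only away from the origin. It is completed by $r(s;\eta)\to 0$ together with the fact that $\Gamma_{\theta}$ stays at distance $\gtrsim\theta$ from the origin, a detail the paper also leaves implicit.
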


\section{Inner outer estimate}\label{innerouterestimatesection}
In this section, we assume $n \geq 5$, and set $\Sigma$ to be the $O(n)\times O(n)$ symmetric Simons cone. By using the fact that the inner self shrinkers constructed in proposition \ref{inner family of self shrinkers} foliate a fixed small ball centered at the origin (theorem \ref{inner region foliation}), we establish an inner outer estimate, which is an inverse Poincare type inequality analogous to the estimate introduced in section 4.5 in \cite{angenent2019unique} (proposition \ref{inner outer estimate}). This inner outer estimate, and the graphical radius estimate (proposition \ref{graphical radius estimate proposition}) will serve as key tools in later analysis.\\

To state the inner outer estimate, we need some definitions. We first define Huisken's functional. 
\begin{definition}\label{Huisken's functional}
    For properly embedded hypersurface $M \subset \mathbf{R}^{2n}$, the Huisken's functional is defined by
    \begin{equation}
        H(M) = \int_{M}e^{-\frac{|x|^2}{4}}d\mathcal{H}^{2n-1}(x).
    \end{equation}
\end{definition}
We next define graphical radius for general hypersurfaces, analogous to that in definition \ref{definition of graphical radius}. 
\begin{definition}\label{def of graphical radius for general hypersurface}
    Let $M^{2n-1} \subset \mathbf{R}^{2n}$ be a smooth, properly embedded hypersurface. For each $\eta \in (0, \frac{1}{100})$, we define
    \begin{equation*}
        r(\eta ; M) = \inf\{ r > 0 \ | \ M \setminus B(0, r) \text{ is graphical over }\Sigma_{2r, \infty} \subset \mathcal{D}\text{ with }\|u\|^*_{C^1(\Sigma_{2r, \infty})} \leq \eta \}.
    \end{equation*}
    If the set on the right hand side is empty, we then define $r = \infty$. Here, the weighted norm is given by definition \ref{Weighted Ck norms}.
    \end{definition}
    We now prove the inner outer estimate. 
    \begin{proposition}\label{inner outer estimate}
        Let $n \geq 5$, $\Sigma$ be the $O(n)\times O(n)$ symmetric Simons cone, and $M \subset \mathbf{R}^{2n}$ be a smooth, properly embedded hypersurface so that 
        \begin{equation}
            H(M) \leq H(\Sigma),
        \end{equation}
        where $H$ is the Huisken's functional given in definition \ref{Huisken's functional}. There exists $\overline{r}(n) > 0$, $C(n) > 0$, and $\overline{\eta} = \overline{\eta}(n)  \in (0, \frac{1}{100})$ so that if the graphical radius of $M$ given by definition \ref{def of graphical radius for general hypersurface} satisfies $r(\eta ; M) \leq \frac{\overline{r}}{10}$ for some $\eta \in (0, \overline{\eta})$, then if we let $u$ denote the normal graph function of $M$ over $\Sigma$, we have for any $3r(\eta ; M) \leq r \leq \overline{r}(n)$, 
      \begin{align*}
           \frac{1}{r^2}&\int_{\Sigma_{r,2r}}u^2e^{-\frac{|x|^2}{4}}d\mathcal{H}^{2n-1} + \int_{\Sigma_{r, \infty}}|\nabla^{\Sigma}u|^2e^{-\frac{|x|^2}{4}}d\mathcal{H}^{2n-1} 
           \\ & + \int_{\Sigma_{r, \infty}}u^2(|x|^2 + \frac{1}{|x|^2})e^{-\frac{|x|^2}{4}}d\mathcal{H}^{2n-1} \leq C(n)\int_{\Sigma_{2r,\infty}}u^2e^{-\frac{|x|^2}{4}}d\mathcal{H}^{2n-1}.
      \end{align*}
    \end{proposition}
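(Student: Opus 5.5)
The plan is to follow the strategy of section 4.5 in \cite{angenent2019unique}: compare $H(M)$ with $H(\Sigma)$ region by region, using a calibration argument in the inner region and a second variation expansion in the outer region. I would split $\mathbf{R}^{2n}$ into the ball $B(0,\rho)$ and its complement, where $\rho\in[r,2r]$ is chosen by averaging (a good-slice choice) so that boundary terms on $\partial B(0,\rho)$ are controlled by $\frac{1}{r}\int_{\Sigma_{r,2r}}(u^2+|\nabla u|^2)\cdots$. The hypothesis $H(M)\le H(\Sigma)$ then reads
\begin{equation*}
\Big[H(M\cap B(0,\rho)) - H(\Sigma\cap B(0,\rho))\Big] + \Big[H(M\setminus B(0,\rho)) - H(\Sigma \setminus B(0,\rho))\Big] \le 0 .
\end{equation*}

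\textbf{Inner region.} I would use theorem \ref{inner region foliation}: the inner self shrinkers of proposition \ref{inner family of self shrinkers} (together with $\Sigma$ itself, on both sides) foliate a fixed ball $B(0,\overline{r})$. Since each leaf is a self shrinker, its unit normal field $N$ satisfies $\mathrm{div}(e^{-|x|^2/4}N)=0$, so $e^{-|x|^2/4}N$ is a calibration for Huisken's functional. Applying the divergence theorem on the region between $M\cap B(0,\rho)$ and $\Sigma\cap B(0,\rho)$ gives
\begin{equation*}
H(M\cap B(0,\rho))\ge H(\Sigma\cap B(0,\rho)) - \Big|\text{flux through } \partial B(0,\rho)\Big| .
\end{equation*}
The flux is bounded by the Gaussian measure of the thin strip of $\partial B(0,\rho)$ between $M$ and $\Sigma$. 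This is $\lesssim \int_{\Sigma\cap\partial B_\rho}|u|\,e^{-|x|^2/4}$, and in fact one can do better: because $N$ is close to $\nu_\Sigma$ with error $O(|u|/|x|)$, this term is quadratic, of size $\lesssim \frac1\rho\int_{\Sigma\cap \partial B_\rho} u^2 e^{-|x|^2/4}$. Since $r\ge 3r(\eta;M)$, $M$ is a small graph there, so this estimate applies.

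\textbf{Outer region.} On $\Sigma_{\rho,\infty}$, $M$ is a graph with $\|u\|^*_{C^1}\le\eta$. I would Taylor expand the Gaussian area:
\begin{equation*}
H(\mathrm{graph}\, u) - H(\Sigma) = \tfrac12\int\big(|\nabla u|^2 - |A|^2u^2 - \tfrac12 u^2\big)e^{-|x|^2/4} + \text{(boundary terms at } \rho) + O(\eta)\int\Big(|\nabla u|^2+\tfrac{u^2}{|x|^2}\Big)e^{-|x|^2/4}.
\end{equation*}
Here $|A|^2=\frac{2n-2}{|x|^2}$. The boundary terms are of the form $\int_{\partial} u\,\partial_\nu u$ and are again bounded by the good-slice choice. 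Combining the two regions yields $-\langle Lu,u\rangle$ over $\Sigma_{\rho,\infty}\lesssim$ (boundary/annulus terms) $+ O(\eta)(\cdots)$.

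\textbf{Coercivity.} The key input is that for $n\ge5$ the Hardy constant on the cone strictly beats $|A|^2$: $(\frac{2n-3}{2})^2 = \frac{(2n-3)^2}{4}>2n-2$. Hence
\begin{equation*}
\int\big(|\nabla u|^2-|A|^2u^2\big)e^{-|x|^2/4}\ge \epsilon\int\Big(|\nabla u|^2+\tfrac{u^2}{|x|^2}\Big)e^{-|x|^2/4} - C\int u^2 e^{-|x|^2/4},
\end{equation*}
which is essentially \eqref{spectral decomposition} localized with a cutoff. The Gaussian weight also supplies $\int u^2|x|^2e^{-|x|^2/4}\lesssim\int(|\nabla u|^2+u^2)e^{-|x|^2/4}$ via the standard identity for the drift Laplacian. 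Absorbing the $O(\eta)$ terms gives control of the outer terms by $C\int_{\Sigma_{\rho,\infty}}u^2 e^{-|x|^2/4}$ plus boundary terms. The boundary terms at $\rho$ are then absorbed into the left side using the $\frac{1}{r^2}\int_{\Sigma_{r,2r}}u^2$ term, which itself is bounded by the Hardy term $\int u^2/|x|^2$ on $\Sigma_{r,2r}$. Finally, replace $\int_{\Sigma_{\rho,\infty}}u^2$ by $\int_{\Sigma_{2r,\infty}}u^2$ plus a small multiple of $\frac{1}{r^2}\int_{\Sigma_{r,2r}}u^2$ times $r^2$, which can be absorbed since $r\le\overline{r}$ is small.

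\textbf{Main obstacle.} I expect the hardest step to be the inner calibration: one must show the flux error is genuinely quadratic in $u$ and localized to the annulus, and that the part of $M$ inside $B(0,r(\eta;M))$, which is not graphical, is honestly covered by the foliation. For the latter, I would use that the foliation fills $B(0,\overline{r})$ and that $M$, with entropy below $2$, separates the ball appropriately, so the divergence theorem applies to the region swept between $M$ and $\Sigma$.
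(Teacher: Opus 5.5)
\textbf{There is a genuine gap, in the step where you absorb the boundary terms at $\rho$.}

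After your good-slice choice, everything produced by the cut is bounded by $C\big(\frac{1}{r^2}\int_{\Sigma_{r,2r}}u^2+\int_{\Sigma_{r,2r}}|\nabla u|^2\big)$ (Gaussian weight understood), with $C$ of order one. This includes the calibration flux, the $\int_{\partial}u\,\partial_\nu u$ terms created by rewriting the expansion as $-\langle Lu,u\rangle$, and the cutoff errors from localizing \eqref{spectral decomposition}. That bound is the very quantity you are trying to estimate. The coercive side, by contrast, only gives $\epsilon\int_{\Sigma_{\rho,\infty}}\big(|\nabla u|^2+\frac{u^2}{|x|^2}\big)$, with $\epsilon$ small and on a region that does not contain $\Sigma_{r,\rho}$. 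What your chain of inequalities actually yields is ``outer energy $\le C\int u^2+C\cdot$(inner annulus)''. That is the Poincar\'e direction, not the inverse Poincar\'e inequality claimed.

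This cannot be fixed by bookkeeping with unspecified constants, because the comparison is critical at leading order. Take $u=c|x|^{\alpha}$, the profile of the inner shrinkers and Hardt--Simon leaves. The Gaussian second variation on $\Sigma_{l,\infty}$ equals $\frac{|\alpha|}{2}\int_{\partial\Sigma_{l,\infty}}\frac{u^2}{|x|}e^{-|x|^2/4}$ up to a multiple of $\int_{\Sigma_{l,\infty}}u^2e^{-|x|^2/4}$. The calibration flux is, to leading order, the same boundary quantity, since the leaves themselves make the calibration an equality. So knowing only ``flux $=O(\int_\partial u^2/|x|)$'', which is all your remark that $N$ is $O(|u|/|x|)$-close to $\nu_\Sigma$ provides, gives no information.

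\textbf{The missing idea} is to keep, with its exact constant, the boundary term that the \emph{exterior} domain supplies, and to compare it with the exact constant in the flux. This is how the paper argues.
\begin{itemize}
\item In the outer region it expands the Jacobian pointwise. There is no integration by parts and no cutoff, so the exterior boundary term is not destroyed. It then applies the first inequality of lemma \ref{Poincare type inequality for u^2/r^2} on $\Sigma_{l,\infty}$. This produces $-2\int_{\partial\Sigma_{l,\infty}}\frac{u^2}{|x|}e^{-|x|^2/4}$ with a favorable sign.
\item In the inner region it uses proposition \ref{approximate formula for inner self shrinkers}, i.e.\ $\hat{u}_{\theta}'\approx\alpha\,\hat{u}_{\theta}/|x|$. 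This bounds the flux by $|\alpha|\frac{c_0+\epsilon}{c_0-\epsilon}\int_{\partial\Sigma_{l,\infty}}\frac{u^2}{|x|}e^{-|x|^2/4}$.
\item Since $|\alpha|<2$ precisely for $n\ge5$ (note $\alpha(4)=-2$), the boundary term survives. For every $l\in[r,2r]$ this gives $\int_{\partial\Sigma_{l,\infty}}\frac{u^2}{|x|}e^{-|x|^2/4}+\int_{\Sigma_{l,\infty}}|\nabla u|^2e^{-|x|^2/4}\le C\int_{\Sigma_{l,\infty}}u^2e^{-|x|^2/4}$.
\item Integrating in $l$, and absorbing the $\Sigma_{r,2r}$ part of the right side using that $r\le\overline{r}$ is small, gives the inverse Poincar\'e bound. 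The remaining terms then follow from lemma \ref{Poincare type inequality for u^2/r^2}.
\end{itemize}
Your stated reason for $n\ge5$, namely $\frac{(2n-3)^2}{4}>2n-2$, already holds for $n=4$. That is a sign this comparison is absent from your argument.

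Splitting by a round ball is also inaccurate at exactly this critical order. Since $|x+t\nu_\Sigma(x)|^2=|x|^2+t^2$, the set $M\setminus B(0,\rho)$ is not the graph over $\Sigma_{\rho,\infty}$, and the discrepancy has Gaussian area about $\frac12\int_\partial\frac{u^2}{|x|}$. The paper instead uses $D_{0,l}=\{(xw_1,yw_2):x+y\le\sqrt2\,l\}$, chosen so that the normal segments over $\partial\Sigma_{l,\infty}$ lie in $\partial D_{0,l}$. This makes $M\cap D_{l,\infty}$ exactly the graph over $\Sigma_{l,\infty}$.

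Finally, the non-graphical part near the origin is not the real difficulty. The foliation fills the ball, and the paper handles the origin by a small perturbation before applying the divergence theorem.
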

    Recall by proposition \ref{Rough graphicality property}, the graphical radius of the time slices of the rescaled mean curvature flow that satisfies assumption \ref{strong convergence to simonscone assumption1} can be made arbitrarily small for all sufficiently negative time. Moreover, the Huisken functional bound
 $H(\Tilde{M}_{\tau}) \leq H(\Sigma)$ is satisfied by Huisken's monotonicity formula \cite{huisken1990asymptotic}.
 Thus, combining this fact with the inner outer estimate (proposition \ref{inner outer estimate}), we obtain the following corollary. 
    \begin{corollary}\label{inner outer estimate for RMCF corollary}
        Let $(M_t)_{t \in (-\infty, 0)}$ be a smooth, properly embedded mean curvature flow that satisfies assumption \ref{strong convergence to simonscone assumption1}, and let $(\Tilde{M}_{\tau})_{\tau \in (-\infty, 0)}$ be the rescaled mean curvature flow. 
        Then there exists $C(n) > 0, \ \overline{r}(n) > 0$,  $\overline{\eta}(n) \in (0, \frac{1}{100})$, $\overline{\tau} < 0$ so that for any $\tau \leq \overline{\tau}$, 
        \begin{align*}
           \frac{1}{r^2}&\int_{\Sigma_{r,2r}}u_{\tau}^2e^{-\frac{|x|^2}{4}}d\mathcal{H}^{2n-1} + \int_{\Sigma_{r, \infty}}|\nabla^{\Sigma}u_{\tau}|^2e^{-\frac{|x|^2}{4}}d\mathcal{H}^{2n-1} 
           \\ & + \int_{\Sigma_{r, \infty}}u_{\tau}^2(|x|^2 + \frac{1}{|x|^2})e^{-\frac{|x|^2}{4}}d\mathcal{H}^{2n-1} \leq C(n)\int_{\Sigma_{2r,\infty}}u_{\tau}^2e^{-\frac{|x|^2}{4}}d\mathcal{H}^{2n-1}
      \end{align*}
        holds for any $3r(\tau ; \eta) \leq r \leq \overline{r}(n)$, where $u_{\tau}$ is the normal graph function of $\Tilde{M}_{\tau}$ over $\Sigma$. 
    \end{corollary}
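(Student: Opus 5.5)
The plan is to reduce the statement to proposition \ref{inner outer estimate}, applied slice by slice with $M = \Tilde{M}_{\tau}$. Two hypotheses have to be checked for all sufficiently negative $\tau$. The first is the Huisken functional bound $H(\Tilde{M}_{\tau}) \leq H(\Sigma)$. The second is smallness of the graphical radius, $r(\eta;\Tilde{M}_{\tau}) \leq \overline{r}/10$ for some $\eta \in (0,\overline{\eta})$, in the sense of definition \ref{def of graphical radius for general hypersurface}. Once both hold, the inequality in the corollary is exactly the conclusion of proposition \ref{inner outer estimate}, valid for $3r(\eta;\Tilde{M}_\tau) \leq r \leq \overline{r}$.

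For the Huisken functional, I would note that
\begin{equation*}
H(\Tilde{M}_{\tau}) = (4\pi)^{\frac{2n-1}{2}}\, F_{0,1}(\Tilde{M}_{\tau}).
\end{equation*}
Moreover $F_{0,1}(\Tilde{M}_\tau) = \Theta((0,0),\mathcal{M},\sqrt{-t})$ with $t = -e^{-\tau}$, which is the Gaussian density ratio of the unrescaled flow centered at the spacetime origin. By Huisken's monotonicity formula this quantity is nonincreasing in $\tau$, i.e. nondecreasing as the scale $\sqrt{-t}$ increases. Its limit as $\tau \to -\infty$ is computed exactly as in the proof of lemma \ref{local smooth convergence away from origin lemma}: use the localized monotonicity formula with cutoff $(1-|x|^2/L\lambda_i^{-2})_+^3$, pass to the Brakke limit given by \ref{strong convergence to simonscone assumption1}, apply Fatou, and let $L \to \infty$. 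That argument already gives $F_{x_0,r}(M_{t_0}) \leq F_{0,1}(\Sigma)$ for every $x_0, r, t_0$. Taking $x_0 = 0$ and $r = \sqrt{-t_0}$ then yields $H(\Tilde{M}_\tau) \leq H(\Sigma)$ for every $\tau$, so no separate limit argument is needed beyond the one already in that proof.

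For the graphical radius, I would fix $\eta = \overline{\eta}(n)/2$, where $\overline{\eta}$ is the constant from proposition \ref{inner outer estimate}, shrunk if necessary so that it is also below the constant of proposition \ref{Rough graphicality property}. I would then apply proposition \ref{Rough graphicality property} with $r = \overline{r}/20$. This gives $\overline{\tau}$ such that for $\tau \leq \overline{\tau}$, $\Tilde{M}_\tau \setminus B(0,\overline{r}/20)$ is a normal graph over a domain containing $\Sigma_{\overline{r}/10,\infty}$ with weighted $C^3$ norm, and hence $C^1$ norm, at most $\eta$. Hence $r(\eta;\Tilde{M}_\tau) \leq \overline{r}/10$.

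One bookkeeping point needs care. In the corollary the lower bound on the radii is $3r(\tau;\eta)$, the flow graphical radius of definition \ref{definition of graphical radius}, which involves $C^3$ control for all $s \leq \tau$. The proposition instead uses the single-slice $C^1$ graphical radius $r(\eta;\Tilde{M}_\tau)$. Since $C^3$ control on all past slices implies $C^1$ control on the slice $\tau$, we have $r(\eta;\Tilde{M}_\tau) \leq r(\tau;\eta)$. So every admissible $r \geq 3r(\tau;\eta)$ in the corollary also satisfies $r \geq 3r(\eta;\Tilde{M}_\tau)$, and proposition \ref{inner outer estimate} applies. Finally, the graph function $u_\tau$ from proposition \ref{Rough graphicality property} coincides with the normal graph function used in proposition \ref{inner outer estimate} on the overlap of their domains, by uniqueness of normal graphs in a small tubular neighborhood of the cone. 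None of these steps is a real obstacle; the only point needing an argument is the Huisken functional bound, and it is a direct consequence of monotonicity together with \ref{strong convergence to simonscone assumption1}.
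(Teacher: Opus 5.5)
Your proposal is correct and follows the paper's own route. The paper likewise obtains the corollary by applying proposition \ref{inner outer estimate} to each slice $\Tilde{M}_\tau$: the bound $H(\Tilde{M}_\tau)\le H(\Sigma)$ comes from Huisken's monotonicity (equivalently from the entropy bound of lemma \ref{local smooth convergence away from origin lemma}, since $\lambda(\Sigma)=F_{0,1}(\Sigma)$), and the small graphical radius comes from proposition \ref{Rough graphicality property}. Your observation that $r(\eta;\Tilde{M}_\tau)\le r(\tau;\eta)$ reconciles the two notions of graphical radius, a point the paper leaves implicit.
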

    In the proof of proposition \ref{inner outer estimate}, we need the following Poincare type inequalities.  
\begin{lemma}\label{Poincare type inequality for u^2/r^2}
    Let $u$ be a smooth function defined on $\Sigma_{r, \infty}$ for any $r > 0$. Assume that $u(x)$ has at most polynomial growth in $|x|$ as $|x| \to \infty$. Then 
\begin{align*}
     n\int_{\Sigma_{r, \infty}}\frac{u^2}{|x|^2}e^{-\frac{|x|^2}{4}}d\mathcal{H}^{2n-1} \leq & \int_{\Sigma_{r, \infty}}u^2e^{-\frac{|x|^2}{4}}d\mathcal{H}^{2n-1} + \frac{4}{3n - 6}\int_{\Sigma_{r, \infty}}|\nabla u|^2e^{-\frac{|x|^2}{4}}d\mathcal{H}^{2n-1} \\ &- 2\int_{\partial \Sigma_{r, \infty}}\frac{u^2}{|x|}e^{-\frac{|x|^2}{4}}d\mathcal{H}^{2n-2}.
\end{align*}
Also, 
\begin{align*}
    \int_{\Sigma_{r, \infty}}u^2|x|^2e^{-\frac{|x|^2}{4}}d\mathcal{H}^{2n-1} \leq 4\int_{\partial \Sigma_{r, \infty}}|x|u^2e^{-\frac{|x|^2}{4}}d\mathcal{H}^{2n-2} + 32n\int_{\Sigma_{r, \infty}}(u^2 + |\nabla u|^2)e^{-\frac{|x|^2}{4}}d\mathcal{H}^{2n-1}.
\end{align*}
\end{lemma}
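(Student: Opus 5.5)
Both inequalities will come from the divergence theorem on $\Sigma_{r,\infty}$ applied to radial vector fields weighted by $u^2 e^{-\frac{|x|^2}{4}}$, followed by absorbing the cross term with Cauchy--Schwarz. The geometric input is that $\Sigma$ is a cone, so the position vector $x$ is tangent to $\Sigma$ at every $x \in \Sigma \setminus \{0\}$. Hence $x^T = x$ and $\textup{div}_\Sigma x = 2n-1$. It follows that
\begin{equation*}
\textup{div}_\Sigma \Big(\frac{x}{|x|^2}\Big) = \frac{2n-1}{|x|^2} - \frac{2|x^T|^2}{|x|^4} = \frac{2n-3}{|x|^2}.
\end{equation*}
The boundary $\partial\Sigma_{r,\infty} = \Sigma\cap\partial B(0,r)$ has outward conormal $-x/|x|$. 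All integrations by parts are first done on $\Sigma_{r,R}$. We then let $R\to\infty$: the outer boundary terms vanish because $u$ grows at most polynomially while the Gaussian decays. If $\int |\nabla u|^2 e^{-\frac{|x|^2}{4}} = \infty$, both inequalities are trivial, so we may assume it is finite.

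For the first inequality, take $X = u^2 \frac{x}{|x|^2} e^{-\frac{|x|^2}{4}}$. Then
\begin{equation*}
\textup{div}_\Sigma X = \Big[(2n-3)\frac{u^2}{|x|^2} + \frac{2u\,\nabla u\cdot x}{|x|^2} - \frac{u^2}{2}\Big]e^{-\frac{|x|^2}{4}}.
\end{equation*}
The boundary flux is $-\int_{\partial\Sigma_{r,\infty}} \frac{u^2}{|x|}e^{-\frac{|x|^2}{4}}$. Multiplying the resulting identity by $2$ gives
\begin{equation*}
2(2n-3)\int \frac{u^2}{|x|^2}e^{-\frac{|x|^2}{4}} = \int u^2 e^{-\frac{|x|^2}{4}} - 4\int \frac{u\,\nabla u\cdot x}{|x|^2}e^{-\frac{|x|^2}{4}} - 2\int_{\partial} \frac{u^2}{|x|}e^{-\frac{|x|^2}{4}}.
\end{equation*}
We bound $4|u||\nabla u|/|x| \le \delta\, u^2/|x|^2 + \frac{4}{\delta}|\nabla u|^2$ with $\delta = 3n-6$. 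This choice makes $2(2n-3) - \delta = n$, and the gradient coefficient becomes $\frac{4}{3n-6}$, which is exactly the stated inequality.

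For the second inequality, take $Y = u^2 x\, e^{-\frac{|x|^2}{4}}$, so that
\begin{equation*}
\textup{div}_\Sigma Y = \Big[(2n-1)u^2 + 2u\,\nabla u\cdot x - \tfrac12 u^2|x|^2\Big]e^{-\frac{|x|^2}{4}},
\end{equation*}
with boundary flux $-\int_\partial |x|u^2 e^{-\frac{|x|^2}{4}}$. This yields
\begin{equation*}
\tfrac12\int u^2|x|^2 e^{-\frac{|x|^2}{4}} = \int_\partial |x|u^2e^{-\frac{|x|^2}{4}} + (2n-1)\int u^2e^{-\frac{|x|^2}{4}} + 2\int u\,\nabla u\cdot x\, e^{-\frac{|x|^2}{4}}.
\end{equation*}
We use $2|u||\nabla u||x| \le \frac14 u^2|x|^2 + 4|\nabla u|^2$ and absorb the first term into the left side. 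Multiplying by $4$ then gives coefficients $4$ on the boundary term, $8n-4$ on $\int u^2$, and $16$ on $\int|\nabla u|^2$. All of these are dominated by $32n$.

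The only delicate point is the justification of the limit $R\to\infty$. This requires that the absorbed weighted integrals be finite, which holds by polynomial growth together with the Gaussian weight, so the absorption is legitimate. The remaining steps are routine bookkeeping of constants.
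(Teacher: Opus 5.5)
Your proof is correct and is essentially the paper's argument. The paper writes $d\mathcal{H}^{2n-1}=s^{2n-2}\,ds\,d\sigma(\omega)$ and integrates by parts in the radial variable $s$, which is exactly your divergence identity for the radial fields $u^2\frac{x}{|x|^2}e^{-\frac{|x|^2}{4}}$ and $u^2x\,e^{-\frac{|x|^2}{4}}$; it then closes with the same Young's inequalities (weight $3n-6$ on the $u^2/|x|^2$ term in the first estimate, and absorbing half of $\int u^2|x|^2e^{-\frac{|x|^2}{4}}$ in the second).
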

\begin{proof}[Proof of lemma \ref{Poincare type inequality for u^2/r^2}]
    We parametrize $\Sigma_{r, \infty}$ as 
    \begin{equation}
        (r, \infty) \times (\frac{1}{\sqrt{2}}\mathbf{S}^{n-1} \times \frac{1}{\sqrt{2}}\mathbf{S}^{n-1}) \ni (s,\omega) \to s\cdot \omega \in \Sigma_{r, \infty}. 
    \end{equation}
    We denote the volume measure of $\frac{1}{\sqrt{2}}\mathbf{S}^{n-1} \times \frac{1}{\sqrt{2}}\mathbf{S}^{n-1}$ by $d\sigma(\omega)$. Then by integration by parts together with the growth assumption of $u$ as $|x| \to \infty$, we have
    \begin{align*}
        \int_{\Sigma_{r, \infty}}u^2e^{-\frac{|x|^2}{4}}d\mathcal{H}^{2n-1} & = \int_{r}^{\infty}\int_{\frac{1}{\sqrt{2}}\mathbf{S}^{n-1} \times \frac{1}{\sqrt{2}}\mathbf{S}^{n-1}}u^2(s\omega)s^{2n-2}e^{-\frac{s^2}{4}}d\sigma(\omega)ds \\ & = \int_{\frac{1}{\sqrt{2}}\mathbf{S}^{n-1} \times \frac{1}{\sqrt{2}}\mathbf{S}^{n-1}}\int_{r}^{\infty}-2u^2s^{2n-3}\frac{d}{ds}(e^{-\frac{s^2}{4}})dsd\sigma(\omega) \\ & = 2\int_{\frac{1}{\sqrt{2}}\mathbf{S}^{n-1} \times \frac{1}{\sqrt{2}}\mathbf{S}^{n-1}}u^2(r\omega)r^{2n-3}e^{-\frac{r^2}{4}}d\sigma(\omega) \\ & \ \ \ \ +\int_{r}^{\infty}\int_{\frac{1}{\sqrt{2}}\mathbf{S}^{n-1} \times \frac{1}{\sqrt{2}}\mathbf{S}^{n-1}}(4n - 6)u^2s^{2n-4}e^{-\frac{s^2}{4}}d\sigma(\omega)ds \\ & \ \ \ \ + \int_{r}^{\infty}\int_{\frac{1}{\sqrt{2}}\mathbf{S}^{n-1} \times \frac{1}{\sqrt{2}}\mathbf{S}^{n-1}}4u\partial_su s^{2n-3}e^{-\frac{s^2}{4}}d\sigma(\omega)ds.
    \end{align*}
    Now, applying Young's inequality
    \begin{equation}
        |4u\partial_su s^{2n-3}| \leq (3n-6)u^2s^{2n-4} + \frac{4}{3n-6}(\partial_su)^2s^{2n-2},
    \end{equation}
    we obtain
    \begin{align*}
        \int_{\Sigma_{r, \infty}}u^2e^{-\frac{|x|^2}{4}}d\mathcal{H}^{2n-1} & \geq n\int_{\Sigma_{r, \infty}}\frac{u^2}{|x|^2}e^{-\frac{|x|^2}{4}}d\mathcal{H}^{2n-1} - \frac{4}{3n - 6}\int_{\Sigma_{r, \infty}}|\nabla u|^2e^{-\frac{|x|^2}{4}}d\mathcal{H}^{2n-1} \\ & \ \ \ \ +2\int_{\partial \Sigma_{r, \infty}}\frac{u^2}{|x|}e^{-\frac{|x|^2}{4}}d\mathcal{H}^{2n-2},
    \end{align*}
    thus proving the first inequality in lemma \ref{Poincare type inequality for u^2/r^2}.\\

    We now turn to the second inequality. Using the same parametrization, we have
    \begin{align*}
        \int_{\Sigma_{r, \infty}}u^2|x|^2e^{-\frac{|x|^2}{4}}d\mathcal{H}^{2n-1} & = \int_{r}^{\infty}\int_{\frac{1}{\sqrt{2}}\mathbf{S}^{n-1}\times \frac{1}{\sqrt{2}}\mathbf{S}^{n-1}}u^2(s\omega)s^{2n}e^{-\frac{s^2}{4}}d\sigma(\omega)ds \\ & =\int_{r}^{\infty}\int_{\frac{1}{\sqrt{2}}\mathbf{S}^{n-1}\times \frac{1}{\sqrt{2}}\mathbf{S}^{n-1}}-2u^2(s\omega)s^{2n-1}\frac{d}{ds}(e^{-\frac{s^2}{4}})d\sigma(\omega)ds \\ & = \int_{\frac{1}{\sqrt{2}}\mathbf{S}^{n-1}\times \frac{1}{\sqrt{2}}\mathbf{S}^{n-1}}2u^2(r\omega)r^{2n-1}e^{-\frac{r^2}{4}}d\sigma(\omega) \\ & \ \ \ \ + \int_{r}^{\infty}\int_{\frac{1}{\sqrt{2}}\mathbf{S}^{n-1}\times \frac{1}{\sqrt{2}}\mathbf{S}^{n-1}}(4n-2)u^2(s\omega)s^{2n-2} e^{-\frac{s^2}{4}}d\sigma(\omega)ds \\ & \ \ \ \ + \int_{r}^{\infty}\int_{\frac{1}{\sqrt{2}}\mathbf{S}^{n-1}\times \frac{1}{\sqrt{2}}\mathbf{S}^{n-1}}4u\partial_sus^{2n-1} e^{-\frac{s^2}{4}}d\sigma(\omega)ds \\ & \leq \int_{\partial \Sigma_{r, \infty}}2|x|u^2e^{-\frac{|x|^2}{4}}d\mathcal{H}^{2n-2} + 4n\int_{\Sigma_{r, \infty}}u^2e^{-\frac{|x|^2}{4}}d\mathcal{H}^{2n-1} \\ & \ \ \ \ + 8\int_{\Sigma_{r, \infty}}|\nabla u|^2e^{-\frac{|x|^2}{4}}d\mathcal{H}^{2n-1} + \frac{1}{2}\int_{\Sigma_{r, \infty}}u^2|x|^2e^{-\frac{|x|^2}{4}}d\mathcal{H}^{2n-1}.
    \end{align*}
    Thus, absorbing the far right integral gives us the second Poincare type inequality.
\end{proof}
    \begin{proof}[Proof of proposition \ref{inner outer estimate}]
    We first set $\overline{r}(n) \leq \frac{r_0(n)}{10}$, where $r_0(n) > 0$ is given in theorem \ref{inner region foliation}. We fix any $3r(\eta ; M) \leq r \leq \overline{r}(n)$, and consider any $r \leq l \leq 2r$. We now split $\mathbf{R}^{2n}$ into two regions given by
    \begin{equation}
        D_{0,l} = \{(xw_1, yw_2) \ | \  w_1,w_2 \in \mathbf{S}^{n-1},  x,y \geq 0 , \ x + y \leq \sqrt{2}l \},
    \end{equation}
    and
    \begin{equation}
        D_{l,\infty} = \mathbf{R}^{2n} \setminus D_{0,l}.
    \end{equation}
    We define 
    \begin{equation}
        M_{0,l} = M \cap D_{0,l}, \ M_{l, \infty} = M \cap D_{l, \infty}.
    \end{equation}
    Finally, we define
    \begin{equation}
        H_{0,l}(M) = \int_{M_{0,l}}e^{-\frac{|x|^2}{4}}d\mathcal{H}^{2n-1}, \ H_{l, \infty}(M) = \int_{M_{l, \infty}}e^{-\frac{|x|^2}{4}}d\mathcal{H}^{2n-1},
    \end{equation}
    and define $H_{0,l}(\Sigma), E_{l, \infty}(\Sigma)$ analogously. \\

    By the inequality
    \begin{equation}
        H(M) \leq H(\Sigma),
    \end{equation}
    we obtain
    \begin{equation}
        H_{0,l}(M) - H_{0,l}(\Sigma) \leq H_{l,\infty}(\Sigma) - H_{l, \infty}(M).
    \end{equation}
    We first estimate 
    \begin{equation}
        H_{l,\infty}(\Sigma) - H_{l, \infty}(M).
    \end{equation}
    By the assumption that $3r(\eta ; M) \leq r \leq l$, we see that $M_{l, \infty}$ is a normal graph over $\Sigma_{l, \infty}$ with graph norm $\|u\|^*_{C^1(\Sigma_{l, \infty})} \leq \eta \leq \overline{\eta}(n)$, where $\overline{\eta}(n) > 0$ will be determined. This means that we can use the parametrization for $M_{l, \infty}$ ;
    \begin{equation}
        \Sigma_{l, \infty} \ni x = (\frac{|x|w_1}{\sqrt{2}}, \frac{|x|w_2}{\sqrt{2}}) \to x + u(x)\nu_{\Sigma}(x) \in M_{l, \infty},
    \end{equation}
    where $\nu_{\Sigma}(x) = (-\frac{w_1}{\sqrt{2}}, \frac{w_2}{\sqrt{2}})$ for $w_1, w_2 \in \mathbf{S}^{n-1}$. Then we can write
\begin{equation}
    H_{l,\infty}(\Sigma) - H_{l, \infty}(M)  = \int_{\Sigma_{l, \infty}}e^{-\frac{|x|^2}{4}}[1 -  J(x, u, \nabla u)e^{-\frac{u^2}{4}}]d\mathcal{H}^{2n-1},
\end{equation}
where $J(x, u, \nabla u)$ is the Jacobian of the parametrization. By directly computing $J$, we see that for any $\epsilon > 0$, there exists $\overline{\eta}_0(n,\epsilon) \in (0, \frac{1}{100})$ so that whenever $0 < \eta \leq \overline{\eta}_0$, we have
\begin{equation}
    J(x,u, \nabla u) \geq (1 - \frac{u^2}{|x|^2})^{n-1}(1 + (\frac{1}{2} - \epsilon)|\nabla u|^2).
\end{equation}
Above inequality implies that by possibly making $\overline{\eta}_0(n,\epsilon) > 0$ smaller, we have
\begin{align*}
    1 - J(x,u, \nabla u)e^{-\frac{u^2}{4}} & \leq 1 - e^{-\frac{u^2}{4}}(1 - \frac{u^2}{|x|^2})^{n-1}(1 + (\frac{1}{2} - \epsilon)|\nabla u|^2) \\ & \leq 1 - e^{-\frac{u^2}{4}}(1 - n\frac{u^2}{|x|^2})(1 + (\frac{1}{2} - \epsilon)|\nabla u|^2) \\ & \leq 1 - (1 - \frac{u^2}{4})(1 - n\frac{u^2}{|x|^2})(1 + (\frac{1}{2} - \epsilon)|\nabla u|^2) \\ & \leq  \frac{1}{3}u^2 + n\frac{u^2}{|x|^2} - (\frac{1}{2} - 2\epsilon)|\nabla u|^2.
\end{align*}
By integrating above estimate, we see that for any $\epsilon > 0$, $0 < \eta \leq \overline{\eta}_0(n, \epsilon)$, $3r(\eta, M) \leq r \leq l \leq 2r \leq 2\overline{r}(n) \leq \frac{r_0(n)}{5}$, we have
\begin{equation}\label{righthandside first estimate}
    H_{l, \infty}(\Sigma) - H_{l, \infty}(M)  \leq \int_{\Sigma_{l, \infty}}(\frac{1}{3}u^2 + n\frac{u^2}{|x|^2} - (\frac{1}{2} - 2\epsilon)|\nabla u|^2)e^{-\frac{|x|^2}{4}}d\mathcal{H}^{2n-1}.
\end{equation}
Applying first inequality in lemma \ref{Poincare type inequality for u^2/r^2} to \eqref{righthandside first estimate}, we obtain
    \begin{align*}
         H_{l, \infty}(\Sigma) - H_{l, \infty}(M)  &\leq \int_{\Sigma_{l, \infty}}(\frac{1}{3}u^2 + n\frac{u^2}{|x|^2} - (\frac{1}{2} - 2\epsilon)|\nabla u|^2)e^{-\frac{|x|^2}{4}}d\mathcal{H}^{2n-1} \\ & \leq (\frac{4}{3n - 6} - (\frac{1}{2} - 2\epsilon))\int_{\Sigma_{l, \infty}}|\nabla u|^2e^{-\frac{|x|^2}{4}}d\mathcal{H}^{2n-1} \\ & \ \ \ \ + 2\int_{\Sigma_{l, \infty}}u^2e^{-\frac{|x|^2}{4}}d\mathcal{H}^{2n-1} - 2\int_{\partial \Sigma_{l, \infty}}\frac{u^2}{|x|}e^{-\frac{|x|^2}{4}}d\mathcal{H}^{2n-2}.
    \end{align*}
    Since $n\geq 5$, $\frac{4}{3n - 6} < \frac{1}{2}$, therefore we can find $\epsilon(n) > 0$ so that
    \begin{equation}
        \frac{4}{3n - 6} - (\frac{1}{2} - 2\epsilon) = -C_0(n) < 0.
    \end{equation}
Now that $\epsilon(n) > 0$ is determined, $\overline{\eta}_0(n) = \overline{\eta}_0(n, \epsilon(n)) \in (0, \frac{1}{100})$ is also determined, thus we see that for any $\eta \in (0, \overline{\eta}_0(n))$, $3r(\eta ; M) \leq r \leq l \leq 2r \leq 2\overline{r} \leq  \frac{r_0(n)}{5}$, 
\begin{align*}
     H_{l, \infty}(\Sigma) - H_{l, \infty}(M)  & \leq - C_0(n)\int_{\Sigma_{l, \infty}}|\nabla u|^2e^{-\frac{|x|^2}{4}}d\mathcal{H}^{2n-1} + 2\int_{\Sigma_{l, \infty}}u^2e^{-\frac{|x|^2}{4}}d\mathcal{H}^{2n-1}  - 2\int_{\partial \Sigma_{l, \infty}}\frac{u^2}{|x|}e^{-\frac{|x|^2}{4}}d\mathcal{H}^{2n-2}.
\end{align*}
We now estimate $H_{0,l}(M) - H_{0,l}(\Sigma)$. The idea is to use the fact that the inner self shrinkers constructed in proposition \ref{inner family of self shrinkers} foliate $B(0, r_0(n))$ together with calibration argument as in \cite{angenent2019unique}. \\

By theorem \ref{inner region foliation}, the ball $B(0, r_0(n))$ is foliated by the self shrinkers
\begin{equation}
        \{\Gamma_{\theta}\}_{\theta \in (0, \sqrt{2(n-1)})}\cup \{\Lambda_{\theta}\}_{\theta \in (0, \sqrt{2(n-1)})} \cup \{\Sigma\},
    \end{equation}
where 
\begin{equation}
    \Gamma_{\theta} = \{ (xw_1, u_{\theta}w_2) \ | \ x \in [0, \sqrt{2(n-1)}]\}, \ \Lambda_{\theta} = \{(u_{\theta}(x)w_1, xw_2) \ | \ x \in [0, \sqrt{2(n-1)}]\},
\end{equation}
with $u_{\theta}$ being the profile function constructed in proposition \ref{inner family of self shrinkers}. 
We can thus define a smooth vector field in $B(0, r_0) \setminus \{0\}$ given by the unit normal vectors of above self shrinkers. More precisely, we can define for each $x \in B(0, r_0) \setminus \{0\}$, 
\begin{equation}
\nu_{fol}(x) = 
    \begin{cases}
        \frac{1}{\sqrt{1 + (u_{\theta}')^2}}(-u_{\theta}'w_1, w_2) \text{ if }x = (sw_1, u_{\theta}w_2) \in \Gamma_{\theta} \\ 
        (-\frac{1}{\sqrt{2}}w_1, \frac{1}{\sqrt{2}}w_2) \text{ if }x = (sw_1, sw_2) \in \Sigma \\ 
        \frac{1}{\sqrt{1 + (u_{\theta}')^2}}(-w_1,u_{\theta}' w_2) \text{ if }x = (u_{\theta}w_1, sw_2) \in \Lambda_{\theta}.
    \end{cases}
\end{equation}
We can then apply divergence theorem. More specifically, due to the fact that $\nu_{fol}$ is not defined up to the origin, we make an arbitrary small perturbation of both $M$, and $\Sigma$ near the origin so that the origin is not contained in the perturbed surface. We then apply divergence theorem to the perturbed surface, and then take limit back to original $M$, and $\Sigma$. Keeping in mind that $3r(\eta ; M) \leq r \leq l \leq 2r \leq \frac{r_0}{5}$, and using the formula of $\nu_{fol}$, we have the lower bound
\begin{equation}\label{conseuqence of divergence theorem}
    H_{0,l}(M) - H_{0, l}(\Sigma) \geq -\int_{\omega \in \frac{1}{\sqrt{2}}\mathbf{S}^{n-1} \times \frac{1}{\sqrt{2}}\mathbf{S}^{n-1}}\int_0^{|u(l\omega)|}|\nu_0 \cdot \nu_{fol}|e^{-\frac{l^2}{4}}l^{2n-2}dtd\sigma(\omega),
\end{equation}
where if we set $\omega = (\frac{w_1}{\sqrt{2}}, \frac{w_2}{\sqrt{2}})$ for $w_1, w_2 \in \mathbf{S}^{n-1}$, then $\nu_0 = (\frac{w_1}{\sqrt{2}}, \frac{w_2}{\sqrt{2}})$, and $\nu_{fol}$ is evaluated at $(\frac{l - t}{\sqrt{2}}w_1, \frac{l + t}{\sqrt{2}}w_2)$ for $0 \leq t \leq |u(l\omega)|$. \\

We now use proposition \ref{approximate formula for inner self shrinkers}. Let $\epsilon > 0$ to be determined. Then proposition \ref{approximate formula for inner self shrinkers} implies that we can find $\theta_0(n) > 0$, and $\delta = \delta(n, \epsilon) > 0$ so that whenever $x \in \partial\Sigma_{l, \infty}$, if 
\begin{equation}
    l \leq \delta, \frac{\theta}{l} \leq \delta, 0 < \theta \leq \theta_0,
\end{equation}
then
 \begin{equation}\label{C0 estimate for inner self shrinker}
        |\hat{u}_{\theta}(x) - c_0\theta^{1 - \alpha}|x|^{\alpha}| \leq \epsilon\theta^{1 - \alpha}|x|^{\alpha},
    \end{equation}
    and
    \begin{equation}\label{C1 estimate for inner self shrinker}
        |\hat{u}_{\theta}'(x) - \alpha c_0\theta^{1 - \alpha}|x|^{\alpha - 1}| \leq \epsilon \theta^{1 - \alpha}|x|^{\alpha - 1},
    \end{equation}
    where $\alpha = \alpha(n) \in (-2, -1)$ is given by \eqref{the formula for alpha}, and $c_0(n) > 0$, and $\hat{u}_{\theta}$ is the normal graph function of $\Gamma_{\theta}$ over $\Sigma$. \\

    We can then first use \eqref{C0 estimate for inner self shrinker} to determine the correct $\theta > 0$ for each $x = (\frac{l - t}{\sqrt{2}}w_1, \frac{l + t}{\sqrt{2}}w_2)$, and then use \eqref{C1 estimate for inner self shrinker} to estimate $|\nu_0 \cdot \nu_{fol} |$. In particular, we first see that for each $\epsilon > 0$, we can find $\overline{\eta}_1 = \overline{\eta}_1(n, \epsilon) \in (0, \frac{1}{100})$ so that whenever $l \leq \delta(n, \epsilon)$, $\eta \leq \overline{\eta}_1$, we have that for each $x = (\frac{l - t}{\sqrt{2}}w_1, \frac{l + t}{\sqrt{2}}w_2)$, $0 \leq t \leq |u(l\omega)|$, the $\theta$ which $x \in \Gamma_{\theta}$ satisfies
    \begin{equation}
        0 \leq \frac{\theta}{l} \leq (\frac{1}{c_0 - \epsilon} \frac{|u(l \omega)|}{l})^{\frac{1}{1 - \alpha}}.
    \end{equation}
    Then, \eqref{C1 estimate for inner self shrinker} implies that
    \begin{equation}\label{estimate of foliation normal}
        |\nu_0\cdot \nu_{fol}| = \frac{|\hat{u}_{\theta}'|}{\sqrt{1 + (\hat{u}_{\theta}')^2}} \leq 
   (|\alpha|c_0 + \epsilon)(\frac{\theta}{l})^{1 - \alpha} \leq \frac{|\alpha|(c_0 + \epsilon)}{c_0 - \epsilon}\frac{|u(l\omega)|}{l}.
    \end{equation}
    Combining above with \eqref{conseuqence of divergence theorem} implies that for each $\epsilon > 0$, we can find $\delta = \delta(n, \epsilon) > 0$, $\overline{\eta}_1 = \overline{\eta}_1(n, \epsilon) > 0$ so that whenever $\eta \in (0, \overline{\eta}_1(n, \epsilon))$, $3r(\eta ; M) \leq r \leq l \leq 2r \leq \min \{ \frac{r_0}{5}, \delta\}$, we have
\begin{align*}
     H_{0,l}(M) - H_{0, l}(\Sigma) &\geq -\int_{\omega \in \frac{1}{\sqrt{2}}\mathbf{S}^{n-1} \times \frac{1}{\sqrt{2}}\mathbf{S}^{n-1}}\int_0^{|u(l\omega)|}|\nu_0 \cdot \nu_{fol}|e^{-\frac{l^2}{4}}l^{2n-2}dtd\sigma(\omega) \\ & \geq - \int_{\omega \in \frac{1}{\sqrt{2}}\mathbf{S}^{n-1} \times \frac{1}{\sqrt{2}}\mathbf{S}^{n-1}}\int_0^{|u(l\omega)|}\frac{|\alpha|(c_0 + \epsilon)}{c_0 - \epsilon}\frac{|u(l\omega)|}{l}e^{-\frac{l^2}{4}}l^{2n-2}dtd\sigma(\omega) \\ & = - \int_{\omega \in \frac{1}{\sqrt{2}}\mathbf{S}^{n-1} \times \frac{1}{\sqrt{2}}\mathbf{S}^{n-1}}\frac{|\alpha|(c_0 + \epsilon)}{c_0 - \epsilon}\frac{|u(l\omega)|^2}{l}e^{-\frac{l^2}{4}}l^{2n-2}d\sigma(\omega)\\ & = - \frac{|\alpha|(c_0 + \epsilon)}{c_0 - \epsilon}\int_{\partial \Sigma_{l, \infty}}\frac{u^2}{|x|}e^{-\frac{|x|^2}{4}}d\mathcal{H}^{2n-2}.
\end{align*}
Combining this with the previous estimate, we obtain
\begin{align*}
     (2 - |\alpha|\frac{c_0 + \epsilon}{c_0 - \epsilon})\int_{\partial \Sigma_{l, \infty}}\frac{u^2}{|x|}e^{-\frac{|x|^2}{4}}d\mathcal{H}^{2n-2} + C_0(n)\int_{\Sigma_{l, \infty}}|\nabla u|^2e^{-\frac{|x|^2}{4}}d\mathcal{H}^{2n-1}  \leq 2\int_{\Sigma_{l, \infty}}u^2e^{-\frac{|x|^2}{4}}d\mathcal{H}^{2n-1},
\end{align*}
provided $0 < \eta \leq \min \{\overline{\eta}_0(n), \overline{\eta}_1(n, \epsilon)\}$, $3r(\eta ; M) \leq r \leq l \leq 2r \leq \min \{\frac{r_0(n)}{5},  \delta(n, \epsilon) \}$. Since $n \geq 5$, $|\alpha| < 2$ and $c_0(n) > 0$, thus we can finally make the choice $\epsilon = \epsilon(n) > 0$ so that
\begin{equation}
    2 - |\alpha|\frac{c_0 + \epsilon}{c_0 - \epsilon} = c_1(n) > 0,
\end{equation}
which then allows us to choose $\overline{\eta}(n) \in(0, \frac{1}{100})$, and $\overline{r}(n) > 0$ so that whenever $0 < \eta \leq \overline{\eta}(n)$, $3r(\eta ; M) \leq r \leq l \leq 2r \leq 2\overline{r}(n)$, we have
\begin{equation}\label{alm final form of innerouter}
     \int_{\partial \Sigma_{l, \infty}}\frac{u^2}{|x|}e^{-\frac{|x|^2}{4}}d\mathcal{H}^{2n-2} + \int_{\Sigma_{l, \infty}}|\nabla u|^2e^{-\frac{|x|^2}{4}}d\mathcal{H}^{2n-1} \leq C_1(n)\int_{\Sigma_{l, \infty}}u^2e^{-\frac{|x|^2}{4}}d\mathcal{H}^{2n-1}.
\end{equation}
Integrating from $r \leq l \leq 2r$,  and dropping the gradient term, we obtain
\begin{equation}
    \frac{1}{2r}\int_{\Sigma_{r, 2r}}u^2e^{-\frac{|x|^2}{4}}d\mathcal{H}^{2n-1} \leq C_1(n)r\int_{\Sigma_{r, \infty}}u^2e^{-\frac{|x|^2}{4}}d\mathcal{H}^{2n-1}.
\end{equation}
By possibly slightly shrinking $\overline{r}(n) > 0$, we can absorb some parts of the integral on the right hand side and finally obtain 
\begin{equation}\label{inner outer integral1}
    \frac{1}{r^2}\int_{\Sigma_{r, 2r}}u^2e^{-\frac{|x|^2}{4}}d\mathcal{H}^{2n-1} \leq C_2(n)\int_{\Sigma_{2r, \infty}}u^2e^{-\frac{|x|^2}{4}}d\mathcal{H}^{2n-1}.
\end{equation}
Above estimate combined with \eqref{alm final form of innerouter} immediately implies 
\begin{equation}\label{inner outer integral2}
    \int_{\Sigma_{r, \infty}}|\nabla u|^2e^{-\frac{|x|^2}{4}}d\mathcal{H}^{2n-1} \leq C_3(n)\int_{\Sigma_{2r, \infty}}u^2e^{-\frac{|x|^2}{4}}d\mathcal{H}^{2n-1}.
\end{equation}
To control the remaining integrals involving $u^2(|x|^2 + \frac{1}{|x|^2})$, we combine previous inequality with lemma \ref{Poincare type inequality for u^2/r^2}. As for the integral involving $\frac{u^2}{|x|^2}$, we use the first inequality in lemma \ref{Poincare type inequality for u^2/r^2}, drop the boundary integral, and then use previous inequalities \eqref{inner outer integral1}, \eqref{inner outer integral2}. Finally, to control the integral involving $u^2|x|^2$, we use the second inequality in lemma \ref{Poincare type inequality for u^2/r^2}. Since $r \leq \overline{r}(n)$, the boundary integral can be controlled by \eqref{alm final form of innerouter}. The remaining integrals can be controlled by \eqref{inner outer integral1}, \eqref{inner outer integral2}, thus completing the proof of proposition \ref{inner outer estimate}.
\end{proof}

\section{Unique asymptotics in the parabolic region}\label{uniqueasymptoticssection}
In this section, we consider smooth, properly embedded ancient mean curvature flow $(M_t)_{t \in (-\infty, 0)}$ that satisfies assumptions \ref{strong convergence to simonscone assumption1}, \ref{additional geoemtric assumption}. By using the graphical radius estimate (proposition \ref{graphical radius estimate proposition}), and the inner outer estimate (corollary \ref{inner outer estimate for RMCF corollary}), we suitably control the nonlinear error terms which allows us to isolate the dynamics of the flow onto a few eigenmodes of the linearized rescaled mean curvature operator \eqref{linearzed RMCF at cone} (proposition \ref{evolution of projections of localized functions}). We then use the assumption \ref{additional geoemtric assumption} to show that the first eigenmode ($\varphi_1$ in subsection \ref{eigenfunctions list}) has to be dominant, which leads to unique asymptotics in the parabolic region (theorem \ref{unique asymptotics of one sided flow}). \\

Throughout this section, for each $x = (\frac{rw_1}{\sqrt{2}}, \frac{rw_2}{\sqrt{2}}) \in \Sigma$ for some $r > 0$, $w_1, w_2 \in \mathbf{S}^{n-1}$, we always define the unit normal $\nu_{\Sigma}(x) = (-\frac{w_1}{\sqrt{2}}, \frac{w_2}{\sqrt{2}})$. Also, if we have one-sidedness assumption \ref{additional geoemtric assumption}, then after possibly reflecting across $\Sigma$, we always assume that the normal graph function $u_{\tau} \geq 0$.\\

Let us first compute the rescaled mean curvature flow of normal graphs over the $O(n)\times O(n)$ symmetric Simons cone $\Sigma$. Following the arguments in lemma 3.6 in \cite{Chodosh2024-qv}, we can write down the structure of the nonlinear terms.
\begin{lemma}\label{the nonlinear error term computation}
    Let $(\Tilde{M}_{\tau})_{\tau \in I}$ be a smooth solution to rescaled mean curvature flow. If $\Tilde{M}_{\tau}$ can be written as a normal graph of $u_{\tau} \in C^{\infty}(\mathcal{D}_{\tau} \subset \Sigma \setminus 
    \{0\})$, then there exists $\overline{\eta} = \overline{\eta}(n) \in (0, \frac{1}{100})$ so that whenever $\|u_{\tau}\|^*_{C^2(\mathcal{D}_{\tau})} \leq \overline{\eta}$, then
    \begin{equation}
        \partial_{\tau}u_{\tau} = Lu_{\tau} + Qu_{\tau},
    \end{equation}
    where $L$ is the linearized operator given in definition \ref{linearized rescaled mean curvature flow equation over the cone}, and $Qu_{\tau} = Q(x, u_{\tau}, \nabla u_{\tau}, \nabla^2u_{\tau})$ can be decomposed into
\begin{align*}
    Q(x, u, \nabla u, \nabla^2u) = &uQ_1(x, u, \nabla u, \nabla^2u) + \nabla u \cdot Q_2(x, u, \nabla u, \nabla^2u) \\ &+ u  Q_3(x, u, \nabla u) + \nabla u \cdot Q_4(x, u, \nabla u).
\end{align*}
  Here, $Q_1$, $Q_2$, $Q_3$, $Q_4$ satisfies the estimates
  \begin{equation}\label{scaling behavior of errorterm1}
      |x|^{2 +i+ j -l}|\nabla^i_x\nabla^j_z\nabla^k_q\nabla^l_AQ_1(x,z,q,A)|\leq C(n,i)(\frac{|z|}{|x|} + |q| + |x||A|)^{\max(0, 1 - j-k-l)},
  \end{equation}
  \begin{equation}\label{scaling behavior of errorterm2}
      |x|^{1 +i + j -l}|\nabla^i_x\nabla^j_z\nabla^k_q\nabla^l_AQ_2(x,z,q,A)|\leq C(n,i)(\frac{|z|}{|x|} + |q| + |x||A|)^{\max(0, 1 - j-k-l)},
  \end{equation}
   \begin{equation}\label{scaling behavior of errorterm3}
      |x|^{ i + j }|\nabla^i_x\nabla^j_z\nabla^k_qQ_3(x,z,q)|\leq C(n,i)(\frac{|z|}{|x|} + |q| )^{\max(0, 1 - j-k)},
  \end{equation}
  and
  \begin{equation}\label{scaling behavior of errorterm4}
      |x|^{-1 +i+ j}|\nabla^i_x\nabla^j_z\nabla^k_qQ_4(x,z,q)|\leq C(n,i)(\frac{|z|}{|x|} + |q|)^{\max(0, 1 - j-k)}.
  \end{equation}
\end{lemma}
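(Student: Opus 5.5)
The plan is to follow lemma 3.6 of \cite{Chodosh2024-qv}, adapted to the cone. The key extra ingredient is scaling, which is what turns the structural bounds into the weighted estimates \eqref{scaling behavior of errorterm1}--\eqref{scaling behavior of errorterm4}.

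\textbf{Step 1: the graphical rescaled mean curvature flow equation.} Parametrize $\tilde M_\tau$ locally by $x\mapsto x+u_\tau(x)\nu_\Sigma(x)$. The rescaled flow $(\partial_\tau X)^\perp=-(H-\tfrac12\langle X,\nu\rangle)\nu$ becomes
\begin{equation*}
\partial_\tau u = \sqrt{1+|(\mathrm{Id}-u A_\Sigma)^{-1}\nabla u|^2}\,\Big(-H_u+\tfrac12\langle x+u\nu_\Sigma,\nu_u\rangle\Big),
\end{equation*}
with the sign conventions fixed so that the linear part is $L$. Here $H_u$ and $\nu_u$ are the mean curvature and unit normal of the graph. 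Write $\mathcal{F}(x,u,\nabla u,\nabla^2u)$ for the right-hand side.

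\textbf{Step 2: the form of $\mathcal{F}$.} $\mathcal F$ is a smooth function of $(x,z,q,A)$ on the set where $|z|/|x|$ and $|q|$ are small, and it is affine in $A$. Its coefficients are built from the metric, $A_\Sigma$ and $\nabla A_\Sigma$ at $x$. Since $\Sigma$ is a cone, these satisfy $|\nabla^iA_\Sigma|\sim|x|^{-1-i}$.

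Define $Q=\mathcal F-L$. Because $\Sigma$ is a shrinker, $\mathcal F(x,0,0,0)=0$, and the first-order Taylor expansion of $\mathcal F$ at $(z,q,A)=0$ is exactly $L$. So $Q$ vanishes to second order at $(z,q,A)=0$. Using the fundamental theorem of calculus in $(z,q)$, I will write
\begin{equation*}
Q = z\,\tilde Q_z + q\cdot \tilde Q_q .
\end{equation*}
Each $\tilde Q$ vanishes linearly in $(z,q,A)$.

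The split into the four terms comes from separating two sources:
\begin{itemize}
\item The pieces affine in $A$, which come from the $H$ term, give $Q_1$ and $Q_2$.
\item The pieces independent of $A$, which come from the $\tfrac12\langle x,\nu\rangle$ term and the curvature-potential terms, give $Q_3$ and $Q_4$.
\end{itemize}

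\textbf{Step 3: scaling gives the weights.} The $H$ part and the curvature parts are scale covariant. Under $u_\lambda(x)=\lambda^{-1}u(\lambda x)$, the natural dimensionless variables are $z/|x|$, $q$ and $|x|A$, and the $H$ part scales like $|x|^{-1}$ in these variables. This yields \eqref{scaling behavior of errorterm1} and \eqref{scaling behavior of errorterm2}. Each $x$-derivative costs $|x|^{-1}$, each $z$-derivative costs $|x|^{-1}$, and each $A$-derivative gains $|x|$. The exponent $\max(0,1-j-k-l)$ records that one derivative kills the linear vanishing.

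The drift part $\tfrac12\langle x+u\nu_\Sigma,\nu_u\rangle$ carries the opposite weight $|x|^{+1}$. I will check whether its nonlinear part is really of the stated size $u\,O(|z|/|x|+|q|)+|x|\,q\cdot O(\cdot)$. Since $\langle x,\nu_\Sigma\rangle=0$ and $\nu_u=\nu_\Sigma-\nabla u+O(|q|^2+|z||q|/|x|)$, the quadratic remainder of $\langle x,\nu_u\rangle$ is $|x|\,O(|q|^2+\cdots)$, which fits $Q_4$ with weight $|x|^{+1}$. The term $u\langle\nu_\Sigma,\nu_u\rangle-u$ is $u\,O(|q|^2)$, which fits $Q_3$ with weight $|x|^0$. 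The Jacobian factor multiplies everything and is handled the same way.

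\textbf{Main obstacle.} The hard part is this bookkeeping. The drift term is not scale invariant, so I must confirm that no quadratic term appears with a worse $|x|$-weight than those in \eqref{scaling behavior of errorterm3}--\eqref{scaling behavior of errorterm4}. The threshold $\overline\eta(n)$ is then chosen so that $\mathcal F$ is smooth and all the expansions converge on $\{|z|/|x|+|q|+|x||A|\le\overline\eta\}$. This set is exactly what the hypothesis $\|u_\tau\|^*_{C^2}\le\overline\eta$ guarantees.
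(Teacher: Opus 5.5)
Your proposal follows essentially the same route as the paper. You write $\partial_\tau u = v(-H+\frac{1}{2}X\cdot\nu)$, subtract $L$, put the quadratic remainder of $-vH$ into $Q_1,Q_2$ and that of $\frac{1}{2}vX\cdot\nu$ into $Q_3,Q_4$, and read off the weights from homogeneity of degree $-1$ and $+1$, respectively, under $u\mapsto\lambda^{-1}u(\lambda\,\cdot)$.

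One slip needs fixing: the split in your Step 2 bullets must be by source ($H$ versus drift), not by $A$-dependence. The $A$-free quadratic pieces of $vH$, e.g.\ terms like $u^2/|x|^3$ or $|\nabla u|^2/|x|$, would violate the $Q_3,Q_4$ bounds near the origin. They must go into $Q_1,Q_2$, which is what your Step 3 in fact does.
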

\begin{proof}[Proof of lemma \ref{the nonlinear error term computation}]
We can parametrize $\Tilde{M}_{\tau}$ by
\begin{equation}
    \mathcal{D}_{\tau} \ni x \to x + u_{\tau}(x)\nu_{\Sigma}(x) = X_{\tau}(x)\in \Tilde{M}_{\tau}.
\end{equation}
    We define $\nu_{\tau}$ to be the unit normal vector field along $\Tilde{M}_{\tau}$ so that $\nu_{\Sigma}\cdot \nu_{\tau} > 0$. Then the rescaled mean curvature flow is equivalent to 
\begin{equation}
    \partial_{\tau}u_{\tau} = v_{\tau}(-H_{\tau} + \frac{1}{2}X_{\tau}\cdot \nu_{\tau}) = (\nu_{\tau}\cdot \nu_{\Sigma})^{-1}(-H_{\tau} + \frac{1}{2}X_{\tau}\cdot \nu_{\tau}).
\end{equation}
We can then write the right hand side as
\begin{align*}
    v_{\tau}(-H_{\tau} + \frac{1}{2}X_{\tau}\cdot \nu_{\tau}) &= Lu_{\tau} + (-v_{\tau}H_{\tau} - \Delta_{\Sigma}u_{\tau} - \frac{2n-2}{|x|^2}u_{\tau}) + \frac{1}{2}(v_{\tau}X_{\tau}\cdot \nu_{\tau} + x\cdot \nabla^{\Sigma}u_{\tau} - u_{\tau})\\ &= Lu_{\tau} + Q^Hu_{\tau} + Q^{X\cdot \nu}u_{\tau}.
\end{align*}
Since $v_{\tau}H_{\tau}$ is a second order quasilinear operator in $u_{\tau}$, and $\Delta_{\Sigma}u_{\tau} + \frac{2n-2}{|x|^2}u_{\tau}$ is the linearization of $-v_{\tau}H_{\tau}$ in $(u_{\tau}, \nabla u_{\tau}, \nabla^2u_{\tau})$ at $(0,0,0)$, we see that
\begin{equation}
    Q^Hu= uQ_1(x, u, \nabla u, \nabla^2u) + \nabla u \cdot Q_2(x, u, \nabla u, \nabla^2u)
\end{equation}
for some $Q_1, Q_2$. By the scaling behavior of the mean curvature, one can check that for any $x \in \Sigma \setminus \{0\}, z \in \mathbf{R}, p \in \mathbf{R}^{2n-1}, A \in S(2n-1)$ with small enough $\frac{|z|}{|x|}, |p|, |x||A|$, we have
\begin{equation}
    Q_1(x,z,p,q) = |x|^{-2}Q_1(\frac{x}{|x|}, \frac{z}{|x|}, p, |x|A),
\end{equation}
and
\begin{equation}
    Q_2(x,z,p,q) = |x|^{-1}Q_2(\frac{x}{|x|}, \frac{z}{|x|}, p, |x|A).
\end{equation}
Then by using the scaling behavior above, together with the fact that $Q^H$ is the at least quadratic error term of $vH$ for small $\frac{|z|}{|x|} + |p| + |x||A|$, we have the estimates \eqref{scaling behavior of errorterm1}, \eqref{scaling behavior of errorterm2}.\\

$Q^{X\cdot \nu}u = Q^{X\cdot \nu}(x, u, \nabla u)$ can be dealt with in the same manner. Since this is the at least quadratic error term of $v_{\tau}X_{\tau}\cdot \nu_{\tau}$, we can write it as
\begin{equation}
    Q^{X\cdot \nu}(x, z, p) = zQ_3(x,z,p) + pQ_4(x,z,p).
\end{equation}
By the scaling behavior of $X$, we see that
\begin{equation}
    Q_3(x,z,p) = Q_3(\frac{x}{|x|}, \frac{z}{|x|}, p),
\end{equation}
and
\begin{equation}
    Q_4(x,z,p) = |x|Q_4(\frac{x}{|x|}, \frac{z}{|x|}, p).
\end{equation}
Above scaling behaviors immediately implies \eqref{scaling behavior of errorterm3}, \eqref{scaling behavior of errorterm4}. 
\end{proof}
Next, we prove the following weighted $C^2$ estimate for $u_{\tau}$ that will be useful when estimating the nonlinear error terms. This is a consequence of avoidance principle against the self shrinkers we construct in proposition \ref{inner family of self shrinkers}, and proposition \ref{Existence of trumpets}, and interpolation.
\begin{lemma}\label{weighted C2 estimate for the graph function giving us higher power nonlinearity}
    Let $(M_t)_{t \in (-\infty, 0)}$ be smooth, properly embedded ancient mean curvature flow which satisfies assumptions \ref{strong convergence to simonscone assumption1}, \ref{additional geoemtric assumption}, and let $(\Tilde{M}_{\tau})_{\tau \in (-\infty, 0)}$ be the rescaled mean curvature flow. Then for any $\delta \in (0, \frac{1}{100})$, there exists $0 < \overline{r}(n) < \overline{L}(n) < \infty$, $\overline{\tau} < 0$, $C = C(n,  \delta) > 0$ so that the following holds; if we define 
\begin{equation}\label{definition of graphical radius function for localization}
    r(\tau) = r_{\delta}(\tau) = \rho_0^{\frac{1}{1 - \alpha} - \delta} , \ \ \rho_0(\tau) = \sup_{s \leq \tau} \|u_{s}\|_{L^2_w(\Sigma_{\frac{\overline{r}}{2}, 2\overline{L}})}
\end{equation}
for each $\tau \leq \overline{\tau}$, then
\begin{equation}
   \|u_{\tau}\|^*_{C^2(\Sigma_{r(\tau), \infty})} \leq C\rho_0^{\frac{\delta(1 - \alpha)}{3}}(\tau).
\end{equation}
\end{lemma}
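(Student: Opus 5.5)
Our plan is to prove a weighted $C^0$ bound for $u_\tau$ on $\Sigma_{r(\tau),\infty}$ with a power gain, then upgrade it to $C^2$ by interpolating against the uniform $C^3$ bound from the definition of graphical radius (proposition \ref{interpolation inequalities of weighted Ck norms}). We split $\Sigma_{r(\tau),\infty}$ into three regions: an inner region $r(\tau)\le |x|\le \overline{r}$, a middle region $\overline{r}\le|x|\le\overline{L}$, and an outer region $|x|\ge \overline{L}$.

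\emph{Inner region.} First, by proposition \ref{graphical radius estimate proposition}, $r(\tau;\eta)\le C_0\rho_0^{\frac{1}{1-\alpha}}$. Since $\rho_0\to0$ and $\delta>0$, this gives $r(\tau;\eta)\ll \rho_0^{\frac1{1-\alpha}-\delta}=r(\tau)$ for $\tau$ very negative, so the region $|x|\ge r(\tau)/2$ lies well inside the graphical region with $\|u_s\|^*_{C^3}\le\eta$. Proposition \ref{inner region weighted C0 estimate}, with its $L^2$ annulus contained in $\Sigma_{\overline r/2,2\overline L}$ so that its sup is dominated by $\rho_0$, gives
\[
\frac{|u_\tau(x)|}{|x|}\le c_0\rho_0|x|^{\alpha-1}\le c_0\rho_0\, r(\tau)^{\alpha-1}=c_0\rho_0^{1-(1-\alpha)(\frac{1}{1-\alpha}-\delta)}=c_0\rho_0^{\delta(1-\alpha)}
\]
for $r(\tau)/2\le|x|\le\overline r$.

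\emph{Middle region.} Here we would use local parabolic (Schauder) estimates for the graphical equation of lemma \ref{the nonlinear error term computation}. This equation is uniformly parabolic with bounded coefficients on compact annuli. Applied on a slightly larger annulus and a backward time interval, these estimates bound $\sup|u_\tau|$ by $C\sup_{s\le\tau}\|u_s\|_{L^2(\Sigma_{\overline r/2,2\overline L})}\le C\rho_0$.

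\emph{Outer region.} This is the step we expect to be the main obstacle, since a fixed annulus $L^2$ norm does not directly control $u$ at infinity. We would first try the avoidance principle against the trumpet self-shrinkers of proposition \ref{Existence of trumpets}. These are barriers outside a large ball whose boundary height on $|x|=\overline{L}$ can be prescribed small, and which stay within a weighted distance $\lesssim$ boundary height of $\Sigma$, presumably growing polynomially. One-sidedness gives $u\ge0$, so only an upper barrier is needed. The barrier comparison is legitimate because as $s\to-\infty$ the flow converges to $\Sigma$, and on the boundary sphere $|x|=\overline L$ we have $u_s\le C\rho_0(\tau)$ for all $s\le\tau$. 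Choosing the trumpet with boundary height $2C\rho_0$ then yields $|u_\tau(x)|/|x|\le C\rho_0$ for $|x|\ge\overline{L}$. We take $\overline{L}(n)$ to be the radius of the trumpet family. If the trumpets only control $u$ up to a factor growing in $|x|$, we instead use pseudolocality (proposition \ref{pseudolocality theorem}) together with the already uniform $\|u\|^*_{C^3}\le\eta$ to handle the far region.

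\emph{Interpolation.} Combining the three regions, $\|u_\tau\|^*_{C^0(\Sigma_{r(\tau)/2,\infty})}\le C\rho_0^{\delta(1-\alpha)}$ and $\|u_\tau\|^*_{C^3}\le\eta$ on the same set. Proposition \ref{interpolation inequalities of weighted Ck norms} with $j=2,k=3$, applied at scale $r(\tau)$ (and with $L_1=L_2=\infty$), gives
\[
[u_\tau]^*_{C^2}\le C\big(\eta^{2/3}(\rho_0^{\delta(1-\alpha)})^{1/3}+\rho_0^{\delta(1-\alpha)}\big).
\]
Similarly $[u_\tau]^*_{C^1}\lesssim\rho_0^{2\delta(1-\alpha)/3}$. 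Summing gives the claimed bound $C\rho_0^{\delta(1-\alpha)/3}$ on $\Sigma_{r(\tau),\infty}$.
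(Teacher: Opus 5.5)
Your proposal is correct and follows the paper's proof essentially step for step. It uses the same three-region split $\Sigma_{r(\tau)/2,\overline r}$, $\Sigma_{\overline r,\overline L}$, $\Sigma_{\overline L,\infty}$, handled respectively by proposition \ref{inner region weighted C0 estimate}, interior Schauder estimates, and the weak maximum principle against the trumpets (started at very negative times via proposition \ref{Rough graphicality property} and the strictly linear growth of $\hat v_\sigma$), followed by interpolation against the weighted $C^3$ bound. Your hedge in the outer region is unnecessary, since corollary \ref{normal graph function estimate of trumpets} gives exactly $\sup_{r\ge L}\hat v_\sigma(r)/r\le C(n)\hat v_\sigma(L)/L$. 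That matters, because your pseudolocality fallback would only give $\eta$-smallness there, not a bound in terms of $\rho_0$.
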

\begin{proof}[Proof of lemma \ref{weighted C2 estimate for the graph function giving us higher power nonlinearity}]
We select $\overline{r}(n) > 0$, $\overline{\eta}(n)$, $\overline{\tau} < 0$ from graphical radius estimate (proposition \ref{graphical radius estimate proposition}) with $\eta = \overline{\eta}(n)$. Also, by possibly taking the maximum with $10 \overline{r}(n) $, we set $\overline{L}(n) > 10\overline{r}(n)$ to be the constant from corollary \ref{normal graph function estimate of trumpets}.\\

First, by possibly shrinking $\overline{\eta}(n) > 0$, we may apply interior Schauder estimates, and obtain
\begin{equation}\label{standard interior schauder}
    \sup_{s \leq \tau,  x \in \Sigma_{\overline{r}, \overline{L}}}|u_s(x)| \leq c_0(n)\rho_0(\tau).
\end{equation}
In view of proposition \ref{graphical radius estimate proposition}, and that $\rho_0(\tau) \to 0$ as $\tau \to -\infty$ by assumption \ref{strong convergence to simonscone assumption1}, by possibly shrinking $\overline{\tau} < 0$, we may further assume that
\begin{equation}\label{compariosn inequality with graphical radius estimate}
     10r(\tau ; \overline{\eta}) \leq 10C_0(\sup_{s \leq \tau} \|u_{s}\|_{L^2_w(\Sigma_{\frac{\overline{r}}{2}, 2\overline{r}})})^{\frac{1}{1 - \alpha}} \leq 10C_0\rho_0^{\frac{1}{1 - \alpha}} \leq r_{\delta}(\tau) < \frac{\overline{r}(n)}{100},
\end{equation}
where the constant $C_0 = C_0(n ) > 0$ is from proposition \ref{graphical radius estimate proposition} with $\eta = \overline{\eta}(n)$. 
 We claim that 
    \begin{equation}\label{C0 estimate}
   \|u_{\tau}\|^*_{C^0(\Sigma_{\frac{r(\tau)}{2},\infty} )} \leq C(n, \delta)\rho_0^{\delta(1 - \alpha)}(\tau).
\end{equation}
Then lemma \ref{weighted C2 estimate for the graph function giving us higher power nonlinearity} follows by interpolation (proposition \ref{interpolation inequalities of weighted Ck norms}) between above $C^0$ estimate, and the weighted $C^3$ bound given by $\overline{\eta}(n) > 0$.\\

We split $\Sigma_{\frac{r(\tau)}{2}, \infty}$ into three regions, namely $D_1 = \Sigma_{\frac{r(\tau)}{2}, \overline{r}(n)}$, $D_2 = \Sigma_{\overline{r}, \overline{L}}$, and $D_3 = \Sigma_{\overline{L}, \infty}$. \\

We first consider $D_1$. By \eqref{compariosn inequality with graphical radius estimate}, we can apply proposition \ref{inner region weighted C0 estimate}. Then for $x \in D_1$, $\tau \leq \overline{\tau} < 0$, we have 
\begin{equation}
    \frac{|u_{\tau}(x)|}{|x|} \leq c(n)\rho_0(\tau)(\frac{r_{\delta}(\tau)}{2})^{\alpha - 1}\leq c(n)\rho_0^{1 + (\frac{1}{1 - \alpha} - \delta)(\alpha - 1)} = c(n)\rho_0^{\delta(1 - \alpha)}(\tau),
\end{equation}
which implies that
\begin{equation}
    \|u_{\tau}\|^*_{C^0(D_1)} \leq c(n)\rho_0^{\delta(1 - \alpha)}(\tau).
\end{equation}
For $D_2$, since the weighted norm and the unweighted norm are uniformly comparable, \eqref{standard interior schauder} immediately implies 
\begin{equation}
    \|u_{\tau}\|^*_{C^0(D_2)} \leq c(n)\rho_0^{\delta(1 - \alpha)}(\tau).
\end{equation}
In the $D_3$ region, we apply corollary \ref{normal graph function estimate of trumpets}. First, note that $\rho_0(\tau )> 0$. If this is not the case for some $\tau < 0$, then 
\begin{equation}
    u_s(x) \equiv 0 \text{ for all }s \leq \tau, \ x \in \Sigma_{\overline{r}, \overline{L}}
\end{equation}
Due to the one-sidedness assumption \ref{additional geoemtric assumption}, together with connectedness of the flow (corollary \ref{connectedness}), strong maximum principle implies that $\Tilde{M}_{\tau} \equiv \Sigma$, which contradicts smoothness of the flow. Combining this with \eqref{sigma estimate for trumpet}, we see that by possibly making $\overline{\tau} < 0$ smaller, we may assume that if for any $|x| = \overline{L}$, 
\begin{equation}\label{choice of sigma for comparison}
    \sup_{s \leq \tau, |y| = \overline{L}} |u_{s}(y)| \leq c_0(n)\rho_0(\tau) = \hat{v}_{\sigma}(x), 
\end{equation}
then $\sigma \in (\frac{1}{2}, 1)$. Since both the `trumpet' generated by $\hat{v}_{\sigma}$, and $\Tilde{M}_{\tau}$ are normal graphs over $\Sigma_{\overline{L}, \infty}$ for all $\tau \leq \overline{\tau}$ with at most linear growth in $|x|$, we can apply weak maximum principle to the normal graph functions. In view of proposition \ref{Rough graphicality property}, and the fact that the `trumpet' has a strict linear growth, we can find some very negative time $s_* < \tau \leq \overline{\tau}$ so that 
\begin{equation}
    |u_{s}(x)| \leq \hat{v}_{\sigma}(x) \text{ for all }x \in \Sigma_{\overline{L}, \infty}, \ s \leq s_*.
\end{equation}
In view of \eqref{choice of sigma for comparison}, weak maximum principle with $s_* < \tau$ as the initial data implies that 
\begin{equation}
    |u_{s}(x)| \leq \hat{v}_{\sigma}(x) \text{ for all }x \in \Sigma_{\overline{L}, \infty}, \ s \leq \tau.
\end{equation}
Combining above with corollary \ref{normal graph function estimate of trumpets} implies that
\begin{equation}
    \sup_{s \leq \tau}\sup_{x \in \Sigma_{\overline{L}, \infty}}\frac{|u_{s}(x)|}{|x|} \leq c(n)\frac{|\hat{v}_{\sigma}(\overline{L})|}{\overline{L}} = C(n)\rho_0(\tau) \leq C(n)\rho_0^{\delta(1 - \alpha)}(\tau),
\end{equation}
thus proving the desired estimate in $D_3$.\\

Combining the three domains, we obtain \eqref{C0 estimate}, thus proving lemma \ref{weighted C2 estimate for the graph function giving us higher power nonlinearity}.
\end{proof}
We now define the localized graph function $\Tilde{u}_{\tau}$. We follow the setup in lemma \ref{weighted C2 estimate for the graph function giving us higher power nonlinearity} with $\delta = \delta(n) \in (0, \frac{1}{100})$ that will be determined later. We first fix a cutoff function
\begin{equation}
    \xi \in C_c^{\infty}([0, \infty)), \ \xi \equiv 0 \text{ in }[0,1], \ \xi \equiv 1 \text{ in }[2, \infty), \xi ' \geq 0,
\end{equation}
and define for each $x \in \Sigma$
\begin{equation}\label{definition of cutoff}
    \xi_{\tau}(x) = \xi(\frac{|x|}{r(\tau)}).
\end{equation}
Note that $r(\tau) > 0$. If this is not the case, then there exists $\tau  > -\infty$ so that for all $s \leq \tau$, $\Tilde{M}_{s}$ agrees with $\Sigma$ on an open set $\Sigma_{\overline{r}, \overline{L}} \subset \mathcal{D}$. Since $\Tilde{M}_{s}$ is a smooth, connected (corollary \ref{connectedness}), and one-sided by assumption \ref{additional geoemtric assumption}, strong maximum principle implies that the flow has to be identical to the cone globally. This contradicts the smoothness of the flow, thus $r(\tau) > 0$. This ensures that $\xi_{\tau}$ is well defined. Moreover, since the $L^2_w$ norm of the graph function over the fixed annulus region asymptotically vanishes as $\tau \to -\infty$, positivity of $r(\tau)$ implies that supremum is actually is attained, i.e
\begin{equation}
    r(\tau) = (\sup_{s \leq \tau}\|u_s\|_{L^2_w(\Sigma_{\frac{\overline{r}}{2}, 2\overline{L}})})^{\frac{1}{1 - \alpha} - \delta} = (\max_{s \leq \tau}\|u_s\|_{L^2_w(\Sigma_{\frac{\overline{r}}{2}, 2\overline{L}})})^{\frac{1}{1 - \alpha} - \delta}.
\end{equation}
This ensures that $r(\tau) > 0$ is locally Lipchitz in $\tau$, almost everywhere differentiable with
\begin{equation}\label{derivative of graphical radius function for localization}
    \frac{d}{d\tau}r(\tau) = \frac{d}{ds}(\|u_s\|^{\frac{1}{1 - \alpha} - \delta}_{L^2_w(\Sigma_{\frac{\overline{r}}{2}, 2\overline{L}})}),
\end{equation}
where the $s$-derivative is evaluated at any $s \leq \tau$ where $r(\tau)$ is attained. By looking at the proof of lemma \ref{weighted C2 estimate for the graph function giving us higher power nonlinearity}, we see that on $\Sigma_{\frac{\overline{r}}{4}, 4\overline{L}}$, the unweighted $C^0$ norm of $u_{s}$ is controlled linearly by $ \rho_0(\tau)$ for all $s \leq \tau$. Thus by applying interior Schauder estimate, we see that for all sufficiently negative $\tau$, 
\begin{equation}\label{bound on the derivative of graphical radius}
    | \frac{d}{d\tau}r(\tau)| \leq C(n, \delta)\rho_0^{\frac{1}{1 - \alpha} - \delta}. 
\end{equation}
 The localized graph function is given by
\begin{equation}\label{definiton of localized graph function}
    \Tilde{u}_{\tau} = u_{\tau}\xi_{\tau}.
\end{equation}
It is clear from the definition of graphical radius (definition \ref{definition of graphical radius}) that $\Tilde{u}_{\tau} \in C^{\infty}(\Sigma \setminus \{0\}) \cap L^2_w(\Sigma)$. Therefore we can define its projection onto the eigenmodes of the linearized operator $L$. We define for each $\tau \leq \overline{\tau}$, the projection of $\Tilde{u}_{\tau}$ onto the positive / zero / negative eigenmodes 
\begin{equation}\label{definition of projection onto eigenmodes}
    U^+_{\tau} = \sum_{i = 1}^3\sum_{j = 1}^{J(i)}\langle \Tilde{u}_{\tau}, \varphi_{i,j}\rangle_{L^2_w}\varphi_{i,j}, \ U^0_{\tau}  = \sum_{j = 1}^{J(4)}\langle \Tilde{u}_{\tau}, \varphi_{4,j}\rangle_{L^2_w}\varphi_{4,j},\ U^-_{\tau} = \Tilde{u}_{\tau} - U^+_{\tau} - U^0_{\tau},
\end{equation}
where $\varphi_{i,j}$ are defined in subsection \ref{eigenfunctions list}.
We now compute the evolution of the projections. 
\begin{proposition}\label{evolution of projections of localized functions}
    There exists $\delta(n) \in (0, \frac{1}{100})$, $p(n) > 0$, $\overline{\tau}< 0$ so that the projections satisfy the following inequalities for all $\tau \leq \overline{\tau}$; 
    \begin{equation}
        \| \frac{d}{d\tau}U^*_{\tau} - LU^*_{\tau} \|_{L^2_{w}} \leq C(n)\rho_0^{p(n)}(\| U^+_{\tau}\|_{L^2_w} + \| U^0_{\tau}\|_{L^2_w} + \| U^-_{\tau} \|_{L^2} +\rho_0),
    \end{equation}
    where $ * = +, 0$, and
    \begin{equation}
        \frac{d}{d\tau}\|U^-_{\tau}\|_{L^2_w}^2 \leq \frac{\lambda_5(n)}{2}\|U^-_{\tau}\|_{L^2_w}^2 +  C(n)\rho_0^{2p(n)}(\| U^+_{\tau}\|_{L^2_w}^2 + \| U^0_{\tau}\|^2_{L^2_w}+ \| U^-_{\tau} \|_{L^2}^2 + \rho_0^2),
    \end{equation}
where $\rho_0$ is given by lemma \ref{weighted C2 estimate for the graph function giving us higher power nonlinearity}. Here, $\lambda_5(n) < 0$ is the first negative eigenvalue of $L$.
\end{proposition}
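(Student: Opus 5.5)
We compute the evolution equation of $\Tilde{u}_{\tau} = u_{\tau}\xi_{\tau}$. By lemma \ref{the nonlinear error term computation}, on the support of $\xi_\tau$,
\begin{equation*}
\partial_\tau \Tilde{u}_\tau = L\Tilde{u}_\tau + \xi_\tau Qu_\tau + E_\tau,
\end{equation*}
where the cutoff error is
\begin{equation*}
E_\tau = u_\tau\partial_\tau\xi_\tau - 2\nabla u_\tau\cdot\nabla\xi_\tau - u_\tau\Delta_\Sigma\xi_\tau + \tfrac12 u_\tau\langle x,\nabla\xi_\tau\rangle .
\end{equation*}
Projecting onto the finitely many eigenfunctions $\varphi_{i,j}$ for $i\le 4$ gives
\begin{equation*}
\frac{d}{d\tau}U^*_\tau - LU^*_\tau = \pi^*(\xi_\tau Qu_\tau + E_\tau).
\end{equation*}
Testing against $U^-_\tau$ and using $\langle LU^-,U^-\rangle\le\lambda_5\|U^-\|^2$ gives
\begin{equation*}
\frac{d}{d\tau}\|U^-\|^2 \le 2\lambda_5\|U^-\|^2 + 2\|\pi^-(\xi Q u + E)\|\,\|U^-\| .
\end{equation*}
By Young's inequality, absorbing $\tfrac{|\lambda_5|}{2}\|U^-\|^2$, both claims reduce to the single estimate
\begin{equation*}
\|\xi_\tau Qu_\tau + E_\tau\|_{L^2_w}\le C\rho_0^{p}\big(\|\Tilde{u}_\tau\|_{L^2_w}+\rho_0\big).
\end{equation*}
Since $\|\Tilde{u}\|\le\|U^+\|+\|U^0\|+\|U^-\|$, this is exactly the bound needed. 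For the $U^+$ pairing one should be slightly careful: $\varphi_1,\varphi_3\sim|x|^\alpha$ are singular at $0$. But $E_\tau$ and $\xi Qu$ vanish for $|x|<r(\tau)$, so only the $L^2_w$ norm is needed, by Cauchy–Schwarz.

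\textbf{Nonlinear term.} By the structure of $Q$ in lemma \ref{the nonlinear error term computation}, pointwise on $\Sigma_{r(\tau),\infty}$,
\begin{equation*}
|Qu|\le C\,\|u\|^*_{C^2}\Big(\frac{|u|}{|x|^2}+\frac{|\nabla u|}{|x|}+|u|+|x||\nabla u|\Big).
\end{equation*}
Lemma \ref{weighted C2 estimate for the graph function giving us higher power nonlinearity} gives $\|u_\tau\|^*_{C^2(\Sigma_{r(\tau),\infty})}\le C\rho_0^{\delta(1-\alpha)/3}$. It therefore suffices to bound
\begin{equation*}
\int_{\Sigma_{r(\tau),\infty}}\Big(\frac{u^2}{|x|^4}+\frac{|\nabla u|^2}{|x|^2}+u^2+|x|^2|\nabla u|^2\Big)e^{-|x|^2/4}
\end{equation*}
by a negative power loss times $\|\Tilde{u}\|^2+\rho_0^2$. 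Here I would use the inner outer estimate (corollary \ref{inner outer estimate for RMCF corollary}), applied at dyadic scales $r\in[r(\tau),\overline{r}]$; it is valid since $r(\tau)\ge 10r(\tau;\overline\eta)$. The weights $|x|^{-2}$ beyond the $|x|^{-2}$ already controlled cost at most a factor $r(\tau)^{-2}$. The right side $\int_{\Sigma_{2r,\infty}}u^2$ is bounded by $\|\Tilde{u}\|^2$, since $\xi\equiv1$ there. The terms with $|x|^2$ weights at infinity are handled by the second Poincaré inequality and interior estimates, using a higher-order Gaussian-weighted energy bound obtained from the $C^2$ control.

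The result is roughly a bound $C\rho_0^{\delta(1-\alpha)/3}\,r(\tau)^{-1}\|\Tilde{u}\|$. Here $r(\tau)^{-1}=\rho_0^{-1/(1-\alpha)+\delta}$ is a large negative power, and this is the main obstacle. To overcome it, I would use the sharper pointwise bound of proposition \ref{inner region weighted C0 estimate}, namely $|u|/|x|\le c\rho_0|x|^{\alpha-1}$, directly near $|x|\sim r(\tau)$. The $L^2_w$ integrals over $\Sigma_{r(\tau),\overline{r}}$ of $u^2|x|^{-4}$ then become $\rho_0^2\int_{r(\tau)}|x|^{2\alpha-4+2n-2}\,d|x|$. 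Because $2n-2+2\alpha-4>-1$ for $n\ge5$, this integral converges at $0$, so these contributions are $O(\rho_0^2)$ times a small power. Multiplied by $\rho_0^{\delta(1-\alpha)/3}$, this yields the $\rho_0^{p}\cdot\rho_0$ term. Gradients are handled similarly by interpolation against the weighted $C^3$ bound. Balancing the choice of $\delta(n)$ is where I expect the arithmetic to be tight, and I would take $p(n)$ to be a small multiple of $\delta$.

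\textbf{Cutoff term.} $E_\tau$ is supported in $\Sigma_{r(\tau),2r(\tau)}$, where $|\nabla\xi|\lesssim r^{-1}$, $|\Delta\xi|\lesssim r^{-2}$, and $|\partial_\tau\xi|\lesssim |r'|/r\le C$ by \eqref{bound on the derivative of graphical radius}. Using the pointwise bound $|u|\le c\rho_0|x|^{\alpha}$ and $|\nabla u|\le c\rho_0|x|^{\alpha-1}$ from proposition \ref{inner region weighted C0 estimate} plus interpolation, one gets
\begin{equation*}
\|E_\tau\|^2_{L^2_w}\lesssim \rho_0^2\, r(\tau)^{2\alpha-4+2n-1},
\end{equation*}
and the exponent is positive for $n\ge5$. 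Since $r(\tau)=\rho_0^{1/(1-\alpha)-\delta}$, this gives $\|E_\tau\|\le C\rho_0^{1+p}$, which is the $\rho_0^{p}\rho_0$ term. Combining the nonlinear and cutoff bounds with the projection identities proves both inequalities.
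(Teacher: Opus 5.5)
\textbf{There is a genuine gap in the nonlinear term.} Your reduction of both inequalities to the single bound $\|\xi_\tau Qu_\tau+E_\tau\|_{L^2_w}\le C\rho_0^{p}(\|\Tilde{u}_\tau\|_{L^2_w}+\rho_0)$ is logically fine. Your treatment of $E_\tau$ is correct. So is your treatment of the $u^2|x|^{-4}$ part of $\xi_\tau Qu_\tau$, which uses $|u_\tau|\le c\rho_0|x|^{\alpha}$ from Proposition~\ref{inner region weighted C0 estimate} and is integrable since $2\alpha+2n-6>-1$.

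The problem is the rest of $\|\xi_\tau Qu_\tau\|_{L^2_w}$. You need $\int_{\Sigma_{r(\tau),\overline{r}}}|\nabla u_\tau|^2|x|^{-2}e^{-|x|^2/4}$ and $\int_{\Sigma_{\overline{r},\infty}}|x|^2|\nabla u_\tau|^2e^{-|x|^2/4}$ to be bounded by $\|\Tilde{u}_\tau\|^2_{L^2_w}+\rho_0^2$, up to a factor $\rho_0^{-q}$ with $q<\tfrac{2}{3}\delta(1-\alpha)$. Nothing you cite gives this.
\begin{itemize}
\item Corollary~\ref{inner outer estimate for RMCF corollary} controls only $\int u^2|x|^{-2}$, $\int|\nabla u|^2$ and $\int u^2|x|^2$. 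Inserting the extra weight costs the $r(\tau)^{-1}$ you noticed.
\item Interpolation against the weighted $C^3$ bound gives only $|\nabla u_\tau|\le C(\rho_0|x|^{\alpha-1})^{2/3}$. The resulting bound for $\xi_\tau\nabla u_\tau\cdot Q_2$ is of order $\rho_0^{\delta(1-\alpha)/3+2/3}$. Since $\|\Tilde{u}_\tau\|_{L^2_w}\le C\rho_0$, the target is $O(\rho_0^{1+p})$, and this misses it because $\delta(1-\alpha)/3<1/3$.
\item The $C^2$ control of Lemma~\ref{weighted C2 estimate for the graph function giving us higher power nonlinearity} is itself only of size $\rho_0^{\delta(1-\alpha)/3}$. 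Any Gaussian-weighted bound on $\nabla^2 u$ derived from it is sublinear, so it cannot handle $\int|x|^2|\nabla u|^2$ at infinity.
\end{itemize}
Your claimed $|\nabla u|\le c\rho_0|x|^{\alpha-1}$ in the cutoff term also does not follow from interpolation. There it does not matter: the weaker bound, or even the paper's crude $|E_\tau|\le C r(\tau)^{-1}$ together with the small volume of $\Sigma_{r(\tau),2r(\tau)}$, suffices. To close your route you would need scale-invariant parabolic Schauder estimates giving gradient bounds linear in $\rho_0$ on all of $\Sigma_{r(\tau),\infty}$, uniformly in past times. That is a substantial additional argument.

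\textbf{How the paper avoids this.} It never estimates $\|\xi_\tau Qu_\tau\|_{L^2_w}$; it estimates only pairings, and splits the weights between $u_\tau$ and the test function.
\begin{itemize}
\item For $*=+,0$ it writes the integrands as $\frac{|u|}{|x|}(|x|^{-1}+|x|)\varphi$ and $|\nabla u|(|x|^{-1}+|x|)\varphi$. It then uses $\int_\Sigma\varphi^2(|x|^{-2}+|x|^2)e^{-|x|^2/4}\le C(n)$ for the finitely many $\varphi_{i,j}$ with $i\le 4$. Only $\int_{\Sigma_{r(\tau),\infty}}u^2|x|^{-2}$ and $\int_{\Sigma_{r(\tau),\infty}}|\nabla u|^2$ remain, and the inner--outer estimate bounds both by $\int_{\Sigma_{2r(\tau),\infty}}u^2\le\|\Tilde{u}_\tau\|^2_{L^2_w}$.
\item For $U^-$ it keeps $\langle\xi_\tau Qu_\tau,U^-_\tau\rangle$ and moves the weight $|x|^2+|x|^{-2}$ onto $U^-_\tau$. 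It bounds $\int_{\Sigma_{r(\tau),\infty}}(|x|^2+|x|^{-2})(U^-_\tau)^2$ by $C\|U^-_\tau\|^2_{H^1_w}$ via the Poincar\'e-type lemma; the boundary term has a favorable sign once $r(\tau)$ is small. It then absorbs $\rho_0^{p}\|\nabla U^-_\tau\|^2_{L^2_w}$ using the coercivity of $-L$.
\end{itemize}
Your step $2\langle\xi Qu+E,U^-\rangle\le 2\|\xi Qu+E\|_{L^2_w}\|U^-\|_{L^2_w}$ throws away exactly that gradient term. That is what forces you into the stronger norm estimate.
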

\begin{proof}[Proof of proposition \ref{evolution of projections of localized functions}]
     We first compute the first inequality. To do so, set $\varphi = \varphi_{i,j}$ given in subsection \ref{eigenfunctions list}, and define 
    \begin{equation}
        a_{\varphi}(\tau) = \langle \Tilde{u}_{\tau} , \varphi \rangle_{L^2_w}.
    \end{equation}
    From now on, we omit the subscript $L^2_w$ in the inner products, and norms whenever the integral is defined on the entire $\Sigma$. By lemma \ref{the nonlinear error term computation}, 
    \begin{align*}
        \frac{d}{d\tau}a_{\varphi}(\tau) & = \langle \partial_{\tau}\Tilde{u}_{\tau}, \varphi\rangle = \langle L\Tilde{u}_{\tau} , \varphi \rangle + \langle \xi_{\tau}Qu_{\tau}, \varphi \rangle + \langle E_{\tau}, \varphi \rangle, 
    \end{align*}
    where $E_{\tau} = u_{\tau}\partial_{\tau} \xi_{\tau} + [\xi_{\tau}, L]u_{\tau} $. We now estimate each terms separately. By definition of $\varphi = \varphi_{i,j}$, we have 
   \begin{equation}\label{estimate of linear term positive mode}
       \langle L\Tilde{u}_{\tau} , \varphi \rangle = \lambda_i \langle \Tilde{u}_{\tau}, \varphi \rangle = \lambda_ia_{\varphi}(\tau).
   \end{equation}
   We use the inner outer estimate (corollary \ref{inner outer estimate for RMCF corollary}), and lemma \ref{weighted C2 estimate for the graph function giving us higher power nonlinearity} to estimate the second term. By lemma \ref{the nonlinear error term computation}, we can write
   \begin{align*}
       \langle \xi_{\tau}Qu_{\tau}, \varphi \rangle = \langle \xi_{\tau}u_{\tau}(Q_1 + Q_3), \varphi\rangle + \langle \xi_{\tau}\nabla u_{\tau}\cdot (Q_2 + Q_4) , \varphi \rangle. 
   \end{align*}
   By lemma \ref{weighted C2 estimate for the graph function giving us higher power nonlinearity},  $\|u_{\tau}\|^*_{C^2(\Sigma_{r(\tau), \infty})} \leq C(n, \delta)\rho_0^{\frac{\delta(1 - \alpha)}{3}}$. By using the nonlinear structure given in lemma \ref{the nonlinear error term computation}, we have the pointwise estimates
   \begin{equation}
       |\xi_{\tau}u_{\tau}(Q_1 + Q_3)| \leq C(n, \delta)\rho_0^{\frac{\delta(1 - \alpha)}{3}}|u_{\tau}|(\frac{1}{|x|^2} + 1)\chi_{\Sigma_{r(\tau), \infty}},
   \end{equation}
   and
   \begin{equation}
       |\xi_{\tau}\nabla u_{\tau}\cdot (Q_2 + Q_4)| \leq C(n, \delta)\rho_0^{\frac{\delta(1 - \alpha)}{3}}|\nabla u_{\tau}|(|x| + \frac{1}{|x|})\chi_{\Sigma_{r(\tau), \infty}}.
   \end{equation}
   By the inner outer estimate (corollary \ref{inner outer estimate for RMCF corollary}) together with the fact that 
   \begin{equation}
       \int_{\Sigma}\varphi^2(\frac{1}{|x|^2} + |x|^2)e^{-\frac{|x|^2}{4}}d\mathcal{H}^{2n-1} \leq C(n)
   \end{equation}
   for any $\varphi$ given in subsection \ref{eigenfunctions list}, we have
   \begin{align*}
       |\langle \xi_{\tau}u_{\tau}(Q_1 + Q_3), \varphi \rangle| & \leq C(n, \delta)\rho_0^{\frac{\delta(1 - \alpha)}{3}}\int_{\Sigma_{r(\tau), \infty}}\frac{|u_{\tau}|}{|x|} (|x| + \frac{1}{|x|})\varphi e^{-\frac{|x|^2}{4}}d\mathcal{H}^{2n-1} \\ & \leq C(n, \delta)\rho_0^{\frac{\delta(1 - \alpha)}{3}}(\int_{\Sigma_{r(\tau), \infty}}\frac{|u_{\tau}|^2}{|x|^2} e^{-\frac{|x|^2}{4}}d\mathcal{H}^{2n-1})^{\frac{1}{2}} \\ & \leq C(n, \delta)\rho_0^{\frac{\delta(1 - \alpha)}{3}}(\int_{\Sigma_{2r(\tau), \infty}}u_{\tau}^2 e^{-\frac{|x|^2}{4}}d\mathcal{H}^{2n-1})^{\frac{1}{2}} \\ & \leq C(n, \delta)\rho_0^{\frac{\delta(1 - \alpha)}{3}} (\| U^+_{\tau} \| + \| U^0_{\tau} \| +\| U^-_{\tau} \|).
   \end{align*}
   Likewise 
   \begin{align*}
       |\langle \xi_{\tau}\nabla  u_{\tau} \cdot(Q_2 + Q_4), \varphi\rangle| & \leq C(n, \delta)\rho_0^{\frac{\delta(1 - \alpha)}{3}}\int_{\Sigma_{r(\tau), \infty}}|\nabla  u_{\tau}| (|x| + \frac{1}{|x|})\varphi e^{-\frac{|x|^2}{4}}d\mathcal{H}^{2n-1} \\ & \leq C(n, \delta)\rho_0^{\frac{\delta(1 - \alpha)}{3}}(\int_{\Sigma_{r(\tau), \infty}}|\nabla  u_{\tau}|^2e^{-\frac{|x|^2}{4}}d\mathcal{H}^{2n-1})^{\frac{1}{2}} \\ & \leq C(n, \delta)\rho_0^{\frac{\delta(1 - \alpha)}{3}}(\int_{\Sigma_{2r(\tau), \infty}}u_{\tau}^2 e^{-\frac{|x|^2}{4}}d\mathcal{H}^{2n-1})^{\frac{1}{2}} \\ & \leq C(n, \delta)\rho_0^{\frac{\delta(1 - \alpha)}{3}} (\| U^+_{\tau} \| +\| U^0_{\tau} \| + \| U^-_{\tau} \|).
   \end{align*}
   Combining both estimates, we obtain
   \begin{equation}\label{est of nonlinear term from nonlinearity of equation in positive mode}
       |\langle \xi_{\tau}Qu_{\tau}, \varphi \rangle| \leq C(n, \delta)\rho_0^{\frac{\delta(1 - \alpha)}{3}} (\| U^+_{\tau} \| +\| U^0_{\tau} \| + \| U^-_{\tau} \|).
   \end{equation}
   Finally using the fact that $E_{\tau}$ is supported in $\Sigma_{r(\tau), 2r(\tau)}$, together with \eqref{bound on the derivative of graphical radius}, lemma \ref{the nonlinear error term computation}, lemma \ref{weighted C2 estimate for the graph function giving us higher power nonlinearity}, we have for $x \in \Sigma_{r(\tau), 2r(\tau)}$
\begin{equation}
    |E_{\tau}|(x) \leq C(n)(|r'(\tau)|\frac{|u_{\tau}|}{r(\tau)} + \frac{|u_{\tau}|}{r^2(\tau)} + \frac{|\nabla u_{\tau}|}{r(\tau)})  \leq C(n, \delta)r^{-1}(\tau).
\end{equation}
Integrating above estimate over $\Sigma_{r(\tau), 2r(\tau)}$ gives us
\begin{equation}
    \langle E_{\tau}, \psi \rangle \leq \|E_{\tau} \| \leq C(n, \delta)r^{-1}(\tau)r(\tau)^{n-1} \leq C(n, \delta)\rho_0^{(n-2)(\frac{1}{1-\alpha} - \delta)}.
\end{equation}
By the definition of $\alpha(n)\in (-2, -1)$ given by \eqref{the formula for alpha}, one can see that as long as $n \geq 5$, we can choose $\delta(n) \in (0, \frac{1}{100})$ small enough so that
\begin{equation}
   (n-2)(\frac{1}{1-\alpha} - \delta) = 1 + p(n) > 1, 
\end{equation}
which gives us 
\begin{equation}\label{est fo error term fromcutoff positive mode}
    \langle E_{\tau}, \varphi \rangle \leq \|E_{\tau} \|  \leq C(n)\rho_0^{1 + p(n)}(\tau).
\end{equation}
By possibly replacing $p(n)$ with $\min(p(n), \frac{\delta(n)(1 - \alpha)}{3}) > 0$, and combining \eqref{estimate of linear term positive mode}, \eqref{est of nonlinear term from nonlinearity of equation in positive mode}, \eqref{est fo error term fromcutoff positive mode}, we obtain
\begin{equation}
    |\frac{d}{d\tau}a_{\psi}(\tau) - \lambda_ia_{\psi}(\tau)| \leq C(n)\rho_0^{p(n)}(\tau)(\| U^+_{\tau} \| + \| U^0_{\tau} \| +\| U^-_{\tau} \| +\rho_0(\tau)).
\end{equation}
By considering all $\psi = \psi_{i,j}$, we have the first inequality in proposition \ref{evolution of projections of localized functions}. \\

The second inequality also follows from a similar line of computation. We first compute the evolution of $\|U^-_{\tau}\|^2_{L^2_w}$.
\begin{equation}
    \frac{d}{d\tau}\|U^-_{\tau}\|^2_{L^2_w} = 2\langle \partial_{\tau}\Tilde{u}_{\tau}, U^-_{\tau} \rangle = 2\langle L\Tilde{u}_{\tau} ,U^-_{\tau}\rangle  + 2\langle \xi_{\tau}Qu_{\tau}, U^-_{\tau} \rangle +2\langle E_{\tau}, U^-_{\tau} \rangle. 
\end{equation}
We now estimate each terms separately. First, since the negative eigenvalues of $L$ are bounded from above by $\lambda_5(n) < 0$, we have
\begin{equation}\label{est of linear term negative mode}
    2\langle L\Tilde{u}_{\tau} ,U^-_{\tau}\rangle =  2\langle LU^-_{\tau} ,U^-_{\tau}\rangle \leq 2\lambda_5(n)\| U^-_{\tau} \|^2.
\end{equation}
To estimate $2\langle \xi_{\tau}Qu_{\tau}, U^-_{\tau} \rangle$, we further decompose $Q$ via lemma \ref{the nonlinear error term computation}. This gives us
\begin{align*}
       \langle \xi_{\tau}Qu_{\tau}, U^-_{\tau} \rangle = \langle \xi_{\tau}u_{\tau}(Q_1 + Q_3), U^-_{\tau} \rangle + \langle \xi_{\tau}\nabla u_{\tau}\cdot (Q_2 + Q_4) ,U^-_{\tau} \rangle. 
   \end{align*}
   As before, we can estimate each terms on the right hand side as follows by using the inner outer estimate (corollary \ref{inner outer estimate for RMCF corollary}), and lemma \ref{weighted C2 estimate for the graph function giving us higher power nonlinearity}. In the following computations, we continue to use the constants $\delta(n), p(n) > 0$ determined previously.
   \begin{align*} 
       | \langle \xi_{\tau}u_{\tau}(Q_1 + Q_3), U^-_{\tau} \rangle| & \leq C(n)\rho_0^{ p(n)}\int_{\Sigma_{r(\tau), \infty}}|u_{\tau}|(|x| + \frac{1}{|x|})\frac{|U^-_{\tau}|}{|x|}e^{-\frac{|x|^2}{4}}d\mathcal{H}^{2n-1}\\ & \leq C(n)\rho_0^{ p(n)}(\int_{\Sigma_{r(\tau), \infty}}\frac{|u_{\tau}|^2}{|x|^2}e^{-\frac{|x|^2}{4}}d\mathcal{H}^{2n-1})^{\frac{1}{2}}\cdot(\int_{\Sigma_{r(\tau), \infty}}(|x|^2 + \frac{1}{|x|^2})|U^-_{\tau}|^2e^{-\frac{|x|^2}{4}}d\mathcal{H}^{2n-1})^{\frac{1}{2}} \\ & \leq C(n)\rho_0^{ p(n)}\| \Tilde{u}_{\tau}\|^2_{L^2_w} + C(n)\rho_0^{ p(n)} \int_{\Sigma_{r(\tau), \infty}}(|x|^2 + \frac{1}{|x|^2})|U^-_{\tau}|^2e^{-\frac{|x|^2}{4}}d\mathcal{H}^{2n-1},
   \end{align*}
   and
    \begin{align*}
       | \langle \xi_{\tau}\nabla u_{\tau}\cdot (Q_2 + Q_4), U^-_{\tau} \rangle | & \leq C(n)\rho_0^{ p(n)}\int_{\Sigma_{r(\tau), \infty}}|\nabla u_{\tau}|(|x| + \frac{1}{|x|})|U^-_{\tau}|e^{-\frac{|x|^2}{4}}d\mathcal{H}^{2n-1}\\ & \leq C(n)\rho_0^{ p(n)}(\int_{\Sigma_{r(\tau), \infty}}|\nabla u_{\tau}|^2e^{-\frac{|x|^2}{4}}d\mathcal{H}^{2n-1})^{\frac{1}{2}}\cdot(\int_{\Sigma_{r(\tau), \infty}}(|x|^2 + \frac{1}{|x|^2})|U^-_{\tau}|^2e^{-\frac{|x|^2}{4}}d\mathcal{H}^{2n-1})^{\frac{1}{2}} \\ & \leq C(n)\rho_0^{ p(n)}\| \Tilde{u}_{\tau}\|^2_{L^2_w}+ C(n)\rho_0^{ p(n)}\int_{\Sigma_{r(\tau), \infty}}(|x|^2 + \frac{1}{|x|^2})|U^-_{\tau}|^2e^{-\frac{|x|^2}{4}}d\mathcal{H}^{2n-1}.
   \end{align*}
   To estimate the last integrals in both estimates, we use the two Poincare type inequalities in lemma \ref{Poincare type inequality for u^2/r^2}. Note that since $U^-_{\tau} = \Tilde{u}_{\tau} - U^+_{\tau}- U^0_{\tau}$, we see that $U^-_{\tau}$ do indeed have polynomial growth in $|x|$ as $|x| \to \infty$, hence we can apply the Poincare type inequalities. This gives us
\begin{align*}
     \int_{\Sigma_{r(\tau), \infty}}(|x|^2 + \frac{1}{|x|^2})|U^-_{\tau}|^2e^{-\frac{|x|^2}{4}}d\mathcal{H}^{2n-1} &\leq  C(n)\|U^-_{\tau}\| ^2_{H^1_w}  + (8r(\tau) - \frac{1}{nr(\tau)})\int_{\partial\Sigma_{r(\tau), \infty}}(U^-_{\tau})^2e^{-\frac{|x|^2}{4}}d\mathcal{H}^{2n-2}.
\end{align*}
Since $r(\tau) \to 0$ as $\tau \to -\infty$, by possibly shrinking $\overline{\tau}  < 0$, we can drop the boundary integral for all $\tau \leq \overline{\tau}$. Therefore, the final estimate we get is
\begin{equation}\label{est of nonlinear term coming from nonlinearity negative}
    |2\langle \xi_{\tau}Qu_{\tau}, U^-_{\tau} \rangle| \leq C(n)\rho_0^{ p(n)} \| \Tilde{u}_{\tau}\|^2_{L^2_w} + C(n)\rho_0^{ p(n)} \| U^-_{\tau} \|^2_{H^1_w}.
\end{equation}
The last term is immediately estimated by \eqref{est fo error term fromcutoff positive mode}, and we obtain
\begin{equation}\label{est of error term from cutoff negatvie}
    |2\langle E_{\tau}, U^-_{\tau} \rangle| \leq 2\| E_{\tau}\|\| U^-_{\tau}\| \leq \frac{-\lambda_5(n)}{4}\| U^-_{\tau}\|^2 + C(n)\|E_{\tau}\|^2 \leq \frac{-\lambda_5(n)}{4}\| U^-_{\tau}\|^2 + C(n)\rho_0^{2 + 2p(n)}(\tau).
\end{equation}
Therefore, combining \eqref{est of linear term negative mode}, \eqref{est of nonlinear term coming from nonlinearity negative}, \eqref{est of error term from cutoff negatvie}, using coercivity of the operator $-L$ (\eqref{spectral decomposition}), and redefining $p(n)$ by $\frac{p(n)}{2}$, we obtain
\begin{align*}
    \frac{d}{d\tau}\|U^-_{\tau}\|^2_{L^2_w} &\leq \frac{3\lambda_5(n)}{4}\|U^-_{\tau}\|^2_{L^2_w} + \langle LU^-_{\tau} ,U^-_{\tau}\rangle+  C(n)\rho_0^{ 2p(n)} \| U^-_{\tau} \|^2_{H^1_w}  + C(n)\rho_0^{ 2p(n)} \|\Tilde{u}_{\tau}\|^2 + C(n)\rho_0^{2 + 2p(n)} \\ & \leq (\frac{3\lambda_5(n)}{4} + C(n)\rho_0^{ 2p(n)})\|U^-_{\tau}\|^2_{L^2_w}  + C(n)\rho_0^{ 2p(n)}(\|\Tilde{u}_{\tau}\|^2+\rho_0^{2 }). 
\end{align*}
By possibly shrinking $\overline{\tau}<0$, we can ensure that
\begin{equation}
    \frac{3\lambda_5(n)}{4}+ C(n)\rho_0^{ 2p(n)} \leq \frac{1}{2}\lambda_5(n).
\end{equation}
This proves the second inequality. 
\end{proof}
We now repeatedly use the evolution inequalities obtained in proposition \ref{evolution of projections of localized functions} together with Merle-Zaag ODE lemma (lemma A.1 in \cite{merle1998optimal}) to isolate the dynamics onto the first eigenmodes. \\

We define for each $\tau \leq \overline{\tau}$, 
\begin{equation}
    \pi_{<}(\tau) = \sup_{s \leq \tau}\| U^-_{s}\|_{L^2_w}, \ \pi_{\geq 0}(\tau) = \sup_{s \leq \tau}\|U^+_s + U^0_s\|_{L^2_w}.
\end{equation}
Recall from \eqref{compariosn inequality with graphical radius estimate} that for $\tau \leq \overline{\tau}$, $2r(\tau) \leq 
\frac{\overline{r}}{5}$ hence we have
\begin{equation}\label{controlling rho0}
    \rho_0(\tau) \leq \pi_{<}(\tau) + \pi_{\geq }(\tau),
\end{equation}
where $\rho_0$ is given in lemma \ref{weighted C2 estimate for the graph function giving us higher power nonlinearity}. We first establish dominance of the nonnegative eigenmodes. 
\begin{lemma}\label{dominance of nonnegative modes}
    Following the setup in proposition \ref{evolution of projections of localized functions}, we have
    \begin{equation}
        \pi_{<0}(\tau) \leq  C(n)\pi_{\geq 0}(\tau)^{1 + p(n)}
    \end{equation}
    for all $\tau \leq \overline{\tau}$ after possibly shrinking $\overline{\tau} < 0$. 
\end{lemma}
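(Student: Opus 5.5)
We integrate the differential inequality for $\|U^-_{\tau}\|^2_{L^2_w}$ from proposition \ref{evolution of projections of localized functions} backwards in time, using the fact that everything tends to zero as $\tau \to -\infty$. This is the standard ``negative modes are slaved to the nonnegative ones'' argument in Merle--Zaag style estimates.

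First we reduce the right hand side to the two sup quantities. Since $\|\Tilde{u}_s\|$ is comparable to $\|U^+_s+U^0_s\|+\|U^-_s\|$, \eqref{controlling rho0} gives
\[
\rho_0(s)\le \pi_{<0}(s)+\pi_{\geq 0}(s)\le \pi_{<0}(\tau)+\pi_{\geq 0}(\tau) \quad \text{for all } s\le\tau ,
\]
using that both sup quantities are nondecreasing in $\tau$. Hence the second inequality of proposition \ref{evolution of projections of localized functions} becomes, for $s\le\tau$,
\[
\frac{d}{ds}\|U^-_s\|^2\le \frac{\lambda_5}{2}\|U^-_s\|^2 + C(n)\rho_0(\tau)^{2p}\big(\pi_{<0}(\tau)+\pi_{\geq 0}(\tau)\big)^2 .
\]

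Next we integrate. Set $A(\tau)=C\rho_0(\tau)^{2p}(\pi_{<0}+\pi_{\geq0})^2(\tau)$. Multiplying by $e^{-\lambda_5 s/2}$ and integrating from $s_0$ to $s$ gives
\[
\|U^-_s\|^2\le e^{\frac{\lambda_5}{2}(s-s_0)}\|U^-_{s_0}\|^2+\frac{2}{|\lambda_5|}A(\tau).
\]
Because $\|U^-_{s_0}\|\le\|\Tilde{u}_{s_0}\|$ stays bounded (indeed it tends to $0$) and $\lambda_5<0$, letting $s_0\to-\infty$ kills the first term. Taking the sup over $s\le\tau$ then yields
\[
\pi_{<0}(\tau)^2\le C\rho_0(\tau)^{2p}\big(\pi_{<0}+\pi_{\geq 0}\big)^2 ,
\]
that is,
\[
\pi_{<0}\le C\rho_0^{p}\,(\pi_{<0}+\pi_{\geq0}).
\]

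Finally we absorb and close. Since $\rho_0(\tau)\to0$, after shrinking $\overline{\tau}$ we have $C\rho_0^p\le\frac12$, so $\pi_{<0}\le 2C\rho_0^p\pi_{\geq 0}$. Then
\[
\rho_0\le \pi_{<0}+\pi_{\geq0}\le 2\pi_{\geq 0},
\]
and substituting this back gives
\[
\pi_{<0}\le C(n)\,\pi_{\geq0}^{1+p},
\]
as claimed.

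The main point needing care is the limit $s_0\to-\infty$, which needs uniform boundedness of $\|U^-_{s_0}\|$. This follows from $\|\Tilde{u}_{s}\|_{L^2_w}\to0$, which in turn comes from lemma \ref{weighted C2 estimate for the graph function giving us higher power nonlinearity} together with the inner outer estimate (corollary \ref{inner outer estimate for RMCF corollary}). Another point is that the inequality of proposition \ref{evolution of projections of localized functions} holds only for almost every $s$, since $r(\tau)$ is merely Lipschitz. Since $\|U^-_s\|^2$ is absolutely continuous, the integration is still legitimate.
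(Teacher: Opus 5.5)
Your proof is correct, but it uses a slightly different mechanism from the paper's.

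The paper's proof goes as follows:
\begin{itemize}
\item It first disposes of the case where $\pi_{<0}$ vanishes at some time.
\item It then uses $\|\Tilde{u}_s\|_{L^2_w}\to 0$ to see that the supremum defining $\pi_{<0}(\tau)$ is attained.
\item It evaluates the inequality of proposition \ref{evolution of projections of localized functions} at a maximizing time, where $\frac{d}{ds}\|U^-_s\|^2_{L^2_w}\ge 0$. Together with \eqref{controlling rho0}, this gives $0\le \frac{\lambda_5}{2}\pi_{<0}^2 + C(\pi_{<0}^{2+2p}+\pi_{\geq 0}^{2+2p})$ for a.e.\ $\tau$.
\item Smallness of $\pi_{<0}$ absorbs the $\pi_{<0}^{2+2p}$ term, and continuity extends the bound to every $\tau$.
\end{itemize}

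You instead bound the forcing on $s\le\tau$ by its worst value at time $\tau$, using monotonicity of $\rho_0$, $\pi_{<0}$ and $\pi_{\geq 0}$. You then integrate with the factor $e^{-\lambda_5 s/2}$ from $-\infty$, a Duhamel/Gronwall argument.

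What each route buys:
\begin{itemize}
\item Your version needs only absolute continuity of $s\mapsto\|U^-_s\|^2_{L^2_w}$ and boundedness of $\|U^-_{s_0}\|_{L^2_w}$ as $s_0\to-\infty$. It gives the estimate for every $\tau$ at once. It avoids the case distinction, the attainment of the supremum, and the somewhat loosely justified identification of the a.e.\ derivative of the running maximum with $\frac{d}{ds}\|U^-_s\|^2$ at a maximizing time.
\item The paper's version is a pointwise, maximum-principle-in-time argument that never integrates the inequality.
\end{itemize}

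Your handling of the $\rho_0^{p}$ factor is equivalent to the paper's direct substitution of $\rho_0\le\pi_{<0}+\pi_{\geq0}$. You first absorb using $C\rho_0^p\le\frac12$, then convert via $\rho_0\le 2\pi_{\geq 0}$.

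A minor remark: the decay $\|\Tilde{u}_s\|_{L^2_w}\to 0$ already follows from the weighted $C^0$ bound in lemma \ref{weighted C2 estimate for the graph function giving us higher power nonlinearity} alone, since $|u_s(x)|\le C\rho_0^{\delta(1-\alpha)}|x|$ on the support of $\xi_s$. The inner outer estimate is not needed for that step.
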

\begin{proof}[Proof of lemma \ref{dominance of nonnegative modes}]
We split into two cases, either $\pi_{<0}(\tau) = 0$ for some $\tau_0 \leq \overline{\tau}$, or $\pi_{<0}(\tau) > 0$ for all $\tau \leq \overline{\tau}$. \\

In the first case, we redefine $\overline{\tau}$ to be $\tau_0$. Then this implies that $\pi_{<0}(\tau) \equiv 0$ for all $\tau \leq \overline{\tau}$, thus lemma \ref{dominance of nonnegative modes} trivially follows.\\

In the second case, note that
\begin{equation}
    \lim_{s\to -\infty}\|\Tilde{u}_s\|_{L^2_w} = 0
\end{equation}
because of lemma \ref{weighted C2 estimate for the graph function giving us higher power nonlinearity}. This together with positivity of $\pi_{<0}$ implies that 
\begin{equation}
    \pi_{<0}(\tau) = \max_{s \leq \tau}\| U^-_{s}\|_{L^2_w},
\end{equation}
and that for a.e $\tau \leq \overline{\tau}$, $\pi_{<0}$ is differentiable, and
\begin{equation}
    \frac{d}{d\tau} \pi_{<0}(\tau)^2 = \frac{d}{ds}\| U^-_{s}\|_{L^2_w}^2,
\end{equation}
where the $s$-derivative is evaluated at points where $\pi_{<0}$ is attained. Then by using the evolution inequalities in proposition \ref{evolution of projections of localized functions}, \eqref{controlling rho0}, and the fact that $\pi_{<0}(\tau)$ is nondecreasing in $\tau$, we obtain
\begin{equation}
    0 \leq \frac{d}{d\tau}\pi_{<0}^2(\tau) \leq \frac{\lambda_5(n)}{2}\pi_{<0}^2(\tau) + C(n)(\pi_{<0}^{2 + 2p(n)}(\tau)  + \pi_{\geq 0}^{2 + 2p(n)}(\tau) ).
\end{equation}
Since $\lambda_5(n) < 0$, by possibly shrinking $\overline{\tau} < 0$ so that $\sup_{s \leq \tau}\|\Tilde{u}_{s}\|_{L^2_w}$ is small enough, 
we see that for a.e $\tau \leq \overline{\tau}$, 
\begin{equation}
    \pi_{<0}(\tau) \leq C(n)\pi^{1 + p(n)}_{\geq 0}(\tau).
\end{equation}
By continuity in $\tau$, above is true for all $\tau \leq \overline{\tau}$, thus proving lemma \ref{dominance of nonnegative modes}.
\end{proof}
We now further split $\pi_{\geq}(\tau)$ into neutral mode, and positive mode. Define
\begin{equation}
    \pi_{>0}(\tau) = \sup_{s \leq \tau}\|U^+_{s}\|_{L^2_w}, \ \pi_{= 0}(\tau) = \sup_{s \leq \tau}\|U^0_s\|_{L^2_w}.
\end{equation}
We now use one-sidedness assumption \ref{additional geoemtric assumption} to rule out the dominance of the zero eigenmode. The idea is essentially that of lemma 5.1 in \cite{Chodosh2024-qv}.
\begin{lemma}\label{dominance of positive mode}
There exists $\epsilon_0(n) > 0$ so that for any $\epsilon \in (0, \epsilon_0)$, we can find $\overline{\tau}< 0$ so that for all $\tau \leq \overline{\tau}$, we have
    \begin{equation}
        \pi_{=0}(\tau) \leq  \epsilon \pi_{> 0}(\tau)
    \end{equation}
    As a result, $\pi_{> 0}(\tau)$ is strictly increasing in $\tau$, and we have
    \begin{equation}
        \pi_{> 0}(\tau) = \|U^+_{\tau}\|_{L^2_w}
    \end{equation}
    for all sufficiently negative times.
\end{lemma}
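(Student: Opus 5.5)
First invoke the Merle--Zaag ODE lemma to obtain a dichotomy between the neutral and positive modes, and then rule out neutral dominance using one-sidedness.

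\textbf{Step 1: ODE system and dichotomy.} Set $x(\tau) = \|U^+_\tau\|_{L^2_w}$, $y(\tau) = \|U^0_\tau\|_{L^2_w}$, $z(\tau) = \|U^-_\tau\|_{L^2_w}$. The positive eigenvalues $\lambda_1,\lambda_2,\lambda_3$ are bounded below by $\lambda_3 = \frac{-1-\alpha}{2} > 0$. Proposition \ref{evolution of projections of localized functions}, together with \eqref{controlling rho0} and lemma \ref{dominance of nonnegative modes}, yields
\begin{equation*}
x' \geq \lambda_3 x - \varepsilon(\tau)(x+y+z), \quad |y'| \leq \varepsilon(\tau)(x+y+z), \quad z' \leq \tfrac{\lambda_5}{2}z + \varepsilon(\tau)(x+y+z),
\end{equation*}
with $\varepsilon(\tau) = C\rho_0^{p}(\tau) \to 0$. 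The $\rho_0$ term is absorbed because $\rho_0 \leq \pi_{<0}+\pi_{\geq 0}$. A small technicality: these are inequalities involving sup-quantities rather than $x,y,z$ themselves. I would either run the Merle--Zaag argument directly with the sup-quantities $\pi$, or note that lemma \ref{dominance of nonnegative modes} together with the $\tilde u \to 0$ property handles this.

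Since $x,y,z \to 0$ as $\tau \to -\infty$, the backward-time version of the Merle--Zaag lemma gives one of two alternatives. Either $x+z = o(y)$ (neutral dominance), or $y+z \leq \epsilon(x)$ eventually, with $\epsilon \to 0$. The second alternative yields $\pi_{=0}\leq \epsilon\,\pi_{>0}$. Since $x' \geq \frac{\lambda_3}{2}x > 0$ in that regime, $x$ is increasing, so the supremum is attained at $\tau$. This gives the last two claims of the lemma.

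\textbf{Step 2: Excluding neutral dominance (main obstacle).} Suppose $\|\tilde u_\tau - U^0_\tau\|_{L^2_w} = o(\|U^0_\tau\|_{L^2_w})$. Normalize $\tilde u_{\tau_i}/\|\tilde u_{\tau_i}\|_{L^2_w}$ along a sequence $\tau_i \to -\infty$. After passing to a subsequence, these converge in $L^2_w$ to a nonzero $\varphi \in \mathrm{span}\{\varphi_{4,j}\}$. Convergence also holds locally uniformly on compact subsets of $\Sigma\setminus\{0\}$, which I would justify via parabolic interior estimates together with the inner--outer estimate (corollary \ref{inner outer estimate for RMCF corollary}) to control mass near the origin.

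By assumption \ref{additional geoemtric assumption} we have $u_\tau \geq 0$, so the limit satisfies $\varphi \geq 0$. However, each $\varphi_{4,j} = c_4 (x\cdot e_l)(x\cdot e_{n+m})/r$ is odd under the reflection $x_l \mapsto -x_l$, which is an isometry of $\Sigma$ preserving the Gaussian weight. A nonzero linear combination $\varphi = \sum a_{lm}\varphi_{4,(l,m)} = \frac{1}{r}\,\langle A x', x''\rangle$, with $x=(x',x'')$, is therefore odd under $x \mapsto (-x',x'')$. It has zero mean and hence must change sign. This contradicts $\varphi \geq 0$.

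The delicate point is ensuring that the normalized limit is a nonnegative function. I would handle this by testing against compactly supported nonnegative functions $\zeta\geq 0$ on $\Sigma\setminus\{0\}$. This gives $\langle \varphi,\zeta\rangle_{L^2_w} \geq 0$ for all such $\zeta$, so weak $L^2_w$ convergence suffices, and that follows directly from the $L^2_w$ convergence. Hence neutral dominance is impossible, and the second alternative of the Merle--Zaag dichotomy holds for any $\epsilon\in(0,\epsilon_0)$ after shrinking $\overline{\tau}$.
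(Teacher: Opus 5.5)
Your proposal is correct and follows the paper's route. Both arguments use the Merle--Zaag dichotomy for the (sup-)norms of the positive, neutral and negative projections, then rule out neutral dominance using one-sidedness and the finite dimensionality of the zero eigenspace. The paper bounds $\|v_i^-\|_{L^2_w}\le 2\epsilon$ and invokes a uniform lower bound on $\|\phi^-\|_{L^2_w}$ over the unit sphere of $\mathcal{V}_0$. You instead pass to an $L^2_w$ limit and give an explicit reason, oddness under $(x',x'')\mapsto(-x',x'')$, why no nonzero neutral eigenfunction is nonnegative; the paper leaves that fact implicit.
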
  
\begin{remark}
    One immediate consequence of lemma \ref{dominance of positive mode} is that 
    \begin{equation}
        \rho_0(\tau) \leq 2\|U^+_{\tau}\|_{L^2_w}
    \end{equation}
    for all sufficiently negative $\tau$. This fact combined with proposition \ref{evolution of projections of localized functions} yields a cleaner differential inequality 
    \begin{equation}\label{cleaner evolution equation for positive mode}
        \|\frac{d}{d\tau}U^+_{\tau} - LU^+_{\tau}\|_{L^2_w} \leq C(n)\|U^+_{\tau}\|_{L^2_w}^{1 + p(n)}.
    \end{equation} 
    for all sufficiently negative times.
\end{remark}
\begin{proof}[Proof of lemma \ref{dominance of positive mode}]
    First choose $\overline{\tau} < 0$ so that both proposition \ref{evolution of projections of localized functions}, lemma \ref{dominance of nonnegative modes} holds. \\

    Fix $\epsilon \in (0, \epsilon_0)$, where $\epsilon_0(n) > 0$ will be determined later. We claim that either
    \begin{equation}
        \pi_{<0}(\tau) + \pi_{=0}(\tau) \leq  \epsilon \pi_{> 0}(\tau), \text{ or } \pi_{<0}(\tau) +\pi_{>0}(\tau) \leq  \epsilon\pi_{= 0}(\tau)
    \end{equation}
    for all $\tau \leq  \overline{\tau}$ after possibly shrinking $\overline{\tau}$. Note that by lemma \ref{dominance of nonnegative modes}, it is enough to prove above dichotomy without $ \pi_{<0}(\tau)$, hence only focus on
    \begin{equation}
        \pi_{=0}(\tau) \leq  \epsilon \pi_{> 0}(\tau), \text{ or } \pi_{>0}(\tau) \leq  \epsilon\pi_{= 0}(\tau).
    \end{equation}
    First, if either $\pi_{=0}(\tau_0) = 0$ or $\pi_{>0}(\tau_0) = 0$ for some $\tau \leq \overline{\tau}$, then the claim trivially follows after redefining $\overline{\tau} = \tau_0$. Thus, we consider the nontrivial case where both are positive for all $\tau \leq \overline{\tau}$. \\

    In this case, by arguing as in lemma \ref{dominance of nonnegative modes}, we see that both $ \pi_{=0}(\tau)$, and $\pi_{> 0}(\tau)$ are a.e differentiable, and by using proposition \ref{evolution of projections of localized functions}, and lemma \ref{dominance of nonnegative modes}, we have
\begin{align*}
    &\frac{d}{d\tau}\pi^2_{<0} \leq \frac{\lambda_5(n)}{2}\pi_{<0}^2 + C(n)\rho_0^{p(n)}(\pi_{<0}^2 + \pi_{=0}^2 + \pi_{>0}^2), \\ &|\frac{d}{d\tau}\pi_{=0}^2| \leq C(n)\rho_0^{p(n)}( \pi^2_{<0} + \pi_{=0}^2 +  \pi_{>0}^2),\\&\frac{d}{d\tau}\pi_{>0}^2 \geq (-1-\alpha)\pi_{>0}^2 - C(n)\rho_0^{p(n)} (\pi_{<0}^2 + \pi_{=0}^2+  \pi_{>0}^2).
\end{align*}
    Since $\lim_{\tau \to -\infty}\rho_0(\tau) = 0$,  by applying Merle-Zaag ODE lemma (lemma A.1 in \cite{merle1998optimal}) to the triple 
    \begin{equation}
        (X_+(\tau), X_0(\tau), X_-(\tau)) = (\pi_{>0}^2, \pi_{=0}^2, \pi_{<0}^2),
    \end{equation}
     we have the desired claim for small enough $\epsilon \leq \epsilon_0(n)$. \\

    We now prove that $\pi_{>0}(\tau) \leq  \epsilon \pi_{= 0}(\tau)$ cannot occur if $\epsilon_0(n) > 0$ is small enough. Suppose for a contradiction, 
    \begin{equation}\label{contradiction assumption, dominance of neutral mode}
        \pi_{>0}(\tau) \leq  \epsilon \pi_{= 0}(\tau)
    \end{equation}
    for all $\tau \leq \overline{\tau}$. Let $\mathcal{V}_0$ denote the space of zero eigenfunctions of $L$. First note that when the zero eigenmode is dominant, $\pi_{=0}(\tau) > 0$ and thus 
    \begin{equation}
        \pi_{=0}(\tau) = \max_{s \leq \tau}\|U^0_s\|_{L^2_w} > 0.
    \end{equation}
    Therefore, we can find $\tau_i \to -\infty$ so that
    \begin{equation}
        \pi_{=0}(\tau_i) = \|U^0_{\tau_i}\|_{L^2_w} \to 0 \text{ as }i \to \infty.
    \end{equation}
    Define
    \begin{equation}
        v_i = \frac{U^0_{\tau_i}}{\|U^0_{\tau_i}\|_{L^2_w}} = v_i^+ + v_i^-,
    \end{equation}
    where $v_i^+$, $v_i^-$ are the positive / negative parts of $v_i$ respectively. We now show that $v^-_i$ is small. By assumption \ref{additional geoemtric assumption}, we have
    \begin{equation}
        v_i = \frac{U^0_{\tau_i}}{\|U^0_{\tau_i}\|_{L^2_w}} = \frac{\Tilde{u}_{\tau_i}}{\|U^0_{\tau_i}\|_{L^2_w}} + \frac{U^0_{\tau_i} - \Tilde{u}_{\tau_i}}{\|U^0_{\tau_i}\|_{L^2_w}} \geq \frac{U^0_{\tau_i} - \Tilde{u}_{\tau_i}}{\|U^0_{\tau_i}\|_{L^2_w}} .
    \end{equation}
    This combined with the assumption \eqref{contradiction assumption, dominance of neutral mode}, and lemma \ref{dominance of nonnegative modes} implies that
    \begin{equation}
        \|v_i\|_{L^2_w} = 1, \ \|v^-_i\|_{L^2_w} \leq 2\epsilon
    \end{equation}
    for large enough $i$. On the other hand, by standard elliptic theory together with compactness (note that $\mathcal{V}_0$ is finite dimensional), there exists $C_0(n) > 0$ so that
    \begin{equation}
        \|\phi^-\|_{L^2_w} \geq C_0(n) \text{ for all }\phi \in \mathcal{V}_0, \|\phi\|_{L^2_w} = 1.
    \end{equation}
    Therefore by possibly choosing $\epsilon_0(n) > 0$ smaller so that
    \begin{equation}
        2\epsilon_0 \leq \frac{1}{2}C_0(n),
    \end{equation}
    we get a contradiction, thus establishing that
    \begin{equation}\label{dominacne of posiive mode in the proof}
         \pi_{=0}(\tau) \leq  \epsilon \pi_{> 0}(\tau)
    \end{equation}
    for all $\epsilon\in (0, \epsilon_0(n))$, $\tau \leq \overline{\tau}$.\\

    Once we have \eqref{dominacne of posiive mode in the proof}, $\pi_{ > 0}(\tau) > 0$, and by using proposition \ref{evolution of projections of localized functions}, we obtain
    \begin{equation}
        \frac{d}{d\tau}\pi_{>0}(\tau) \geq \frac{-1-\alpha}{2}\pi_{>0}(\tau) - \epsilon  \pi_{>0}(\tau)
    \end{equation}
    for all sufficiently negative times for each $\epsilon \in (0, \epsilon_0(n))$. By possibly choosing $\epsilon_0(n) > 0$ smaller, we obtain
\begin{equation}
        \frac{d}{d\tau}\pi_{>0}(\tau) \geq \frac{-1-\alpha}{4}\pi_{>0}(\tau)  > 0,
    \end{equation}
    thus proving that $\pi_{>0}(\tau)$ is strictly increasing. This immediately implies 
    \begin{equation}
        \pi_{> 0}(\tau) = \|U^+_{\tau}\|_{L^2_w},
    \end{equation}
    thus proving lemma \ref{dominance of positive mode}.
\end{proof}
By repeating the arguments in lemma \ref{dominance of positive mode}, we can rule out dominance of all the positive eigenmodes except the first one by using one-sidedness assumption \ref{additional geoemtric assumption}.
\begin{lemma}\label{dominance of first eigenmode}
    There exists $\epsilon_0(n) > 0$ so that for any $\epsilon \in (0, \epsilon_0(n))$, we can find $\overline{\tau} < 0$ so that for all $\tau \leq 
    \overline{\tau}$,
\begin{equation}
    \sup_{s \leq \tau}\|\Tilde{u}_{s}\| \leq (1 + \epsilon)\langle U^+_{\tau}, \varphi_1 \rangle_{L^2_w}.
\end{equation}
If we set $a(\tau) = \langle U^+_{\tau}, \varphi_1 \rangle_{L^2_w} > 0$, then $a(\tau)$ satisfies
\begin{equation}\label{inequality of a(tau)}
    |\frac{d}{d\tau}a(\tau) - \frac{1 - \alpha}{2}a(\tau)| \leq C(n)a^{1 + p(n)}(\tau)
\end{equation}
for all sufficiently negative times.
\end{lemma}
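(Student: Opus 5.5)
We repeat the argument of lemma \ref{dominance of positive mode}, now inside the positive eigenspace. By the remark following lemma \ref{dominance of positive mode}, the cleaner inequality \eqref{cleaner evolution equation for positive mode} holds. We split $U^+_\tau$ into its components along the three positive eigenvalues. Write $a(\tau)=\langle U^+_\tau,\varphi_1\rangle$ and let $b_j(\tau)=\langle U^+_\tau,\varphi_{2,j}\rangle$ for $1\le j\le 2n$. Let $c(\tau)=\langle U^+_\tau,\varphi_3\rangle$. Projecting \eqref{cleaner evolution equation for positive mode} onto each eigenfunction gives the system
\begin{equation*}
|a'-\lambda_1 a|,\ |b'-\lambda_2 b|,\ |c'-\lambda_3 c| \le C(n)\|U^+_\tau\|^{1+p(n)},
\end{equation*}
where $\lambda_1>\lambda_2>\lambda_3>0$ and the gaps are fixed constants depending only on $n$. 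We then form the suprema $\pi_1=\sup_{s\le\tau}|a|$, $\pi_2=\sup_{s\le\tau}|b|$ and $\pi_3=\sup_{s\le\tau}|c|$. Since $\|U^+_\tau\|\to0$, a Merle--Zaag type argument for distinct positive rates (or just the elementary ODE comparison using the gaps $\lambda_1-\lambda_2$ and $\lambda_2-\lambda_3$) shows that exactly one of the three modes dominates. That is, for each small $\epsilon$ and all $\tau\le\overline\tau$, one of $\pi_1,\pi_2,\pi_3$ is at least $\epsilon^{-1}$ times the sum of the other two. The first task is therefore to rule out dominance of mode $2$ and of mode $3$.

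For either bad case we follow the contradiction argument of lemma \ref{dominance of positive mode}. Suppose mode $k\in\{2,3\}$ dominates. Pick $\tau_i\to-\infty$ at which the corresponding supremum is attained, and normalize to get $v_i=P_k U^+_{\tau_i}/\|P_kU^+_{\tau_i}\|$, where $P_k$ is the projection onto the $\lambda_k$-eigenspace. Three facts combine here: $u_\tau\ge0$ (one-sidedness \ref{additional geoemtric assumption}), lemma \ref{dominance of nonnegative modes}, and the already proven dominance of $U^+$ over $U^0$. Together they give $\|v_i^-\|_{L^2_w}\le 2\epsilon$. But every unit vector in the $\lambda_2$-eigenspace has negative part bounded below in $L^2_w$. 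Indeed such vectors are $\nu\cdot e$, which integrate to zero against the weight by symmetry and are not nonnegative. By finite dimensionality this gives a uniform bound $C_0(n)>0$. The $\lambda_3$ eigenfunction $\varphi_3$ changes sign at $|x|^2=4(n+\alpha-\tfrac12)$, so $\|\varphi_3^-\|\ge C_0(n)$ as well. Choosing $\epsilon_0$ small then gives a contradiction in both cases. This leaves $\pi_2+\pi_3\le\epsilon\pi_1$.

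Next we show $a>0$ and derive the claimed estimate. Since $\lambda_1>0$, the inequality $|a'-\lambda_1a|\le C\|U^+\|^{1+p}\le C'\pi_1^{1+p}$ shows that $|a|$ is eventually strictly increasing, so $\pi_1(\tau)=|a(\tau)|$. Nonnegativity of $\tilde u_\tau$ and positivity of $\varphi_1$ give $a\ge -\langle U^-+U^0,\varphi_1\rangle-\dots$, which is a small fraction of $|a|$; hence $a>0$. Now combine lemma \ref{dominance of nonnegative modes}, lemma \ref{dominance of positive mode} and the mode-$1$ dominance just proved. Since $\|\tilde u_s\|\le\|U^+_s\|+\|U^0_s\|+\|U^-_s\|$, this yields $\sup_{s\le\tau}\|\tilde u_s\|\le(1+C\epsilon)a(\tau)$, and we relabel $\epsilon$. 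Finally $\|U^+_\tau\|\le(1+\epsilon)a$, and substituting this into the projected equation gives \eqref{inequality of a(tau)}.

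The main obstacle is the negative-part bound for the mode $2$ and mode $3$ eigenspaces. One has to be careful that the sign information is transferred from $\tilde u$ to $v_i$ only up to the small remainder $(U^+-P_kU^+)+U^0+U^-$. This requires the dominance estimates to be stated with suprema attained at the same times $\tau_i$, exactly as in lemma \ref{dominance of positive mode}. The case $k=3$ has the additional subtlety that $\varphi_3$ is positive near the origin and negative only at large $|x|$. This is harmless because $L^2_w$ only sees the weight, and $\|\varphi_3^-\|_{L^2_w}$ is a fixed positive constant.
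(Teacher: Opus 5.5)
Your proposal is correct. Its skeleton matches the paper's: establish dominance among the positive modes, exclude the $\lambda_2$- and $\lambda_3$-modes by one-sidedness (any eigenfunction orthogonal to $\varphi_1>0$ must change sign), then read off $a>0$ and the ODE for $a$.

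The genuine difference is how you obtain dominance among the positive modes.

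\textbf{The paper's route.} It splits $\Tilde{u}_\tau$ into $U^{>\lambda_3}_\tau$, $U^{=\lambda_3}_\tau$ and $U^{<\lambda_3}_\tau=\Tilde{u}_\tau-U^+_\tau$. It multiplies the squared norms by $e^{-(2\lambda_3-\epsilon)\tau}$, so that the $\lambda_3$-mode becomes almost neutral, the higher modes unstable and the rest stable, and then quotes the Merle--Zaag lemma. This needs the a priori decay $e^{-(\lambda_3-\epsilon)\tau}\|\Tilde{u}_\tau\|_{L^2_w}\to0$ as that lemma's hypothesis, and it must be run twice (at $\lambda_3$, then at $\lambda_2$).

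\textbf{Your route.} You work only with $U^+_\tau$ and the cleaner inequality from the remark after Lemma \ref{dominance of positive mode}, with three positive rates and no weight. Lemma A.1 of Merle--Zaag does not literally cover three positive rates, so your dominance step has to be the elementary comparison you mention. It does go through:
\begin{itemize}
\item Set $Y=a^2+|b|^2$, $Z=c^2$ and $\eta=\sup_{s\le\overline{\tau}}C\|U^+_s\|^{p}$. Then $Y'\ge2\lambda_2Y-\eta(Y+Z)$ and $Z'\le2\lambda_3Z+\eta(Y+Z)$.
\item For $\eta\ll\epsilon(\lambda_2-\lambda_3)$, the set $\{Y>\epsilon Z\}$ is forward invariant and $\{Z>\epsilon Y\}$ is backward invariant.
\item If neither $Y\le\epsilon Z$ nor $Z\le\epsilon Y$ held on a half-line, these invariances would give $\epsilon Y<Z<\epsilon^{-1}Y$ on some $(-\infty,\tau^*]$.
\item There $(\log(Z/Y))'\le-(\lambda_2-\lambda_3)$, so $Z/Y\to\infty$ as $\tau\to-\infty$, which is a contradiction.
\end{itemize}
This gives pointwise dominance, hence dominance of your suprema, and the same argument then separates $a$ from $b$. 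So your route avoids the exponential weights and the a priori decay estimate, at the price of proving this short dichotomy by hand instead of citing a lemma.

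A minor simplification: $U^0_\tau$, $U^-_\tau$ and the other positive modes are all $L^2_w$-orthogonal to $\varphi_1$. Hence $a(\tau)=\langle\Tilde{u}_\tau,\varphi_1\rangle_{L^2_w}\ge0$ exactly, with no error terms, and strict positivity follows from $|a|=\pi_1>0$.
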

\begin{proof}[Proof of lemma \ref{dominance of first eigenmode}]
    We first choose $\epsilon_0(n) > 0$ so that for any $\epsilon \in (0, \epsilon_0(n))$, we can find $\overline{\tau}< 0$ so that proposition \ref{evolution of projections of localized functions}, lemma \ref{dominance of nonnegative modes}, lemma \ref{dominance of positive mode} holds for all $\tau \leq \overline{\tau}$, and $\epsilon_0(n) < \frac{\lambda_2 - \lambda_3}{100}$. We further split the positive mode, and consider
    \begin{equation}
        U^{= \lambda_3}_\tau = \langle \Tilde{u}_{\tau}, \varphi_3\rangle_{L^2_w}\varphi_3, \ U^{> \lambda_3}_{\tau} = U^+_{\tau} - U^{= \lambda_3}_{\tau}, \ U^{<\lambda_3}_{\tau} = \Tilde{u}_{\tau} - U^+_{\tau},
    \end{equation}
    where $0 < \lambda_3 < \lambda_2$ are given in \eqref{first four eigenvalues eq}, and $\varphi_3$ is the eigenfunction corresponding to $\lambda_3$. 
    By using proposition \ref{evolution of projections of localized functions} together with lemma \ref{dominance of nonnegative modes}, lemma \ref{dominance of positive mode}, we have for each $\epsilon \in (0, \epsilon_0(n))$, $\tau$ small enough,
    \begin{align*}
        \frac{d}{d\tau}e^{-(2\lambda_3 - \epsilon)\tau}\|U^{<\lambda_3}_{\tau}\|^2_{L^2_w} \leq& -2\lambda_3e^{-(2\lambda_3 - \epsilon)\tau}\|U^{<\lambda_3}_{\tau}\|^2_{L^2_w}   \\ &+ 2\epsilon(e^{-(2\lambda_3 - \epsilon)\tau}\|U^{<\lambda_3}_{\tau}\|_{L^2_w}^2 + e^{-(2\lambda_3 - \epsilon)\tau}\|U^{= \lambda_3}_\tau\|_{L^2_w}^2 + e^{-(2\lambda_3 - \epsilon)\tau}\|U^{> \lambda_3}_\tau\|_{L^2_w}^2),\\|\frac{d}{d\tau }e^{-(2\lambda_3 - \epsilon)\tau}\|U^{= \lambda_3}_\tau\|_{L^2_w}^2 | \leq& 2\epsilon(e^{-(2\lambda_3 - \epsilon)\tau}\|U^{<\lambda_3}_{\tau}\|_{L^2_w}^2 + e^{-(2\lambda_3 - \epsilon)\tau}\|U^{= \lambda_3}_\tau\|_{L^2_w}^2 + e^{-(2\lambda_3 - \epsilon)\tau}\|U^{> \lambda_3}_\tau\|_{L^2_w}^2),\\\frac{d}{d\tau }e^{-(2\lambda_3 - \epsilon)\tau}\|U^{> \lambda_3}_\tau\|_{L^2_w}^2 \geq&  (2\lambda_2 - 2\lambda_3)e^{-(2\lambda_3 - \epsilon)\tau}\|U^{> \lambda_3}_\tau\|_{L^2_w}^2 \\ & -2\epsilon(e^{-(2\lambda_3 - \epsilon)\tau}\|U^{<\lambda_3}_{\tau}\|_{L^2_w}^2 + e^{-(2\lambda_3 - \epsilon)\tau}\|U^{= \lambda_3}_\tau\|_{L^2_w}^2 + e^{-(2\lambda_3 - \epsilon)\tau}\|U^{> \lambda_3}_\tau\|_{L^2_w}^2).
    \end{align*}
    By lemma \ref{dominance of nonnegative modes}, lemma \ref{dominance of positive mode}, \eqref{cleaner evolution equation for positive mode} implies that
    \begin{equation}
        \lim_{\tau \to -\infty}e^{-(\lambda_3 - \epsilon)\tau}\|\Tilde{u}_{\tau}\|_{L^2_w} = 0
    \end{equation}
    for any small $\epsilon > 0$. 
    Therefore, Merle-Zaag ODE lemma (lemma A.1 in \cite{merle1998optimal}) applied to the triple
    \begin{equation}
        (X_+(\tau), X_0(\tau), X_-(\tau)) = (e^{-(2\lambda_3 - \epsilon)\tau}\|U^{> \lambda_3}_\tau\|_{L^2_w}^2 , e^{-(2\lambda_3 - \epsilon)\tau}\|U^{= \lambda_3}_\tau\|_{L^2_w}^2, e^{-(2\lambda_3 - \epsilon)\tau}\|U^{< \lambda_3}_\tau\|_{L^2_w}^2)
    \end{equation}
    implies that after possibly shrinking $\overline{\tau}$, either
    \begin{equation}
        \|U^{< \lambda_3}_\tau\|_{L^2_w} + \|U^{> \lambda_3}_\tau\|_{L^2_w} \leq \epsilon\|U^{= \lambda_3}_\tau\|_{L^2_w}, \text{ or }\|U^{< \lambda_3}_\tau\|_{L^2_w} + \|U^{= \lambda_3}_\tau\|_{L^2_w} \leq \epsilon \|U^{> \lambda_3}_\tau\|_{L^2_w}
    \end{equation}
    for all $\tau \leq \overline{\tau}$ for small enough $\epsilon \leq \epsilon_0(n)$. \\

    We argue that the first case cannot occur when $\epsilon_0(n) > 0$ is small enough. If not, then we can define for any $\tau_i \to -\infty$, 
    \begin{equation}
        v_i = \frac{U^{= \lambda_3}_{\tau_i}}{\| U^{= \lambda_3}_{\tau_i}\|_{L^2_w}} \in \mathcal{V}_{\lambda_3},
    \end{equation}
    where $\mathcal{V}_{\lambda_3}$ denote the space of all $\lambda_3$-eigenfunctions of $L$. By exploiting assumption \ref{additional geoemtric assumption} as in the proof of lemma \ref{dominance of positive mode}, we see that for $i$ large enough, 
    \begin{equation}
        \|v_{i}\|_{L^2_w} = 1, \ \|v^-_{i}\|_{L^2_w} \leq 10\epsilon.
    \end{equation}
    On the other hand, by standard elliptic theory together with compactness, we get a contradiction if $\epsilon_0(n) >0$ is small enough. We thus get  
    \begin{equation}
        \|U^{= \lambda_3}_\tau\|_{L^2_w} \leq \epsilon \|U^{> \lambda_3}_\tau\|_{L^2_w}.
    \end{equation}
    By repeating previous argument with $\lambda_3$ replaced by $\lambda_2$, we eventually obtain dominance of the first eigenmode. Since $\Tilde{u}_{\tau} \geq 0$, and the first eigenspace is one dimensional, we have
    \begin{equation}
    \sup_{s \leq \tau}\|\Tilde{u}_{s}\| \leq (1 + \epsilon)\langle U^+_{\tau}, \varphi_1 \rangle_{L^2_w},
\end{equation}
proving the first assertion. \\

To prove the desired inequality, first take $\epsilon = \epsilon_0(n)$. Combining above with \eqref{cleaner evolution equation for positive mode} implies \eqref{inequality of a(tau)}, thus proving lemma \ref{dominance of first eigenmode}.
\end{proof}
Discussions so far implies that the dynamics of $(\Tilde{M}_{\tau})_{\tau \in (-\infty, 0)}$ which satisfies assumptions \ref{strong convergence to simonscone assumption1}, \ref{additional geoemtric assumption} is determined by 
\begin{equation}
    a(\tau) = \langle \Tilde{u}_{\tau} , \varphi_1\rangle_{L^2_w},
\end{equation}
and that $a(\tau)$ satisfies
\begin{equation}
    |\frac{d}{d\tau}a(\tau) - \frac{1 - \alpha}{2}a(\tau)| \leq C(n)a^{1 + p(n)}(\tau).
\end{equation}
This allows us to compute the `leading term' of $\Tilde{u}_{\tau}$ for all sufficiently negative time. 
\begin{lemma}\label{uniqueasymptoticsinweightedL2setting}
There exist constants $c > 0$, $p(n) > 0$, $\overline{\tau} < 0$ so that for all $\tau \leq \overline{\tau}$, 
\begin{equation}
    \| \Tilde{u}_{\tau} - ce^{\frac{1 - \alpha}{2}\tau}\varphi_1 \|_{L^2_w} \leq O(e^{\frac{(1 - \alpha)(1 + p(n))}{2}\tau}).
\end{equation}
\end{lemma}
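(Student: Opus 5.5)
Our plan is to integrate the scalar differential inequality \eqref{inequality of a(tau)} of lemma \ref{dominance of first eigenmode} and then use the mode decomposition to compare $\Tilde{u}_\tau$ with $a(\tau)\varphi_1$. Write $\lambda_1 = \frac{1-\alpha}{2}$ and set $b(\tau) = e^{-\lambda_1\tau}a(\tau)$. Since $a(\tau)>0$ and $a(\tau)\to 0$ as $\tau\to-\infty$, \eqref{inequality of a(tau)} gives
\begin{equation*}
    \Big|\frac{d}{d\tau}\log a(\tau) - \lambda_1\Big| \leq C(n)a^{p(n)}(\tau).
\end{equation*}
We first use this crudely. For $\tau$ sufficiently negative, $C a^{p}\le \lambda_1/2$, so $\frac{d}{d\tau}\log a \ge \lambda_1/2$. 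Integrating from $\tau$ to $\overline{\tau}$ yields $a(\tau)\le C e^{\frac{\lambda_1}{2}\tau}$, so $a$ decays at least exponentially.

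With this decay, $|\frac{d}{d\tau}\log b| \le C a^{p} \le C e^{\frac{\lambda_1 p}{2}\tau}$, which is integrable on $(-\infty,\overline{\tau}]$. Hence $\log b(\tau)$ has a finite limit as $\tau\to-\infty$, so $b(\tau)\to c'>0$ and $a(\tau)\le C e^{\lambda_1\tau}$. Feeding this sharper bound back in gives $|\frac{d}{d\tau}\log b|\le C e^{\lambda_1 p\tau}$. Integrating from $-\infty$ to $\tau$ then yields
\begin{equation*}
    |\log b(\tau)-\log c'|\le C e^{\lambda_1 p\tau},
\end{equation*}
and therefore $|a(\tau) - c' e^{\lambda_1\tau}| \le C e^{\lambda_1(1+p)\tau}$. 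The constant $c = c'/\text{(normalization)}$ is determined by $\tilde{\mathcal M}$, exactly as allowed in the statement.

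It remains to control the remainder $\Tilde{u}_\tau - a(\tau)\varphi_1$ in $L^2_w$. We split it as $(U^+_\tau - a\varphi_1) + U^0_\tau + U^-_\tau$. Lemma \ref{dominance of nonnegative modes} gives $\|U^-_\tau\|\le C\pi_{\ge0}^{1+p}$, and lemma \ref{dominance of positive mode} together with the remark after it gives $\pi_{\ge 0}\le C a$, so $\|U^-_\tau\| \le Ca^{1+p} \le C e^{\lambda_1(1+p)\tau}$.

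For the remaining components of $U^+_\tau$ and $U^0_\tau$, namely the coefficients $a_{\varphi}$ with $\varphi=\varphi_{i,j}$, $i=2,3,4$, proposition \ref{evolution of projections of localized functions} combined with $\rho_0\le Ca$ gives
\begin{equation*}
    \Big|\frac{d}{d\tau}a_\varphi - \lambda_i a_\varphi\Big|\le C a^{1+p}\le Ce^{\lambda_1(1+p)\tau}.
\end{equation*}
Since $\lambda_i<\lambda_1$ and $a_\varphi(\tau)\to0$, we solve by variation of constants, $a_\varphi(\tau)=\int_{-\infty}^\tau e^{\lambda_i(\tau-s)}(\dots)\,ds$. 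This is the delicate point: we need the boundary term $e^{-\lambda_i s}a_\varphi(s)$ to vanish as $s\to-\infty$. It does, because $a_\varphi(s)\le Ca(s)\le Ce^{\lambda_1 s}$ and $\lambda_1>\lambda_i$. Provided $p$ is shrunk so that $\lambda_1(1+p)>\lambda_i$ (automatic for $\lambda_i\le \lambda_2$), we get $|a_\varphi(\tau)|\le Ce^{\lambda_1(1+p)\tau}$.

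An alternative that avoids this ODE bookkeeping is the quantitative form of lemma \ref{dominance of first eigenmode} with $\epsilon$ replaced by $Ca^{p}$. Either way, summing the finitely many modes gives the claimed $O(e^{\frac{(1-\alpha)(1+p)}{2}\tau})$ bound. The main obstacle is ensuring that the non-leading positive and neutral modes decay at the improved rate and not merely at rate $\epsilon a$; the variation-of-constants argument above handles this, possibly at the cost of decreasing $p(n)$.
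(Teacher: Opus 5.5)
Your proposal is correct and follows the same route as the paper. You integrate \eqref{inequality of a(tau)} to get $a(\tau)=ce^{\frac{1-\alpha}{2}\tau}+O(e^{\frac{(1-\alpha)(1+p)}{2}\tau})$, then bound $\Tilde{u}_\tau-a(\tau)\varphi_1$ mode by mode via proposition \ref{evolution of projections of localized functions} and the dominance lemmas, which spells out the step the paper leaves as ``one can check''; the only cosmetic difference is that you get $c>0$ directly from $a>0$ and the integrability of $(\log b)'$, where the paper uses one-sidedness and then rules out $c=0$ by contradiction. Three small corrections: the bounds $\pi_{\geq 0}\leq Ca$ and $\rho_0\leq Ca$ come from the first assertion of lemma \ref{dominance of first eigenmode}, not from lemma \ref{dominance of positive mode}; $c=c'$ with no normalization factor, since $\varphi_1$ is $L^2_w$-normalized; and no shrinking of $p$ is needed, because $\lambda_1(1+p)>\lambda_1>\lambda_i$ holds automatically for $i\geq 2$.
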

\begin{proof}[Proof of lemma \ref{uniqueasymptoticsinweightedL2setting}]
    Fix $\overline{\tau} < 0$ so that all previous results hold for every $\tau \leq \overline{\tau}$. Inequality \eqref{inequality of a(tau)}, and the fact that $a(\tau) \to 0$ as $\tau \to -\infty$ implies that there exists $c \in \mathbf{R}$ so that
    \begin{equation}
        |a(\tau) - ce^{\frac{1 - \alpha}{2}\tau}|\leq O(e^{\frac{(1 - \alpha)(1 + p(n))}{2}\tau}).
    \end{equation}
   By using lemma \ref{dominance of first eigenmode}, and proposition \ref{evolution of projections of localized functions}, one can check that 
   \begin{equation}
       \| \Tilde{u}_{\tau} - a(\tau)\varphi_1 \|_{L^2_w} \leq O(e^{\frac{(1 - \alpha)(1 + p(n))}{2}\tau}).
   \end{equation}
   Combining the two inequalities, we see that
   \begin{equation}
       \| \Tilde{u}_{\tau} - ce^{\frac{1 - \alpha}{2}\tau}\varphi_1 \|_{L^2_w} \leq O(e^{\frac{(1 - \alpha)(1 + p(n))}{2}\tau})
   \end{equation}
   for some $c \in \mathbf{R}$. \\

   We now claim that $c > 0$. The one-sidedness assumption \ref{additional geoemtric assumption} immediately implies that $c \geq 0$. Assume for contradiction that $c = 0$. Then the inequality \eqref{inequality of a(tau)} implies that 
   \begin{equation}
       a(\tau) \equiv 0
   \end{equation}
   for all $\tau \leq \overline{\tau}$. This implies that $\Tilde{u}_{\tau} \equiv 0$, which means that $\Tilde{M}_{\tau}$ has to agree with $\Sigma$ for all sufficiently negative time due to strong maximum principle (note that $\Tilde{M}_{\tau}$ is one-sided). This contradicts the smoothness of $\Tilde{M}_{\tau}$, thus excluding $c = 0$ case.
\end{proof}
Lemma \ref{uniqueasymptoticsinweightedL2setting} combined with standard parabolic interior regularity theory implies the following main result of this section.
\begin{theorem}\label{unique asymptotics of one sided flow}[Theorem \ref{maintheorem : uniqueasmptotics}] Let $(M_t)_{t \in (-\infty, 0)}$ be smooth, properly embedded ancient mean curvature flow which satisfies assumptions \ref{strong convergence to simonscone assumption1}, \ref{additional geoemtric assumption}, and let $(\Tilde{M}_{\tau})_{\tau \in (-\infty, 0)}$ be the rescaled mean curvature flow. Then there exists $c > 0$, $p(n) > 0$ so that for every $k \in \mathbf{N}$, $0 < r < R < \infty$, there exists $\overline{\tau}$ so that for all $\tau \leq \overline{\tau}$, 
\begin{equation}
    \|u_{\tau} - ce^{\frac{1 - \alpha}{2}\tau}\varphi_1 \|_{C^{k}(\Sigma_{r, R})} \leq O(e^{\frac{(1 - \alpha)(1 + p(n))}{2}\tau}).
\end{equation}
\end{theorem}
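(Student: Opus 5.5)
The plan is to upgrade the weighted $L^2$ asymptotics of Lemma \ref{uniqueasymptoticsinweightedL2setting} to $C^k$ bounds on compact annuli using local parabolic regularity. Set $w_\tau = u_\tau - ce^{\frac{1-\alpha}{2}\tau}\varphi_1$.

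First, fix $0<r<R$ and choose $\overline{\tau}$ so that $2r(\tau)<r/4$ for all $\tau\le\overline{\tau}$, where $r(\tau)$ is the cutoff radius from Lemma \ref{weighted C2 estimate for the graph function giving us higher power nonlinearity}. This is possible because $r(\tau)\to 0$. Then $\Tilde u_\tau = u_\tau$ on $\Sigma_{r/4,\infty}$. On the annulus $\Sigma_{r/4,4R}$ the Gaussian weight is bounded above and below by constants depending on $R$, so the lemma gives the unweighted bound
\begin{equation*}
\|w_s\|_{L^2(\Sigma_{r/4,4R})}\le C(r,R)\,e^{\frac{(1-\alpha)(1+p)}{2}s}
\end{equation*}
for every $s\le\overline{\tau}$.

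Second, derive an equation for $w$. Since $\varphi_1$ is a $\lambda_1$-eigenfunction with $\lambda_1=\frac{1-\alpha}{2}$, the function $ce^{\lambda_1\tau}\varphi_1$ solves $\partial_\tau v = Lv$. Lemma \ref{the nonlinear error term computation} then gives $\partial_\tau w = Lw + Qu_\tau$. Write $Qu_\tau = a^{ij}\nabla^2_{ij}u_\tau + b\cdot\nabla u_\tau + e\,u_\tau$. By Lemma \ref{the nonlinear error term computation} and Lemma \ref{weighted C2 estimate for the graph function giving us higher power nonlinearity}, the coefficients are bounded by $C\|u_\tau\|^*_{C^2}$ on the annulus, which is small. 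The equation for $w$ is therefore uniformly parabolic with smooth bounded coefficients on $\Sigma_{r/4,4R}\times[s-1,s]$, with a forcing term of size $|Qu_\tau|\lesssim |(u,\nabla u,\nabla^2u)|^2$. The uniform $C^k$ bounds of Proposition \ref{Rough graphicality property}, with higher $k$ supplied by pseudolocality and interior estimates, make all coefficients bounded in $C^k$.

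Third, bound the quadratic forcing, which is the main point to check. On the annulus, $|u_\tau|\le |w_\tau| + Ce^{\lambda_1\tau}$. Interior parabolic $L^2\to C^{k+2}$ estimates, applied first to the equation for $u$ alone, give $\|u\|_{C^{k+2}}\le C\sup_{[s-1,s]}\|u\|_{L^2}\le Ce^{\lambda_1 s}$ on a slightly smaller annulus. Hence $\|Qu\|_{C^{k}}\le Ce^{2\lambda_1 s}$, which is at most $Ce^{\lambda_1(1+p)s}$ once $p\le1$.

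Fourth, apply interior parabolic Schauder and $L^2\to L^\infty$ estimates to the linear equation for $w$ on nested annuli, shrinking the interval slightly at each step. This yields $\|w_s\|_{C^k(\Sigma_{r,R})}\le C(k,r,R)\big(\sup_{[s-1,s]}\|w\|_{L^2(\Sigma_{r/4,4R})} + \sup\|Qu\|_{C^{k}}\big) \le Ce^{\lambda_1(1+p)s}$, which is the claimed estimate.

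For the main Theorem \ref{maintheorem : uniqueasmptotics}, the graphicality statement with $r_0$ comes from Proposition \ref{graphical radius estimate proposition} together with $\rho_0\sim e^{\lambda_1\tau}$, since $\rho_0^{1/(1-\alpha)}\sim e^{\tau/2}$. Finally, $\varphi_1=c_1|x|^\alpha$, so the constant $c_1$ is absorbed into $c$.
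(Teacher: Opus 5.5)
Your proposal is correct and follows the paper's route. The paper deduces this theorem in one line from Lemma \ref{uniqueasymptoticsinweightedL2setting} together with standard interior parabolic regularity. Your steps supply exactly the details that line leaves implicit: using $\tilde u_\tau = u_\tau$ on a fixed annulus once $2r(\tau)$ is small, passing from $L^2_w$ to unweighted $L^2$ there, bounding the quadratic term $Qu_\tau$ by $Ce^{(1-\alpha)\tau}$ and treating it as a forcing, and applying interior estimates to $\partial_\tau w = Lw + Qu_\tau$.
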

Theorem \ref{unique asymptotics of one sided flow} has two consequences for the unrescaled flow $(M_t)_{t \in (-\infty, 0)}$. First corollary is the `uniform in time' asymptotic behavior of the time slices of the \textit{unrescaled }flow in space. 
\begin{corollary}\label{graphicality of unrescaled flow}
    Let $\mathcal{M} = (M_t)_{t \in (-\infty, 0)}$ be smooth, properly embedded mean curvature flow which satisfies assumptions \ref{strong convergence to simonscone assumption1}, \ref{additional geoemtric assumption}. Then there exists $\overline{\eta}(n) \in (0, \frac{1}{100})$ so that for any $\eta \in (0, \overline{\eta}(n))$, there exists $\overline{t} < 0$, and $r = r(n,\mathcal{M},  \eta) > 0$ so that for all $t \leq \overline{t}$, $M_t \setminus B(0, r)$ is a normal graph of $v_t \in C^{\infty}(\Sigma_{2r, \infty} \subset \mathcal{D})$ with $\|v_t\|^*_{C^3(\Sigma_{2r, \infty})} \leq \eta$.
\end{corollary}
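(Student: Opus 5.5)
The plan is to transfer the graphical radius estimate for the rescaled flow back to the unrescaled flow, using theorem \ref{unique asymptotics of one sided flow} to show that the rescaled graphical radius decays exactly like $e^{\frac{\tau}{2}}$. Recall that $M_t = \sqrt{-t}\,\Tilde{M}_{\tau}$ with $\tau = -\ln(-t)$, so that $\sqrt{-t} = e^{-\frac{\tau}{2}}$. If $\Tilde{M}_\tau \setminus B(0,\rho)$ is a normal graph of $u_\tau$ over $\Sigma_{2\rho,\infty}\subset \mathcal{D}_\tau$ with $\|u_\tau\|^*_{C^3(\Sigma_{2\rho,\infty})}\leq \eta$, then $M_t\setminus B(0, e^{-\frac{\tau}{2}}\rho)$ is the normal graph of
\begin{equation*}
v_t(x) = e^{-\frac{\tau}{2}}u_\tau(e^{\frac{\tau}{2}}x),
\end{equation*}
defined over a domain containing $\Sigma_{2e^{-\frac{\tau}{2}}\rho,\infty}$. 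By the scaling invariance of the weighted seminorms in definition \ref{Weighted Ck norms} (the identity $[u_\lambda]^*_{C^k(\Sigma_{r,R})} = [u]^*_{C^k(\Sigma_{\lambda r,\lambda R})}$ with $\lambda = e^{\frac{\tau}{2}}$), this graph still satisfies $\|v_t\|^*_{C^3}\leq \eta$ on that domain. Hence it suffices to prove
\begin{equation*}
r(\tau;\eta)\leq C e^{\frac{\tau}{2}}
\end{equation*}
for all sufficiently negative $\tau$. We then set $r = 2C$: since $r(\tau;\eta)$ is an infimum, graphicality holds outside $B(0,2Ce^{\frac{\tau}{2}})$ as well (graphicality over $\Sigma_{2\rho,\infty}$ with the norm bound for some $\rho$ implies the same for any larger $\rho$), and the rescaled ball $B(0, e^{-\frac{\tau}{2}}\cdot 2Ce^{\frac{\tau}{2}}) = B(0,r)$ is fixed.

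To bound $r(\tau;\eta)$, I use proposition \ref{graphical radius estimate proposition}, which gives $r(\tau;\eta)\leq C_0(n,\eta)\rho_0(\tau)^{\frac{1}{1-\alpha}}$ with $\rho_0(\tau)=\sup_{s\leq\tau}\|u_s\|_{L^2_w(\Sigma_{\frac{\overline r}{2},2\overline r})}$. Theorem \ref{unique asymptotics of one sided flow} with $k=0$ on the fixed annulus $\Sigma_{\frac{\overline r}{2},2\overline r}$ gives, for $s\leq\overline\tau$,
\begin{equation*}
\sup_{\Sigma_{\frac{\overline r}{2},2\overline r}}|u_s| \leq c\,e^{\frac{1-\alpha}{2}s}\sup|\varphi_1| + O(e^{\frac{(1-\alpha)(1+p)}{2}s}) \leq C e^{\frac{1-\alpha}{2}s}.
\end{equation*}
Since this bound is increasing in $s$, the supremum over $s\leq\tau$ is at most $Ce^{\frac{1-\alpha}{2}\tau}$. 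Integrating over the bounded annulus then yields $\rho_0(\tau)\leq C e^{\frac{1-\alpha}{2}\tau}$, and therefore
\begin{equation*}
r(\tau;\eta)\leq C_0 C^{\frac{1}{1-\alpha}}e^{\frac{\tau}{2}},
\end{equation*}
as required.

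The only delicate point is bookkeeping. We need $\eta<\overline\eta(n)$ so that proposition \ref{graphical radius estimate proposition} applies, and $t\leq\overline t=-e^{-\overline\tau}$ so that both that proposition and the asymptotic theorem are valid. Beyond this, the argument is essentially immediate: the real content, namely the optimal rate $e^{\frac{\tau}{2}}$ of the graphical radius, is carried entirely by the combination of the graphical radius estimate with the sharp decay $e^{\frac{1-\alpha}{2}\tau}$ from the unique asymptotics.
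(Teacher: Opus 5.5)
Your proposal is correct and follows essentially the same route as the paper. You bound $\rho_0(\tau)\le Ce^{\frac{1-\alpha}{2}\tau}$ via Theorem \ref{unique asymptotics of one sided flow}, feed this into Proposition \ref{graphical radius estimate proposition} to get $r(\tau;\eta)\le Ce^{\frac{\tau}{2}}$, and rescale back using the scale invariance of the weighted norms. You also spell out two points the paper leaves implicit: the explicit rescaling $v_t(x)=e^{-\frac{\tau}{2}}u_\tau(e^{\frac{\tau}{2}}x)$, and the monotonicity in the radius that lets you pass from the infimum $r(\tau;\eta)$ to the fixed radius $r=2C$.
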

\begin{proof}[Proof of corollary \ref{graphicality of unrescaled flow}]
    Let $\overline{r}(n) > 0$, $\overline{\eta} \in (0, \frac{1}{100})$ to be constants from proposition \ref{graphical radius estimate proposition}. Theorem \ref{unique asymptotics of one sided flow} implies that for all $\tau$ small enough, 
    \begin{equation}
        \sup_{s \leq \tau}\|u_{s}\|_{L^2_{w}(\Sigma_{\frac{\overline{r}}{2}, 2\overline{r}})} \leq Ce^{\frac{1 - \alpha}{2}\tau}
    \end{equation}
    for some fixed constant $C > 0$. Therefore, proposition \ref{graphical radius estimate proposition} implies that for any $\eta \in (0, \overline{\eta}(n))$, we can find $\overline{\tau} < 0$ so that for all $\tau \leq \overline{\tau}$, 
    \begin{equation}
        r(\tau ; \eta) \leq Ce^{\frac{\tau}{2}}.
    \end{equation}
    Rescaling back to $(M_t)_{t \in (-\infty, 0)}$ implies corollary \ref{graphicality of unrescaled flow} since the weighted norm is scaling invariant. 
\end{proof}
The second corollary is that the unrescaled flow is `trapped' between two nearby leaves of the Hardt-Simon foliation in the parabolic region for all sufficiently negative time. 
\begin{corollary}\label{upper and lower obound of unrescaled flow in parabolic region}
    Let $(M_t)_{t \in (-\infty, 0)}$ be smooth, properly embedded mean curvature flow which satisfies assumptions \ref{strong convergence to simonscone assumption1}, \ref{additional geoemtric assumption}. Then there exists $c > 0$ so that for any small $\epsilon > 0$, $0 < r < R < \infty$, there exists $\overline{t} < 0$ so that for any $t \leq \overline{t}$, $M_t$ contains the normal graph of $v_t$ defined over $\Sigma_{r\sqrt{-t}, R\sqrt{-t}}$, and for each $x \in \Sigma_{r\sqrt{-t}, R\sqrt{-t}}$, 
    \begin{equation}
        \hat{\psi}_{c - \epsilon}(x) \leq v_t(x) \leq \hat{\psi}_{c + \epsilon}(x).
    \end{equation}
    Here, $\hat{\psi}_{c \pm \epsilon}$ are given by \eqref{graphical normal parametrization of hardtsimonleaf}.
\end{corollary}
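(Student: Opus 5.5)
Rescale the asymptotics of Theorem \ref{unique asymptotics of one sided flow} back to the unrescaled flow and compare with the explicit expansion of $\hat{\psi}_c$. For $t<0$ set $\tau=-\ln(-t)$, so that $M_t=\sqrt{-t}\,\Tilde{M}_{\tau}$ and $\sqrt{-t}=e^{-\tau/2}$. If $\Tilde{M}_\tau$ is the normal graph of $u_\tau$ over $\Sigma_{r,R}$, then $M_t$ contains the normal graph of
\[
v_t(x)=\sqrt{-t}\,u_\tau\bigl(x/\sqrt{-t}\bigr),\qquad x\in\Sigma_{r\sqrt{-t},R\sqrt{-t}},
\]
because $\Sigma$ is a cone and $\nu_\Sigma$ is $0$-homogeneous.

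Since $\varphi_1=c_1(n)|x|^\alpha$, Theorem \ref{unique asymptotics of one sided flow} with $k=0$ gives
\[
u_\tau(y)=c' e^{\frac{1-\alpha}{2}\tau}|y|^\alpha+O\bigl(e^{\frac{(1-\alpha)(1+p)}{2}\tau}\bigr)
\]
uniformly for $y\in\Sigma_{r,R}$, where $c'=c\,c_1(n)>0$. Substituting $y=x/\sqrt{-t}$ and $e^{\tau}=(-t)^{-1}$, the leading term becomes
\[
\sqrt{-t}\,c'(-t)^{-\frac{1-\alpha}{2}}|x|^\alpha(-t)^{-\alpha/2}=c'|x|^\alpha .
\]
This is time independent, as it should be. The error becomes $O\bigl((-t)^{\frac12-\frac{(1-\alpha)(1+p)}{2}}\bigr)$. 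On $\Sigma_{r\sqrt{-t},R\sqrt{-t}}$ we have $|x|^\alpha\ge R^\alpha(-t)^{\alpha/2}$, so the relative error is
\[
O\bigl((-t)^{\frac{1-\alpha}{2}-\frac{(1-\alpha)(1+p)}{2}}\bigr)=O\bigl((-t)^{-\frac{(1-\alpha)p}{2}}\bigr)\to0 .
\]
Hence $v_t(x)=c'|x|^\alpha(1+o(1))$ uniformly in the annulus as $t\to-\infty$.

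Next use the expansion $\hat{\psi}_{\tilde c}(x)=c_0(n)\tilde c^{1-\alpha}|x|^\alpha(1+o(1))$ as $|x|\to\infty$, which holds for $\tilde c$ in a compact subset of $(0,\infty)$. The $o(1)$ is uniform there, since $\hat{\psi}_{\tilde c}(x)=\tilde c\,\hat{\psi}_1(|x|/\tilde c)$ by scaling. Define the constant of the statement by $c_0(n)c_*^{1-\alpha}=c'$. For $\epsilon\in(0,c_*/2)$ the gap is strict:
\[
c_0(c_*-\epsilon)^{1-\alpha}<c'<c_0(c_*+\epsilon)^{1-\alpha},
\]
with a margin $\mu(\epsilon)>0$ relative to $|x|^\alpha$. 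On the annulus $|x|\ge r\sqrt{-t}\to\infty$, so for $t\le\overline t$ both the $o(1)$ in $\hat{\psi}_{c_*\pm\epsilon}$ and the $o(1)$ in $v_t$ are smaller than $\mu/3$. This gives
\[
\hat{\psi}_{c_*-\epsilon}(x)\le v_t(x)\le\hat{\psi}_{c_*+\epsilon}(x).
\]
The domain $\Sigma_{r\sqrt{-t},R\sqrt{-t}}$ lies in $[\tilde c/\sqrt2,\infty)$, where $\hat{\psi}_{\tilde c}$ is defined, once $t$ is negative enough.

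The main point needing care is the sign convention. The graph functions must be measured with respect to the same normal, so that $u_\tau\ge0$ (after the reflection fixed in this section) corresponds to $\Sigma^+_{\tilde c}$ lying on the same side as $M_t$. If instead the flow lies on the $\Sigma^-$ side, one uses the reflected leaves; the statement is symmetric. The remaining checks are routine. First, the uniformity of the $O(\cdot)$ in Theorem \ref{unique asymptotics of one sided flow} on the fixed annulus $\Sigma_{r,R}$, which is exactly what that theorem provides. Second, the matching of the $|x|^\alpha$ profile, which is automatic because $\varphi_1$ and the leaf asymptotics share the exponent $\alpha$.
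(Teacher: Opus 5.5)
Your proposal is correct and follows the same route as the paper. You rescale Theorem~\ref{unique asymptotics of one sided flow} to get $v_t(x)=c'|x|^{\alpha}(1+o(1))$ uniformly on $\Sigma_{r\sqrt{-t},R\sqrt{-t}}$, then compare with the asymptotic expansion of $\hat{\psi}_{c\pm\epsilon}$. Along the way you make explicit the details the paper's two-line proof leaves implicit: the relative error exponent $-(1-\alpha)p/2$, uniformity in the leaf parameter via $\hat{\psi}_{\tilde c}(r)=\tilde c\,\hat{\psi}_1(r/\tilde c)$, and the sign convention.
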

\begin{proof}[Proof of corollary \ref{upper and lower obound of unrescaled flow in parabolic region}]
    By rescaling theorem \ref{unique asymptotics of one sided flow} back to the unnormalized flow, we see that we can find $\lambda > 0$ so that for any $\epsilon > 0$, there exists $\overline{t}<0$ so that if $t \leq \overline{t}$, $x \in \Sigma_{r\sqrt{-t}, R\sqrt{-t}}$,
    \begin{equation}
       (\lambda - \epsilon)|x|^{\alpha} \leq v_t(x) \leq (\lambda + \epsilon)|x|^{\alpha}
    \end{equation}
    Then corollary \ref{upper and lower obound of unrescaled flow in parabolic region} follows from the asymptotic formula of $\hat{\psi}_c$ given by \eqref{graphical normal parametrization of hardtsimonleaf}.
\end{proof}
\section{Uniqueness of mean convex flows}\label{uniquenesssection}
In this section, we consider smooth, properly embedded ancient mean curvature flow $(M_t)_{t \in (-\infty, 0)}$ which satisfies assumptions \ref{strong convergence to simonscone assumption1}, \ref{additional geoemtric assumption}, \ref{mean convexity assumption4}. The goal of this section is to prove that $(M_t)_{t \in (-\infty, 0)}$ is stationary, hence has to agree with a leaf in the Hardt-Simon foliation (theorem \ref{uniqueness of mean convex flow}). Rough sketch of the proof is as follows. We first use the assumptions, and corollary \ref{graphicality of unrescaled flow} to obtain (i) convergence of $(M_t)_{t \in (-\infty, 0)}$ to some $\Sigma^+_c$ as $t\to -\infty$ after possible reflection across $\Sigma$, and (ii) uniform in time decay rate of the normal graph function in space (lemma \ref{backwards convergence to hardtsimonfoliation}, corollary \ref{global graphicality of flow over hardtsimon leaf}). Then by using the Liouvile type theorems (proposition \ref{liouvile for simons cone}, \ref{liouvlie for hardtsimonleaf}), we show that the flow is stationary, hence prove that $(M_t)_{(-\infty, 0)}$ has to be stationary (lemma \ref{Schauder type estimate}, lemma \ref{flow is stationary}). \\

Throughout this section, for each $x = (\frac{rw_1}{\sqrt{2}}, \frac{rw_2}{\sqrt{2}}) \in \Sigma$ for some $r > 0$, $w_1, w_2 \in \mathbf{S}^{n-1}$, we always define the unit normal $\nu_{\Sigma}(x) = (-\frac{w_1}{\sqrt{2}}, \frac{w_2}{\sqrt{2}})$. Also, if we have one-sidedness assumption \ref{additional geoemtric assumption}, then after possibly reflecting across $\Sigma$, we always assume that the normal graph function over $\Sigma$ denoted by $v_{t}$ is nonnegative.\\
 
We first prove backward convergence to $\Sigma^+_c$ for some $c > 0$. 
\begin{lemma}\label{backwards convergence to hardtsimonfoliation}
Let $(M_t)_{t \in (-\infty, 0)}$ be smooth, properly embedded mean curvature flow which satisfies assumptions \ref{strong convergence to simonscone assumption1}, \ref{additional geoemtric assumption}, \ref{mean convexity assumption4}. Then there exists $c > 0$ so that
\begin{equation}
    M_{t} \to \Sigma^+_c\text{ in }C^{\infty}_{loc}(\mathbf{R}^{2n}).
    \end{equation}
\end{lemma}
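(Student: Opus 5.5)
We combine mean convexity, which makes the flow monotone in time, with the uniform graphicality and trapping results of Section \ref{uniqueasymptoticssection}, which identify the backward limit.

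\textbf{Monotonicity and existence of a limit.} Since $H_t\geq 0$ with $\nu_t$ pointing out of $\Omega_t$, the flow moves inward. Hence the regions $\Omega_t$ are nonincreasing in $t$, and going backwards in time they increase. We therefore define $\Omega_{-\infty}=\bigcup_t \Omega_t$; equivalently, $\partial\Omega_{-\infty}$ is the limit of $M_t$ as $t\to-\infty$. By the entropy bound in Lemma \ref{local smooth convergence away from origin lemma}, for any sequence $t_i\to-\infty$ the time-translated flows $(M_{t+t_i})_{t}$ subconverge, as Brakke flows, to an ancient limit flow. Monotonicity forces this limit to be static, with support $\partial\Omega_{-\infty}$. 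A standard argument (the time derivative integrates to a finite quantity, e.g. $\int\int H^2$ is controlled) shows that the limit is a stationary integral varifold, i.e.\ a minimal hypersurface $N$ with entropy at most $\lambda(\Sigma)<2$.

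\textbf{Regularity and identification of the limit.} Corollary \ref{graphicality of unrescaled flow} shows that $M_t\setminus B(0,r)$ is a normal graph over $\Sigma$ with weighted $C^3$ norm at most $\eta$, uniformly for $t\leq \overline{t}$. So $N\setminus B(0,r)$ is a smooth graph over $\Sigma$ with small weighted norm, and by interior estimates the convergence is smooth there. Corollary \ref{upper and lower obound of unrescaled flow in parabolic region} traps $M_t$ between $\Sigma^+_{c-\epsilon}$ and $\Sigma^+_{c+\epsilon}$ on $\Sigma_{r\sqrt{-t},R\sqrt{-t}}$, which moves to infinity as $t\to-\infty$. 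Combined with one-sidedness and the monotonicity of $\Omega_t$, this should give $N\setminus B(0,r')$ lying between these two leaves for all large radii. Its graph function then has the asymptotics $c_0c^{1-\alpha}|x|^{\alpha}(1+o(1))$, and $N$ lies on one side of $\Sigma$.

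The step I expect to be hardest is ruling out singularities of $N$ near the origin and showing that $N\neq\Sigma$. Here I would use the one-sided strong maximum principle, Theorem \ref{strongmaximumprinciplebrakkeflows}, with density $<2$, and compare $N$ with the Hardt--Simon leaves, which foliate one side of $\Sigma$. Let $\Sigma^+_{c'}$ be the leaf with largest $c'$ lying inside $\Omega_{-\infty}$ and touching $N$. If $c'<c$, the leaf and $N$ must touch at a finite point, because at infinity $N$ is asymptotic to $\Sigma^+_c$, which lies strictly outside. Theorem \ref{strongmaximumprinciplebrakkeflows} then forces $N=\Sigma^+_{c'}$ near the touching point, hence globally by connectedness, contradicting the asymptotics. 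Running the same argument from the other side gives $N=\Sigma^+_c$. In particular $N$ is smooth, the origin is not on $N$ since $c>0$, and White's regularity theorem upgrades the convergence to $C^\infty_{loc}(\mathbf{R}^{2n})$. The subsequence was arbitrary, so the full limit exists.
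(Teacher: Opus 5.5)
Your overall architecture matches the paper's: a Brakke limit of time-translates, staticity from mean convexity (your space-time $H^2$ argument is an acceptable substitute for the paper's $\int\phi|H|\to0$), and smooth graphicality of the limit outside a ball via Corollary \ref{graphicality of unrescaled flow}. The gap is in the identification step. Your claim that $N$ lies \emph{below} $\Sigma^+_{c+\epsilon}$ near infinity does not follow from what you cite. Let $v_t\ge0$ be the graph function over $\Sigma$. Mean convexity makes $v_t$ increase as $t\to-\infty$, so $v_{-\infty}(x)=\sup_{s}v_s(x)$. Corollary \ref{upper and lower obound of unrescaled flow in parabolic region} controls $v_s(x)$ only for $-s\in[|x|^2/R^2,|x|^2/r^2]$. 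For all earlier $s$ the point $x$ lies in the inner region $|x|<r\sqrt{-s}$, where the corollary says nothing, and the supremum defining $v_{-\infty}(x)$ is taken exactly over those times. So you get $v_{-\infty}\ge\hat\psi_{c-\epsilon}$ near infinity, but not $v_{-\infty}\le\hat\psi_{c+\epsilon}$; one-sidedness is also only a lower bound.

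Your upward slide survives. The origin lies in the open set $\Omega_{-\infty}$, so $\operatorname{spt}N\subset\partial\Omega_{-\infty}$ misses it, and Theorem \ref{strongmaximumprinciplebrakkeflows} keeps $N$ off $\Sigma\setminus\{0\}$. But that slide only shows $N$ lies above $\Sigma^+_c$. The slide from the other side cannot start: nothing you have rules out $v_{-\infty}$ decaying more slowly than $|x|^{\alpha}$, in which case no leaf lies above $N$. Nor can it be forced to end at a finite contact point, since contact may escape to infinity.

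The paper avoids barriers at this step. Once $N$ is stationary, smooth outside a ball, and there equal to the graph of $v_{-\infty}>0$ with $\|v_{-\infty}\|^*_{C^3(\Sigma_{R,\infty})}\to0$, it invokes the Simon--Solomon classification (or Edelen--Spolaor) of minimal hypersurfaces asymptotic to $\Sigma$. Positivity of $v_{-\infty}$ then excludes the cone, the translates (whose graph function contains the sign-changing term $\nu_\Sigma\cdot a$), and the leaves on the other side. What remains is $\Sigma^+_c$ for some $c>0$, which is all the lemma asserts.

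If you want to keep your barrier route, you need a uniform-in-$t$ bound $v_t\le\hat\psi_{c+\epsilon}$ on the inner region $C_\epsilon\le|x|\le r\sqrt{-t}$. You can get it by avoidance against the inner self-shrinkers, as in the proof of Lemma \ref{graphical radius estimate supversion}. Match $\theta$ at a radius small enough that Proposition \ref{approximate formula for inner self shrinkers} is $\epsilon$-accurate, and use Theorem \ref{unique asymptotics of one sided flow} to evaluate the boundary data at that radius. The version recorded in Proposition \ref{inner region weighted C0 estimate} is not enough: it only gives $v_t\le C|x|^{\alpha}$ with a non-sharp $C$, which lets the downward slide start but still permits contact at infinity.
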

\begin{proof}[Proof of lemma \ref{backwards convergence to hardtsimonfoliation}]
    Take any $t_i \to -\infty$, and consider sequence of mean curvature flow
    \begin{equation}
        \mathcal{M}^i = (M^i_t)_{t \in (-\infty, 1]} = (M_{t_i + t})_{t \in (-\infty, 1]}.
    \end{equation}
By the uniform entropy bound given by proposition \eqref{entropy bound proposition eq}, after passing through subsequence, there exists a unit regular, integral $2n-1$-Brakke flow $\mathcal{M}^{\infty} = (\mu^{\infty}_t)_{t \in (-\infty, 1]}$ with
\begin{equation}
    \mathcal{M}^i \to \mathcal{M}^{\infty} \text{ as integral Brakke flows as }i \to \infty.
\end{equation}
We claim that there exists $c > 0$ for each $t \leq 1$, 
\begin{equation}
    \mu^{\infty}_t \equiv \mathcal{H}^{2n-1}|_{\Sigma^+_c}.
\end{equation}
We first show that the limit flow is quasistatic i.e for a.e $t \in (-\infty, 1]$, $\mu^{\infty}_t$ is pushforward of a stationary integral $2n-1$-varifold. For each $t \in (-\infty, 0)$, define $\Omega_t$ so that
\begin{equation}
    0 \in \Omega_t, \ \partial \Omega_t = M_t.
\end{equation}
As explained in the introduction, one can always `continuously' select $\Omega_t$ under assumptions \ref{strong convergence to simonscone assumption1}, \ref{additional geoemtric assumption}. Note that the mean convexity assumption \ref{mean convexity assumption4} implies that $\{\Omega_t\}_{t \in (-\infty, 0)}$ are nested, i.e
\begin{equation}
    t_1 > t_2 \implies \Omega_{t_1}\subset \Omega_{t_2.}
\end{equation}
Take any $\phi \in C^1_c(\mathbf{R}^{2n} ; \mathbf{R}_+)$. Due to mean convexity, we have for any $a < b \leq 1$
\begin{align*}
    \lim_{i \to \infty}\int_{a}^b\int_{M^i_t} \phi |H|d\mathcal{H}^{2n-1}dt &= \lim_{i \to + \infty}|\int_{a}^{b}\int_{M_{t_i + t}}\phi Hd\mathcal{H}^{2n-1}dt| \\ & = \lim_{i \to +\infty}|\int_{\Omega_{t_i + b}}\phi d\mathcal{H}^{2n} - \int_{\Omega_{t_i + a}}\phi 
    d\mathcal{H}^{2n}| \\ & = \int_{\Omega}\phi d\mathcal{H}^{2n} - \int_{\Omega}\phi d\mathcal{H}^{2n} = 0,
\end{align*}
where \begin{equation}
    \Omega = \bigcup_{t < 0}\Omega_t.
\end{equation}
This implies that for a.e $t \in (-\infty, 1]$ where $\mu^{\infty}_t$ is a pushforward of an integral varifold $V^{\infty}(t)$, and that $M^i_t \to V^{\infty}(t)$ as integral varifold (after possibly passing through further subsequence), $H^{\infty}_t \equiv 0$, thus $V^{\infty}(t)$ has to be a stationary integral varifold. \\

We now prove that each $V^{\infty}(t)$ is a multiplicity one $\Sigma^+_c$ for some fixed $c > 0$ \textit{independent} of $t$. To see this, we go back to the original flow $M_t$. By corollary \ref{graphicality of unrescaled flow}, we can find $\overline{\eta}(n) \in (0, \frac{1}{100})$ so that for every $\eta \in (0, \overline{\eta})$, there exists $R(n, \eta) > 0$, $\overline{t} < 0$ so that $M_t \setminus B(0, R)$ is a normal graph over $\Sigma_{2R, \infty} \subset \mathcal{D}_t \subset \Sigma$ for all $t \leq \overline{t}$. Moreover, if we denote the normal graph function to be $v_t$, then we have
\begin{equation}\label{asymptoticbehaviorofnegativetmes}
    \|v_t\|^*_{C^3(\Sigma_{2R, \infty})} \leq \eta.
\end{equation}
If we denote $R_0$ to be $R$ from previous discussion with $\eta = \overline{\eta}(n)$, then the one-sidedness assumption \ref{additional geoemtric assumption}, mean convexity assumption \ref{mean convexity assumption4} together with standard interior regularity theorem \cite{Ecker1991InteriorEF} implies that there exists a smooth, positive function $v_{-\infty} \in C^{\infty}(\Sigma_{2R_0, \infty})$ so that
\begin{equation}
    v_t \nearrow v_{-\infty} \text{ in }C^{\infty}_{loc}(\Sigma_{2R_0, \infty})\text{ as }t \to -\infty.
\end{equation}
The smooth convergence together with uniform in $t$ estimate \eqref{asymptoticbehaviorofnegativetmes} implies that 
\begin{equation}
    \lim_{R \to \infty}\|v_{-\infty}\|^*_{C^3(\Sigma_{R, \infty})} = 0.
\end{equation}
Therefore, we can conclude that for a.e $t \in (-\infty, 1]$, $\mu^{\infty}_t$ is a pushforward of a stationary integral varifold $V^{\infty}(t)$ which is smooth outside a fixed ball, and agrees with the normal graph of $v_{-\infty}$ outside a fixed ball. The asymptotic behavior of $v_{-\infty}$ implies that $V^{\infty}(t)$ is asymptotic to $\Sigma$, hence by \cite{simon1986minimal} (alternatively, one can apply the proof of corollary 3.7 in \cite{edelen2023regularity}), it has to be either the cone itself, or one of the leaf of Hardt-Simon foliation with unit multiplicity, up to translation and rotation. Because $v_{-\infty}$ is positive, and is independent of $t$, we conclude that 
\begin{equation}
    V^{\infty}(t) \equiv \Sigma^+_c
\end{equation}
for some fixed $c > 0$ for a.e $t \in (-\infty, 1]$. Thus we obtain the desired claim
\begin{equation}
    \mu^{\infty}_t \equiv \mathcal{H}^{2n-1}|_{\Sigma^+_c}.
\end{equation}
Then White's local regularity theory \cite{White2005-nl} implies that the convergence $\mathcal{M}^i \to \mathcal{M}^{\infty} = (\Sigma^+_c)_{t \in (-\infty, 1]}$ is smooth. \\

Finally, note that $c > 0$ is independent of choice of $t_i \to -\infty$ since $c > 0$ is determined by the function $v_{-\infty}$ which was a full limit of $v_t$ as $t\to -\infty$. This completes the proof of lemma \ref{backwards convergence to hardtsimonfoliation}.
\end{proof}
For the sake of convenience, we will from now on without loss of generality assume that $c = 1$. If $c \neq 1$, we can rescale the flow by a fixed factor $\frac{1}{c} > 0$. \\

Combining lemma \ref{backwards convergence to hardtsimonfoliation}, assumption \ref{additional geoemtric assumption}, \ref{mean convexity assumption4}, and corollary \ref{graphicality of unrescaled flow}, we obtain the following nice description of $M_t$ as a \textit{global} normal graph over $\Sigma^+_1$ where the normal graph function denoted by $h_t$ has a nice decay at spatial infinity.
\begin{corollary}\label{global graphicality of flow over hardtsimon leaf}
    For $x = (\frac{r  - \hat{\psi}_1(r)}{\sqrt{2}}w_1, \frac{r + \hat{\psi}_1(r)}{\sqrt{2}}w_2) \in \Sigma^+_1$ for some $r \geq \frac{1}{\sqrt{2}}$, $w_1, w_2 \in \mathbf{S}^{n-1}$, define the unit normal vector to be $\nu_{\Sigma^+_1} = \frac{1}{\sqrt{2 + 2\hat{\psi}_1'^2}}((1 + \hat{\psi}_1')w_1, -(1 - \hat{\psi}_1')w_2)$. Then there exists $\overline{t} < 0$ so that we can find $h \in C^{\infty}(\Sigma^+_1 \times (-\infty, \overline{t}])$ so that 
    \begin{equation}
        M_t = \{x + h_t(x)\nu_{\Sigma^+_1}(x) \ | \ x \in \Sigma^+_1\}, \ h_t \to 0 \text{ as }t \to -\infty \text{ in }C^{\infty}_{loc}(\Sigma^+_1),
    \end{equation}
    and
    \begin{equation}\label{properties of global graph function}
        h_t(x) \geq 0, \ \partial_th_t \geq 0, \ \sup_{t \leq \overline{t}}\sup_{x \in \Sigma^+_1}|\nabla^kh_t(x)|(1 + |x|)^{|\alpha| + k} < \infty \text{ for all } k \geq 0.
    \end{equation}
\end{corollary}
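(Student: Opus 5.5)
I would build the global graph representation in two regions, an outer one and a compact one, and then read off the three properties in \eqref{properties of global graph function} from the geometry already established.

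\emph{Outer region.} Corollary \ref{graphicality of unrescaled flow} gives $R_0>0$ and $\overline{t}<0$ such that for $t\le\overline{t}$, $M_t\setminus B(0,R_0)$ is a normal graph of $v_t\ge 0$ over $\Sigma_{2R_0,\infty}$ with $\|v_t\|^*_{C^3}\le\eta$. The leaf $\Sigma^+_1$ is itself the graph of $\hat\psi_1$ over $\Sigma$ outside a compact set, with $\|\hat\psi_1\|^*_{C^k(\Sigma_{R,\infty})}\to 0$ as $R\to\infty$. Hence, after enlarging $R_0$, a standard inverse function argument in tubular coordinates (both surfaces lie in a thin cone-shaped neighborhood of $\Sigma$) writes $M_t\setminus B(0,R_0)$ as a normal graph $h_t$ over $\Sigma^+_1$. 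Since both $v_t$ and $\hat\psi_1$ have weighted norms bounded by $\eta$, we get $\|h_t\|^*_{C^3}\lesssim\eta$ there.

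\emph{Compact region.} By Lemma \ref{backwards convergence to hardtsimonfoliation} (with $c=1$), $M_t\to\Sigma^+_1$ in $C^\infty_{loc}$ as $t\to-\infty$. So on $B(0,2R_0)$, and after decreasing $\overline{t}$, $M_t$ is a small normal graph over $\Sigma^+_1$, and $h_t\to0$ in $C^\infty_{loc}$. The two descriptions agree on the overlap by uniqueness of the nearest-point projection, so $h_t$ is globally defined. The graph is also surjective, since $M_t$ is connected (Corollary \ref{connectedness}) and properly embedded.

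\emph{Signs.} The one-sidedness gives $v_t\ge0$, but $h_t\ge 0$ is a statement relative to $\Sigma^+_1$, so I would prove it together with monotonicity. Mean convexity makes the domains $\Omega_t$ nested: $\Omega_{t_1}\subset\Omega_{t_2}$ for $t_1>t_2$. In graph form this says $h_t$ is monotone in $t$, and with the chosen orientation of $\nu_{\Sigma^+_1}$, which points away from $\Omega$, it gives $\partial_t h_t\ge0$; this is the sign bookkeeping I must check against the orientation in \ref{mean convexity assumption4}. Then $h_t\ge \lim_{s\to-\infty}h_s=0$ pointwise.

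\emph{Decay estimate.} This is the main obstacle: weighted $C^3$ smallness only gives $|h_t|\lesssim\eta|x|$, not the bound $|x|^{\alpha}$. I would obtain it by trapping. Monotonicity gives $v_{-\infty}\ge v_t\ge v_s$ for $s\le t$, and in the proof of Lemma \ref{backwards convergence to hardtsimonfoliation} we have $v_{-\infty}=\hat\psi_1$. For the reverse bound, $M_t$ lies between $\Sigma^+_1$ and the region $\Omega$ on the cone side. I would compare with a slightly smaller leaf $\Sigma^+_{1-\epsilon}$, using the avoidance principle with Corollary \ref{upper and lower obound of unrescaled flow in parabolic region} as boundary control, to get $\hat\psi_{1-\epsilon}\le v_t\le\hat\psi_1$ for $|x|\ge R$ at all sufficiently negative $t$. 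This yields $0\le \hat\psi_1-v_t\lesssim |x|^{\alpha}$, and so $|h_t|\lesssim(1+|x|)^{\alpha}$.

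If the leaf comparison fails for fixed $\epsilon$, I would instead use scale-invariant trapping. At scale $|x|\sim\sqrt{-t}$, Corollary \ref{upper and lower obound of unrescaled flow in parabolic region} gives the bound. For $|x|\gg\sqrt{-t}$, I would use the monotone limit: $v_t$ lies between $v_{-\infty}$ and a barrier built from the trumpet comparison as in Lemma \ref{weighted C2 estimate for the graph function giving us higher power nonlinearity}, rescaled.

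Finally, derivative bounds $|\nabla^k h_t|\lesssim(1+|x|)^{\alpha-k}$ follow from rescaled interior parabolic Schauder estimates on balls $B(x,|x|/2)$ over time intervals of length $|x|^2$. Here $h_t$ solves a uniformly parabolic quasilinear equation whose coefficients scale like the Jacobi operator of $\Sigma^+_1$, with the $C^0$ bound as input.
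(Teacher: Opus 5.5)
Your overall structure matches the paper: you build $h_t$ from local convergence plus Corollary~\ref{graphicality of unrescaled flow}, get the signs from the nested $\Omega_t$, and get the derivatives from rescaled Schauder estimates on annuli. However, the $C^0$ decay step, as you set it up, rests on a claim you cannot justify at this stage.

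The lower barrier $v_t\ge\hat{\psi}_{1-\epsilon}$ on all of $\{|x|\ge R\}$, for all very negative $t$, is not available:
\begin{itemize}
\item The boundary control from Corollary~\ref{upper and lower obound of unrescaled flow in parabolic region} is $\hat{\psi}_{c\pm\epsilon}$, where $c$ is the constant of Theorem~\ref{unique asymptotics of one sided flow}. That $c$ has not been identified with the backward limit $\Sigma^+_1$; from $v_t\le\hat{\psi}_1$ and the nesting of the leaves you only get $c\le 1$.
\item For $|x|\gg\sqrt{-t}$ you have no lower information beyond $v_t\ge 0$.
\item The avoidance principle on the noncompact region $\{|x|\ge R\}$, for an ancient flow, would need a time at which $M_t$ already lies beyond $\Sigma^+_{1-\epsilon}$ there. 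That is what you are trying to prove.
\end{itemize}
Your fallback does not help either, since the trumpet barriers only give upper bounds of linear growth, not lower bounds.

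The missing observation is that no lower barrier is needed. You already have $v_t\ge 0$ from assumption~\ref{additional geoemtric assumption} and $v_t\le v_{-\infty}=\hat{\psi}_1$ from nestedness. So $M_t$ is trapped between $\Sigma$ and $\Sigma^+_1$, and this whole slab has width $\hat{\psi}_1(r)=c_0r^{\alpha}+o(r^{\alpha})$. For large $R_0$, \eqref{asymptotic behavior of hardsimonleafformula} lets you write $\Sigma\setminus B(0,R_0)$ as a normal graph $w$ over $\Sigma^+_1$. The trapping then reads $0\le h_t\le w$ with $\sup w\,(1+|x|)^{|\alpha|}<\infty$, which is the case $k=0$ of \eqref{properties of global graph function}. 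Your Schauder step then handles all $k$. This is exactly the paper's argument.

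There are also two bookkeeping slips.
\begin{itemize}
\item The given $\nu_{\Sigma^+_1}$ is asymptotic to $\frac{1}{\sqrt{2}}(w_1,-w_2)=-\nu_{\Sigma}$, so it points toward the cone, i.e.\ into $\Omega$, not away from it. With the orientation you describe, the shrinking of $\Omega_t$ would give $\partial_t h_t\le 0$.
\item Since $v_t\nearrow v_{-\infty}$ as $t\to-\infty$, the correct ordering is $v_{-\infty}\ge v_s\ge v_t$ for $s\le t$, not $v_{-\infty}\ge v_t\ge v_s$.
\end{itemize}
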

\begin{proof}[Proof of corollary \ref{global graphicality of flow over hardtsimon leaf}]
    Since the convergence $M_t \to \Sigma^+_1$ as $t \to -\infty$ is locally smooth in $\mathbf{R}^{2n}$, we can write $M_t$ as a normal graph over $\Sigma^+_1$ inside any given large ball for sufficiently negative time. At the same time, corollary \ref{graphicality of unrescaled flow} together with the asymptotic behavior of $\Sigma^+_1$ given by \eqref{asymptotic behavior of hardsimonleafformula} implies that we also have graphicality over $\Sigma^+_1$ outside some fixed compact set for all sufficiently negative time. Thus combining the two gives us existence of a globally defined $h_t$. \\

    We now establish \eqref{properties of global graph function}. By assumption \ref{mean convexity assumption4}, and our choice of normal vector $\nu_{\Sigma^+_1}$, we have $\partial_th_t \geq 0$. Integrating in $t$ implies $h_t \geq 0$. As for the final estimate, note that it is enough to consider $|x|\geq R$ for some $R \geq 100$ to be determined. We first prove when $k = 0$. The one-sidedness assumption \ref{additional geoemtric assumption}, and mean convexity assumption \ref{mean convexity assumption4} implies that for all $t \in(-\infty, 0)$, $M_t$ is `trapped' between $\Sigma$, and $\Sigma^+_1$. Since we can find some $R_0(n) > 100$ so that $\Sigma \setminus B(0, R_0)$ can be parametrized as a normal graph over
    \begin{equation}
        \Sigma^+_1 \setminus B(0, 2R_0) \subset \mathcal{D}\subset \Sigma^+_1,
    \end{equation}
    then for all $x \in  \Sigma^+_1 \setminus B(0, 2R_0)$, $t \leq \overline{t}$, we have
    \begin{equation}
        0 \leq h_t(x) \leq w(x).
    \end{equation}
    In view of the asymptotic behavior of $\Sigma^+_1$ given by \eqref{asymptotic behavior of hardsimonleafformula}, we see that
    \begin{equation}
        \sup_{x\in \Sigma^+_1 \setminus B(0, 2R_0)}|w(x)|(1 + |x|)^{|\alpha|} < \infty.
    \end{equation}
    This immediately implies \eqref{properties of global graph function} when $k = 0$. \\

    To prove for other $k \in \mathbf{N}$, we use standard parabolic interior regularity theory, and scaling invariance. For each $x_0 \in \Sigma^+_1$ with $|x_0| \geq R_0$, $t_0 \leq \overline{t}$, we can define the function
    \begin{equation}
        \Sigma^+_{\frac{1}{|x_0|}} \cap (B(0, 2) \setminus B(0, \frac{1}{2}))\times [-1,0] \ni (y,t) \to h^{(x_0, t_0)}_t(y) = \frac{1}{|x_0|}h_{t_0 + |x_0|^2t}(|x_0|y).
    \end{equation}
    By the scaling invariance of mean curvature flow, one can see that the normal graph of $h^{(x_0, t_0)}$ over $\Sigma^+_{\frac{1}{|x_0|}} \cap (B(0, 2) \setminus B(0, \frac{1}{2}))$ is a smooth solution to the mean curvature flow. By applying corollary \ref{graphicality of unrescaled flow} together with the asymptotic behavior of Hardt-Simon leaf, we see that after possibly enlarging $R_0 > 100$, and shrinking $\overline{t}< 0$, we may further assume that the $C^3$ norm of $h^{(x_0, t_0)}$ over $\Sigma^+_{\frac{1}{|x_0|}} \cap (B(0, 2) \setminus B(0, \frac{1}{2})) \times [-1,0] $ is sufficiently small. Then by interpreting $h^{(x_0, t_0)}$ as a smooth solution to a linear, homogeneous second order parabolic PDE with regular coefficients (see for example appendix A in \cite{huang2025mean}), we can apply interior Schauder theory, and bootstrapping, and obtain for each $k$,
   \begin{align*}
       |x_0|^{k - 1}|\nabla^k h_{t_0}(x_0)| &\leq C(n,k)\sup_{t \leq \overline{t}}\sup_{y \in \Sigma^+_{1} \cap (B(0, 2|x_0|) \setminus B(0, \frac{1}{2}|x_0|))}\frac{1}{|y|}|h(y, t)| \\ & \leq C(n,k)(1 + |x_0|)^{-|\alpha| - 1}.
   \end{align*}
   Note that the constant $C(n,k)$ does not depend on $x_0$ since for $|x_0|$ sufficiently large, the geometry of $\Sigma^+_{\frac{1}{|x_0|}} \cap (B(0,2) \setminus B(0,1))$ is uniformly close to that of $\Sigma \cap (B(0,2) \setminus B(0,1))$. 
   Since above estimate is true for all $|x_0| \geq R_0$, $t_0 \leq \overline{t}$, we obtain \eqref{properties of global graph function} after taking supremum in $(x_0, t_0)$. 
\end{proof}
We now consider the smooth function
\begin{equation}
    g_t = \partial_th_t \in C^{\infty}(\Sigma^+_{1} \times (-\infty, \overline{t}]).
\end{equation}
In view of corollary \ref{global graphicality of flow over hardtsimon leaf}, we see that
\begin{equation}\label{properties of gt}
    g_t \geq 0, \ \sup_{t \leq \overline{t}}\sup_{x \in \Sigma^+_{1}}|g_t(x)|(1 + |x|)^{|\alpha| + 2} < \infty.
\end{equation}
In the remainder of this section, we show that $g_t \equiv 0$. By essentially repeating the arguments in the proof of lemma \ref{the nonlinear error term computation}, we can obtain an evolution equation of $h_t$ for all sufficiently negative $t$. Then by differentiating in $t$, we see that $g_t$ satisfies a linear, homogeneous second order parabolic equation of the form
\begin{equation}\label{evolution equaiton for partialtht = gt}
    \partial_tg_t = L_{h_t}g_t = \Delta_{\Sigma^+_{1}}g_t + |A_{\Sigma^+_{1}}|^2g_t + E_{h_t}(g_t).
\end{equation}
Here, 
\begin{align*}
    E_{h}(g) &= a^{ij}(x, h, \nabla h, \nabla^2h)\nabla^2_{ij}g + b^i(x, h, \nabla h, \nabla^2h)\nabla_ig + c(x, h, \nabla h, \nabla^2h)g \\ &= a^{ij}(x; h)\nabla^2_{ij}g + b^i(x;h)\nabla_ig + c(x ;h)g ,
\end{align*}
where the coefficients depend smoothly on its arguments, and
\begin{equation}\label{coefficient control}
    |a^{ij}(x;h)| \leq C(n)\eta, \ |  b^i(x;h)| \leq \frac{C(n)\eta}{1 + |x|}, \ |c(x;h)| \leq \frac{C(n)\eta}{(1 + |x|)^2}
\end{equation}
provided 
\begin{equation}\label{weightedc2normboundfor graph functin over simons cone}
    \sup_{x \in \Sigma^+_1}(\frac{|h(x)|}{1 + |x|} + |\nabla h(x)| + (1 + |x|)|\nabla^2h(x)|) \leq \eta \leq \overline{\eta}(n)
\end{equation}
for some small $\overline{\eta}(n) > 0$. \\

The intuition to proving $g_t \equiv 0$ is as follows; for sufficiently negative time, $h_t$ is very small, hence we can expect that the error term in \eqref{evolution equaiton for partialtht = gt} will be negligible. So if we \textit{pretend} that the error term is identically zero, then due to \eqref{properties of gt}, we can apply the Liouvile type theorem (proposition \ref{liouvlie for hardtsimonleaf}), and conclude that $g_t \equiv 0$. \\

To make this precise, let us define a weighted norm. 
\begin{definition}\label{finalweightednorm}
    Let $f \in C^{\infty}(\Sigma^+_1 \times (-\infty, \overline{t}]) $ for some $\overline{t} < 0$. For each $k = 0,1,2,..$, $\gamma > 0$, and $\hat{t} \leq \overline{t}$, define
    \begin{equation}
        \|f\|_{k ; \gamma,\hat{t}} = \sup_{t \leq \hat{t}}\sup_{0 \leq i \leq k}\sup_{x \in \Sigma^+_1}|\nabla ^if_t(x)|(1 + |x|)^{\gamma + i}
    \end{equation}
    whenever it is finite. 
\end{definition}
\begin{lemma}\label{Schauder type estimate}
    Let $h \in C^{\infty}(\Sigma^+_1 \times (-\infty, \overline{t}])$ be the normal graph function from corollary \ref{global graphicality of flow over hardtsimon leaf}. Let $g\in C^{\infty}(\Sigma^+_1 \times (-\infty, \overline{t}])$ so that
    \begin{equation}\label{assumptions on the function g}
        g \geq 0, \ \sup_{t \leq \overline{t}}\sup_{x \in \Sigma^+_{1}}|g_t(x)|(1 + |x|)^{|\alpha| + 2} < \infty,
    \end{equation}
    and satisfies the equation
    \begin{equation}
        \partial_tg_t = L_{h_t}g_t.
    \end{equation}
    Then after possibly shrinking $\overline{t} < 0$, there exists $C_0(n, \overline{t}) > 0$ so that
    \begin{equation}\label{key estimate}
        \|g\|_{2 ; |\alpha| + 2,\hat{t}} \leq C_0\| E_{h}g\|_{0 ; |\alpha| + 4,\hat{t}},
    \end{equation}
    for any $\hat{t} \leq \overline{t}$.
    Both norms involved in \eqref{key estimate} are finite. Here, $\alpha(n) \in (-2, -1)$ is given by \eqref{the formula for alpha}.
\end{lemma}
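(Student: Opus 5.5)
The plan is to prove \eqref{key estimate} by a blow-up and compactness argument that reduces it to the two Liouville theorems, proposition \ref{liouvile for simons cone} and proposition \ref{liouvlie for hardtsimonleaf}. Set $\gamma = |\alpha| + 2$ and $f_t = E_{h_t}g_t$, so that $g$ solves the linear inhomogeneous equation
\begin{equation*}
\partial_t g_t - \Delta_{\Sigma^+_1}g_t - |A_{\Sigma^+_1}|^2 g_t = f_t .
\end{equation*}
From this point on the specific form of $f$ no longer matters. I would then prove the estimate $\|g\|_{2;\gamma,\hat t} \leq C_0 \|f\|_{0;\gamma+2,\hat t}$ for every nonnegative $g$ with finite weighted $C^0$ norm solving this equation.

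\textbf{Finiteness and reduction to $C^0$.} For each $x_0$ with $|x_0|$ large and each $t_0 \leq \hat t$, rescale by $|x_0|$ as in the proof of corollary \ref{global graphicality of flow over hardtsimon leaf}. After rescaling, $\Sigma^+_{1/|x_0|} \cap (B(0,2)\setminus B(0,\frac12))$ is uniformly close to the corresponding annulus of the cone $\Sigma$. Interior parabolic Schauder estimates on the backward parabolic cylinder then give
\begin{equation*}
\|g\|_{2;\gamma,\hat t} \leq C\big(\|g\|_{0;\gamma,\hat t} + \|f\|_{0;\gamma+2,\hat t}\big),
\end{equation*}
and a compact-region version of the same estimate handles $|x_0| \lesssim 1$. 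Only backward cylinders are used, so the estimate is uniform up to $t=\hat t$. The hypothesis \eqref{assumptions on the function g} gives $\|g\|_{0;\gamma,\hat t}<\infty$, and hence $\|g\|_{2;\gamma,\hat t}<\infty$. Combined with \eqref{coefficient control} and the decay of $h$ from corollary \ref{global graphicality of flow over hardtsimon leaf}, this also shows $\|f\|_{0;\gamma+2,\hat t}<\infty$. It therefore suffices to prove $\|g\|_{0;\gamma,\hat t}\leq C\|f\|_{0;\gamma+2,\hat t}$.

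\textbf{Contradiction argument.} Suppose the $C^0$ bound fails. Then there are nonnegative solutions $g^i$, with forcings $f^i$ and times $\hat t_i$, normalized so that $\|g^i\|_{0;\gamma,\hat t_i}=1$ and $\|f^i\|_{0;\gamma+2,\hat t_i}\to 0$. Choose $(x_i,t_i)$ with $t_i\leq \hat t_i$ and $|g^i_{t_i}(x_i)|(1+|x_i|)^{\gamma}\geq \frac12$, and translate in time so that $t_i=0$. The previous step gives uniform local $C^{2}$ bounds, which bootstrap to $C^{2,\beta}$ bounds, so we may pass to limits. There are two cases.

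\emph{Case $|x_i|$ bounded.} A subsequence of the $g^i$ converges locally smoothly on $\Sigma^+_1\times(-\infty,0]$ to an ancient solution $g^\infty \geq 0$ of $\partial_t g = \Delta_{\Sigma^+_1} g + |A_{\Sigma^+_1}|^2 g$. It satisfies $|g^\infty|(1+|x|)^{\gamma}\leq 1$, and $g^\infty(x_\infty,0)\neq 0$ at a limit point $x_\infty$ of the $x_i$. Since $\gamma>|\alpha|$, proposition \ref{liouvlie for hardtsimonleaf} forces $g^\infty\equiv 0$, a contradiction.

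\emph{Case $|x_i|\to\infty$.} Set $G^i(y,s)=|x_i|^{\gamma} g^i(|x_i|y, |x_i|^2 s)$, defined on $\Sigma^+_{1/|x_i|}$, which converges smoothly away from the origin to $\Sigma$. The rescaled forcing satisfies $|x_i|^{2+\gamma}|f^i|\leq o(1)|y|^{-\gamma-2}$, so it vanishes in the limit on compact subsets away from $0$. In the limit we obtain $G\geq 0$ solving $\partial_s G=\Delta_\Sigma G+|A_\Sigma|^2 G$ on $(\Sigma\setminus\{0\})\times(-\infty,0]$, with $|G(y,s)|\leq |y|^{-|\alpha|-2}$ and $|G(y_\infty,0)|\geq\frac12$ at some $|y_\infty|=1$. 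Proposition \ref{liouvile for simons cone} applies with $\delta=2\in(0,3)$ and gives $G\equiv 0$, again a contradiction.

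\textbf{Main obstacle.} The step I expect to require the most care is the uniformity of the Schauder estimates. The constants must not depend on $x_0$ or on $\hat t$, which means controlling the geometry of $\Sigma^+_{1/|x_0|}$ on unit annuli uniformly, together with the case where the rescaled annulus reaches the tip region. The latter happens only at bounded rescaling factors, so it should be covered by the compact-region estimate. Nonnegativity of $g$ is used in an essential way: it passes to both limits and is what the Liouville theorems require. Shrinking $\overline t$ serves only to guarantee, via \eqref{weightedc2normboundfor graph functin over simons cone}, that $f$ is well defined with the coefficient bounds \eqref{coefficient control}. This is the source of the dependence of $C_0$ on $\overline t$.
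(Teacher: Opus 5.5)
Your architecture matches the paper's. You prove a weighted interior regularity estimate, then run a blow-up split into bounded and escaping points, and close with proposition \ref{liouvlie for hardtsimonleaf} on $\Sigma^+_1$ and proposition \ref{liouvile for simons cone} (with $\delta=2$) on $\Sigma$. Normalizing in the weighted $C^0$ norm rather than the weighted $C^2$ norm is a harmless variant. The problem is the reduction step.

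Once you declare that the specific form of $f$ no longer matters, you regard $g$ only as a solution of $\partial_t g-\Delta_{\Sigma^+_1}g-|A_{\Sigma^+_1}|^2g=f$ with $\|f\|_{0;\gamma+2,\hat t}$ under control. In that setting interior Schauder theory does \emph{not} give $\|g\|_{2;\gamma,\hat t}\le C(\|g\|_{0;\gamma,\hat t}+\|f\|_{0;\gamma+2,\hat t})$. Schauder estimates need a parabolic H\"older norm of the forcing. A merely bounded forcing yields only $W^{2,1}_p$ bounds for $p<\infty$ (hence $C^{1,\beta}$), never a bound on $\nabla^2 g$. In fact the inequality $\|g\|_{2;\gamma,\hat t}\le C_0\|f\|_{0;\gamma+2,\hat t}$ that you propose to prove for arbitrary forcing is false. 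To see this, superimpose on a smooth nonnegative bump a small, localized perturbation whose heat operator is bounded but whose Hessian is arbitrarily large. The ``uniform local $C^2$ bounds, which bootstrap to $C^{2,\beta}$'' in your compactness step rest on the same invalid estimate.

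The repair is to keep the structure $f=E_{h}g$, that is, to treat $g$ as a solution of the homogeneous equation $\partial_t g=L_{h_t}g$, whose extra coefficients $a^{ij},b^i,c$ obey \eqref{coefficient control}.
\begin{itemize}
\item Shrinking $\overline t$, using the decay of $h$ from corollary \ref{global graphicality of flow over hardtsimon leaf}, makes $|a^{ij}|$ small, so $\partial_t-L_{h_t}$ is uniformly parabolic.
\item After the rescaling $x\mapsto |x_0|x$, $t\mapsto t_0+|x_0|^2t$, this operator has uniformly controlled smooth coefficients on unit annuli.
\item Interior Schauder theory for the homogeneous equation then gives $\|g\|_{k;\gamma,\hat t}\le C(n,k)\|g\|_{0;\gamma,\hat t}$, which is exactly the estimate the paper establishes first.
\end{itemize}
This, and not merely the well-definedness of $f$, is why $\overline t$ must be shrunk.

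With this estimate in hand, $\|E_hg\|_{0;\gamma+2,\hat t}\le C\overline{\eta}\,\|g\|_{2;\gamma,\hat t}<\infty$, and the reduction to the $C^0$ bound is legitimate. Your sequences $g^i$ get uniform local $C^k$ bounds from the operators $\partial_t-L_{h_{t_i+t}}$ and their rescalings, rather than from the forcing. Your two-case Liouville argument then goes through as written.
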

\begin{proof}[Proof of lemma \ref{Schauder type estimate}]
We first establish finiteness of the norms involved. Recalling corollary \ref{global graphicality of flow over hardtsimon leaf}, we can choose $\overline{t} < 0$ so that for all $t \leq \overline{t}$, 
\begin{equation}\label{good behavior of ht at all negative times}
    \sum_{i = 0}^{20}\sup_{x \in \Sigma^+_1, t \leq \overline{t}}(1 + |x|)^{-1 + i}|\nabla^ih_t(x)| \leq \overline{\eta}
\end{equation}
for any given $\overline{\eta} > 0$. In view of the structure of the error term given by \eqref{weightedc2normboundfor graph functin over simons cone}, we can select $\overline{\eta}(n) > 0$, hence $\overline{t} < 0$ so that the operator
\begin{equation}
    \partial_t - L_{h_t}
\end{equation}
is uniformly parabolic. We can thus apply interior Schauder theory obtain $C^k$ estimates of $g_t$ in any fixed compact set, say $B(0, 100)$. \\

To deal the outer region where $|x| \geq 100$, we repeat the rescaling argument that we used in proof of corollary \ref{global graphicality of flow over hardtsimon leaf}. Take any $|x_0| \geq 100$, $t_0 \leq \overline{t}$, and consider
\begin{equation}
    g^{(x_0, t_0)}_t(x) = g_{t_0 + |x_0|^2t}(|x_0|x)
\end{equation}
with $(x,t) \in \Sigma^+_{\frac{1}{|x_0|}} \cap (B(0, 2) \setminus B(0, \frac{1}{2})) \times [-1, 0]$. By scaling invariance of mean curvature flow, we see that $g^{(x_0, t_0)}$ solves the linear equation
\begin{equation}
    (\partial_t - L_{h^{(x_0, t_0)}_t})g^{(x_0, t_0)} = 0,
\end{equation}
where
\begin{equation}
    h^{(x_0, t_0)}_t(x) = \frac{1}{|x_0|}h_{t_0 + |x_0|^2t}(|x_0| x).
\end{equation}
The estimate \eqref{good behavior of ht at all negative times} implies the analogous smallness of $h^{(x_0, t_0)}$ in  $(x,t) \in \Sigma^+_{\frac{1}{|x_0|}} \cap (B(0, 2) \setminus B(0, \frac{1}{2})) \times [-1, 0]$, hence the above differential equation is also uniformly parabolic provided $\overline{\eta}(n) > 0$ is small enough. By applying interior Schauder theory to $g^{(x_0, t_0)}$ and rescaling back, we can cover the outer region. Ultimately, we obtain the estimate
\begin{equation}\label{regularity estimate for general g}
    \| g\|_{k ; |\alpha| + 2,\hat{t}} \leq C(n,k)\|g\|_{0;|\alpha| + 2,\hat{t}} \text{ for }0 \leq k \leq 10, \ \hat{t} \leq \overline{t}.
\end{equation}
Therefore, if $g$ satisfies \eqref{assumptions on the function g}, then 
\begin{equation}
    \|g\|_{2;|\alpha| + 2,\hat{t}} < \infty
\end{equation}
for all $\hat{t} \leq \overline{t}$. Also due to the structure of the error term $E_h$ given by \eqref{weightedc2normboundfor graph functin over simons cone}, by the choice of $\overline{t}$ given by \eqref{good behavior of ht at all negative times}, we see that 
\begin{equation}
    \|E_{h}(g)\|_{0, |\alpha| + 4,\hat{t}} \leq C(n)\overline{\eta}\|g\|_{2 ; |\alpha| + 2,\hat{t}} < \infty
\end{equation}
for all $\hat{t} \leq \overline{t}$.
Thus all the norms in estimate \eqref{key estimate} are finite provided $g$ satisfies \eqref{assumptions on the function g}.\\

We now derive \eqref{key estimate}. We will use the same choice of $\overline{t} < 0$ given by \eqref{good behavior of ht at all negative times}. We prove by contradiction. If \eqref{key estimate} is false, then there exists sequence of  functions $g^i \in C^{\infty}(\Sigma^+_1 \times (-\infty, \overline{t}])$, $\hat{t}_i \leq \overline{t}$ so that
\begin{equation}\label{basic properties of original sequence}
        g^i \geq 0, \ \sup_{t \leq \overline{t}}\sup_{x \in \Sigma^+_{1}}|g^i_t(x)|(1 + |x|)^{|\alpha| + 2} < \infty,
    \end{equation}
    and solves the equation
     \begin{equation}
        \partial_tg^i_t = L_{h_t}g^i_t,
    \end{equation}
    yet
    \begin{equation}\label{contradictory assumption original sequence}
        \|g^i\|_{2; |\alpha| + 2,\hat{t}_i} > i\|E_{h}g^i\|_{0 ; |\alpha| + 4,\hat{t}_i}.
    \end{equation}
   Since $g^i_t$ solves a homogeneous linear equation, we may without loss of generality assume that
   \begin{equation}
       \|g^i\|_{2;|\alpha| + 2,\hat{t}_i} = 1.
   \end{equation}
   By the definition of the weighted norm given by definition \ref{finalweightednorm}, we can find $x_i \in \Sigma^+_{1}$, and $t_i \leq \hat{t}_i$ so that
   \begin{equation}\label{choice of xiti}
       \sum_{j = 0}^{2}(1 + |x_i|)^{|\alpha| + 2 + j}|\nabla^jg^i_{t_i}(x_i)| \in [\frac{1}{2}, 1].
   \end{equation}
   We now split into two cases. \\

   Case (i) : $\limsup_{i \to \infty}|x_i| < \infty$. In this case, we define a new sequence of smooth functions by
   \begin{equation}
       \Tilde{g}^i_t(x) = g^i_{t_i + t}(x) \in C^{\infty}(\Sigma^+_{1}\times (-\infty, 0]).
   \end{equation}
   Since $\Tilde{g}^i$ is just a time translation of the original $g^i$, we see that
   \begin{equation}\label{properties of case1 new defined sequence gi}
       \Tilde{g}^i \geq 0, \ \|\Tilde{g}^i\|_{2, |\alpha| + 2,0} \leq 1,
   \end{equation}
   \begin{equation}\label{the equation solved by newsequence case1}
       \partial_t\Tilde{g}^i_t - \Delta_{\Sigma^+_1}\Tilde{g}^i_t - |A_{\Sigma^+_1}|^2\Tilde{g}^i_t = E_{h_{t_i + t}}(\Tilde{g}^i_t),
   \end{equation}
   and
   \begin{equation}\label{contradictory property of newsequence case1}
       \|E_{h_{t_i + t}}(\Tilde{g}^i_t)\|_{0 ; |\alpha| + 4,0} \leq \|E_{h_t}g^i_t\|_{0 ; |\alpha| + 4,\hat{t}_i} \leq \frac{1}{i}, \ \sum_{j = 0}^{2}(1 + |x_i|)^{|\alpha| + 2 + j}|\nabla^jg^i_{0}(x_i)| \in [\frac{1}{2}, 1].
   \end{equation}
   In view of estimate \eqref{regularity estimate for general g} and \eqref{properties of case1 new defined sequence gi}, and the uniform boundedness of $|x_i|$, we can pass through subsequence, and obtain $g^{\infty} \in C^{\infty}(\Sigma^+_1 \times (-\infty, 0])$, and $x \in \Sigma^+_1$ so that 
   \begin{equation}
       \Tilde{g}^i \to g^{\infty} \text{ in }C^2_{loc}(\Sigma^+_1 \times (-\infty, 0]), \ x_i \to x \text{ as }i \to \infty.
   \end{equation}
   By \eqref{properties of case1 new defined sequence gi}, we see that
   \begin{equation}
       g^{\infty} \geq 0, \ \| g^{\infty}\|_{2;|\alpha|+2,0} \leq 1 < \infty.
   \end{equation}
   On the other hand, by \eqref{the equation solved by newsequence case1}, and \eqref{contradictory property of newsequence case1}, we see that $g^{\infty} \neq 0$ solves
   \begin{equation}
       \partial_tg^{\infty} - \Delta_{\Sigma^+_1}g^{\infty} - |A_{\Sigma^+_1}|^2g^{\infty} = 0.
   \end{equation}
  This contradicts the Liouvile type theorem (proposition \ref{liouvlie for hardtsimonleaf}). Thus case (i) is not possible.\\

  Case (ii) : $\limsup_{i \to \infty}|x_i| = \infty$. In this case, by dropping several terms, we may assume that $|x_i| \geq 100$, and $|x_i| \to \infty$ as $i \to \infty$. We define $\Tilde{g}^i \in C^{\infty}(\Sigma^+_{\frac{1}{|x_i|}}\times (-\infty, 0])$ by
  \begin{equation}
      \Tilde{g}^i_t = |x_i|^{|\alpha| + 2}g^i_{t_i + |x_i|^2t}(|x_i|x).
  \end{equation}
 First note that
  \begin{equation}\label{basic property of rescaled sequence second case}
      \Tilde{g}^i \geq 0, \sup_{x \in \Sigma^+_{\frac{1}{|x_i|}} , \ t \leq 0}|x|^{|\alpha| + 2 + k}|\nabla^k\Tilde{g}^i_t(x)| \leq C(n,k) < \infty
  \end{equation}
  for any $0 \leq k \leq 10$, $0 < r  < \infty$. The non-negativity of $\Tilde{g}^i$ is obvious by \eqref{basic properties of original sequence}. As for the second inequality, for each $x \in \Sigma^+_{\frac{1}{|x_i|}}$, by \eqref{basic properties of original sequence}, and \eqref{regularity estimate for general g}, we have
  \begin{equation}
      \sup_{t \leq 0}|x|^{|\alpha| + 2 + k}|\nabla^k\Tilde{g}^i_t(x)| \leq \sup_{t \leq \hat{t}_i}(|x_i||x|)^{|\alpha| + 2 + k}|\nabla^k g^i_t(|x_i|x)| \leq \|g^i\|_{k;|\alpha| + 2, \hat{t}_i} \leq C(n,k),
  \end{equation}
  thus proving \eqref{basic property of rescaled sequence second case}. 
  Also $\Tilde{g}^i$ solves the equation
  \begin{equation}\label{the equation satisfied by second sequence}
      \partial_t\Tilde{g}^i_t - \Delta_{\Sigma^+_{\frac{1}{|x_i|}}}\Tilde{g}^i_t - |A_{\Sigma^+_{\frac{1}{|x_i|}}}|^2\Tilde{g}^i_t = E_{h^i_t}\Tilde{g}^i_t,
  \end{equation}
  with 
  \begin{equation}
      h^i_t(x) = \frac{1}{|x_i|}h_{t_i + |x_i|^2t}(|x_i|x).
  \end{equation}
  By \eqref{choice of xiti}, we also have $\Tilde{x}_i = \frac{x_i}{|x_i|} \in \Sigma^+_{\frac{1}{|x_i|}}$ so that
  \begin{equation}\label{the bad points in secodn sequnce}
      \sum_{j = 0}^{2}|\nabla^j\Tilde{g}^i_{0}(\Tilde{x}_i)| \geq \frac{1}{3}.
  \end{equation}
  By \eqref{contradictory assumption original sequence}, we have
  \begin{equation}\label{vanishing error term second case}
      \sup_{r \leq |x| \leq R, \ t \leq 0}|E_{h^i_t}\Tilde{g}^i_t| \leq  \sup_{r \leq |x| \leq R, \ t \leq \hat{t}_i}|x_i|^{|\alpha| +4}|E_{h}g^i_t(|x_i|x)| \leq \frac{C(n, r, R)}{i} \to 0 \text{ as }i \to \infty
  \end{equation}
  for each $0 < r < R < \infty$. Finally, note that 
  \begin{equation}
      \Sigma^+_{\frac{1}{|x_i|}} \to \Sigma \text{ locally smoothly away from origin as }i \to \infty.
  \end{equation}
  Therefore, by viewing $\Tilde{g}^i$ as smooth functions defined over large domains in $\Sigma \setminus \{0\}$, after possibly passing through subsequence, we can obtain $g^{\infty} \in C^{\infty}(\Sigma \setminus \{0\} \times (-\infty ,0])$, and $\Tilde{x} \in \Sigma \setminus \{0\}$ so that
  \begin{equation}
      \Tilde{g}^i \to g^{\infty} \text{ in }C^2_{loc}(\Sigma \setminus \{0\}), \Tilde{x}_i \to \Tilde{x}.
  \end{equation}
  By \eqref{the equation satisfied by second sequence}, and \eqref{vanishing error term second case}, we see that
  \begin{equation}
      \partial_t g^{\infty} - \Delta_{\Sigma}g^{\infty}_t - |A_{\Sigma}|^2g^{\infty}_t \equiv 0.
  \end{equation}
  On the other hand, by \eqref{basic property of rescaled sequence second case}, and \eqref{the bad points in secodn sequnce}, we see that $g^{\infty} \neq 0$, and
  \begin{equation}
      g^{\infty} \geq 0, \ \sup_{t \leq 0}\sup_{x \in \Sigma \setminus \{0\}}|g^{\infty}_t(x)||x|^{|\alpha| + 2} < \infty.
  \end{equation}
  This contradicts Liouvile type theorem (proposition \ref{liouvile for simons cone}). Thus the second case is also impossible.\\

  Since both cases are impossible, we reached a contradiction. This completes the proof of lemma \ref{Schauder type estimate}.
\end{proof}
With lemma \ref{Schauder type estimate}, we can show that $g \equiv 0$.
\begin{lemma}\label{flow is stationary}
    \begin{equation}
        g = \partial_th \equiv 0 \text{ in }\Sigma^+_1 \times (-\infty, \overline{t}].
    \end{equation}
\end{lemma}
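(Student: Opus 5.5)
The plan is to combine the Schauder-type estimate (lemma \ref{Schauder type estimate}) with the smallness of the coefficients of the error operator $E_h$ and absorb. By corollary \ref{global graphicality of flow over hardtsimon leaf} and \eqref{properties of gt}, $g = \partial_t h$ satisfies the hypotheses \eqref{assumptions on the function g}: $g \geq 0$, the decay $(1+|x|)^{|\alpha|+2}$ holds uniformly for $t \leq \overline{t}$, and $\partial_t g_t = L_{h_t} g_t$ by \eqref{evolution equaiton for partialtht = gt}. Hence for every $\hat{t} \leq \overline{t}$ we get
\begin{equation*}
\|g\|_{2;|\alpha|+2,\hat{t}} \leq C_0 \|E_h g\|_{0;|\alpha|+4,\hat{t}},
\end{equation*}
and both sides are finite.

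Next I estimate the right-hand side using the coefficient bounds \eqref{coefficient control}. For $\hat{t}$ sufficiently negative, corollary \ref{global graphicality of flow over hardtsimon leaf} (via $h_t \to 0$ in $C^\infty_{loc}$ and the uniform decay $|\nabla^k h_t|(1+|x|)^{|\alpha|+k} \leq C$) makes the quantity in \eqref{weightedc2normboundfor graph functin over simons cone} at most some $\eta(\hat t)$ with $\eta(\hat t)\to 0$ as $\hat t\to-\infty$. Indeed, the far region has $(1+|x|)^{-1-|\alpha|}$ smallness, so one first fixes a large $R$ making the contribution from $|x|\ge R$ small uniformly in $t$, and then takes $\hat t$ so negative that local smooth convergence handles $|x|\le R$. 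Pointwise,
\begin{equation*}
|E_h g|(1+|x|)^{|\alpha|+4} \leq C(n)\eta \big(|\nabla^2 g|(1+|x|)^{|\alpha|+4} + |\nabla g|(1+|x|)^{|\alpha|+3} + |g|(1+|x|)^{|\alpha|+2}\big) \leq C(n)\eta\|g\|_{2;|\alpha|+2,\hat{t}}.
\end{equation*}
Here $\eta$ must control $a^{ij}$ (not weighted), $(1+|x|) b^i$ and $(1+|x|)^2 c$, which is exactly what \eqref{coefficient control} provides.

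Combining the two displays gives $\|g\|_{2;|\alpha|+2,\hat t} \leq C_0 C(n)\eta(\hat t)\|g\|_{2;|\alpha|+2,\hat t}$. Choosing $\hat{t}$ with $C_0C(n)\eta(\hat t) < 1/2$ forces $g \equiv 0$ on $\Sigma^+_1\times(-\infty,\hat t]$.

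The main obstacle is the bookkeeping of constants. The constant $C_0$ in lemma \ref{Schauder type estimate} must not deteriorate as $\hat t$ decreases; it depends on $(n,\overline t)$ but is valid for all $\hat t\le \overline t$, so this is fine. Only the smallness $\eta$ is then improved by lowering $\hat t$. If one worries that $C_0$ was obtained using the specific smallness of $h$ on $(-\infty,\overline t]$, note that the norms are sup over $t\le\hat t$, so shrinking the time window only helps.

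Finally, to extend from $(-\infty,\hat t]$ to $(-\infty,\overline t]$, I use forward uniqueness. Since $h_t$ is constant on $(-\infty,\hat t]$, the flow coincides there with a fixed smooth hypersurface $M_{\hat t}$, and $M_{\hat t}=\lim_{t\to-\infty}M_t=\Sigma^+_1$ by lemma \ref{backwards convergence to hardtsimonfoliation}, with $h\equiv 0$. This is a stationary solution. Uniqueness of smooth mean curvature flow with the given bounded-curvature and decay control (or again the linear equation for $g$ with zero data at $\hat t$ and the decay bound, via the maximum principle) gives $h\equiv0$, hence $g\equiv 0$, on $(-\infty,\overline t]$.
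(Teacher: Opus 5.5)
Your proposal is correct and follows the paper's proof: you apply lemma \ref{Schauder type estimate} with $C_0$ uniform in $\hat t\le\overline t$, use \eqref{coefficient control} with the decay and local convergence from corollary \ref{global graphicality of flow over hardtsimon leaf} to get $\|E_h g\|_{0;|\alpha|+4,\hat t}\le C(n)\eta\|g\|_{2;|\alpha|+2,\hat t}$ for $\hat t$ very negative, and absorb; your near/far splitting for the smallness of $\eta(\hat t)$ spells out a step the paper states in one line. Your extension from $(-\infty,\hat t]$ to $(-\infty,\overline t]$ via uniqueness for the linear parabolic equation satisfied by the bounded function $g$ (your parenthetical alternative) is exactly the paper's last step.
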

\begin{proof}[Proof of lemma \ref{flow is stationary}]
    By lemma \ref{Schauder type estimate}, there exists a fixed constant $C_0 > 0$ so that for all $\hat{t} \leq \overline{t}$, we have
    \begin{equation}
        \|g\|_{2;|\alpha|+2,\hat{t}} \leq C_0\|E_h(g)\|_{0;|\alpha| + 4,\hat{t}}.
    \end{equation}
    Recalling the structure of $E_h$ given by \eqref{weightedc2normboundfor graph functin over simons cone}, and corollary \ref{global graphicality of flow over hardtsimon leaf}, for any $\eta \in (0, \overline{\eta}(n))$, we can find $\hat{t} < 0$ so that for all $t \leq \hat{t}$, 
    \begin{equation}
        \|E_h(g)\|_{0;|\alpha| + 4,\hat{t}} \leq C(n)\eta  \|g\|_{2;|\alpha|+2,\hat{t}}.
    \end{equation}
    Therefore by choosing $\eta > 0$ so that
    \begin{equation}
        C(n)C_0\eta = \frac{1}{2},
    \end{equation}
    we obtain
    \begin{equation}
         \|g\|_{2;|\alpha|+2,\hat{t}} \leq \frac{1}{2} \|g\|_{2;|\alpha|+2,\hat{t}},
    \end{equation}
    hence
    \begin{equation}
         \|g\|_{2;|\alpha|+2,\hat{t}} = 0.
    \end{equation}
    This means $g_t(x) \equiv 0$ for all $x \in \Sigma^+_1$, $t \leq \hat{t}$. Since $g_t$ satisfies a linear second order parabolic PDE, and is uniformly bounded up to time $\overline{t}$, we conclude that $g \equiv 0$, proving lemma \ref{flow is stationary}. 
\end{proof}
As a consequence, we finally obtain the desired uniqueness theorem.
\begin{theorem}\label{uniqueness of mean convex flow}[Theorem \ref{maintheorem : uniqueness in mean convex case}] 
    Let $(M_t)_{t \in (-\infty, 0)}$ be smooth, properly embedded mean curvature flow which satisfies assumptions \ref{strong convergence to simonscone assumption1}, \ref{additional geoemtric assumption}, \ref{mean convexity assumption4}. Then up to dilation, and reflection across $\Sigma$, $M_t \equiv \Sigma^+_1$.
\end{theorem}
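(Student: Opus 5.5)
The plan is to assemble the results of this section, since each step is already available. First, by lemma \ref{backwards convergence to hardtsimonfoliation} (after reflecting across $\Sigma$ if necessary, so that the normal graph over $\Sigma$ is nonnegative), there is $c>0$ with $M_t \to \Sigma^+_c$ in $C^\infty_{loc}(\mathbf{R}^{2n})$ as $t\to-\infty$. Dilating the flow by the factor $\frac{1}{c}$ preserves assumptions \ref{strong convergence to simonscone assumption1}, \ref{additional geoemtric assumption}, \ref{mean convexity assumption4}, so we may take $c=1$. Corollary \ref{global graphicality of flow over hardtsimon leaf} then provides $\overline{t}<0$ and a global normal graph function $h_t$ over $\Sigma^+_1$ for $t\le \overline{t}$. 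This $h_t$ satisfies $h_t\ge 0$, $\partial_t h_t\ge 0$, $h_t\to 0$ locally smoothly as $t\to-\infty$, and the decay $|\nabla^k h_t|\lesssim (1+|x|)^{-|\alpha|-k}$ uniformly in $t$.

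Next, put $g_t=\partial_t h_t$. It satisfies \eqref{properties of gt} and the linear equation $\partial_t g=L_{h_t}g$. Lemma \ref{flow is stationary}, which rests on the Schauder type estimate of lemma \ref{Schauder type estimate} and the two Liouville theorems (propositions \ref{liouvile for simons cone}, \ref{liouvlie for hardtsimonleaf}), gives $g\equiv 0$ on $\Sigma^+_1\times(-\infty,\overline{t}]$. Hence $h_t$ is independent of $t$ for $t\le\overline{t}$. Since $h_t\to 0$ as $t\to-\infty$, we get $h_t\equiv 0$, that is, $M_t=\Sigma^+_1$ for all $t\le \overline{t}$.

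The remaining step is to extend this identity from $(-\infty,\overline{t}]$ to all of $(-\infty,0)$, and it is the only real point to check. Both $(M_t)_{t<0}$ and the static flow $(\Sigma^+_1)_{t<0}$ are smooth, properly embedded mean curvature flows, and they agree at time $\overline{t}$. We would first try to invoke uniqueness of smooth mean curvature flow with the same initial data. Since $\Sigma^+_1$ is noncompact, this requires controlled geometry. For $t\in[\overline{t},0)$, $M_t$ is trapped between $\Sigma$ and $\Sigma^+_1$ by one-sidedness and mean convexity, and it stays graphical with small weighted norms outside a large ball by the pseudolocality argument of proposition \ref{Rough graphicality property}. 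That bounded-curvature-at-infinity control should suffice to apply standard uniqueness results, for instance Chen--Yin, on each compact interval $[\overline{t},t_1]$ with $t_1<0$.

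As an alternative that avoids uniqueness theorems, observe that $g$ solves a linear parabolic equation for as long as $M_t$ remains a global graph over $\Sigma^+_1$. With $g\equiv 0$ at $t=\overline{t}$ and the spatial decay carried forward, the maximum principle keeps $g\equiv0$, so $M_t$ remains equal to $\Sigma^+_1$ and in particular graphical. A continuity argument in $t$ then gives $M_t\equiv\Sigma^+_1$ for all $t<0$. Undoing the dilation and reflection, this proves the theorem.
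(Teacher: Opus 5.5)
Your proposal is correct. Its first two steps are the paper's own: you normalize $c=1$ using Lemma \ref{backwards convergence to hardtsimonfoliation}, take the global graph $h_t$ of Corollary \ref{global graphicality of flow over hardtsimon leaf}, and combine Lemma \ref{flow is stationary} with $h_t\to 0$ to get $M_t=\Sigma^+_1$ for $t\le\overline{t}$. The routes diverge only in extending past $\overline{t}$.

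\textbf{The paper's route.} It uses a barrier argument. The static leaves $\Sigma^+_{1-\epsilon}$ and $\Sigma^+_{1+\epsilon}$ sandwich $M_{\overline{t}}=\Sigma^+_1$. The avoidance principle keeps $M_t$ between them, and letting $\epsilon\to 0$ pins $M_t$ to $\Sigma^+_1$. By the nesting of the $\Omega_t$, only $\Sigma^+_{1-\epsilon}$ actually matters.

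\textbf{Your route.} You use forward uniqueness of smooth flows with bounded second fundamental form (Chen--Yin), or equivalently the maximum principle for the linear equation satisfied by $g$ plus a continuity argument. This works once $|A|$ is bounded on $[\overline{t},t_1]$, and you correctly say where that comes from. On compact sets it follows from smoothness. At infinity it follows from the pseudolocality argument in the proof of Proposition \ref{Rough graphicality property}. Applied to the rescalings $\lambda M_{\lambda^{-2}t}$, which are defined for all $t<0$, that argument gives a fixed radius outside which every $M_t$ with $t\in[\overline{t},0)$ is a graph over $\Sigma$ with small weighted $C^3$ norm. Hence $|A|\lesssim 1/|x|$ there.

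\textbf{What each buys.} The paper's argument is shorter and purely geometric. It uses the Hardt--Simon foliation directly and needs no parametrization or external uniqueness theorem. Yours does not depend on having a foliation by barriers. It also makes explicit the control at infinity that the paper's one-line appeal to avoidance leaves implicit. That control is genuinely needed there too: the distance from $\Sigma^+_1$ to $\Sigma^+_{1\pm\epsilon}$ decays like $|x|^{\alpha}$, so a noncompact avoidance principle requires exactly this kind of asymptotic graphicality.

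\textbf{Two points for your second alternative.} First, the ``spatial decay carried forward'' can be replaced by mere boundedness of $g$. This follows from the curvature bound, since $g$ is the normal speed $H$ up to a bounded factor. Second, the continuity step needs the smallness of $h_t$ near infinity, uniformly on a short time interval past the first time $M_t$ might leave $\Sigma^+_1$. This again comes from the same asymptotic graphicality together with the trapping of $M_t$ between $\Sigma$ and $\Sigma^+_1$.
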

\begin{proof}[Proof of theorem \ref{uniqueness of mean convex flow}]
    By lemma \ref{flow is stationary}, $M_t$ is stationary for all $t \leq \overline{t}$ for some $\overline{t}$. Thus by corollary \ref{global graphicality of flow over hardtsimon leaf}, 
    \begin{equation}
        M_t \equiv \Sigma^+_1
    \end{equation}
    for all $t \leq \overline{t}$. To control $t \geq \overline{t}$, we simply apply avoidance principle to $M_t$ against nearby leaves $\Sigma^+_{1 - \epsilon}, \ \Sigma^+_{1 + \epsilon}$ for each small $\epsilon > 0$. 
\end{proof}

\section{Construction of inner self shrinkers, and its analysis}\label{innershrinkersection}
In this section, we take $n \geq 4$. The goal of this section is to construct a family of self shrinkers, which we call inner self shrinkers (proposition \ref{inner family of self shrinkers}). This family is a one parameter family of $O(n) \times O(n)$ symmetric self shrinkers with boundary which are smooth, properly embedded in a fixed small ball centered at the origin. The construction of these surfaces is carried out by a careful analysis of solutions to the ODE obtained by reducing the self shrinker PDE using the $O(n)\times O(n)$ symmetry. We then obtain an approximate formula for the profile function (proposition \ref{approximate formula for inner self shrinkers}). In the last part of this section, we show that the self shrinkers constructed in proposition \ref{inner family of self shrinkers} together with the Simons cone foliate a fixed small ball centered at the origin. The key philosophy behind the results of this section is that near the origin, we can view the right hand side of the self shrinker equation
\begin{equation}
    H = \frac{\langle x, \nu\rangle}{2}
\end{equation}
as a perturbation of the minimal surface equation. This suggests that these inner self shrinkers would behave similarly to the leaves of the Hardt-Simon foliation near the origin.\\

We first construct the inner self shrinkers.
\begin{proposition}\label{inner family of self shrinkers}
Let $n \geq 4$. For each $0 < \theta < \sqrt{2(n-1)}$, there exists $u_{\theta} : [0, \sqrt{2(n-1)}] \to \mathbf{R}_+$ so that\\\\
(i) $u_{\theta} \in C^2([0, \sqrt{2(n-1)}])$, and satisfies the initial value problem
\begin{equation}\label{initial value problem for inner self shrinkers}
    \begin{cases}
        \frac{u_{\theta}''}{1 + (u_{\theta}')^2} = (\frac{x}{2} - \frac{n-1}{x})u_{\theta}' - \frac{u_{\theta}}{2} + \frac{n-1}{u_{\theta}} \\ u_{\theta}(0) = \theta, \ u_{\theta}'(0) = 0.
    \end{cases}
\end{equation}
In other words, the hypersurfaces given by the map
\begin{equation}
    X_{\theta} : [0, \sqrt{2(n-1)}] \times \mathbf{S}^{n-1} \times \mathbf{S}^{n-1} \ni (x, w_1, w_2) \to (xw_1, u_{\theta}(x)w_2) \in \mathbf{R}^{2n}
\end{equation}
and
\begin{equation}
    Y_{\theta} : [0, \sqrt{2(n-1)}] \times \mathbf{S}^{n-1} \times \mathbf{S}^{n-1} \ni (x, w_1, w_2) \to (u_{\theta}(x)w_1, xw_2) \in \mathbf{R}^{2n}
\end{equation}
are $O(n)\times O(n)$ symmetric self shrinkers.
\end{proposition}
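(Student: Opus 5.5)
Our plan is to solve the singular initial value problem \eqref{initial value problem for inner self shrinkers} by a fixed point argument near $x=0$, and then continue the solution by ODE theory. The continuation must be shown not to break down (no blow up of $u_\theta'$ and no vanishing of $u_\theta$) on the whole interval $[0,\sqrt{2(n-1)}]$. It is convenient to exploit scaling: writing $u_\theta(x)=\theta v(x/\theta)$ with $y=x/\theta$, the equation becomes
\begin{equation*}
\frac{v''}{1+(v')^2}+\frac{n-1}{y}v'-\frac{n-1}{v}=\frac{\theta^2}{2}\bigl(y v'-v\bigr),\qquad v(0)=1,\ v'(0)=0,
\end{equation*}
which is a small perturbation of the Hardt--Simon ODE \eqref{minimal surface equation} when $\theta$ is small. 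For $\theta$ close to $\sqrt{2(n-1)}$ the problem is instead a perturbation of the cylinder $u\equiv\sqrt{2(n-1)}$, whose profile is an exact solution.

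\textbf{Local existence at $x=0$.} Near $x=0$ we rewrite the equation as $(x^{n-1}u')'=x^{n-1}F(x,u,u')$, with $F$ smooth near $(0,\theta,0)$. Here
\begin{equation*}
F=\frac{n-1}{x}u'\,\frac{-(u')^2}{1+(u')^2}\cdot(\ldots)+(1+(u')^2)\Bigl(\frac{x}{2}u'-\frac{u}{2}+\frac{n-1}{u}\Bigr)+\ldots
\end{equation*}
after moving the principal part to the left. The integral equation $u(x)=\theta+\int_0^x s^{1-n}\int_0^s\sigma^{n-1}F\,d\sigma\,ds$ is a contraction on a small interval $[0,x_0]$ in the $C^1$ ball around $\theta$, because the kernel gains a factor of $x$. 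This gives a $C^2$ solution with $u'(0)=0$ and $u''(0)=\frac{1}{n}\bigl(\frac{n-1}{\theta}-\frac{\theta}{2}\bigr)>0$ for $\theta<\sqrt{2(n-1)}$.

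\textbf{Continuation and a priori bounds.} Standard ODE theory extends the solution as long as $u>0$ and $|u'|<\infty$ for $x>0$. We first show $u'>0$ on $(0,\sqrt{2(n-1)}]$. Set $w=\frac{n-1}{u}-\frac u2$. Since $\theta<\sqrt{2(n-1)}$, we have $w(0)>0$. The quantity $w$ stays positive while $u<\sqrt{2(n-1)}$, and on $(0,\sqrt{2(n-1)})$ the drift coefficient $\frac x2-\frac{n-1}{x}$ is negative. A first-zero argument then applies. At a first point where $u'=0$ with $x>0$, we would have $u''=(1+u'^2)w>0$ if $u<\sqrt{2(n-1)}$, which contradicts $u'$ decreasing to $0$. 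If instead $u$ reached $\sqrt{2(n-1)}$, uniqueness against the cylinder solution rules out tangency. Hence $u$ is increasing and bounded below by $\theta$.

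For the gradient, we compare with the cone $u=x$ and the Hardt--Simon profile. In the rescaled variables, $v$ stays within $C\theta^2 y^2$ of $\psi_1$ on the relevant range, by Gronwall applied to the difference $v-\psi_1$. For large $y$ we use that the Simons cone is a barrier: a Sturm type comparison in the variable $\arctan u'$ shows that $u'$ stays bounded (slope at most $C(n)$), since $u''$ is controlled by $(1+u'^2)$ times bounded quantities once $u\ge\theta$. Concretely, we set $\phi=\arctan u'$, so that $\phi'=(\frac x2-\frac{n-1}{x})\tan\phi\cdot\cos^2\phi\ \text{-type terms}+w\cos^2\phi$. This is bounded on $[\epsilon,\sqrt{2(n-1)}]$, which prevents $\phi$ from reaching $\pi/2$.

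\textbf{Main obstacle.} We expect the hardest step to be uniformity of these bounds up to $x=\sqrt{2(n-1)}$ for all $\theta\in(0,\sqrt{2(n-1)})$, especially ruling out $u'\to\infty$ when $u$ crosses the cone region and $w$ changes sign. Our first attempt is the $\phi$-equation above. Near $x=0$ the term $-\frac{n-1}{x}u'$ acts as a damping force. Away from $0$ the coefficients are bounded, and $|w|\le C(n)/\theta$, so $\phi'\le C(n,\theta)$ on a bounded interval. This only yields $\phi<\pi/2$ if we also show that $\phi$ cannot accumulate too fast. To handle this we would use the monotone quantity given by the self-shrinker first variation, or compare against the exact shrinking cylinder and the cone via the maximum principle to trap the graph. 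Once boundedness of $u'$ and positivity of $u$ are established, the solution exists in $C^2([0,\sqrt{2(n-1)}])$. The maps $X_\theta,Y_\theta$ are then $O(n)\times O(n)$ symmetric self shrinkers, since \eqref{initial value problem for inner self shrinkers} is exactly $H=\frac{\langle x,\nu\rangle}{2}$ for this symmetric ansatz.
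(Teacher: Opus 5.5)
\textbf{Verdict: there is a genuine gap in the global step.}

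Your local existence step is fine. A contraction for the singular integral equation works as well as the paper's route, which rescales to $\tilde u_\theta(y)=\theta^{-1}u_\theta(\theta y)$, sets $y=e^\tau$, and takes the one-dimensional unstable manifold at $(0,1,0)$; that route also gives smooth dependence on $\theta$.

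The gap, as you suspect, is that nothing in the proposal rules out $|u'|\to\infty$ or $u\to 0$ before $x=\sqrt{2(n-1)}$ for every $\theta\in(0,\sqrt{2(n-1)})$. Each of your partial arguments falls short:
\begin{enumerate}
\item The perturbative pictures cover only $\theta$ near $0$ or near $\sqrt{2(n-1)}$. The Gronwall comparison with $\psi_1$ cannot reach $y=\sqrt{2(n-1)}/\theta$, since there $\theta^2y^2=x^2$ is of order one. Even the paper's much finer weighted estimate only controls $y\le 1/(10M\theta)$.
\item The angle equation $\phi'=(\frac x2-\frac{n-1}{x})\tan\phi+w$ gives only $\phi'\le (n-1)/\theta$ when $u'\ge 0$. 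Over an interval of length $\sqrt{2(n-1)}$ this does not keep $\phi<\pi/2$.
\item Your first-zero argument for $u'>0$ works only while $u\le\sqrt{2(n-1)}$. If $u$ crosses $\sqrt{2(n-1)}$ transversally, a later zero of $u'$ with $u''=w<0$ is not contradicted. So the bound $u\ge\theta$ that you draw from monotonicity is not justified as written.
\end{enumerate}

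\textbf{The missing idea is an energy inequality.} Multiplying the equation by $u'$ gives
\[
\frac{d}{dx}\Bigl(\tfrac12\ln\bigl(1+(u')^2\bigr)\Bigr)=\Bigl(\frac{x}{2}-\frac{n-1}{x}\Bigr)(u')^2+\frac{d}{dx}\Bigl((n-1)\ln u-\frac{u^2}{4}\Bigr).
\]
For $x\le\sqrt{2(n-1)}$ the first term on the right is $\le 0$ whatever the sign of $u'$. This is where the sign of the drift coefficient, which you noticed, should enter. Integrating from $0$ yields
\[
\sqrt{1+(u')^2}\le\Bigl(\frac{u}{\theta}\Bigr)^{n-1}e^{\frac{\theta^2-u^2}{4}}.
\]
This one inequality gives all the bounds you need:
\begin{enumerate}
\item The right-hand side is a bounded function of $u$, so $|u'|\le M(n,\theta)$ and $u$ stays bounded.
\item The right-hand side must be $\ge 1$, and $s\mapsto (n-1)\ln s-\frac{s^2}{4}$ is increasing on $(0,\sqrt{2(n-1)})$. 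Hence $u\ge\theta$, with no monotonicity needed.
\end{enumerate}
Together these rule out every blow-up alternative on $[0,\sqrt{2(n-1)}]$ at once, by the same argument for every $\theta$. This is precisely the paper's proof, written there in the variable $y=x/\theta$.
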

\begin{proof}[Proof of proposition \ref{inner family of self shrinkers}]
It turns out that it is easier to work with the rescaled function
\begin{equation}
    \Tilde{u}_{\theta}(y) = \theta^{-1}u_{\theta}(\theta y), \ x = \theta y.
\end{equation}
Then the initial value problem \eqref{initial value problem for inner self shrinkers} is equivalent to 
\begin{equation}\label{initial value problem for inner self shrinkers, rescaled version}
    \begin{cases}
        \frac{ \Tilde{u}_{\theta}''}{1 + ( \Tilde{u}_{\theta}')^2} + \frac{n-1}{y} \Tilde{u}_{\theta}' - \frac{n-1}{ \Tilde{u}_{\theta}} = \frac{\theta^2}{2}(y\Tilde{u}_{\theta}' -  \Tilde{u}_{\theta}) \\  \Tilde{u}_{\theta}(0) = 1, \ \Tilde{u}_{\theta}'(0) = 0.
    \end{cases}
\end{equation}
We now change variable by
\begin{equation}
    y(\tau) = e^{\tau}, \ z = z(\tau; \theta) =  \Tilde{u}_{\theta}(y(\tau)), \ w = w(\tau ; \theta) =  \Tilde{u}_{\theta}'(y(\tau)).
\end{equation}
Then the initial value problem \eqref{initial value problem for inner self shrinkers, rescaled version} reduces to a first order ODE system
\begin{equation}\label{reducd ODE system}
    \begin{cases}
        \frac{dy}{d\tau} = y \\ 
        \frac{dz}{d\tau} = yw \\ 
        \frac{dw}{d\tau} = (1 + w^2)(\frac{\theta^2}{2}y^2w - \frac{\theta^2}{2}yz + \frac{n-1}{z}y - (n-1)w),
    \end{cases}
\end{equation}
with 
\begin{equation}
    \lim_{\tau \to -\infty}(y, z, w) = (0,1,0).
\end{equation}
We then obtain (short time) existence, uniqueness, and smooth dependence on $\theta$ by the unstable manifold theorem. Setting 
\begin{equation}
    F(y,z,w ; \theta) = (y, yw, (1 + w^2)(\frac{\theta^2}{2}y^2w - \frac{\theta^2}{2}yz + \frac{n-1}{z}y - (n-1)w)),
\end{equation}
the linearization of $F$ at $(0,1,0)$ is
\begin{equation}
    DF|_{(0,1,0)}(\cdot, \cdot, \cdot ; \theta) = \begin{pmatrix} 1 & 0 &  0 \\ 0 & 0 & 0 \\ -\frac{\theta^2}{2} + (n-1) & 0 & -(n-1)\end{pmatrix}.
\end{equation}
In particular, the unstable manifold of the system \eqref{reducd ODE system} at $(0,1,0)$ is one dimensional, and corresponds to the positive eigenvalue of above matrix, which is equal to $1$. Since the corresponding eigenvector is
\begin{equation}
    \begin{pmatrix} 1  \\ 0 \\ \frac{2(n-1) - \theta^2}{2n} \end{pmatrix},
\end{equation}
$(z,w)$ can be written as a function of $y$ for small $y > 0$. Therefore the solution to the initial value problem \eqref{initial value problem for inner self shrinkers, rescaled version} corresponds to the unstable manifold of the system \eqref{reducd ODE system} at $(0,1,0)$. Therefore we see that it must uniquely exist up to some maximal existence point $\overline{y} = \overline{y}(\theta) > 0$, and depends smoothly on $\theta$. The fact that $\Tilde{u}_{\theta}$ is $C^2$ up to $y = 0$ follows from unstable manifold theorem combined with the ODE \eqref{initial value problem for inner self shrinkers, rescaled version}. In fact, the ODE implies that
\begin{equation}
    \Tilde{u}_{\theta}''(0) = 1 - \frac{1}{n} - \frac{\theta^2}{2n} > 0.
\end{equation}
To finish the proof of proposition \ref{inner family of self shrinkers}, we need to show that $\overline{y}(\theta) > \frac{\sqrt{2(n-1)}}{\theta}$. Suppose this is not true, and $\overline{y}(\theta) \leq \frac{\sqrt{2(n-1)}}{\theta}$ for some $\theta \in (0, \sqrt{2(n-1)})$. Then in view of the ODE \eqref{initial value problem for inner self shrinkers, rescaled version}, we see that 
\begin{equation}\label{blowup criterion of rescaled profile curve}
    \limsup_{y \to \overline{y}(\theta)-} |\Tilde{u}_{\theta}(y)| + |\Tilde{u}_{\theta}'(y)| + \frac{1}{|\Tilde{u}_{\theta}(y)|} = \infty.
\end{equation}
We now claim that above blowup is not possible when $\overline{y}(\theta) \leq  \frac{2(n-1)}{\theta}$. Multiply $\Tilde{u}_{\theta}'$ to the ODE \eqref{initial value problem for inner self shrinkers, rescaled version}. Then
\begin{align*}
    \frac{d}{dy}(\ln \sqrt{1 + (\Tilde{u}_{\theta}')^2}) & = \frac{\Tilde{u}_{\theta}'\Tilde{u}_{\theta}''}{1 + (\Tilde{u}_{\theta}')^2} \\ & = (\frac{\theta^2y}{2} - \frac{n-1}{y})(\Tilde{u}_{\theta}')^2 + \frac{(n-1)\Tilde{u}_{\theta}'}{\Tilde{u}_{\theta}} - \frac{\theta^2}{2}\Tilde{u}_{\theta}\Tilde{u}_{\theta}' \\ & \leq \frac{(n-1)\Tilde{u}_{\theta}'}{\Tilde{u}_{\theta}} - \frac{\theta^2}{2}\Tilde{u}_{\theta}\Tilde{u}_{\theta}' \\ &= \frac{d}{dy}(\ln \Tilde{u}_{\theta}^{n-1} - \frac{\theta^2}{4}\Tilde{u}_{\theta}^2),
\end{align*}
where the inequality follows from the assumption $\overline{y}(\theta) \leq  \frac{2(n-1)}{\theta}$. Thus, by integraing in $y$, using the initial values given in \eqref{initial value problem for inner self shrinkers, rescaled version}, and removing the logarithm, we obtain 
\begin{equation}\label{uniform bound of profile curve}
    \sqrt{1 + (\Tilde{u}_{\theta}')^2} \leq \Tilde{u}_{\theta}^{n-1}e^{\frac{\theta^2}{4}}e^{-\frac{\theta^2}{4}\Tilde{u}_{\theta}^2} \leq M = M(n, \theta) < \infty.
\end{equation}
This implies that 
\begin{equation}
    \sup_{y \in [0, \overline{y})}|\Tilde{u}_{\theta}'(y)| \leq M < \infty.
\end{equation}
By fundamental theorem of calculus combined with $\Tilde{u}_{\theta}(0) = 1$, we obtain
\begin{equation}
    \sup_{y \in [0, \overline{y})}|\Tilde{u}_{\theta}(y)| \leq 1 + \frac{\sqrt{2(n-1)}}{\theta}M < \infty.
\end{equation}
Finally, by \eqref{uniform bound of profile curve}, we see that
\begin{equation}
    1 \leq \Tilde{u}_{\theta}^{n-1}e^{\frac{\theta^2}{4}}e^{-\frac{\theta^2}{4}\Tilde{u}_{\theta}^2},
\end{equation}
which immediately implies that 
\begin{equation}
    \sup_{y \in [0, \overline{y})}\frac{1}{|\Tilde{u}_{\theta}(y)|} \leq M'= M'(n, \theta) < \infty.
\end{equation}
Therefore, we get a contradiction to \eqref{blowup criterion of rescaled profile curve}, hence $\overline{y}(\theta) > \frac{\sqrt{2(n-1)}}{\theta}$, thus completing the proof of proposition \ref{inner family of self shrinkers}.
\end{proof}
Proposition \ref{inner family of self shrinkers} implies that the self shrinkers for each $\theta \in (0, \sqrt{2(n-1)})$ are properly embedded in $B(0, \sqrt{2(n-1)})$. We now show that by focusing on a possibly smaller ball, these self shrinkers behave similarly to the leaves of the Hardt-Simon foliation of the Simons cone. \\

To make this precise, we introduce the following weighted norm which is motivated from section 4.3 in \cite{stolarski2023existence}. 
\begin{definition}\label{weighted norm for inner shrinker construction}
    Let $v \in C^2([0, y_*] ; \mathbf{R})$. We define for each $k = 0,1,2$ the weighted norm
    \begin{equation}
        \|v\|_{k,y_*} = \sup_{y \in (0, y_*]}\sum_{i = 0}^k|\frac{v^{(i)}(y)}{y^{k-i}(1 + y)^{\alpha}}|.
    \end{equation}
    Here
    \begin{equation}
        \alpha = \alpha(n) = \frac{-(2n-3) + \sqrt{4n^2 - 20n + 17}}{2} \in [-2, -1).
    \end{equation}
\end{definition}
Note that in the proof of proposition \ref{inner family of self shrinkers}, we worked with the rescaled profile function which satisfies the initial value problem
\begin{equation}\label{rescaled initial value problem}
    \begin{cases}
        \frac{ \Tilde{u}_{\theta}''}{1 + ( \Tilde{u}_{\theta}')^2} + \frac{n-1}{y} \Tilde{u}_{\theta}' - \frac{n-1}{ \Tilde{u}_{\theta}} = \frac{\theta^2}{2}(y\Tilde{u}_{\theta}' -  \Tilde{u}_{\theta}) \\  \Tilde{u}_{\theta}(0) = 1, \ \Tilde{u}_{\theta}'(0) = 0.
    \end{cases}
\end{equation}
If we focus only on the above initial value problem, note that the argument involving unstable manifold theorem in the proof of proposition \ref{inner family of self shrinkers} goes through even if we take $\theta$ to be any real number in $(-\sqrt{2(n-1)}, \sqrt{2(n-1)})$. This means that we actually get a smooth family of functions $\Tilde{u}_{\theta}$ for all $(-\sqrt{2(n-1)}, \sqrt{2(n-1)})$. Moreover, since the initial value problem \eqref{rescaled initial value problem} remains unchanged if we replace $\theta$ with $-\theta$, this implies that $\Tilde{u}_{\theta}  = \Tilde{u}_{-\theta}$. Finally, note that if $\theta = 0$, then \eqref{rescaled initial value problem} is exactly equal to the minimal surface equation reduced to the $O(n)\times O(n)$ setting. By uniqueness, this implies that
\begin{equation}
    \Tilde{u}_0(y) = \psi_1(y),
\end{equation}
where $\psi_1(y)$ is the solution to the initial value problem \eqref{minimal surface equation}. \\

Above discussion implies that we actually get a smooth family of $C^2$ functions $(\Tilde{u}_{\theta})_{|\theta| < \sqrt{2(n-1)}}$ so that  $\Tilde{u}_{\theta} \in C^2([0, \frac{\sqrt{2(n-1)}}{|\theta|}])$ with $\Tilde{u}_{\theta} = \Tilde{u}_{-\theta}$ for $\theta \neq 0$, and $ \Tilde{u}_0(y) = \psi_1(y)$.  Then for $\theta$ small enough, we can view $\Tilde{u}_{\theta}$ as a perturbation of $\psi_1$, hence we can write $\Tilde{u}_{\theta}$ as
\begin{equation}\label{decomposition of rescaled profile curve to main and error}
    \Tilde{u}_{\theta} = \psi_1 + \theta^2E_{\theta}.
\end{equation}
We now state the key estimate which makes our claim precise. 
\begin{lemma}\label{key estimate for inner shrinkers}
    Let $n \geq 4$. There exists $M_0 = M_0(n) > 3$, $\theta_0 = \theta_0(n)\in (0, \sqrt{2(n-1)})$ so that whenever $0 < \theta \leq \theta_0$, $M \geq M_0$
    \begin{equation}
        \|E_{\theta}\|_{2, \frac{1}{10M\theta}} \leq M,
    \end{equation}
    and 
  \begin{equation}
      \Tilde{u}_{\theta} \geq \frac{1}{2}\psi_1 \textup{ for all }0 \leq y \leq \frac{1}{10M_0\theta}.
  \end{equation}
\end{lemma}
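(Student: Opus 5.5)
Write $\Tilde{u}_{\theta} = \psi_1 + \theta^2 E_{\theta}$. Subtracting the minimal surface ODE \eqref{minimal surface equation} from \eqref{rescaled initial value problem} and expanding, $E_{\theta}$ solves an initial value problem of the form
\begin{equation*}
    \mathcal{L}E_{\theta} = \frac{1}{2}(y\Tilde{u}_{\theta}' - \Tilde{u}_{\theta}) + \theta^2 N_{\theta}(y, E_{\theta}, E_{\theta}', E_{\theta}''), \quad E_{\theta}(0) = E_{\theta}'(0) = 0.
\end{equation*}
Here $\mathcal{L}$ is the linearized operator \eqref{linearized minimal surface equation at leaf}. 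The term $N_{\theta}$ collects the at least quadratic remainder coming from the nonlinearities $\frac{u''}{1+(u')^2}$ and $\frac{n-1}{u}$. I would then run a continuity/bootstrap argument in the weighted norm $\|\cdot\|_{2,y_*}$ of definition \ref{weighted norm for inner shrinker construction}. Let $y_*(\theta)$ be the supremum of $y_* \le \frac{1}{10M\theta}$ such that $\|E_{\theta}\|_{2,y_*} \le M$ and $\Tilde{u}_{\theta} \ge \frac12\psi_1$ on $[0,y_*]$. By smoothness in $\theta$ and $C^2$ regularity at $0$ (proposition \ref{inner family of self shrinkers}), $y_* > 0$. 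The goal is to show that on this interval both bounds are strictly improved, so that $y_* = \frac{1}{10M\theta}$.

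The key input is the explicit inverse \eqref{Solvability of linearized operator definition eq}. I would first establish a linear estimate: for $f$ with $\sup_{[0,y_*]} |f|\, y^{-1}(1+y)^{-\alpha}$-type control, the solution satisfies $\|u\|_{2,y_*} \le C(n) \sup |f|/\big((1+y)^{\alpha} y^{?}\big)$, uniformly in $y_*$. To see the gain of $y^2$, use $\psi_1 - y\psi_1' \sim y^{\alpha}$ (from \eqref{asymptotic behavior of hardsimonleafformula}, since $\psi_1 - y\psi_1' = c_1(1-\alpha)y^{\alpha}+\dots$), together with $\phi \sim y^{\alpha}$ and $\mathcal{J} \sim y^{2n-2}$. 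The double integral then gains two powers of $y$ relative to $f$, since $2\alpha + 2n - 2 > -1$. This mirrors the argument in section 4.3 of \cite{stolarski2023existence}, and I would essentially import that estimate.

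Next I apply the linear estimate to the forcing. The leading forcing $\frac12(y\psi_1' - \psi_1)$ has size $O(y^{\alpha})$ for large $y$ and is bounded near $0$; it therefore contributes $O(y^2(1+y)^{\alpha})$, that is, a bounded amount $C_1(n)$ in $\|\cdot\|_{2}$, independent of $M$. The remaining forcing terms are $\frac{\theta^2}{2}(yE_{\theta}' - E_{\theta})$ and $\theta^2 N_{\theta}$. Under the bootstrap bound $|E^{(i)}| \le M y^{2-i}(1+y)^{\alpha}$, each carries an extra factor $\theta^2 M y^2 \le \frac{1}{100M}$ on $y\le \frac{1}{10M\theta}$. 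For the quadratic terms I also use $\Tilde{u}_{\theta} \ge \frac12\psi_1 \gtrsim 1+y$ to control $\frac{1}{\Tilde{u}_\theta}$ expansions, so they contribute at most $C(n)M\theta^2 y^2(\dots) \le C(n)/M$ in the weighted norm. Hence $\|E_{\theta}\|_{2,y_*} \le C_1(n) + C_2(n)/M < M$ once $M \ge M_0(n)$. Moreover, $|\theta^2 E_{\theta}| \le \theta^2 M y^2(1+y)^{\alpha} \le \frac{1}{100M}(1+y)^{\alpha} \ll \frac12\psi_1$ for $\theta \le \theta_0$, which gives the strict improvement $\Tilde{u}_{\theta} > \frac12 \psi_1$. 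Continuity closes the argument. The second claim with $M_0$ in place of $M$ follows directly.

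I expect the main obstacle to be the quadratic remainder $N_{\theta}$ involving $E_{\theta}''$. The term $\frac{u''}{1+(u')^2}$ is quasilinear, so $E''$ appears multiplied by $\theta^2(\psi_1' E' + \theta^2 E'^2)$. This must be moved to the left side, and the operator treated as a small perturbation of $\mathcal{L}$ in the $\|\cdot\|_2$ norm. The point is to check that the coefficient $\theta^2|E'|\lesssim \theta^2 M y(1+y)^{\alpha}$ is small, which holds on the given range. I would first try absorbing it by rewriting the equation as $u'' = (1+(u')^2)(\cdots)$, which makes it semilinear in $E''$, before applying the integral formula.
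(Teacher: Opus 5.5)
Your proposal follows the paper's proof essentially step for step: the same bootstrap on $\|E_\theta\|_{2,y_*}$ and $\tilde u_\theta\ge\frac12\psi_1$ up to $y=\frac{1}{10M\theta}$, and the same linear estimate $\|u\|_{2,y_*}\le c(n)\|f\|_{0,y_*}$ from Stolarski's explicit inverse. As in the paper, the forcing $\frac12(y\psi_1'-\psi_1)$ contributes an $M$-independent constant (the paper splits it off as the particular solution $g$), and the $E''$-term is estimated directly as forcing through the bootstrap bound on $\|E_\theta\|_{2,y_*}$, so the quasilinear rearrangement you anticipate is not needed. One bookkeeping correction: the quadratic terms in $N_\theta$ carry $M^2$ and so contribute $O(1)$ rather than $O(1/M)$, while cubic terms such as $\theta^4E_\theta''(E_\theta')^2$ contribute up to $O(\theta^2M)$; the closing bound therefore has the form $\|E_\theta\|_{2,y_*}\le c_3(n)+c_2(n)\theta M$, which still yields $<M$ but requires $\theta\le\theta_0(n)$ already at this step, exactly as in the paper.
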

\begin{proof}[Proof of lemma \ref{key estimate for inner shrinkers}]
    We first compute the equation for $E_{\theta}$. By substituting $ \Tilde{u}_{\theta} = \psi_1 + \theta^2E_{\theta}$ in the initial value problem \eqref{rescaled initial value problem}, we obtain
    \begin{equation}\label{equation of Error term inner self shrinker}
        \begin{cases}
           \frac{E_{\theta}''}{1 + (\Tilde{u}_{\theta}')^2} + \frac{n-1}{y}E_{\theta}' + \frac{(n-1)E_{\theta}}{\psi_1\Tilde{u}_{\theta}} - \frac{2\psi_1'\psi_1''E_{\theta}' + \theta^2(E_{\theta}')^2\psi_1''}{(1 + (\psi_1')^2)(1 + (\Tilde{u}_{\theta}')^2)}  = \frac{1}{2}(y\psi_1' - \psi_1) + \frac{\theta^2}{2}(yE_{\theta}' - E_{\theta}) \\
           E_{\theta}(0) = E_{\theta}'(0) = 0.
        \end{cases}
    \end{equation}
    First note that $E_{\theta} = \frac{1}{\theta^2}(\Tilde{u}_{\theta} - \psi_1) \in C^2([0, \frac{\sqrt{2(n-1)}
    }{\theta}])$, and $E_{\theta}(0) = E'_{\theta}(0) = 0, \ E_{\theta}''(0)= -\frac{1}{2n}$ by the differential equation.  This implies that for each $0 < \theta < \sqrt{2(n-1)}$, one can find some $y_* > 0$ so that 
    \begin{equation}
        \|E_{\theta}\|_{2, y_*} \leq 3, \ \Tilde{u}_{\theta} = \psi_1 + \theta^2E_{\theta} \geq \frac{1}{2}\psi_1 \textup{ in }[0, y_*].
    \end{equation}
    Now, for each $M > 3$, $0 < \theta < \sqrt{2(n-1)}$, we define
    \begin{equation}\label{def of maximal good time}
        \overline{y} = \overline{y}(\theta, M) = \sup\{ y_*  > 0\ | \ \|E_{\theta}\|_{2, y_*} \leq M, \ \Tilde{u}_{\theta} = \psi_1 + \theta^2E_{\theta} \geq \frac{1}{2}\psi_1 \textup{ in }[0, y_*]\}.
    \end{equation}
    By previous discussion, $\overline{y} > 0$ is well defined. \\

    We claim that we can find $M_0 = M_0(n) > 3$, and $\theta_0 = \theta_0(n) > 0$ so that for all $M \geq M_0$, $0 < \theta \leq \theta_0$, \begin{equation}
         \overline{y} = \overline{y}(\theta, M) > \frac{1}{10M\theta}.
    \end{equation}
    Suppose this is not true, and that $\overline{y}(\theta, M) \leq \frac{1}{10M\theta}$. 
    To obtain a nice estimate for $E_{\theta}$, we recall the linearized minimal surface equation at $\psi_1$ given by \eqref{linearized minimal surface equation at leaf}. By using the inverse formula \eqref{Solvability of linearized operator definition eq}, one can directly check that
    \begin{equation}\label{estimate for linearized minimal surface equation}
    \|u\|_{2, y_*} \leq c(n)\|f\|_{0, y_*}
\end{equation}
when $u$ is the solution to the initial value problem
\begin{equation}
    \begin{cases}
        \mathcal{L}u = f \text{ in }(0, y_*] \\ u(0) = u'(0) = 0.
    \end{cases}
\end{equation}
   Thus, we can find $g \in C^2([0, \infty))$ so that 
\begin{equation}
    \begin{cases}
        \mathcal{L}g = \frac{1}{2}(y\psi_1' - \psi_1) \\ g(0) = g'(0) = 0,
    \end{cases}
\end{equation}
and 
\begin{equation}
    \|g\|_{2, \infty} \leq c(n)\|y\psi_1' - \psi_1\|_{0, \infty} = C_0(n) < \infty.
\end{equation}
    Since when we take $\theta = 0$ in \eqref{equation of Error term inner self shrinker}, it reduces to the initial value problem for $g$, we view $E_{\theta}$ as a perturbation of $g$, and thus consider
    \begin{equation}
        v_{\theta} = E_{\theta} - g.
    \end{equation}
    Then, by using the definition of $g$, and \eqref{equation of Error term inner self shrinker}, we see that $v_{\theta}$ solves
    \begin{equation}
        \begin{cases}
            \mathcal{L}v_{\theta} = F_{\theta} = F_{1, \theta} + F_{2, \theta} + F_{3, \theta} + F_{4, \theta} +F_{5, \theta} \\ v_{\theta}(0) = v_{\theta}'(0) = 0.
        \end{cases}
    \end{equation}
    with
    \begin{equation}
        F_{1, \theta} = \frac{E_{\theta}''(2\theta^2E'_{\theta}\psi_1' + \theta^4(E_{\theta}')^2)}{(1 + (\psi_1')^2)(1 + (\Tilde{u}_{\theta}')^2)},
    \end{equation}
    \begin{equation}
        F_{2, \theta} =  \frac{(n-1)\theta^2E^2_{\theta}}{\psi_1^2\Tilde{u}_{\theta}},
    \end{equation}
\begin{equation}
    F_{3, \theta} = \frac{\theta^2}{2}(yE'_{\theta} - E_{\theta}),
\end{equation}
\begin{equation}
    F_{4, \theta} =  \frac{\theta^2(E_{\theta}')^2\psi_1''}{(1 + (\psi_1')^2)(1 + (\Tilde{u}_{\theta}')^2)},
\end{equation}
and
\begin{equation}
    F_{5, \theta} =  -\frac{2\psi_1'\psi_1''E_{\theta}'(2\theta^2E_{\theta}'\psi_1' + \theta^4(E_{\theta}')^2)}{(1 + (\psi_1')^2)^2(1 + (\Tilde{u}_{\theta}')^2)}.
\end{equation}
We now estimate 
\begin{equation}
    \|F_{i, \theta}\|_{0, \overline{y}(\theta, M)} \textup{ for }i = 1,2,3,4,5
\end{equation}
under the assumption that $\overline{y} \leq \frac{1}{10M\theta}$, $M > 3$. Recall that $\overline{y}$ is given by \eqref{def of maximal good time}. For each $y \in (0, \overline{y}]$, 
\begin{align*}
    |\frac{F_{1, \theta}}{(1 + y)^{\alpha}}|& \leq |\frac{2\theta^2E_{\theta}'E_{\theta}''\psi_1'}{(1 + y)^{\alpha}}| + |\frac{\theta^4(E_{\theta}')^2E_{\theta}''}{(1 + y)^{\alpha}}| \\& \leq \|E\|_{2, \overline{y}}^2|\frac{2\theta^2y(1 + y)^{2\alpha}}{(1 + y)^{\alpha}}| + \|E\|_{2, \overline{y}}^3|\frac{\theta^4y^2(1 + y)^{3\alpha}}{(1 + y)^{\alpha}}| \\ & \leq 2M^2\theta^2\overline{y} + M^3\theta^4\overline{y}^2 \leq 2\theta^2M^2\frac{1}{10M\theta} + M^3\theta^4\frac{1}{100M^2\theta^2} \\ & \leq \frac{1}{5}\theta M + \frac{\theta^2}{100}M.
\end{align*}
\begin{align*}
    |\frac{F_{2, \theta}}{(1 + y)^{\alpha}}| &\leq |\frac{(n-1)\theta^2E^2_{\theta}}{\psi_1^2\Tilde{u}_{\theta}(1 + y)^{\alpha}}| \\ & \leq \|E_{\theta}\|_{2, \overline{y}}^2|\frac{2(n-1)\theta^2y^4(1 + y)^{2\alpha}}{y^3(1 + y)^{\alpha}}| \\ & \leq M^22(n-1)\theta^2\overline{y} \leq M^22(n-1)\theta^2\frac{1}{10M\theta} \\ & \leq \frac{(n-1)}{5}\theta M.
\end{align*}
\begin{align*}
    |\frac{F_{3, \theta}}{(1 + y)^{\alpha}}| & \leq |\frac{\theta^2yE_{\theta}'}{2(1 + y)^{\alpha}}| + |\frac{\theta^2E_{\theta}}{2(1 + y)^{\alpha}}| \\ & \leq \|E_{\theta}\|_{2, \overline{y}}\frac{\theta^2y^2(1 + y)^{\alpha}}{(1 + y)^{\alpha}} \leq M\theta^2\overline{y}^2 \leq \frac{1}{100M} \leq 1.
\end{align*}
\begin{align*}
    |\frac{F_{4, \theta}}{(1 + y)^{\alpha}}| &\leq |\frac{\theta^2(E_{\theta}')^2\psi_1''}{(1 + y)^{\alpha}}| \leq \|E_{\theta}\|_{2, \overline{y}}^2|\frac{\theta^2y^2(1 + y)^{2\alpha}c_1(n)(1 + y)^{\alpha - 2}}{(1 + y)^{\alpha}}| \\ & \leq M^2\theta^2c_1(n)\overline{y}^2 \leq \frac{c_1(n)}{100}.
\end{align*}
\begin{align*}
    |\frac{F_{5, \theta}}{(1 + y)^{\alpha}}| & \leq |\frac{4(\psi_1')^2\psi_1''(E_{\theta}')^2\theta^2}{(1 + y)^{\alpha}}| + |\frac{2\psi_1'\psi_1'' \theta^4(E_{\theta}')^3}{(1 + y)^{\alpha}}| \\ & \leq 4\|E_{\theta}\|_{2, \overline{y}}^2|\frac{c_1(n)(1 + y)^{\alpha - 2}y^2(1 + y)^{2\alpha}\theta^2}{(1 + y)^{\alpha}}| + 2\|E_{\theta}\|_{2, \overline{y}}^3|\frac{\theta^4c_1(n)(1 + y)^{\alpha - 2}y^3(1 + y)^{3\alpha}}{(1 + y)^{\alpha}}| \\ & \leq 4c_1(n)M^2\theta^2\overline{y}^2 + 2c_1(n)M^3\theta^4\overline{y}^3 \leq \frac{c_1(n)}{25} + \frac{c_1(n)\theta}{100}.
\end{align*}
Therefore, by combining all above estimates of the `inhomogeneous term', and using the estimate \eqref{estimate for linearized minimal surface equation}, we obtain
\begin{equation}
    \|v_{\theta}\|_{2, \overline{y}} \leq c(n)\|F_{\theta}\|_{0 ,\overline{y}} \leq c(n)(\frac{n\theta}{5}M + \frac{\theta^2}{100}M + c_2(n)),
\end{equation}
Combining with the fact that $E_{\theta} = v_{\theta} + g$, and $\|g\|_{2, \infty} \leq C_0(n) < \infty$, $0 < \theta < \sqrt{2(n-1)}$, we obtain
\begin{equation}\label{final c2 estimate of error term}
   \|E_{\theta}\|_{2, \overline{y}} \leq c_2(n)\theta M + c_3(n).
\end{equation}
We now choose $M_0(n) > 3$, and $\theta_0(n) \in (0, \sqrt{2(n-1)})$. 
We first choose $\theta_0 = \theta_0(n) < \sqrt{2(n-1)}$ so that $c_2(n)\theta_0 \leq \frac{1}{3}$, and then choose $M_0(n) > 3$ so that $c_3(n) \leq \frac{M_0(n)}{3}$. Then if $0 < \theta \leq \theta_0$, $M \geq M_0$, then estimate \eqref{final c2 estimate of error term} implies that 
\begin{equation}
    \|E_{\theta}\|_{2, \overline{y}} \leq \frac{2}{3}M < M.
\end{equation}
We now show that for all $y \in [0, \overline{y}]$, 
\begin{equation}
  \Tilde{u}_{\theta} = \psi_1 + \theta^2E_{\theta} > \frac{2}{3}\psi_1.  
\end{equation}
By recalling definition \eqref{def of maximal good time}, $\overline{y} \leq \frac{1}{10M\theta}$, we have
\begin{equation}
    |\theta^2E_{\theta}(y)| \leq M\theta^2y^2(1 + y)^{\alpha} \leq (M\theta^2 \overline{y})y \leq \frac{\theta}{10}y.
\end{equation}
Choose $\theta_0(n) < 1$. Then since $\psi_1 > y$, we obtain for all $y \in [0, \overline{y}]$,
\begin{equation}\label{final lower bound of tildeu}
    \Tilde{u}_{\theta}(y) = \psi_1 + \theta^2E_{\theta} \geq \psi_1 - \frac{1}{10}y  \geq \frac{9}{10}\psi_1 > \frac{1}{2}\psi_1.
\end{equation}
Then estimates \eqref{final c2 estimate of error term}, and \eqref{final lower bound of tildeu} give us the desired contradiction to the maximality of $\overline{y}$ whenever $\overline{y} \leq \frac{1}{10M\theta}$. This completes the proof of lemma \ref{key estimate for inner shrinkers}.
\end{proof}
One consequence of lemma \ref{key estimate for inner shrinkers} is the following uniform graphicality of self shrinkers constructed in proposition \ref{inner family of self shrinkers} over the Simons cone. 
\begin{corollary}\label{graphicality of self shrinkers over simons cone}
    Let $n \geq 4$. Then there exists $\theta_0 = \theta_0(n) \in (0, \sqrt{2(n-1)})$, $r_0 = r_0(n) > 0$ so that for each $0 < \theta \leq \theta_0$, if we let $u_{\theta}$ to be the profile function given by proposition \ref{inner family of self shrinkers}, then there exists $\hat{u}_{\theta} : [\frac{\theta}{\sqrt{2}}, r_0] \to \mathbf{R}_+$ so that 
    \begin{equation}\label{normal graph over simons cone of inner self shrinkers}
        \{(\frac{r  - \hat{u}_{\theta}}{\sqrt{2}}, \frac{r + \hat{u}_{\theta}}{\sqrt{2}})\ | \ r \in [\frac{\theta}{\sqrt{2}}, r_0]\} \subset \{(x, u_{\theta}(x)) \ | \ x \in [0, \sqrt{2(n-1)}\}.
    \end{equation}
\end{corollary}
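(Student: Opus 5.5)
The plan is to work in the rescaled variables of the proof of Proposition \ref{inner family of self shrinkers}. There the profile curve is
\[
\{(x,u_\theta(x))\} = \theta\cdot\{(y,\Tilde{u}_\theta(y)) : 0\le y\le \tfrac{\sqrt{2(n-1)}}{\theta}\}.
\]
I would show that the rescaled curve $\{(y,\Tilde{u}_\theta(y))\}$ is a normal graph over the cone direction $(1,1)/\sqrt{2}$ on a range of $r$ of size comparable to $1/\theta$, and then scale back by $\theta$. Fix $M=M_0(n)$, possibly enlarged below, and $\theta\le\theta_0(n)$ from Lemma \ref{key estimate for inner shrinkers}. On $[0,y_\theta]$ with $y_\theta=\frac{1}{10M_0\theta}$ we have $\Tilde{u}_\theta=\psi_1+\theta^2E_\theta$ and $\|E_\theta\|_{2,y_\theta}\le M_0$. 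This gives
\[
|\theta^2E_\theta|\le M_0\theta^2y^2(1+y)^\alpha, \qquad |\theta^2E_\theta'|\le M_0\theta^2 y(1+y)^\alpha\le \tfrac{\theta}{10}.
\]

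\textbf{Step 1 (monotonicity in the cone coordinate).} Rotate coordinates:
\[
r(y)=\frac{y+\Tilde{u}_\theta(y)}{\sqrt2}, \qquad s(y)=\frac{\Tilde{u}_\theta(y)-y}{\sqrt2}.
\]
Then $r'(y)=\frac{1+\Tilde u_\theta'}{\sqrt2}$. For the Hardt--Simon profile I use the standard facts (section 2 of \cite{guo2018analysis}) that $\psi_1$ is convex and increasing, so $\psi_1'\ge 0$. Together with $|\theta^2E'_\theta|\le 1/10$ this gives $r'(y)\ge \frac{9}{10\sqrt2}>0$. Hence $y\mapsto r(y)$ is a diffeomorphism from $[0,y_\theta]$ onto $[\frac{1}{\sqrt2},r(y_\theta)]$, using $\Tilde u_\theta(0)=1$. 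Moreover $r(y_\theta)\ge \sqrt2\,y_\theta\cdot\frac{9}{10}$. Define $\hat h_\theta(r)=s(y(r))$. Then the curve is exactly $\{(\frac{r-\hat h_\theta}{\sqrt2},\frac{r+\hat h_\theta}{\sqrt2})\}$ on this range.

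\textbf{Step 2 (positivity, i.e.\ values in $\mathbf R_+$).} I need $\Tilde u_\theta(y)>y$ on $[0,y_\theta]$. From \eqref{asymptotic behavior of hardsimonleafformula} and $\psi_1>y$ there is $c(n)>0$ with $\psi_1(y)-y\ge c(n)(1+y)^\alpha$ for all $y\ge0$. The perturbation satisfies
\[
\theta^2|E_\theta|\le M_0\theta^2y_\theta^2(1+y)^\alpha=\frac{(1+y)^\alpha}{100M_0}.
\]
Enlarging $M_0(n)$ so that $\frac{1}{100M_0}<\frac{c(n)}{2}$ gives $\Tilde u_\theta-y\ge \frac{c(n)}{2}(1+y)^\alpha>0$. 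This is the one place where the constants must be rechecked. Lemma \ref{key estimate for inner shrinkers} holds for every $M\ge M_0$, so enlarging $M_0$ is harmless, though $\theta_0$ should stay the one fixed in that lemma. I expect this to be the main (mild) obstacle: the comparison between the $y^\alpha$ gap of the leaf and the $\theta^2y^2(1+y)^\alpha$ error is exactly balanced on the scale $y\sim 1/\theta$, and this forces the cutoff $r_0$ to be a small fixed multiple depending on $n$.

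\textbf{Step 3 (scaling back).} Set $\hat u_\theta(r)=\theta\,\hat h_\theta(r/\theta)$ for $r\in[\frac{\theta}{\sqrt2},\theta r(y_\theta)]$. By scaling, its graph in the form \eqref{normal graph over simons cone of inner self shrinkers} is contained in $\{(x,u_\theta(x))\}$. Also
\[
\theta r(y_\theta)\ge \frac{9\sqrt2}{10}\cdot\frac{1}{10M_0(n)},
\]
so I can take $r_0(n)=\frac{1}{10M_0(n)}$, independent of $\theta$. Shrink $\theta_0$ if needed so that $\frac{\theta_0}{\sqrt2}<r_0$, which makes the interval nonempty. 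Finally, $\theta y_\theta\le\sqrt{2(n-1)}$ holds trivially, so every point used lies in the domain of $u_\theta$.
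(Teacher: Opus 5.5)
Your proposal is correct and follows the paper's proof in all essentials: you rescale, use Lemma \ref{key estimate for inner shrinkers} with an enlarged $M_0(n)$ to get $\Tilde{u}_\theta>y$ on $[0,\frac{1}{10M_0\theta}]$ from the gap $\psi_1-y\gtrsim(1+y)^{\alpha}$, and scale back with $r_0=\frac{1}{10M_0}$. The only difference is the monotonicity step. The paper proves convexity $\Tilde{u}_\theta''>0$ from $\psi_1''\ge c_0(1+y)^{\alpha-2}$ and $|\theta^2E_\theta''|\le\theta^2M_0(1+y)^{\alpha}$, which costs a further enlargement of $M_0$ and shrinking of $\theta_0$, and then uses $u_\theta'(0)=0$ to get $u_\theta'\ge0$. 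You instead bound $1+\Tilde{u}_\theta'\ge\frac{9}{10}$ directly from $\psi_1'\ge0$ and $|\theta^2E_\theta'|\le\frac{\theta}{10}$ (using $\theta_0<1$). Your route is slightly more economical, since the normal-graph representation only needs $\Tilde{u}_\theta'>-1$.
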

\begin{proof}[Proof of corollary \ref{graphicality of self shrinkers over simons cone}]
    We first let $\theta_0(n), \ M_0(n) > 0$ be the constants from lemma \ref{key estimate for inner shrinkers}. Then, we can compute for each $y \in [0, \frac{1}{10M_0\theta}]$, $0 < \theta \leq \theta_0$,
  \begin{align*}
      \Tilde{u}_{\theta}(y) &= \psi_1(y) + \theta^2E_{\theta}(y) \\ & \geq \psi_1(y) - \theta^2M_0(\frac{1}{M_0\theta})^2(1 + y)^{\alpha}\\ & = \psi_1(y) - \frac{1}{M_0}(1 + y)^{\alpha},
  \end{align*}
  and
    \begin{align*}
        \Tilde{u}_{\theta}''(y) &= \psi_1''(y) + \theta^2E_{\theta}''(y) \\ & \geq c_0(n)(1 + y)^{\alpha - 2} - \theta^2M_0(1 + y)^{\alpha} \\ & \geq (1 + y)^{\alpha}(c_0(n)(1 + y)^{-2} - \theta^2M_0), 
    \end{align*}
    where $c_0(n) > 0$. Therefore, by possibly enlarging $M_0(n) >3$ in view of the asymptotic formula of $\psi_1$ given by \eqref{asymptotic behavior of hardsimonleafformula}, and then possibly shrinking $\theta_0(n) > 0$, we can ensure that
    \begin{equation}\label{onesidedness of inner self shrinker}
        \Tilde{u}_{\theta}(y) > y, \ \Tilde{u}_{\theta}''(y) > 0
    \end{equation}
    whenever $0 < \theta \leq \theta_0(n)$, $y \in [0, \frac{1}{10M_0\theta}]$. Setting $x_0(n) = \frac{1}{10M_0} > 0$ and rescaling back, this implies that for every $0 < \theta \leq \theta_0$, $ 0 \leq x \leq x_0(n)$, 
    \begin{equation}
        u_{\theta}''(x) > 0, \ u_{\theta}(x) >x .
    \end{equation}
    Combined with the fact that $u_{\theta}'(0) = 0$, we see that the graph of $u_{\theta}|_{[0, x_0(n)]}$ can be expressed as \eqref{normal graph over simons cone of inner self shrinkers} for some $\hat{u}_{\theta} > 0$, and $r_0 = r_0(n, \theta) > \frac{\theta}{\sqrt{2}}$. Moreover, because $u_{\theta}(x) > x$ for $x \in [0, x_0]$, we must have $r_0(n, \theta) \geq x_0(n) > 0$, thus we may set $r_0(n) = x_0(n) > 0$ to finish the proof. 
\end{proof}

We now use lemma \ref{key estimate for inner shrinkers} and the asymptotic formula of $\psi_1$ given by \eqref{asymptotic behavior of hardsimonleafformula} to obtain an approximation formula of $\hat{u}_{\theta}$ obtained in corollary \ref{graphicality of self shrinkers over simons cone}.
\begin{proposition}\label{approximate formula for inner self shrinkers}
    Let $n \geq 4$. For $0 < \theta \leq \theta_0(n)$, where $\theta_0$ is chosen so that both lemma \ref{key estimate for inner shrinkers}, and corollary \ref{graphicality of self shrinkers over simons cone} hold. Let $u_{\theta}$ be the profile function given by proposition \ref{inner family of self shrinkers}. Then there exists $c_0 = c_0(n)> 0$ so that for every small $\epsilon > 0$, there exists $\delta = \delta(n, \epsilon) > 0$ so that whenever
    \begin{equation}
         r \leq \delta, \ \frac{\theta}{r} \leq \delta,
    \end{equation}
    then
    \begin{equation}
        |\hat{u}_{\theta}(r) - c_0\theta^{1 - \alpha}r^{\alpha}| \leq \epsilon\theta^{1 - \alpha}r^{\alpha},
    \end{equation}
    and
    \begin{equation}
        |\hat{u}_{\theta}'(r) - \alpha c_0\theta^{1 - \alpha}r^{\alpha - 1}| \leq \epsilon \theta^{1 - \alpha}r^{\alpha - 1}.
    \end{equation}
\end{proposition}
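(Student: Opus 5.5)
The approach is to reduce the statement to the known asymptotics of the Hardt--Simon leaf, using the decomposition $\Tilde{u}_{\theta} = \psi_1 + \theta^2 E_{\theta}$ and lemma \ref{key estimate for inner shrinkers}. In rescaled variables $y = x/\theta$, the graph of $u_\theta$ is $\theta$ times the graph of $\Tilde{u}_\theta$. Likewise the normal graph function over $\Sigma$ rescales as $\hat{u}_\theta(r) = \theta\, \hat{\Tilde{u}}_\theta(r/\theta)$, where $\hat{\Tilde{u}}_\theta$ is the normal graph function of the curve $\{(y,\Tilde{u}_\theta(y))\}$ over the diagonal. The target $c_0\theta^{1-\alpha} r^{\alpha}$ equals $\theta\cdot c_0 (r/\theta)^{\alpha}$. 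It therefore suffices to show that, with $s = r/\theta$,
\[ |\hat{\Tilde{u}}_\theta(s) - c_0 s^{\alpha}| \leq \epsilon s^{\alpha}, \qquad |\hat{\Tilde{u}}_\theta'(s) - \alpha c_0 s^{\alpha-1}| \leq \epsilon s^{\alpha-1} \]
for $s \geq 1/\delta$ and $s \leq \delta/\theta$. Here $c_0 = c_0(n)$ is the constant in the asymptotic formula for $\hat{\psi}_1$ from \eqref{graphical normal parametrization of hardtsimonleaf}, which gives exactly these bounds for $\hat{\psi}_1$ once $s$ is large.

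The key step is to compare $\hat{\Tilde{u}}_\theta$ with $\hat{\psi}_1$. By lemma \ref{key estimate for inner shrinkers}, on $[0, \frac{1}{10M_0\theta}]$ we have $|E_\theta(y)| \leq M_0 y^2(1+y)^{\alpha}$, $|E_\theta'| \leq M_0 y(1+y)^{\alpha}$ and $|E_\theta''| \leq M_0(1+y)^{\alpha}$. Hence
\[ |\Tilde{u}_\theta - \psi_1|(y) \leq M_0\theta^2 y^2 (1+y)^{\alpha} \leq M_0 (\theta y)^2 (1+y)^{\alpha}, \]
which is at most $\epsilon' (1+y)^{\alpha}$ as soon as $\theta y \leq \delta$ with $\delta$ small. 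The same smallness holds for the derivative, in the form $|\Tilde{u}_\theta' - \psi_1'| \leq M_0 (\theta y)\,\theta(1+y)^{\alpha} \leq \epsilon'(1+y)^{\alpha-1}$, using $\theta \leq \delta/y$. Converting vertical graphs over the $x$-axis to normal graphs over the diagonal is a smooth change of coordinates in which the profile slope stays near $1$ (corollary \ref{graphicality of self shrinkers over simons cone} and $\psi_1' \to 1$). Under this change, a vertical perturbation of size $\epsilon'(1+y)^{\alpha}$ with derivative of size $\epsilon'(1+y)^{\alpha-1}$ becomes a normal perturbation of comparable size, up to a factor close to $1/\sqrt{2}$, evaluated at a point $s$ comparable to $y$. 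Therefore $|\hat{\Tilde{u}}_\theta(s) - \hat{\psi}_1(s)| \leq C\epsilon' s^{\alpha}$, and similarly for the first derivative.

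Finally, combine the pieces by the triangle inequality: first choose $1/\delta$ large so that the $\hat{\psi}_1$ asymptotics hold within $\epsilon/2$, then choose $\delta$ small (i.e.\ $\theta s$ small) so that the perturbation error is at most $\epsilon/2$. The constraint $s \leq \delta/\theta \leq \frac{1}{10M_0\theta}$ keeps us inside the range where lemma \ref{key estimate for inner shrinkers} applies.

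I expect the main obstacle to be the graph conversion step, specifically controlling derivatives. The normal parametrization evaluates the functions at a shifted base point, which depends on $\hat{\Tilde{u}}$ itself. The plan is to handle this with the implicit function theorem applied uniformly on scaled annuli $s \in [S, 2S]$: rescale by $S$, note that both curves are $C^2$-close to the diagonal with closeness measured relative to $S^{\alpha-1}$, and apply a uniform Lipschitz estimate for the map taking a vertical graph to its normal graph near the diagonal.
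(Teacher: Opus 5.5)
Your proposal is correct and is essentially the paper's argument. Both use $\tilde u_\theta=\psi_1+\theta^2E_\theta$, the bounds $|\theta^2E_\theta^{(i)}(y)|\le M_0\theta^2y^{2-i}(1+y)^{\alpha}$ from Lemma~\ref{key estimate for inner shrinkers} (small once $\theta y$ is small), and the $C^1$ asymptotics of $\psi_1$. The only difference is the order of steps: the paper first proves the vertical-graph bounds $|u_\theta(x)-x-c_1\theta^{1-\alpha}x^{\alpha}|\le\epsilon\theta^{1-\alpha}x^{\alpha}$ (and the derivative analogue) and converts to the normal gauge at the end, whereas you convert first and compare with $\hat\psi_1$. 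One small point: the preliminaries state the asymptotics of $\hat\psi_1$ only in $C^0$, so you must derive $|\hat\psi_1'(s)-\alpha c_0s^{\alpha-1}|\le\frac{\epsilon}{2}s^{\alpha-1}$ from the $C^1$ asymptotics of $\psi_1$, using the same gauge conversion you already plan to carry out.
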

\begin{proof}[Proof of proposition \ref{approximate formula for inner self shrinkers}]
    We first recall that $\psi_1$ satisfies the asymptotic formula
    \begin{equation}\label{asymptotic formula of psi1}
        \psi_1(y) = y + c_1(n)y^{\alpha} + o(y^{\alpha}) \text{ as }y \to \infty \text{ in }C^1.
    \end{equation}
    In particular, for any $\epsilon > 0$, we can find $y_0 = y_0(n, \epsilon) > 0$ so that for all $y \geq y_0$, then
\begin{equation}
    |\psi_1(y) - y - c_1y^{\alpha}| \leq \frac{\epsilon}{2} y^{\alpha},
\end{equation}
and
\begin{equation}
    |\psi_1'(y) - 1 - c_1\alpha y^{\alpha - 1}| \leq \frac{\epsilon}{2} y^{\alpha - 1}.
\end{equation}
    We now use lemma \ref{key estimate for inner shrinkers}. Then, for any $\theta \in (0, \theta_0(n))$, $y = \frac{x}{\theta}\in [0, \frac{1}{10M_0\theta}]$,
    \begin{equation}
        |u_{\theta}(x) - \theta \psi_1(\theta^{-1}x)| = |\theta^3E_{\theta}(\theta^{-1}x)| \leq \theta^3M_0(\theta^{-1}x)^2(1 + \theta^{-1}x)^{\alpha},
    \end{equation}
    and
    \begin{equation}
        |u_{\theta}'(x) - \psi_1'(\theta^{-1}x)| = |\theta^2E_{\theta}'(\theta^{-1}x)| \leq \theta M_0x(1 + \theta^{-1}x)^{\alpha}.
    \end{equation}
    Therefore, by combining previous two estimates, we see that for 
    \begin{align*}
        |u_{\theta}(x) - x - c_1\theta(\theta^{-1}x)^{\alpha}| &\leq \frac{\epsilon}{2} \theta^{1-\alpha}x^{\alpha} + |u_{\theta}(x) - \theta \psi_1(\theta^{-1}x)| \\ & \leq \frac{\epsilon}{2} \theta^{1 - \alpha}x^{\alpha} +  M_0x^2\theta^{1 - \alpha}x^{\alpha} \\ & = (\frac{\epsilon}{2} + M_0x^2)\theta^{1 - \alpha}x^{\alpha},
    \end{align*}
    provided $\theta^{-1}x \geq y_0(n, \epsilon)$, and $x \leq \frac{1}{10M_0}$. By possibly making $x$ smaller depending on $n$ and $\epsilon > 0$, we can ensure that $M_0x^2 < \frac{\epsilon}{2}$. Thus we see that we can find $\delta_0 = \delta_0(n, \epsilon) > 0$ so that whenever $x \leq \delta_0$, and $\frac{\theta}{x} \leq \delta_0$, then
    \begin{equation}\label{appx in graphical gauge}
        |u_{\theta}(x) -x -  c_1\theta^{1 - \alpha}x^{\alpha}| \leq \epsilon\theta^{1 - \alpha}x^{\alpha}.
    \end{equation}
    Likewise, the same argument implies 
    \begin{equation}\label{appx in grapical gauge derivative}
        |u_{\theta}'(x) -1 -  c_1\alpha\theta^{1 - \alpha}x^{\alpha-1}| \leq \epsilon\theta^{1 - \alpha}x^{\alpha - 1}
    \end{equation}
    as well. Once we have estimates \eqref{appx in graphical gauge}, and \eqref{appx in grapical gauge derivative}, proposition \ref{approximate formula for inner self shrinkers} follows by using the formula \eqref{normal graph over simons cone of inner self shrinkers} to convert estimates \eqref{appx in graphical gauge}, and \eqref{appx in grapical gauge derivative} to corresponding estimates of $\hat{u}_{\theta}$.
\end{proof}
We now show that the family of self shrinkers we constructed in proposition \ref{inner family of self shrinkers} together with the Simons cone foliate a small ball centered at the origin in $\mathbf{R}^{2n}$ whenever $n \geq 4$. 
\begin{theorem}\label{inner region foliation}
    Let $n \geq 4$. For each $0 < \theta < \sqrt{2(n-1)}$, let $u_{\theta} : [0, \sqrt{2(n-1)}] \to \mathbf{R}_+$ be the profile function constructed in proposition \ref{inner family of self shrinkers}. Define the self shrinkers as follows; let $\Sigma$ to be the $O(n)\times O(n)$ symmetric Simons cone, $\Gamma_{\theta} = X_{\theta}([0, \sqrt{2(n-1)}]\times \mathbf{S}^{n-1}\times \mathbf{S}^{n-1})$, $\Lambda_{\theta} = Y_{\theta}([0, \sqrt{2(n-1)}]\times \mathbf{S}^{n-1}\times \mathbf{S}^{n-1})$, where $X_{\theta}$, $Y_{\theta}$ are given in proposition \ref{inner family of self shrinkers}. Then there exists $r_0 = r_0(n) \in (0, \sqrt{2(n-1)})$ so that 
    \begin{equation}
        \{\Gamma_{\theta}\}_{\theta \in (0, \sqrt{2(n-1)})}\cup \{\Lambda_{\theta}\}_{\theta \in (0, \sqrt{2(n-1)})} \cup \{\Sigma\}
    \end{equation}
    foliate $B(0, r_0)$.
\end{theorem}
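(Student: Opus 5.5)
Because everything is $O(n)\times O(n)$ invariant, I will prove the foliation statement for the profile curves in the quarter plane $\{(x,y): x,y\ge 0\}$. There it says: the curves $\gamma_\theta=\{(x,u_\theta(x))\}$, their reflections $\gamma_\theta^*=\{(u_\theta(x),x)\}$, and the diagonal $\{x=y\}$ (the profile of $\Sigma$) are pairwise disjoint inside $\{x^2+y^2<r_0^2\}$, and every point of that quarter disc lies on exactly one of them. By the reflection symmetry $(x,y)\mapsto(y,x)$ it suffices to treat the region $D^+=\{y>x\ge0\}\cap B(0,r_0)$ and the curves $\gamma_\theta$.

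\emph{Step 1 (location).} First I show that each $\gamma_\theta\cap B(0,r_0)$ lies in $D^+$, and that for large $\theta$ it misses the ball altogether.
\begin{itemize}
\item For $\theta\le\theta_0(n)$, corollary \ref{graphicality of self shrinkers over simons cone} gives $u_\theta(x)>x$ on $[0,x_0(n)]$. Since $u_\theta\ge x$ there, the part of $\gamma_\theta$ inside $B(0,r_0)$ with $r_0\le x_0$ lies in $D^+$.
\item For $\theta\ge\theta_0$, I want $u_\theta(x)\ge c(n)>0$ on $[0,\sqrt{2(n-1)}]$, so that $\gamma_\theta$ avoids $B(0,r_0)$ once $r_0<c(n)$. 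The initial value problem \eqref{initial value problem for inner self shrinkers} has the barrier $u\equiv\sqrt{2(n-1)}$ (the cylinder $\mathbf{R}^n\times\mathbf{S}^{n-1}(\sqrt{2(n-1)})$), and the energy identity from the proof of proposition \ref{inner family of self shrinkers} gives uniform bounds. Smooth dependence on $\theta$ on the compact set $\theta\in[\theta_0,\sqrt{2(n-1)}-\epsilon]$ should then give a uniform positive lower bound. For $\theta$ near $\sqrt{2(n-1)}$, the solution is close to the cylinder, so $u_\theta$ stays near $\sqrt{2(n-1)}$.
\end{itemize}

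\emph{Step 2 (disjointness).} This is the key step: for $0<\theta_1<\theta_2\le\theta_0$, I need $u_{\theta_1}<u_{\theta_2}$ on $[0,r_0]$, which is monotonicity of $u_\theta(x)$ in $\theta$. I would differentiate in $\theta$ using the smooth dependence from the unstable manifold construction. Set $\phi_\theta=\partial_\theta u_\theta$. Then $\phi_\theta$ solves the linearized self-shrinker equation (a Jacobi-type equation) with $\phi_\theta(0)=1$ and $\phi_\theta'(0)=0$. By the scaling $u_\theta(x)=\theta\tilde u_\theta(x/\theta)$ and $\tilde u_\theta=\psi_1+\theta^2E_\theta$,
\[
\phi_\theta(x)=\psi_1(y)-y\psi_1'(y)+O\bigl(\theta^2\|E_\theta\|+\theta^2\|\partial_\theta E_\theta\|\bigr),\qquad y=x/\theta .
\]
The main term $\psi_1-y\psi_1'$ is the Jacobi field of the Hardt–Simon dilations. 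It is strictly positive, and it behaves like $c\,y^{\alpha}$ as $y\to\infty$; this positivity expresses that the Hardt–Simon leaves foliate one side of $\Sigma$. So I need $\partial_\theta E_\theta$ bounded in the weighted norm of definition \ref{weighted norm for inner shrinker construction}. I would get this by repeating the bootstrap of lemma \ref{key estimate for inner shrinkers} on the differentiated equation, using the inverse formula \eqref{Solvability of linearized operator definition eq}. The error is then $\lesssim \theta^2 y^2(1+y)^{\alpha}$, which is small relative to $y^{\alpha}$ exactly when $\theta y=x\le r_0$ is small. Hence $\phi_\theta>0$ on $[0,r_0]$.

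Since the curves are graphs in $D^+$, it is cleaner to measure positivity in the normal direction to the cone. I expect the resulting normal component to be comparable to $\theta^{-\alpha}x^{\alpha}$ by proposition \ref{approximate formula for inner self shrinkers}. I expect controlling $\partial_\theta E_\theta$ uniformly up to $y\sim 1/\theta$ to be the main obstacle, as it is the step where the precise weighted estimates are needed.

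\emph{Step 3 (covering).} Take a point $p=(a,b)\in D^+$. The map $\theta\mapsto u_\theta(a)$ is continuous and strictly increasing on $(0,\theta_0]$. As $\theta\to0$, $u_\theta(a)\to a$, because $u_\theta(a)=a+O(\theta^{1-\alpha}a^{\alpha})$ by \eqref{appx in graphical gauge}. For $\theta\ge\theta_0$ the curves lie outside the ball by Step 1, and at $\theta=\theta_0$ we have $u_{\theta_0}(a)\ge r_0$ after shrinking $r_0$. By the intermediate value theorem there is a unique $\theta$ with $u_\theta(a)=b$. Points on the axis $a=0$ are covered by $\theta=b$. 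Reflecting gives $D^-$, and the diagonal is $\Sigma$. Together these give the foliation, with each leaf smooth away from the origin.
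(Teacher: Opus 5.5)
Your proposal follows the paper's proof. The paper also writes $u_\theta(x)=\theta\tilde u_\theta(x/\theta)$ with $\tilde u_\theta=\psi_1+\theta^2E_\theta$ and proves monotonicity of $u_\theta(x)$ in $\theta$. The main term is the dilation Jacobi field $\psi_1-y\psi_1'\ge c_3(n)(1+y)^{\alpha}$, and the errors are controlled by lemma \ref{key estimate for inner shrinkers} plus a bootstrap through the inverse of $\mathcal{L}$. It then shows $u_\theta(x)\to x$ as $\theta\to0$. Your Step 3 simply spells out the intermediate value argument that the paper leaves implicit.

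There is one correction in Step 2, the step you flagged. The correct expansion is
\[
\partial_\theta u_\theta(x)=\tilde u_\theta-y\tilde u_\theta'+\theta\,\partial_\theta\tilde u_\theta=\psi_1-y\psi_1'+3\theta^2E_\theta-\theta^2yE_\theta'+\theta^3\partial_\theta E_\theta ,
\]
so $\partial_\theta E_\theta$ enters with a factor $\theta^3$, not $\theta^2$. You also should not aim for a uniform bound on $\|\partial_\theta E_\theta\|_{2,\frac{1}{10M\theta}}$. Differentiating the term $\frac{\theta^2}{2}(yE_\theta'-E_\theta)$ in the equation for $E_\theta$ produces the source $\theta(yE_\theta'-E_\theta)$. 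On $y\le\frac{1}{10M\theta}$, its weighted norm can only be bounded by $O(\frac{1}{M\theta})$. Combined with your $\theta^2$ prefactor, such a bound would not make the error small uniformly as $\theta\to0$.

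The paper instead bootstraps $v_\theta=\partial_\theta\tilde u_\theta$ directly. Its source $\theta(y\psi_1'-\psi_1)+\theta^3(yE_\theta'-E_\theta)$ has weighted norm $O(\theta)$, which gives $\|v_\theta\|_{2,\frac{1}{10M\theta}}\le C(n)\theta$. Hence $|\theta v_\theta(y)|\le C\theta^2y^2(1+y)^{\alpha}=Cx^2(1+y)^{\alpha}$, which is negligible against $c_3(1+y)^{\alpha}$ once $x$ is small.

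For $\theta\ge\theta_0$ in Step 1, your compactness and near-cylinder argument is unnecessary.
\begin{itemize}
\item The computation leading to \eqref{uniform bound of profile curve} is valid on all of $[0,\sqrt{2(n-1)}/\theta]$. It gives $1\le\tilde u_\theta^{\,n-1}e^{\theta^2(1-\tilde u_\theta^2)/4}$.
\item The map $s\mapsto(n-1)\ln s+\frac{\theta^2}{4}(1-s^2)$ vanishes at $s=1$ and is strictly increasing on $(0,1]$ when $\theta^2<2(n-1)$. So the inequality forces $\tilde u_\theta\ge1$, that is, $u_\theta\ge\theta$ on $[0,\sqrt{2(n-1)}]$.
\item Consequently $\Gamma_\theta$ and $\Lambda_\theta$ miss $B(0,r_0)$ for $\theta\ge\theta_0\ge r_0$, and $u_{\theta_0}(a)\ge r_0$ in your Step 3. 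This is why the paper can take $r_0=\min(\theta_0,x_0)$.
\end{itemize}
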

\begin{proof}[Proof of theorem \ref{inner region foliation}]
    Recall that for each $0 < \theta < \sqrt{2(n-1)}$, we defined the rescaled profile function
    \begin{equation}
        \Tilde{u}_{\theta}(y) = \theta^{-1}u_{\theta}(\theta y) = \psi_1(y) + \theta^2E_{\theta}(y),
    \end{equation}
    where $\psi_1$ is the solution to the initial value problem \eqref{minimal surface equation}. By lemma \ref{key estimate for inner shrinkers}, we can find $\theta_0 = \theta_0(n) \in (0, \sqrt{2(n-1)})$, and $M_0 = M_0(n) > 3$ so that whenever $0 < \theta \leq \theta_0$, $M \geq M_0(n)$ then
    \begin{equation}
        \|E_{\theta}\|_{2, \frac{1}{10M\theta}} \leq M,
    \end{equation}
    where the norm is defined in \eqref{weighted norm for inner shrinker construction}. \\

    We claim that we can find $x_0 = x_0(n) > 0$ so that after possibly shrinking $\theta_0(n) > 0$, for all $0 \leq x \leq x_0$, $0 < \theta \leq \theta_0(n)$, 
    \begin{equation}\label{monotonicity in theta of inner self shrinkers}
        \frac{\partial u_{\theta}(x)}{\partial \theta} > 0.
    \end{equation}
    Note that we already established smooth dependence of $u_{\theta}$ with respect to $\theta$, thus we can indeed differentiate in $\theta$. Since $u_{\theta}(x) = \theta \Tilde{u}_{\theta}(\theta^{-1}x) = \theta \psi_1(\theta^{-1}x) + \theta^3E_{\theta}(\theta^{-1}x)$, we have
    \begin{equation}
        \frac{\partial u_{\theta}(x)}{\partial \theta} = (\psi_1(y) - y\psi_1'(y)) + \theta^2E_{\theta}(y) - \theta^2yE_{\theta}'(y) + \theta\frac{\partial \Tilde{u}_{\theta}(y)}{\partial \theta} 
    \end{equation}
    with $y = \theta^{-1}x$.
    We now estimate each terms seperately. First, by the asymptotic formula of $\psi_1$ given by \eqref{asymptotic behavior of hardsimonleafformula}, we have
    \begin{equation}
        \lim_{y \to \infty}\frac{\psi_1 - y\psi_1'}{(1 + y)^{\alpha}} = c_2(n) > 0.
    \end{equation}
    On the other hand, we have
    \begin{equation}
        \frac{d}{dy}(\psi_1 - y\psi_1') = -y\psi_1''(y) < 0.
    \end{equation}
    This implies that we can find some constant $c_3 = c_3(n) > 0$ so that
    \begin{equation}
        \frac{\psi_1 - y\psi_1'}{(1 + y)^{\alpha}} \geq c_3 > 0.
    \end{equation}
    By using lemma \ref{key estimate for inner shrinkers}, we estimate for each $M \geq M_0$, $0 < \theta \leq \theta_0$, $y = \theta^{-1}x \leq \frac{1}{10M\theta}$, 
    \begin{equation}
        \frac{\theta^2E_{\theta}(y)}{(1 + y)^{\alpha}} \geq - \theta^2M (\frac{1}{10M\theta})^2 = -\frac{1}{100M},
    \end{equation}
    \begin{equation}
        \frac{-\theta^2yE_{\theta}'(y)}{(1 + y)^{\alpha}} \geq -\theta^2M(\frac{1}{10M\theta})^2 \geq -\frac{1}{100M}, 
    \end{equation}
    To estimate $\frac{\partial \Tilde{u}_{\theta}}{\partial \theta}$, we proceed exactly as in lemma \ref{key estimate for inner shrinkers}. By differentiating \eqref{initial value problem for inner self shrinkers, rescaled version}, we see that $v_{\theta} = \partial_{\theta}\Tilde{u}_{\theta}$ satisfies the initial value problem
    \begin{equation}
        \begin{cases}
            \mathcal{L}v_{\theta} = \theta(y\psi_1' - \psi_1) + \theta^3(yE'_{\theta} - E_{\theta}) + \frac{\theta^2}{2}(yv_{\theta}' - v_{\theta}) + F_{\theta} \\ v_{\theta}(0) = v_{\theta}'(0) = 0,
        \end{cases}
    \end{equation}
    where
    $F_{\theta} = F_{1, \theta} + F_{2, \theta} + F_{3, \theta}$
    with
    \begin{equation}
        F_{1, \theta} = \frac{v_{\theta}''(2\theta^2E_{\theta}'\psi_1' + \theta^4(E_{\theta}')^2)}{(1 + (\Tilde{u}_{\theta}')^2)(1 + (\psi_1')^2)},
    \end{equation}
    \begin{equation}
        F_{2, \theta}= \frac{(n-1)v_{\theta}(2\theta^2E_{\theta}\psi_1 + \theta^4E_{\theta}^2)}{\Tilde{u}_{\theta}^2\psi_1^2}
    \end{equation}
    \begin{equation}
       F_{3, \theta} = v_{\theta}'(-\frac{2\psi_1'\psi_1''}{(1 + (\psi_1')^2)^2} + \frac{2(\psi_1' + \theta^2E_{\theta}')(\psi_1'' + \theta^2E_{\theta}'')}{(1 + (\psi_1' + \theta^2E_{\theta}')^2)^2}),
    \end{equation}
 and $\mathcal{L}$ is given by \eqref{linearized minimal surface equation at leaf}.
    By using lemma \ref{key estimate for inner shrinkers} to compute the `inhomogeneous term', and using \eqref{estimate for linearized minimal surface equation}, we see that for all $0 < \theta \leq \theta_0$, $M \geq M_0 > 0$, we have
    \begin{equation}
       \|v_{\theta}\|_{2, \frac{1}{10M\theta}} \leq c(n) \|\mathcal{L}v_{\theta}\|_{0, \frac{1}{10M\theta}} \leq C(n)(\frac{1}{M} + \theta)\|v_{\theta}\|_{2, \frac{1}{10M\theta}} + c(n)\theta.
    \end{equation} 
    Therefore, by possibly enlarging $M_0(n) > 3$ and shrinking $\theta_0(n) > 0$, we have 
    \begin{equation}
        \|\partial_{\theta}\Tilde{u}_{\theta}\|_{2, \frac{1}{10M\theta}} \leq C(n)\theta
    \end{equation}
    for all $M \geq M_0$, $0 < \theta \leq \theta_0(n)$. 
    This implies that for any $M \geq M_0$, $0 \leq x \leq \frac{1}{10M}$, $0 < \theta \leq \theta_0$, we have
    \begin{equation}
        \frac{\theta\partial_{\theta}\Tilde{u}_{\theta} }{(1 + y)^{\alpha}}\geq -C(n)\theta^2(\frac{1}{10M\theta})^2  = -\frac{C(n)}{100M^2}.
    \end{equation}
    Thus, combining all together, there exists $\theta_0(n) > 0$, $M_0(n) > 0$ so that for any $M \geq M_0$, $0 < \theta \leq \theta_0$, $0 \leq x \leq \frac{1}{10M}$, 
    \begin{equation}
        \frac{\partial_{\theta}u_{\theta}(x)}{(1 + \theta^{-1}x)^{\alpha}} \geq c_3(n) - \frac{C(n)}{M}.
    \end{equation}
    Thus, by possibly enlarging $M_0(n) >0$, we obtain \eqref{monotonicity in theta of inner self shrinkers} for $x \leq \frac{1}{10M_0} = x_0(n)$.\\

    In view of the definitions of the self shrinkers in theorem \ref{inner region foliation} and the monotonicity \eqref{monotonicity in theta of inner self shrinkers}, the proof is complete once we show that
    \begin{equation}
        \lim_{\theta \to 0}u_{\theta}(x) = x
    \end{equation}
    for each $x\in [0, x_0]$.
    This is equivalent to
    \begin{equation}
        \lim_{\theta \to 0}\theta^3E_{\theta}(\theta^{-1}x) = 0
    \end{equation}
    for each $x\in [0, x_0]$.
    By using lemma \ref{key estimate for inner shrinkers}, we see that
    \begin{equation}
        |\theta^3E_{\theta}(\theta^{-1}x)| \leq \theta M_0x_0^2 \to 0 \text{ as }\theta \to 0.
    \end{equation}
    This finishes the proof of theorem \ref{inner region foliation} by setting $r_0(n) = \min (\theta_0, x_0) > 0$.
\end{proof}
\section{Construction of `trumpets', and its analysis}\label{trumpetsection}
In this section, we construct a one parameter family of self shrinkers, which we call `trumpets'. These surfaces are $O(n) \times O(n)$ symmetric self shrinkers with boundary that are properly embedded outside a fixed ball centered at the origin. These surfaces are analogous to the rotationally symmetric self shrinkers constructed in \cite{kleene2014self}. We first construct the `trumpets' (proposition \ref{Existence of trumpets}), and then obtain an approximate formula of the profile function of the `trumpets'.\\

We first construct the `trumpets'.
\begin{proposition}\label{Existence of trumpets}
For each $n \geq 4$, $0 < \sigma < 1$, there exists $u_{\sigma} : [x_{s}(\sigma) , \infty) \to \mathbf{R}_+$ so that\\\\
(i) There exists $x_{s}(\sigma) \in [\sqrt{2(n-1)}, \frac{\sqrt{2(n-1)}}{\sigma}]$ so that $u_{\sigma}(x_{s}(\sigma)) = \sqrt{2(n-1)}$.\\
(ii) $u_{\sigma} \in C^0([x_{s}(\sigma), \infty)) \cap C^{\infty}((x_{s}(\sigma), \infty))$  solves 
\begin{equation}\label{selfshrinkerequationtrumptets}
    \frac{u_{\sigma}''}{1 + (u_{\sigma}')^2} = (\frac{x}{2} - \frac{n-1}{x})u_{\sigma}' - \frac{u_{\sigma}}{2} + \frac{n-1}{u_{\sigma}}.
\end{equation}
In other words, the `trumpets' defined as
\begin{equation}
    \Sigma_{\sigma} = \{(xw_1, u_{\sigma}(x)w_2) \ | \ (x, w_1, w_2) \in [x_{s}(\sigma), \infty) \times \mathbf{S}^{n-1} \times \mathbf{S}^{n - 1}\},
\end{equation}
and
\begin{equation}
    \Gamma_{\sigma}= \{(u_{\sigma}(x)w_1, xw_2) \ | \ (x, w_1, w_2) \in [x_{s}(\sigma), \infty) \times \mathbf{S}^{n-1} \times \mathbf{S}^{n - 1}\}
\end{equation}
are $O(n)\times O(n)$ symmetric self shrinkers. \\
(iii) $u_{\sigma}$ is strictly increasing, convex, and satisfies $\sigma x \leq u_{\sigma}(x) < x$ for $x > x_{s}(\sigma)$ and is asymptotic to $y = \sigma x$. More specifically, there exists $c = c(n) > 0$ so that
\begin{equation}\label{asymptotics of trumpets}
    |u_{\sigma}(x) - \sigma x| \leq \frac{c(n)}{\sigma x}, \ \ |u_{\sigma}'(x) - \sigma| \leq \frac{c(n)}{\sigma x^2}
\end{equation}
hold for all $x \geq x_{s}(\sigma)$. 
\end{proposition}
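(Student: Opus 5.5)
We construct the `trumpets' by shooting from infinity: prescribe the asymptotic cone $y=\sigma x$ and solve the ODE \eqref{selfshrinkerequationtrumptets} inward. Because the self shrinker equation has the drift term $\frac{x}{2}u' - \frac{u}{2}$, the natural substitution is $u_\sigma(x) = \sigma x + v(x)$. Since $\sigma x$ solves $\frac{x}{2}u'-\frac{u}{2}=0$, the equation for $v$ becomes
\begin{equation*}
\frac{v''}{1+(\sigma+v')^2} = \frac{x}{2}v' - \frac{v}{2} - \frac{(n-1)(\sigma+v')}{x} + \frac{n-1}{\sigma x+v}.
\end{equation*}
The leading inhomogeneity is $\frac{n-1}{x}(\frac{1}{\sigma}-\sigma)>0$, which is of order $1/x$. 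Solving $\frac{x}{2}v'-\frac{v}{2}\approx -\frac{(n-1)(1-\sigma^2)}{\sigma x}$ gives $v\approx \frac{(n-1)(1-\sigma^2)}{\sigma x}$, so $v>0$ and the decay rates in \eqref{asymptotics of trumpets} are consistent. Following Kleene--M{\o}ller \cite{kleene2014self}, I would build the solution on $[R,\infty)$ for large $R$ by a contraction mapping. I would rewrite the equation as $(v/x)' = \frac{2}{x^2}\big(\text{RHS terms}\big)$, integrate from $\infty$ with $v/x\to 0$, and work in the weighted space $\sup x|v| + x^2|v'| + x^3|v''|<\infty$. On this space the nonlinear terms are perturbative for $x\geq R(\sigma)$.

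Next I extend the solution inward as far as it exists. Along the way I would establish monotonicity and convexity by maximum-principle arguments on the ODE, in the style of the proof of proposition \ref{inner family of self shrinkers}:
\begin{itemize}
\item[(a)] \emph{Lower bound $u_\sigma>\sigma x$.} At a first point where $v=0$ going inward, one has $v'\geq 0$. The equation then forces $v''$ to have a sign contradicting this, because the term $\frac{n-1}{\sigma x}-\frac{(n-1)\sigma}{x}>0$ dominates.
\item[(b)] \emph{Upper bound $u_\sigma<x$.} I compare with the Simons cone $u=x$, which is an exact solution. At a touching point, the ODE uniqueness/comparison gives a contradiction.
\item[(c)] \emph{Convexity.} Differentiating the equation gives a first-order linear relation for $u''$ along the flow. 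Since $u''>0$ near infinity (from $v\sim c/x$, so $v''\sim 2c/x^3>0$), a zero of $u''$ can be ruled out by the sign of the derivative there.
\item[(d)] \emph{Monotonicity.} $u'>0$ follows from convexity together with $u'\to\sigma>0$ at infinity only if $u'$ decreases inward. I expect this actually to follow because $u'$ is increasing in $x$ and the solution stops before $u'$ hits $0$.
\end{itemize}

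The next step is to show the solution reaches height $\sqrt{2(n-1)}$. As long as $u_\sigma>0$ and $u'$ stays bounded, the solution continues. Using $\sigma x\leq u_\sigma<x$, the solution exists until $u_\sigma$ decreases to $\sqrt{2(n-1)}$, and this happens at some $x_s$ with $\sigma x_s\leq\sqrt{2(n-1)}\leq x_s$, giving $x_s\in[\sqrt{2(n-1)},\sqrt{2(n-1)}/\sigma]$. To keep $u'$ bounded in the region $u\geq\sqrt{2(n-1)}$, I would use the same logarithmic energy identity as in \eqref{uniform bound of profile curve}. There the term $\frac{n-1}{u}-\frac{u}{2}\le 0$ exactly when $u\geq\sqrt{2(n-1)}$, which explains the threshold.

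Finally, the quantitative estimate \eqref{asymptotics of trumpets} with a constant $c(n)$ uniform in $\sigma$ comes from integrating the $(v/x)'$ identity on $[x_s,\infty)$ using $0<v$, $u'\leq 1$, and $u\geq\sigma x$. This yields $|v|\leq \frac{2(n-1)}{\sigma x}$, and similarly $|v'|\lesssim \frac{1}{\sigma x^2}$. I expect this to be the main obstacle: the $\sigma$-uniformity when $\sigma\to 0$, and the behavior near $x_s$ where $v$ is not small relative to $\sigma x$. To handle it, I would avoid linearization and use only the exact first-integral bounds together with the sign information from (a)--(d).
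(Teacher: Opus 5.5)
\textbf{The gap: the qualitative bounds (a)--(d).} These carry all of part (iii), and also the claim that the solution survives down to height $\sqrt{2(n-1)}$ with $\sigma x_s\le\sqrt{2(n-1)}\le x_s$.

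Your argument for (a) does not work. At the largest zero $x_1$ of $v$ you have $v(x_1)=0$ and $v'(x_1)\ge 0$, and for $x_1>\sqrt{2(n-1)}$ the equation gives $\frac{v''(x_1)}{1+u'(x_1)^2}=(\frac{x_1}{2}-\frac{n-1}{x_1})v'(x_1)+\frac{n-1}{x_1}(\frac{1}{\sigma}-\sigma)>0$. That is exactly how a function rising from $0$ behaves, so there is no contradiction. No local argument at a zero of $v$ can exclude a crossing.

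In (b), ODE uniqueness only excludes \emph{tangential} contact with $u=x$. At a transversal crossing ($u(x_1)=x_1$, $u'(x_1)<1$) the equation gives $\phi(x_1)=(\frac{x_1}{2}-\frac{n-1}{x_1})(u'(x_1)-1)<0$, where $\phi=\frac{u''}{1+u'^2}$. This is a contradiction only if you already know $\phi\ge0$ on $(x_1,\infty)$.

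Conversely, (c) needs (b) and $u'>0$. At a first zero of $\phi$ one has $\phi'=(n-1)u'(x^{-2}-u^{-2})$, which is negative only if $u<x$ and $u'>0$. And (d) is left as an expectation. So the four items are circular or unproven.

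The missing idea is that convexity drives everything, and the bounds must be proved simultaneously by one continuity argument on the maximal interval where $\phi>0$ and $\max(\sqrt{2(n-1)},\sigma x)\le u<x$. On that interval:
\begin{itemize}
\item $u'$ increases to its limit $\sigma$, so $u'<\sigma$ and $u-\sigma x$ decreases to $0$. This is the correct proof of (a).
\item $\phi\ge0$ together with $x,u>\sqrt{2(n-1)}$ gives $(\frac{x}{2}-\frac{n-1}{x})u'\ge\frac{u}{2}-\frac{n-1}{u}>0$. This is (d), and it is where the threshold $\sqrt{2(n-1)}$ actually enters, not through the energy identity. The gradient bound is simply $0\le u'\le\sigma$.
\item A crossing of $u=x$ forces $\phi<0$.
\item A first zero of $\phi$ forces $\phi'<0$.
\end{itemize}
This is precisely the three-case argument in the paper.

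\textbf{Two further points, and the comparison with the paper.} First, your fixed-point scheme loses derivatives as formulated. The identity $(v/x)'=\frac{2}{x^2}(\ldots)$ contains $\frac{v''}{1+u'^2}$, so $(Tv)''$ involves $v'''$ and $T$ does not preserve your weighted $C^2$ space. The remedy is to take $W=xu'-u=x^2(u/x)'$ as the unknown. It solves the first-order equation $W'=\frac{x}{2}(1+u'^2)W+(n-1)(1+u'^2)(\frac{x}{u}-u')$, which you integrate from infinity against $e^{-\frac12\int_t^s z(1+u'^2)dz}$. This is the Kleene--M{\o}ller identity the paper uses for the final estimates. It also gives $|W|\le\frac{c(n)}{\sigma x}$ and hence the bound on $u'-\sigma$, which integrating $(v/x)'$ does not give directly.

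Second, to start the bootstrap near infinity you need the sign of $\phi$ there. But $\phi=O(x^{-3})$, while its two pieces $\frac{x^2}{2}(v/x)'$ and $\frac{n-1}{x}(\frac{x}{u}-u')$ are each of order $1/x$ and cancel. So ``$v\sim c/x$, hence $v''\sim 2c/x^3$'' requires second-order asymptotics you have not derived. A cleaner route is to integrate the linear equation for $\phi$ from infinity: $\phi(x)=(n-1)\int_x^\infty u'(s)(u(s)^{-2}-s^{-2})e^{-(B(s)-B(x))}ds$ with $B'=(\frac{x}{2}-\frac{n-1}{x})(1+u'^2)$. This is positive as soon as $u'>0$ and $u(s)<s$ for all $s\ge x$.

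The paper sidesteps all of this by shooting from a finite point $a$ with $u(a)=\sigma a$ and $u'(a)=\sigma$. There $\phi(a)=\frac{n-1}{a}(\frac{1}{\sigma}-\sigma)>0$ and every strict inequality holds trivially. It runs the continuity argument down to height $\sqrt{2(n-1)}$, then lets $a\to\infty$ by compactness using $0\le u'\le\sigma$. The price is a separate argument that the limit still satisfies $u_\sigma(x_s)=\sqrt{2(n-1)}$; your single-solution approach would get that for free.
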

\begin{proof}[Proof of proposition \ref{Existence of trumpets}]
For each $0 < \sigma < 1$, $a > \frac{\sqrt{2(n-1)}}{\sigma}$, we define 
    $$u_{\sigma, a} : (x_{0}(\sigma, a), a] \to \mathbf{R}_+$$ to be the solution to the initial value problem
    \begin{equation}
        \begin{cases}
            \frac{u_{\sigma, a}''}{1 + (u_{\sigma, a}')^2} = (\frac{x}{2} - \frac{n-1}{x})u_{\sigma, a}' - \frac{u_{\sigma, a}}{2} + \frac{n-1}{u_{\sigma, a}} \\
            u_{\sigma, a}(a) = \sigma a \ \ \ \ u'_{\sigma, a}(a) = \sigma
        \end{cases}
    \end{equation}
    with 
    \begin{equation}\label{definition ofx0(sigma, a)}
      x_{0}(\sigma, a) = \inf\{x_0 \geq \sqrt{2(n-1)}\ | \ u_{\sigma, a} \text{ is well defined on }(x_0, a] \text{ with }u_{\sigma, a} > \sqrt{2(n-1)} \}. 
    \end{equation}
 We claim that for $x \in (x_0(\sigma, a), a]$, the following estimates hold. 
    \begin{equation}\label{key apriori estimate of appx sol for trumpets}
       \max(\sqrt{2(n-1)}, \sigma x) \leq u_{\sigma, a}(x) < x, \ \ \phi_{\sigma, a}(x) = (\frac{x}{2} - \frac{n-1}{x})u_{\sigma, a}' - \frac{u_{\sigma, a}}{2} + \frac{n-1}{u_{\sigma, a}} > 0. 
    \end{equation}
    $u_{\sigma, a}(x) \geq \sqrt{2(n-1)}$ is automatic by the definition of $x_0(\sigma, a)$ given in \eqref{definition ofx0(sigma, a)}. Therefore, we focus on showing $\sigma x \leq u_{\sigma, a}$.
    When $x = a$, estimates \eqref{key apriori estimate of appx sol for trumpets} hold by the given initial condition. Then by continuity, there exists small $\epsilon > 0$ so that for all $a - \epsilon \leq x \leq a$, 
    $$u_{\sigma, a}(x) < x, \ \ \phi_{\sigma, a}(x) > 0.$$
    Also, by $\phi_{\sigma, a} > 0$ and equation \eqref{selfshrinkerequationtrumptets}, we have $u_{\sigma, a}''(x) > 0$, hence $$u'_{\sigma, a}(x) < u_{\sigma,a}'(a) = \sigma$$ for $x \in [a - \epsilon, a)$. Therefore,  $\sigma x - \sigma a\leq u_{\sigma, a}(x) - u_{\sigma, a}(a) = u_{\sigma, a}(x) - \sigma a,$ hence we have
    $$\sigma x \leq u_{\sigma,a}(x)$$
    as well. This means \eqref{key apriori estimate of appx sol for trumpets} hold for all $x \in [a - \epsilon, a]$ for some $\epsilon > 0$.\\

Define 
\begin{equation}\label{definition of x0bar}
        \overline{x_0}(\sigma, a) = \inf\{x_0 > x_0(\sigma, a) \ | \ \text{\eqref{key apriori estimate of appx sol for trumpets} hold for } x \in [x_0, a]\}
    \end{equation}
    By previous discussion, $\overline{x_0}(\sigma, a) \in [x_0(\sigma, a), a)$ is well defined. We see that \eqref{key apriori estimate of appx sol for trumpets} hold for all $x > x_0(\sigma, a)$ if and only if $\overline{x_0}(\sigma, a) = x_0(\sigma, a)$. \\

    Suppose not and $\overline{x_0}(\sigma, a) > x_0(\sigma, a)$. Then by the definition of $\overline{x}_0(\sigma, a)$ given in \eqref{definition of x0bar}, we see that 
    \begin{equation}\label{Property 1}
        \text{\eqref{key apriori estimate of appx sol for trumpets} hold for $x \in (\overline{x_0}(\sigma, a), a]$},
    \end{equation}
     and one of the three cases below must hold; either
     \begin{equation}\label{possible 2-1}
         u_{\sigma, a}(\overline{x_0}(\sigma, a)) = \sigma \overline{x_0}(\sigma, a),
     \end{equation}
      or 
      \begin{equation}\label{possible 2-2}
        u_{\sigma, a}(\overline{x_0}(\sigma, a)) = \overline{x_0}(\sigma, a) , 
      \end{equation}
      or 
      \begin{equation}\label{possible 2-3}
         \phi_{\sigma, a}(\overline{x_0}(\sigma, a)) = 0. 
      \end{equation} \\
      
      Case \eqref{possible 2-1} is not possible; otherwise, by the mean value theorem, there exists $\overline{x_0}(\sigma, a) < x_1 < a$ so that $u_{\sigma, a}'(x_1) = \sigma$. On the other hand, by \eqref{Property 1} and equation \eqref{selfshrinkerequationtrumptets}, $u_{\sigma, a}'$ strictly increases in $(\overline{x_0}(\sigma, a), a]$. Therefore by combining with $u_{\sigma, a}'(a) = \sigma$ we have
      $$\sigma = u_{\sigma, a}'(x_1) < u_{\sigma, a}'(a) = \sigma $$
      which is a contradiction.\\
      
      If case \eqref{possible 2-2} holds, then by \eqref{Property 1}, $u_{\sigma, a}'(\overline{x_0}(\sigma, a)) \leq 1$. Actually $$u_{\sigma, a}'(\overline{x_0}(\sigma, a)) < 1$$ must hold. If this is not the case, by uniqueness of solution to initial value problem together with the fact that the function $u(x) = x$
      is also a solution to equation \eqref{selfshrinkerequationtrumptets}, we must have $u_{\sigma, a}(x) = x$ for $x \geq \overline{x_0}(\sigma, a)$ which contradicts \eqref{Property 1}. Combining $u_{\sigma, a}'(\overline{x_0}(\sigma, a)) < 1$ with $\overline{x_0}(\sigma, a) > \sqrt{2(n-1)}$, we obtain
    $$\phi_{\sigma, a}(\overline{x_0}(\sigma, a)) < (\frac{\overline{x_0}(\sigma, a)}{2} - \frac{n-1}{\overline{x_0}(\sigma, a)}) - \frac{\overline{x_0}(\sigma, a)}{2} + \frac{n-1}{\overline{x_0}(\sigma, a)} = 0$$
    which contradicts \eqref{Property 1} because of continuity of $\phi_{\sigma, a}$. Therefore, we see that
    \begin{equation}\label{weak C^0 for trumpet}
        \sigma x \leq u_{\sigma, a}(x) <x
    \end{equation}
    hold for $x \in [ \overline{x_0}(\sigma, a), a]$.\\
    
    If case \eqref{possible 2-3} holds, then by solving $\phi_{\sigma, a}(\overline{x_0}(\sigma, a)) = 0$ for $u_{\sigma, a}'$, together with $\overline{x_0}(\sigma, a) >\sqrt{2(n-1)}$ and $u_{\sigma, a}(\overline{x_0}(\sigma, a)) > \sqrt{2(n-1)}$, we have
    $$u_{\sigma, a}'(\overline{x_0}(\sigma, a)) > 0.$$
    On the other hand, by using equation \eqref{selfshrinkerequationtrumptets} to compute $\phi_{\sigma, a}'$, and then using \eqref{Property 1}, assumption \eqref{possible 2-3}, and estimate \eqref{weak C^0 for trumpet}, we have
    $$0 \leq \phi_{\sigma, a}'(\overline{x_0}(\sigma, a)) = [\frac{n-1}{\overline{x_0}(\sigma, a)^2} - \frac{n-1}{u_{\sigma, a}(\overline{x_0}(\sigma, a))^2}]u_{\sigma, a}'(\overline{x_0}(\sigma, a)) < 0$$
    which is a contradiction. Thus all three cases are impossible, which means our initial assumption $\overline{x}_0(\sigma, a) > x_0(\sigma, a)$ is false. Thus 
    $$\overline{x}_0(\sigma, a) =x_0(\sigma, a)$$
    and \eqref{key apriori estimate of appx sol for trumpets} holds for all $x \in (x_0(\sigma, a), a]$.\\

    Note that equation \eqref{selfshrinkerequationtrumptets} and \eqref{key apriori estimate of appx sol for trumpets} implies that $ 0 \leq u_{\sigma, a}' \leq \sigma$, $u_{\sigma}'' \geq 0$, and $u_{\sigma}$ is bounded from below by $\sqrt{2(n-1)}$. Hence $u_{\sigma}$ extends in $C^1$ to $x = x_0(\sigma, a)$. We also obtain
\begin{equation}\label{estimate of x0(sigma, a)}
      x_0(\sigma, a) \in [\sqrt{2(n-1)}, \frac{\sqrt{2(n-1)}}{\sigma}], \ \ \ u_{\sigma, a}(x_{0}(\sigma, a)) = \sqrt{2(n-1)}.
 \end{equation}
    By \eqref{key apriori estimate of appx sol for trumpets}, if 
   $$\lim_{x \to x_0(\sigma, a)}(x, u_{\sigma, a}(x)) \in (\sqrt{2(n-1)}, \infty) \times (\sqrt{2(n-1)}, \infty)$$
    then one can strictly extend the graphical solution to parts of $x < x_0(\sigma, a)$ so that the extended solution still remains in $(\sqrt{2(n-1)}, \infty) \times (\sqrt{2(n-1)}, \infty)$, which contradicts the minimality of $x_0(\sigma, a)$ given in \eqref{definition ofx0(sigma, a)}. This together with the bound $$ u_{\sigma,a}(x) < x$$ implies estimate \eqref{estimate of x0(sigma, a)} for $x_0(\sigma, a)$. \\

    By equation \eqref{selfshrinkerequationtrumptets}, and \eqref{key apriori estimate of appx sol for trumpets}, we have the following estimates.
    \begin{align}\label{apriori estimate for trumpets}
        &\max(\sigma x, \sqrt{2(n-1)}) \leq u_{\sigma, a}(x) \leq x, \ 0 \leq u_{\sigma, a}'(x) \leq \sigma, \ \phi_{\sigma, a}(x) = \frac{u_{\sigma, a}''}{1 + (u_{\sigma, a}')^2} \geq 0
    \end{align}
    for $x \in (x_0(\sigma, a), a]$, together with 
    \begin{equation}
        \sqrt{2(n-1)} \leq x_0(\sigma, a) \leq \frac{\sqrt{2(n-1)}}{\sigma}, u_{\sigma, a}(x_0(\sigma, a)) = \sqrt{2(n-1)}.
    \end{equation}
    Then one can find $a_j \to \infty$ so that
    $$x_0(\sigma, a_j) \to x_{s}(\sigma) \in [\sqrt{2(n-1)}, \frac{\sqrt{2(n-1)}}{\sigma}]$$ and 
    $$u_{\sigma, a_j} \to u_{\sigma} \text{ in }C^2_{loc}((x_{s}(\sigma), \infty)).$$
    We then see that $u_{\sigma}$ solves equation \eqref{selfshrinkerequationtrumptets}, and estimates \eqref{apriori estimate for trumpets} passes through the limit and thus $u_{\sigma}$ satisfies
   \begin{equation}\label{final apriori estimates for trumpets}
       \max(\sqrt{2(n-1)}, \sigma x) \leq u_{\sigma}(x) \leq x, \  \ 0 \leq u_{\sigma}'(x) \leq \sigma, \  \ \phi_{\sigma} = \frac{u_{\sigma}''}{1 + (u_{\sigma}')^2}\geq 0
   \end{equation}
   for all $x \in (x_{s}(\sigma), \infty)$. In particular, we see that $u_{\sigma}$ extends in $C^1$ up to $x = x_{s}(\sigma)$ with $u_{\sigma}(x_{s}(\sigma)) \geq \sqrt{2(n-1)}$. We claim that 
   $$u_{\sigma}(x_{s}(\sigma)) = \sqrt{2(n-1)}$$
   If $u_{\sigma}(x_{s}(\sigma)) >\sqrt{2(n-1)}$, then by $0 \leq u_{\sigma}'(x) \leq \sigma$, we can find some small $\delta > 0$ so that 
   $u_{\sigma}(x) \geq \sqrt{2(n-1)} + \delta$
for all $x \in [x_{s}(\sigma), x_{s}(\sigma) +\delta]$. On the other hand, since $x_0(\sigma, a_j) \to x_{s}(\sigma)$ with $u_{\sigma, a}(x_0(\sigma, a)) = \sqrt{2(n-1)}$ and $0 \leq u_{\sigma, a}'(x) \leq \sigma$, for all sufficiently large $j$, we have $x_{s}(\sigma) - \frac{\delta}{100\sigma} < x_0(\sigma, a_j) < x_{s}(\sigma) + \frac{\delta}{100\sigma}$ and 
$$u_{\sigma, a_j}(x_{s}(\sigma) + \frac{\delta}{100\sigma}) \leq \sqrt{2(n-1)}+ \frac{\delta}{50}$$
which contradicts the $C^2_{loc}$ convergence of $u_{\sigma, a_j} \to u_{\sigma}$.\\

    By the estimates \eqref{final apriori estimates for trumpets}, 
    $\phi_{\sigma} \geq 0$. Recalling the definition of $\phi_{\sigma}$
    $$\phi_{\sigma} = (\frac{x}{2} - \frac{n-1}{x})u_{\sigma}'(x) - \frac{u_{\sigma}}{2} + \frac{n-1}{u_{\sigma}}$$
    and that $u_{\sigma} \geq \sqrt{2(n-1)}$,  $u_{\sigma}$ is non-decreasing. Moreover, in the region $(\sqrt{2(n-1)}, \infty) \times (\sqrt{2(n-1)}, \infty)$, $u_{\sigma}'(x) > 0$. Thus to show that $u_{\sigma}$ strictly increases, we only need to show that $u_{\sigma}(x) >\sqrt{2(n-1)}$ for $x > x_{s}(\sigma)$. If this is not true, then by uniqueness of ODE solution together with the fact that the constant function $y = \sqrt{2(n-1)}$ is a solution to equation \eqref{selfshrinkerequationtrumptets}, and $u_{\sigma}(x_{s}(\sigma)) = \sqrt{2(n-1)}$, we must have $u_{\sigma}(x) \equiv \sqrt{2(n-1)}$ which contradicts $\sigma x \leq u_{\sigma}(x)$ for large $x$. Thus $u_{\sigma}$ is strictly increasing. Convexity in the interior follows from $\phi_{\sigma}(x) = \frac{u_{\sigma}''}{1 + (u_{\sigma}')^2} \geq 0$. Finally, the inequality $\sigma x \leq u_{\sigma}(x)$ follows from \eqref{final apriori estimates for trumpets}, and the upper bound $u_{\sigma}(x) < x$ when $x > x_s(\sigma)$ follows from \eqref{final apriori estimates for trumpets} together with $u_{\sigma}(x_s(\sigma)) = \sqrt{2(n-1)}  \leq x_{s}(\sigma)$.\\

    To show the desired asymptotics of $u_{\sigma}$, we go back to the approximate solutions $u_{\sigma, a}$. By following the proof of lemma 2 of \cite{kleene2014self}, each $u_{\sigma, a}$ satisfies the integral identity
    $$u_{\sigma, a}(x) = \sigma x + x\int_{x}^{a}\frac{1}{t^2}\int_{t}^{a}(\frac{(n-1)s}{u_{\sigma, a}} - (n-1)u_{\sigma, a}')(1 + (u_{\sigma, a}')^2)e^{-\frac{1}{2}\int_{t}^{s}z(1 + (u_{\sigma, a}')^2)dz}dsdt.$$
     \eqref{apriori estimate for trumpets} implies that above integral identity also passes through the limit and thus
    \begin{equation}
        u_{\sigma}(x) = \sigma x + x\int_{x}^{\infty}\frac{1}{t^2}\int_{t}^{\infty}(\frac{(n-1)s}{u_{\sigma}} - (n-1)u_{\sigma}')(1 + (u_{\sigma}')^2)e^{-\frac{1}{2}\int_{t}^{s}z(1 + (u_{\sigma}')^2)dz}dsdt
    \end{equation}
    holds for all $x\in [x_{s}(\sigma), \infty)$. Then by using estimates \eqref{final apriori estimates for trumpets}, we have 
   \begin{equation}
       |u_{\sigma}(x) - \sigma x| \leq x\int_{x}^{\infty}\frac{1}{t^2}\int_{t}^{\infty}\frac{2(n-1)}{\sigma}e^{-1/4(s^2 - t^2)}dsdt = \frac{c(n)}{\sigma x}.
   \end{equation}
    Similar calculations yield 
    \begin{equation}
        |u_{\sigma}'(x) - \sigma| \leq \frac{c(n)}{\sigma x^2}.
    \end{equation}
    This completes the proof of proposition \ref{Existence of trumpets}.
\end{proof}
In order to use the `trumpets' as barriers in the proof of the main theorems, we need the following $C^0$ estimates of `trumpets'.
\begin{lemma}\label{Alternative $C^0$ for trumpets}
Let $u_{\sigma}$ be the profile function constructed in proposition \ref{Existence of trumpets}. Then
    $$\sigma x\leq u_{\sigma}(x) \leq  \sqrt{\sigma^2x^2 + (1 - \sigma^2)(2n-2)}$$
for all $\sigma \in (0, 1)$, $x \in [x_{s}(\sigma), \infty)$.
\end{lemma}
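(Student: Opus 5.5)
The lower bound $\sigma x \le u_{\sigma}(x)$ is already part of proposition \ref{Existence of trumpets} (iii), so only the upper bound needs proof. The plan is to turn the sign condition $\phi_{\sigma} \ge 0$ from \eqref{final apriori estimates for trumpets} into a monotonicity statement for a suitable ratio, and then use the asymptotics \eqref{asymptotics of trumpets} to identify the limit of that ratio at infinity.

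\textbf{Step 1 (a differential inequality for $u_{\sigma}$).} Fix $x > x_{s}(\sigma)$ with $x > \sqrt{2(n-1)}$; this is all of $(x_{s}(\sigma),\infty)$ except possibly nothing, since $x_s(\sigma)\ge\sqrt{2(n-1)}$. Here $\frac{x}{2} - \frac{n-1}{x} = \frac{x^2 - 2(n-1)}{2x} > 0$. Rewriting $\phi_{\sigma} \ge 0$ gives
\begin{equation*}
\frac{x^2-2(n-1)}{2x}\,u_{\sigma}' \;\ge\; \frac{u_{\sigma}^2 - 2(n-1)}{2u_{\sigma}}.
\end{equation*}
Since $u_{\sigma} > \sqrt{2(n-1)}$ on $(x_s(\sigma),\infty)$, by the strict monotonicity in proposition \ref{Existence of trumpets}, both numerators are positive. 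Dividing yields
\begin{equation*}
\frac{2u_{\sigma}u_{\sigma}'}{u_{\sigma}^2-2(n-1)} \;\ge\; \frac{2x}{x^2-2(n-1)},
\end{equation*}
that is, $\frac{d}{dx}\ln\bigl(u_{\sigma}^2-2(n-1)\bigr) \ge \frac{d}{dx}\ln\bigl(x^2-2(n-1)\bigr)$.

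\textbf{Step 2 (monotone ratio).} By Step 1, the ratio
\begin{equation*}
R(x) = \frac{u_{\sigma}(x)^2 - 2(n-1)}{x^2 - 2(n-1)}
\end{equation*}
is nondecreasing on $(x_{s}(\sigma),\infty)$. By \eqref{asymptotics of trumpets}, $u_{\sigma}(x) = \sigma x + O(1/x)$, so $R(x) \to \sigma^2$ as $x \to \infty$. Hence $R(x) \le \sigma^2$ for every $x > x_s(\sigma)$, which is exactly
\begin{equation*}
u_{\sigma}^2 - 2(n-1) \le \sigma^2 x^2 - \sigma^2(2n-2),
\end{equation*}
equivalently $u_{\sigma}^2 \le \sigma^2x^2 + (1-\sigma^2)(2n-2)$. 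Taking square roots gives the upper bound.

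\textbf{Step 3 (the endpoint).} At $x = x_{s}(\sigma)$ the inequality follows by continuity of $u_{\sigma}$, which is $C^0$ up to $x_s(\sigma)$. It can also be checked directly: $u_{\sigma}(x_s) = \sqrt{2(n-1)}$ and $x_s \ge \sqrt{2(n-1)}$, so $2(n-1) \le \sigma^2 x_s^2 + (1-\sigma^2)(2n-2)$.

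The only delicate point is justifying the division in Step 1, namely the strict positivity of $x^2 - 2(n-1)$ and of $u_{\sigma}^2 - 2(n-1)$ on the open interval. Both are supplied by proposition \ref{Existence of trumpets}: $u_{\sigma}$ is strictly increasing from the value $\sqrt{2(n-1)}$, and $x > x_s(\sigma) \ge \sqrt{2(n-1)}$.
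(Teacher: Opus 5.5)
Your proof is correct and follows essentially the same route as the paper: you rewrite $\phi_{\sigma}\ge 0$ as $\frac{d}{dx}\ln\bigl(u_{\sigma}^2-2(n-1)\bigr)\ge\frac{d}{dx}\ln\bigl(x^2-2(n-1)\bigr)$ on $(x_s(\sigma),\infty)$, then integrate out to infinity and use the asymptotics $u_{\sigma}(x)=\sigma x+O(1/x)$, which your monotone ratio $R(x)\le\lim_{x\to\infty}R=\sigma^2$ packages neatly. The endpoint $x=x_s(\sigma)$ is then handled by continuity, as in the paper; your direct check there is also valid.
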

A consequence of lemma \ref{Alternative $C^0$ for trumpets} is the following weighted $C^0$ norm estimate for the normal graph function of $\Sigma_{\sigma}$ (or equivalently $\Gamma_{\sigma}$), which was used in proof of lemma \ref{weighted C2 estimate for the graph function giving us higher power nonlinearity}. 
\begin{corollary}\label{normal graph function estimate of trumpets}
    For each $\sigma \in (\frac{1}{2}, 1)$, let $u_{\sigma} : [x_s(\sigma), \infty) \to \mathbf{R}_+$ be the profile function obtained in proposition \ref{Existence of trumpets}. Then there exists $\overline{L}(n)> 10$, $C(n) > 0$ so that for any $\sigma \in (\frac{1}{2}, 1)$, there exists $\hat{v}_{\sigma} : [\overline{L}(n) , \infty) \to \mathbf{R}_+$ so that
    \begin{equation}\label{normal graphicality of trumpets over cone}
       \{ (\frac{r + \hat{v}_{\sigma}(r)}{\sqrt{2}}w_1, \frac{r - \hat{v}_{\sigma}(r)}{\sqrt{2}}w_2) \ | \ r \geq \overline{L}, \ w_1, w_2 \in \mathbf{S}^{n-1} \} \subset \Sigma_{\sigma}.
    \end{equation}
    We also have for any $L \geq \overline{L}(n)$
    \begin{equation}\label{weightedc0 estimate for trumpets}
        \sup_{r \geq L}\frac{\hat{v}_{\sigma}(r)}{r} \leq C(n)\frac{\hat{v}_{\sigma}(L)}{L}.
    \end{equation}
\end{corollary}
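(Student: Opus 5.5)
The plan is to pass from the profile description $(x,u_\sigma(x))$ of $\Sigma_\sigma$ to rotated coordinates adapted to the cone. For a point $(xw_1,u_\sigma(x)w_2)$ we set
\begin{equation*}
r(x)=\frac{x+u_\sigma(x)}{\sqrt{2}},\qquad \hat v(x)=\frac{x-u_\sigma(x)}{\sqrt{2}},
\end{equation*}
so that $(xw_1,u_\sigma w_2)=(\frac{r+\hat v}{\sqrt 2}w_1,\frac{r-\hat v}{\sqrt 2}w_2)$.

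\textbf{Graphicality.} By proposition \ref{Existence of trumpets}(iii), $u_\sigma$ is strictly increasing, so $r'(x)=(1+u_\sigma')/\sqrt2\ge 1/\sqrt2>0$. Hence $x\mapsto r(x)$ is a homeomorphism from $[x_s(\sigma),\infty)$ onto $[r(x_s(\sigma)),\infty)$, and $\hat v_\sigma:=\hat v\circ r^{-1}$ is well defined. It is positive because $u_\sigma(x)<x$ for $x>x_s$, which gives \eqref{normal graphicality of trumpets over cone}. To make the domain uniform in $\sigma\in(\frac12,1)$, use $u_\sigma(x_s)=\sqrt{2(n-1)}$ and $x_s\le \sqrt{2(n-1)}/\sigma\le 2\sqrt{2(n-1)}$ to get $r(x_s(\sigma))\le 3\sqrt{n-1}$. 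Then take $\overline L(n)\ge\max(10,\,3\sqrt{n-1},\,4\sqrt{n-1})$; the last bound is used below.

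\textbf{Weighted estimate.} Write
\begin{equation*}
\frac{\hat v_\sigma}{r}=\frac{1-u_\sigma/x}{1+u_\sigma/x}.
\end{equation*}
This is not decreasing in $r$; in fact it increases toward $\frac{1-\sigma}{1+\sigma}$. So the estimate must come from comparing both sides with $1-\sigma$.

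For the upper bound, $u_\sigma\ge\sigma x$ gives $\hat v_\sigma/r\le \frac{1-\sigma}{1+\sigma}\le 1-\sigma$ for every $r$.

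For the lower bound at $r=L$, let $x_L=r^{-1}(L)$. Since $r\le\sqrt2\,x$, we have $x_L\ge L/\sqrt2\ge 2\sqrt{2(n-1)}$, so $\epsilon:=(2n-2)/x_L^2\le\frac14$. Lemma \ref{Alternative $C^0$ for trumpets} then gives
\begin{equation*}
1-\frac{u_\sigma(x_L)}{x_L}\ge 1-\sqrt{\sigma^2+(1-\sigma^2)\epsilon}=\frac{(1-\sigma^2)(1-\epsilon)}{1+\sqrt{\sigma^2+(1-\sigma^2)\epsilon}}\ge \frac{3}{8}(1-\sigma).
\end{equation*}
Since $u_\sigma/x<1$, the denominator $1+u_\sigma/x$ is at most $2$, so $\hat v_\sigma(L)/L\ge \frac{3}{16}(1-\sigma)$. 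Combining with the upper bound yields \eqref{weightedc0 estimate for trumpets} with $C(n)=16/3$.

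\textbf{Main obstacle.} The delicate step is the lower bound at $r=L$. The naive approach, monotonicity of $\hat v_\sigma/r$, points the wrong way. The remedy is to get a lower bound on $1-u_\sigma/x$ that is uniform in $\sigma$ and linear in $1-\sigma$, which is exactly what Lemma \ref{Alternative $C^0$ for trumpets} provides. One must also choose $\overline L$ large, depending only on $n$, so that the correction term $\epsilon$ stays bounded away from $1$.
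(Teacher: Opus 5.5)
Your proof is correct and follows essentially the same route as the paper. Both arguments use $u_\sigma\ge\sigma x$ for the upper bound and the bound $u_\sigma\le\sqrt{\sigma^2x^2+(1-\sigma^2)(2n-2)}$ for the lower bound, showing that $\hat v_\sigma(r)/r$ is comparable to $1-\sigma$ uniformly in $\sigma\in(\frac12,1)$. The paper packages the latter as $u_\sigma(x)\le\frac{1+\sigma}{2}x$ for $x\ge 4\sqrt{n-1}$, and leaves implicit the conversion $x\ge r/\sqrt2$ between the profile variable and the cone radius, which you handle explicitly.
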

\begin{proof}[Proof of corollary \ref{normal graph function estimate of trumpets}]
    By proposition \ref{Existence of trumpets} together with $\sigma \in (\frac{1}{2}, 1)$, we see that $u_{\sigma}$ is defined for all $x \geq 2\sqrt{2(n-1)}$. The estimates $u'_{\sigma} \geq 0$, $\sigma x \leq u_{\sigma}(x) <x$ (see proposition \ref{Existence of trumpets}) implies that \eqref{normal graphicality of trumpets over cone} holds for any $\overline{L} \geq 2\sqrt{2(n-1)}$. \\

    To prove \eqref{weightedc0 estimate for trumpets}, we claim that by possibly increasing $\overline{L}(n) > 10$, we have
    \begin{equation}\label{comparing with two linear functions}
        \sigma x \leq u_{\sigma}(x) \leq \frac{1 + \sigma}{2}x
    \end{equation}
    for all $\sigma \in (\frac{1}{2}, 1)$, and $x \geq \overline{L}(n)$. The lower bound is immediate from lemma \ref{Alternative $C^0$ for trumpets}. To show the upper bound, we compare
    \begin{equation}
        \sqrt{\sigma^2x^2 + (1 - \sigma^2)2(n-1)} \leq 
        \frac{1 + \sigma}{2}x.
    \end{equation}
    One can easily see that as long as $\sigma \in (\frac{1}{2}, 1)$, $4\sqrt{n-1} \leq x$, above inequality holds. Thus lemma \ref{Alternative $C^0$ for trumpets} proves the claim.\\

    Then \eqref{comparing with two linear functions} implies that for all $r \geq \overline{L}(n)$, $\sigma \in (\frac{1}{2}, 1)$, 
    \begin{equation}\label{sigma estimate for trumpet}
        c_1(n)(1 - \sigma) \leq \frac{\hat{v}_\sigma(r)}{r} \leq c_2(n)(1 - \sigma)
    \end{equation}
    for some $0 < c_1(n) < c_2(n)$, which immediately implies \eqref{weightedc0 estimate for trumpets}, thus completing the proof of corollary \ref{normal graph function estimate of trumpets}.
\end{proof}
\begin{proof}[Proof of lemma \ref{Alternative $C^0$ for trumpets}]
The lower bound is immediate from proposition \ref{Existence of trumpets}. As for the upper bound, by the continuity of $u_{\sigma}$ with respect to $x$, it is enough to consider when $x >x_{s}(\sigma)$. Recall by \eqref{final apriori estimates for trumpets}, we have $\phi_{\sigma}(x) \geq 0$ for all $x \in (x_{s}(\sigma), \infty)$. By rewriting $\phi_{\sigma}(x) \geq 0$ using $x > x_{s}(\sigma) \geq \sqrt{2(n-1)}$ and $u_{\sigma} > \sqrt{2(n-1)}$, we have $$\frac{2u_{\sigma}u_{\sigma}'}{u_{\sigma}^2 - 2(n-1)} \geq 
    \frac{2x}{x^2 - 2(n-1)}.$$
    This inequality can be written as
    $$\frac{d}{dx}\ln(u_{\sigma}^2 - 2(n-1)) \geq \frac{d}{dx}\ln(x^2 - 2(n-1)).$$
    Thus integrating from $x \in (x_{s}(\sigma), \infty)$ to any large $a > 0$, and then letting $a \to \infty$, combining with the asymptotics given in (iii) of proposition \ref{Existence of trumpets}, we obtain the upper bound
    \begin{equation}
        u_{\sigma}(x) \leq \sqrt{ \sigma^2x^2 + (1 - \sigma^2) 2(n-1)}.
    \end{equation}
   This completes the proof of lemma \ref{Alternative $C^0$ for trumpets}.
\end{proof}

\section*{Acknowledgment}
The author would like to thank his advisor Prof. Nata\v sa \v Se\v sum for her constant support and guidance. He also thanks Prof. Kyeongsu Choi for his suggestion which led to the improvement of the main result.
\printbibliography

@book{ilmanen1994elliptic,
  title={Elliptic regularization and partial regularity for motion by mean curvature},
  author={Ilmanen, Tom},
  volume={520},
  year={1994},
  publisher={American Mathematical Soc.}
}

@article{ancientasymptotical,
  title={Ancient asymptotically cylindrical flows and applications},
  author={Choi, Kyeongsu and Haslhofer, Robert and Hershkovits, Or and White, Brian},
  journal={Inventiones mathematicae},
  volume={229},
  number={1},
  pages={139--241},
  year={2022},
  publisher={Springer}
}

@article{Ecker1991InteriorEF,
  title={Interior estimates for hypersurfaces moving by mean curvature},
  author={Klaus Ecker and Gerhard Huisken},
  journal={Inventiones mathematicae},
  year={1991},
  volume={105},
  pages={547-569},
  url={https://api.semanticscholar.org/CorpusID:122642136}
}

@ARTICLE{White2005-nl,
  title     = "A local regularity theorem for mean curvature flow",
  author    = "White, Brian",
  journal   = "Ann. Math.",
  publisher = "Annals of Mathematics",
  volume    =  161,
  number    =  3,
  pages     = "1487--1519",
  month     =  may,
  year      =  2005,
  language  = "en"}

@ARTICLE{Chodosh2024-qv,
  title     = "Mean curvature flow with generic initial data",
  author    = "Chodosh, Otis and Choi, Kyeongsu and Mantoulidis, Christos and
               Schulze, Felix",
  journal   = "Invent. Math.",
  publisher = "Springer Science and Business Media LLC",
  volume    =  237,
  number    =  1,
  pages     = "121--220",
  month     =  jul,
  year      =  2024,
  copyright = "https://creativecommons.org/licenses/by/4.0",
  language  = "en"
}

@article{pseudolocality,
  title={On short time existence for the planar network flow},
  author={Ilmanen, Tom and Neves, Andr{\'e} and Schulze, Felix},
  journal={Journal of Differential Geometry},
  volume={111},
  number={1},
  pages={39--89},
  year={2019},
  publisher={Lehigh University}
}

@article{kleene2014self,
  title={Self-shrinkers with a rotational symmetry},
  author={Kleene, Stephen and M{\o}ller, Niels},
  journal={Transactions of the American Mathematical Society},
  volume={366},
  number={8},
  pages={3943--3963},
  year={2014}
}

@article{angenent2019unique,
  title={Unique asymptotics of ancient convex mean curvature flow solutions},
  author={Angenent, Sigurd and Daskalopoulos, Panagiota and Sesum, Natasa},
  journal={Journal of Differential Geometry},
  volume={111},
  number={3},
  pages={381--455},
  year={2019},
  publisher={Lehigh University}
}

@article{stolarski2023existence,
  title={Existence of mean curvature flow singularities with bounded mean curvature},
  author={Stolarski, Maxwell},
  journal={Duke Mathematical Journal},
  volume={172},
  number={7},
  pages={1235--1292},
  year={2023},
  publisher={Duke University Press}
}

@article{guo2018analysis,
  title={Analysis of Vel{\'a}zquez’s solution to the mean curvature flow with a type II singularity},
  author={Guo, Siao-Hao and Sesum, Natasa},
  journal={Communications in Partial Differential Equations},
  volume={43},
  number={2},
  pages={185--285},
  year={2018},
  publisher={Taylor \& Francis}
}

@article{hardt1985area,
  title={Area minimizing hypersurfaces with isolated singularities.},
  author={Hardt, Robert and Simon, Leon},
  year={1985},
  publisher={Walter de Gruyter, Berlin/New York Berlin, New York}
}

@article{huang2025mean,
  title={Mean curvature flow converging to an minimizing cone and its Hardt-Simon foliation},
  author={Huang, Jiuzhou},
  journal={arXiv preprint arXiv:2510.11430},
  year={2025}
}

@article{huisken1990asymptotic,
  title={Asymptotic behavior for singularities of the mean curvature flow},
  author={Huisken, Gerhard},
  journal={Journal of Differential Geometry},
  volume={31},
  number={1},
  pages={285--299},
  year={1990},
  publisher={Lehigh University}
}

@article{coldingminicozzi,
author = {Colding, Tobias and Minicozzi, William},
year = {2012},
month = {03},
pages = {},
title = {Generic mean curvature flow I; Generic singularities},
volume = {2},
journal = {Annals of Mathematics. Second Series},
doi = {10.4007/annals.2012.175.2.7}
}

@article{simon1986minimal,
  title={Minimal hypersurfaces asymptotic to quadratic cones in R n+ 1},
  author={Simon, Leon and Solomon, Bruce},
  journal={Inventiones mathematicae},
  volume={86},
  number={3},
  pages={535--551},
  year={1986},
  publisher={Springer}
}

@inproceedings{bernstein2025lower,
  title={Lower bounds on density for topologically nontrivial minimal cones up to dimension six},
  author={Bernstein, Jacob and Wang, Lu},
  booktitle={Forum of Mathematics, Sigma},
  volume={13},
  pages={e122},
  year={2025},
  organization={Cambridge University Press}
}

@article{edelen2023regularity,
  title={Regularity of minimal surfaces near quadratic cones},
  author={Edelen, Nick and Spolaor, Luca},
  journal={Annals of Mathematics},
  volume={198},
  number={3},
  pages={1013--1046},
  year={2023},
  publisher={Department of Mathematics, Princeton University Princeton, New Jersey, USA}
}

@article{schulze2021introduction,
  title={An introduction to Brakke flows},
  author={Schulze, Felix},
  journal={Lecture Notes. Summer School},
  year={2021}
}

@article{huisken2015convex,
  title={Convex ancient solutions of the mean curvature flow},
  author={Huisken, Gerhard and Sinestrari, Carlo},
  journal={Journal of differential geometry},
  volume={101},
  number={2},
  pages={267--287},
  year={2015},
  publisher={Lehigh University}
}

@article{haslhofer2015uniqueness,
  title={Uniqueness of the bowl soliton},
  author={Haslhofer, Robert},
  journal={Geometry \& Topology},
  volume={19},
  number={4},
  pages={2393--2406},
  year={2015},
  publisher={Mathematical Sciences Publishers}
}

@article{haslhofer2016ancient,
  title={Ancient solutions of the mean curvature flow},
  author={Haslhofer, Robert and Hershkovits, Or},
  journal={Communications in Analysis and Geometry},
  volume={24},
  number={3},
  pages={593--604},
  year={2016}
}

@article{LangfordLynch2020,
  title={Sharp one-sided curvature estimates for fully nonlinear curvature flows and applications to ancient solutions},
  author={Langford, Mat and Lynch, Stephen},
  journal={Journal f{\"u}r die reine und angewandte Mathematik (Crelle's Journal)},
  volume={765},
  pages={1--33},
  year={2020},
  publisher={Walter de Gruyter GmbH}
}

@article{angenent2020uniqueness,
  title={Uniqueness of two-convex closed ancient solutions to the mean curvature flow},
  author={Angenent, Sigurd and Daskalopoulos, Panagiota and Sesum, Natasa},
  journal={Annals of mathematics},
  volume={192},
  number={2},
  pages={353--436},
  year={2020},
  publisher={Department of Mathematics, Princeton University Princeton, New Jersey, USA}
}

@article{BrendleChoi2021,
  title={Uniqueness of convex ancient solutions to mean curvature flow in higher dimensions},
  author={Brendle, Simon and Choi, Kwok-Kun},
  journal={Geometry \& Topology},
  volume={25},
  number={5},
  pages={2195--2234},
  year={2021},
  publisher={Mathematical Sciences Publishers},
  doi={10.2140/gt.2021.25.2195},
  url={https://doi.org}
}

@article{choi2022translators,
  title={Classification of noncollapsed translators in $\mathbb{R}^4$},
  author={Choi, Kyeongsu and Hershkovits, Or and Haslhofer, Robert},
  journal={arXiv preprint arXiv:2208.14280},
  year={2022}
}

@article{choi2024classification,
  title={Classification of ancient noncollapsed flows in $\mathbb{R}^4$},
  author={Choi, Kyeongsu and Haslhofer, Robert},
  journal={arXiv preprint arXiv:2412.10581},
  year={2024}
}

@article{choi2022bubblesheet,
  title={Classification of bubble-sheet ovals in $\mathbb{R}^4$},
  author={Choi, Beomjun and Daskalopoulos, Panagiota and Du, Wenkui and Haslhofer, Robert and Sesum, Natasa},
  journal={Geometry \& Topology},
  volume={29},
  pages={931--964},
  year={2025},
  publisher={Mathematical Sciences Publishers},
  doi={10.2140/gt.2025.29.931},
  url={https://msp.org/gt/2025/29-2/gt-v29-n2-p08-p.pdf}
}

@article{daskalopoulos2010classification,
  title={Classification of compact ancient solutions to the curve shortening flow},
  author={Daskalopoulos, Panagiota and Hamilton, Richard and {\v{S}}e{\v{s}}um, Nata{\v{s}}a},
  journal={Journal of Differential Geometry},
  volume={84},
  number={3},
  pages={455--464},
  year={2010},
  publisher={International Press of Boston}
}

@article{bourni2019convex,
  title={Convex ancient solutions to curve shortening flow},
  author={Bourni, Theodora and Langford, Mat and Tinaglia, Giuseppe},
  journal={Calculus of Variations and Partial Differential Equations},
  volume={58},
  number={4},
  pages={133},
  year={2019},
  publisher={Springer}
}

@article{bourni2021collapsing,
  title={Collapsing ancient solutions of mean curvature flow},
  author={Bourni, T and Langford, M and Tinaglia, G},
  journal={Journal of Differential Geometry},
  volume={119},
  number={2},
  pages={187--219},
  year={2021},
  publisher={International Press of Boston}
}

@article{Clutterbuck_2007,
  title={Stability of translating solutions to mean curvature flow},
  author={Clutterbuck, Julie and Schnürer, Oliver C. and Schulze, Felix},
  journal={Calculus of Variations and Partial Differential Equations},
  volume={29},
  number={3},
  pages={281--293},
  year={2007},
  publisher={Springer},
  doi={10.1007/s00526-006-0033-1}
}

@article{hershkovits2021translators,
  title={Translators asymptotic to cylinders},
  author={Hershkovits, Or},
  journal={Journal f{\"u}r die reine und angewandte Mathematik (Crelle's Journal)},
  year={2021},
  publisher={Walter de Gruyter GmbH}
}

@article{haslhofer2026mcf,
  title={Mean curvature flow through singularities},
  author={Haslhofer, Robert},
  journal={Proceedings of the International Congress of Mathematicians (ICM)},
  year={2026},
  note={arXiv:2510.01355}
}

@article{DuZhu2025,
  title     = {Spectral quantization for ancient asymptotically cylindrical flows},
  author    = {Du, Wenkui and Zhu, Jingze},
  journal   = {Advances in Mathematics},
  volume    = {479},
  pages     = {110422},
  year      = {2025},
  issn      = {0001-8708},
  doi       = {10.1016/j.aim.2025.110422},
  publisher = {Elsevier}
}

@article{bamler2025pde,
  title={The PDE-ODI principle and cylindrical mean curvature flows},
  author={Bamler, Richard H. and Lai, Yi},
  journal={arXiv preprint arXiv:2512.25050},
  year={2025}
}

@misc{bamler2025classification,
  title={Classification of ancient cylindrical mean curvature flows and the Mean Convex Neighborhood Conjecture},
  author={Richard H. Bamler and Yi Lai},
  year={2025},
  eprint={2512.24524},
  archivePrefix={arXiv},
  primaryClass={math.DG}
}

@article{choi2022ancientlowentropy,
  title={Ancient low-entropy flows, mean-convex neighborhoods, and uniqueness},
  author={Choi, Kyeongsu and Haslhofer, Robert and Hershkovits, Or},
  journal={Acta Mathematica},
  volume={228},
  number={2},
  pages={217--301},
  year={2022},
  publisher={International Press of Boston}
}

@article{choi2026classification,
  title={Classification of ancient finite-entropy curve shortening flows},
  author={Choi, Kyeongsu and Seo, Dong-Hwi and Su, Wei-Bo and Zhao, Kai-Wei},
  journal={arXiv preprint arXiv:2603.09112},
  year={2026}
}

@article{lambert2021ancient,
  title={Ancient solutions in Lagrangian mean curvature flow},
  author={Lambert, Ben and Lotay, Jason D and Schulze, Felix},
  journal={Annali della Scuola Normale Superiore di Pisa, Classe di Scienze},
  volume={22},
  number={3},
  pages={1169--1205},
  year={2021},
  doi={10.2422/2036-2145.201901_016},
  url={https://doi.org}
}

@article{LotaySchulzeSzekelyhidi2024,
  title={Ancient solutions and translators of Lagrangian mean curvature flow},
  author={Lotay, Jason D. and Schulze, Felix and Sz{\'e}kelyhidi, G{\'a}bor},
  journal={Publications Math{\'e}matiques de l'IH{\'E}S},
  volume={139},
  pages={259--294},
  year={2024},
  publisher={Springer},
  doi={10.1007/s10240-023-00143-5},
  url={https://springer.com}
}

@article{Mazet2014,
  title={Minimal hypersurfaces asymptotic to Simons cones},
  author={Mazet, Laurent},
  journal={Journal of the Institute of Mathematics of Jussieu},
  volume={16},
  number={4},
  pages={821--845},
  year={2017},
  publisher={Cambridge University Press},
  url={https://arxiv.org/abs/1407.2474}
}

@ARTICLE{Chodosh2023-yl,
  title         = "Mean curvature flow with generic initial data {II}",
  author        = "Chodosh, Otis and Choi, Kyeongsu and Schulze, Felix",
  month         =  feb,
  year          =  2023,
  copyright     = "http://creativecommons.org/licenses/by/4.0/",
  archivePrefix = "arXiv",
  primaryClass  = "math.DG",
  eprint        = "2302.08409"
}

@article{merle1998optimal,
  title={Optimal estimates for blowup rate and behavior for nonlinear heat equations},
  author={Merle, Frank and Zaag, Hatem},
  journal={Communications on pure and applied mathematics},
  volume={51},
  number={2},
  pages={139--196},
  year={1998},
  publisher={New York: Interscience Publishers, c1949-}
}
\end{document}